\documentclass[11pt,a4paper,oneside]{article}
\usepackage[utf8]{inputenc}
\usepackage[dvipsnames]{xcolor}
\usepackage[pdftitle={Scaling and local limits of Baxter permutations and bipolar orientations through coalescent-walk processes},
pdfauthor={Jacopo Borga, Mickaël Maazoun},
colorlinks=true,linkcolor=NavyBlue,urlcolor=RoyalBlue,citecolor=PineGreen,bookmarks=true,bookmarksopen=true,bookmarksopenlevel=2,unicode=true,linktocpage]{hyperref}
\usepackage{graphicx}
\usepackage{a4wide}
\usepackage{amsmath}
\usepackage{authblk}
\usepackage{amsthm}
\usepackage{amsfonts}
\usepackage{amssymb}
\usepackage{mathtools}
\usepackage{mathrsfs}
\usepackage{stmaryrd}
\usepackage{wasysym}
\usepackage[mathscr]{eucal}
\usepackage{dsfont}
\usepackage{graphicx}
\usepackage{todonotes}
\usepackage{enumitem}
\usepackage{subcaption}
\usepackage[capitalize]{cleveref}
\usepackage{amsmath,bm}
\setlist{nolistsep}
\usepackage{pst-node}
\usepackage{tikz-cd} 

\newcommand{\R}{\mathbb{R}}

\newcommand{\Z}{\mathbb{Z}}
\newcommand\indep{\perp\!\!\!\perp}
\newcommand{\eps}{\varepsilon}

\DeclareMathOperator{\E}{\mathbb{E}}
\DeclareMathOperator{\idf}{\mathds{1}}
\DeclareMathOperator{\Prob}{\mathbb{P}}
\renewcommand{\P}{\Prob}
\DeclareMathOperator{\Var}{Var}

\DeclareMathOperator{\pat}{pat}

\DeclareMathOperator{\Leb}{Leb}

\DeclareMathOperator{\Id}{Id}

\DeclareMathOperator{\sgn}{sgn}
\DeclareMathOperator{\std}{std}

\DeclareMathOperator{\wcp}{WC}
\DeclareMathOperator{\fortree}{LFor}

\DeclareMathOperator{\labtree}{LTr}
\DeclareMathOperator{\cpbp}{CP}
\DeclareMathOperator{\bow}{OW}
\DeclareMathOperator{\bobp}{OP}
\DeclareMathOperator{\Perm}{Perm}

\DeclareMathOperator{\wpc}{WPC}
\DeclareMathOperator{\pcw}{PCW}

\def\leqsi{\preccurlyeq_{\sigma,i}}
\def\coc{c\text{-}occ}

\newcommand{\DualForest}{\mathrm{DualF}}
\newcommand{\exc}{\mathrm{exc}}
\newcommand{\Walks}{\mathfrak W}
\newcommand{\Coals}{\mathfrak C}
\newcommand{\Perms}{\mathfrak S}
\newcommand{\Maps}{\mathfrak m}
\newcommand{\Steps}{A}
\newcommand{\Inverse}{\Theta}

\newcommand{\conti}[1]{{\bm{\mathscr #1}}}
\newcommand{\solution}{F}
%----------------counter arrow on top of symbols: \cev-------------------
\makeatletter
\DeclareRobustCommand{\cev}[1]{%
	\mathpalette\do@cev{#1}%
}
\newcommand{\do@cev}[2]{%
	\fix@cev{#1}{+}%
	\reflectbox{$\m@th#1\vec{\reflectbox{$\fix@cev{#1}{-}\m@th#1#2\fix@cev{#1}{+}$}}$}%
	\fix@cev{#1}{-}%
}
\newcommand{\fix@cev}[2]{%
	\ifx#1\displaystyle
	\mkern#23mu
	\else
	\ifx#1\textstyle
	\mkern#23mu
	\else
	\ifx#1\scriptstyle
	\mkern#22mu
	\else
	\mkern#22mu
	\fi
	\fi
	\fi
}

\makeatother
%----------------------------------------------------------------------------

\newtheorem{theorem}{Theorem}[section]
\newtheorem{lemma}[theorem]{Lemma}
\newtheorem{proposition}[theorem]{Proposition}
\newtheorem{corollary}[theorem]{Corollary}

\theoremstyle{definition}
\newtheorem{definition}[theorem]{Definition}

\theoremstyle{remark}
\newtheorem{remark}[theorem]{Remark}
\newtheorem{observation}[theorem]{Observation}
\newtheorem{exmp}[theorem]{Example}
\newtheorem{conjecture}[theorem]{Conjecture}

%For seeral author use the following:
\author[1]{Jacopo Borga\thanks{\href{mailto:jborga@stanford.edu}{jborga@stanford.edu}}}
\author[2]{Mickaël Maazoun\thanks{\href{mailto:mickael.maazoun@ens-lyon.fr}{mickael.maazoun@ens-lyon.fr}}}
\affil[1]{\small{Institut für Mathematik, Universität Zürich, Switzerland}}
\affil[2]{\small{Université de Lyon, ENS de Lyon, Unité de mathématiques pures et appliquées UMR 5669 CNRS, France}}

\title{\vspace{-2.5cm}Scaling and local limits of Baxter permutations and bipolar orientations through coalescent-walk processes}

\date{}

%The following lines are for keywords and subjectclass-------------
\makeatletter
\newcommand{\subjclass}[2][1991]{%
	\let\@oldtitle\@title%
	\gdef\@title{\@oldtitle\footnotetext{#1 \emph{Mathematics subject classification.} #2}}%
}
\newcommand{\keywords}[1]{%
	\let\@@oldtitle\@title%
	\gdef\@title{\@@oldtitle\footnotetext{\emph{Key words and phrases.} #1}}%
}
\makeatother

\keywords{Local and scaling limits, permutations, planar maps, walks in cones.}

\subjclass[2010]{60C05, 60G50, 05A05, 05C10, 34K50. \\
\hphantom{|,} \emph{Additional notes.} An extended abstract of this preprint is available at \cite{borga_et_al:LIPIcs:2020:12037}.}

%---------------------------------------------------------------------

\begin{document}

	\maketitle
	
	\vspace*{-2.5em}
	
\begin{abstract}
	Baxter permutations, plane bipolar orientations, and a specific family of walks in the non-negative quadrant, called \emph{tandem walks}, are well-known to be related to each other through several bijections. 	
	We introduce a further new family of discrete objects, called \emph{coalescent-walk processes} and we relate it to the three families mentioned above.  
		
	We prove joint Benjamini--Schramm convergence (both in the annealed and quenched sense) for uniform objects in the four families. Furthermore, we explicitly construct a new random measure on the unit square,
	called the \emph{Baxter permuton} and we show that it is the scaling limit (in the permuton sense) of uniform Baxter permutations. 
	In addition, we relate the limiting objects of the four families to each other, both in the local and scaling limit case.
	
	The scaling limit result is based on the convergence of the trajectories of the coalescent-walk process to the \emph{coalescing flow} -- in the terminology of Le Jan and Raimond (2004) -- of a perturbed version of the Tanaka stochastic differential equation. 
	Our scaling result entails joint convergence of the tandem walks of a plane bipolar orientation and its dual, 
	%extending the main result of Gwynne, Holden, Sun (2016), and 
	giving an alternative answer to Conjecture 4.4 of Kenyon, Miller, Sheffield, Wilson (2019) compared to the one of Gwynne, Holden, Sun (2016).

		\end{abstract}
	
\begin{figure}[htbp]
	\centering
		\includegraphics[scale=0.45]{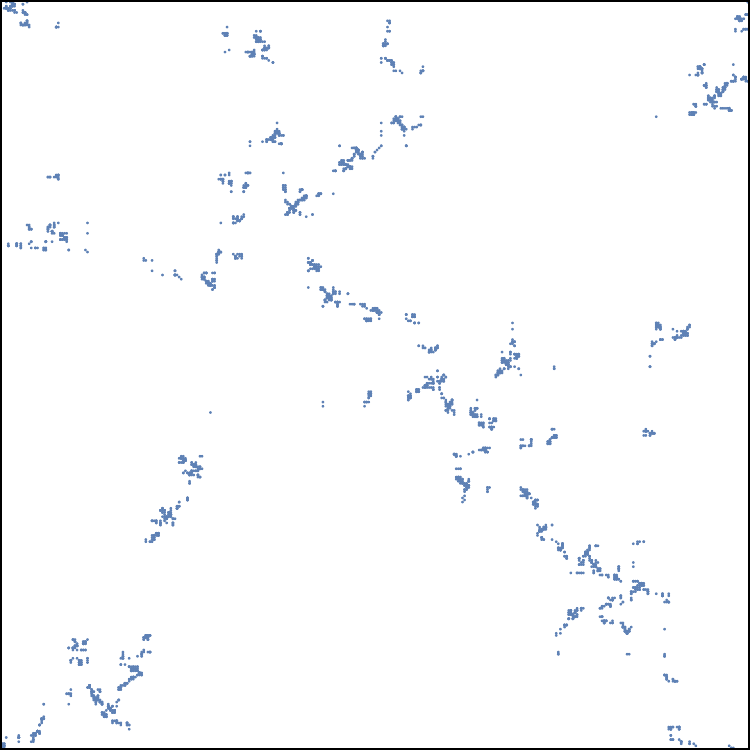}
		\hspace{1.9 cm}
		\includegraphics[scale=0.45]{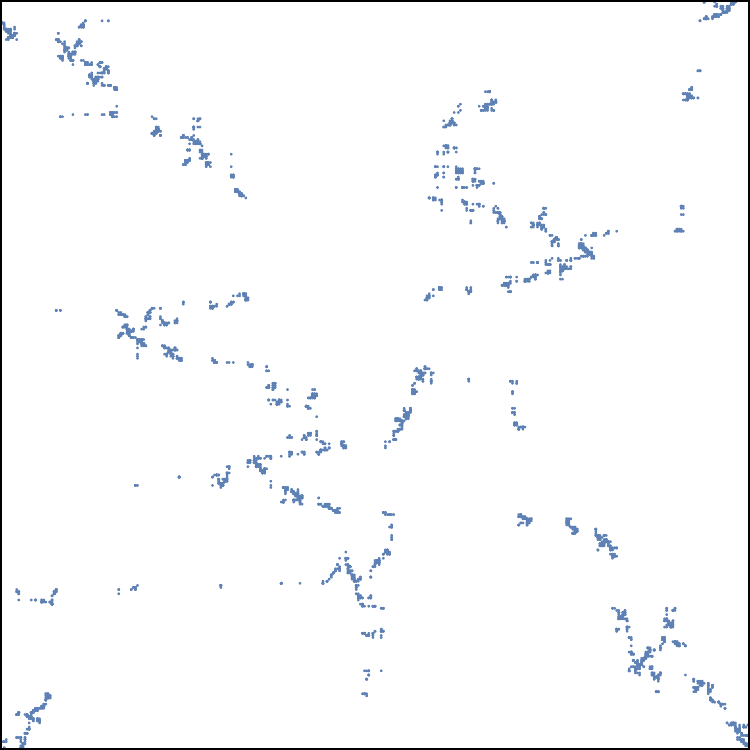}\\
		\caption{The diagrams of two uniform Baxter permutations of size 3253 and 4520. 	
		The underlying generating algorithm is discussed in \cref{sect:simulations}.
		 \label{fig:Baxter_perm_3253}}
\end{figure}

\tableofcontents
		
\section{Introduction and main results}

In the last 30 years, several bijections between Baxter permutations, plane bipolar orientations and certain walks in the plane have been discovered. These relations between discrete objects of different nature are a \emph{beautiful piece of combinatorics}\footnote{Quoting the abstract of \cite{MR2763051}.}, that we aim at investigating from a more probabilistic point of view.

These bijective results come from the enumerative works of Gire~\cite{gire1993arbres} and later Bousquet-M\'{e}lou~\cite{MR2028288}, where they explored the connection between Baxter permutations and generating trees with two-dimensional labels.
Bousquet-M\'{e}lou noticed that Baxter permutations were equinumerous to plane bipolar orientations. A remarkable bijection (denoted $\bobp$ in the present paper) between plane bipolar orientations with $n$ edges and Baxter permutations of size
$n$ was then given by Bonichon, Bousquet-Mélou and Fusy~\cite{MR2734180}. 

Later Felsner, Fusy, Noy and Orden~\cite{MR2763051} gave a unified presentation of some other (partially already-known) bijections between Baxter permutations, 2-orientations of planar quadrangulations, certain pairs of binary trees, and triples of non-intersecting lattice paths.

Kenyon, Miller, Sheffield and Wilson~\cite{MR3945746} introduced a bijection (denoted $\bow$ in the present paper) between plane bipolar orientations and a family of two-dimensional walks in the non-negative quadrant. 
The latter has been used in \cite{bousquet2019plane} to enumerate plane bipolar orientations together with the number of faces of degree $r$ for every $r\in\Z_{>0}$.

In this paper we explore local and scaling limits of some of these objects and we study the relations between their limits. Indeed, since these objects are related by several bijections at the discrete level, we expect that most of the relations among them also hold in the ``limiting discrete and continuous worlds''. 

\medskip

In the next three sections we introduce the precise definitions of the objects involved in our work and we describe some of the bijections mentioned above.

\subsection{Baxter permutations and permuton convergence}\label{sect:perm_sect}
Baxter permutations were introduced by Glen Baxter in 1964 \cite{MR0184217} to study fixed points of commuting functions. A permutation $\sigma$ is Baxter if it is not possible to find $i < j < k$ such that $\sigma(j+1) < \sigma(i) < \sigma(k) < \sigma(j)$ or $\sigma(j) < \sigma(k) < \sigma(i) < \sigma(j+1)$. Baxter permutations are well-studied from a combinatorial point of view by the \textit{permutation patterns} community (see for instance \cite{MR0250516,MR491652,MR555815,MR2028288,MR3882946}). They are a particular example of family of permutations avoiding \textit{vincular patterns} (see \cite{MR2901166} for more details).
We denote by $\mathcal P$ the set of Baxter permutations\footnote{We also denote by $\mathcal{P}_n$ the set of Baxter permutations of size $n$. This convention will be used for all combinatorial classes studied in the paper.}.

The study of random permutations, especially uniform permutations in \emph{permutation classes}, which are families of permutations avoiding classical patterns, is an emerging topic at the interface of combinatorics and
discrete probability theory. There are two main approaches to it: the first is the study
of statistics on permutations, and the second, more recent, looks for limits of permutations
themselves. For instance, one can study the shape of the rescaled diagram of a random permutation (i.e.\ the sets of points of
the Cartesian plane at coordinates $(i, \sigma(i))$)  using the formalism of \textit{permutons}, developed by \cite{hoppen2013limits}. This approach is a rapidly developing field in discrete probability theory, see for instance \cite{madras2010random, mp, madras_monotone, hoffman2017pattern, bassino2018brownian, bassino2019scaling, bassino2017universal, borga2019almost, hoffman2019scaling, maazoun, borga2019square, borga2018localsubclose, kenyon2015permutations}.

A permuton $\mu$ is a Borel probability measure on the unit square $[0,1]^2$ with uniform marginals, that is 
$\mu( [0,1] \times [a,b] ) = \mu( [a,b] \times [0,1] ) = b-a,$
for all $0 \le a \le b\le 1$. Any permutation $\sigma$ of size $n \ge 1$ may be interpreted as the permuton $\mu_\sigma$ given by the sum of Lebesgue area measures
$$\mu_\sigma(A)= n \sum_{i=1}^n \Leb\big([(i-1)/n, i/n]\times[(\sigma(i)-1)/n,\sigma(i)/n]\cap A\big),$$
for all Borel measurable sets $A$ of $[0,1]^2$.

Let $\mathcal M$ be the set of permutons equipped with the topology of weak convergence of measures: a sequence of (deterministic) permutons $(\mu_n)_n$ converges to $\mu$ if 
$
\int_{[0,1]^2} f d\mu_n \to \int_{[0,1]^2} f d\mu,
$
for every (bounded and) continuous function $f: [0,1]^2 \to \mathbb{R}$. With this topology, $\mathcal M$ is a compact metric space (we refer the reader to \cite[Section 2]{bassino2017universal} for a complete introduction to permutons).

A sequence of random permutations $\bm \sigma_n$ converges in distribution in the permuton sense, if the associated sequence of permutons $\mu_{\bm \sigma_n}$ converges.
Permuton convergence is a statement about the first-order geometry of the diagram of $\bm \sigma_n$. Nevertheless, permuton convergence is equivalent to joint convergence in distribution of all \textit{pattern density} statistics (see \cite[Theorem 2.5]{bassino2017universal}). 

Permuton convergence was investigated for some remarkable subclasses of Baxter permutations. Separable permutations, i.e.\ permutations avoiding the two classical patterns $2413$ and $3142$, converge to the \textit{Brownian separable permuton} \cite{bassino2018brownian}. This result provided the first example of a sequence of uniform permutations in a class converging to a non-deterministic permuton. Dokos and Pak \cite{MR3238333} explored the expected limit shape of the so-called \emph{doubly alternating Baxter
permutations}. In their article they claimed that \emph{``it would be interesting to compute the limit shape of random Baxter permutations''}. The present paper answers this open question (see \cref{fig:Baxter_perm_3253} for some simulations of the diagram of uniform Baxter permutations of large size).

\subsection{Plane bipolar orientations, tandem walks, and bijections with Baxter permutations}\label{sect:discr_obj}
\textit{Plane bipolar orientations}, or \emph{bipolar orientations} for short, are planar maps (i.e.\ connected graphs properly embedded in the plane up to continuous deformation) equipped with an acyclic orientation of the edges with exactly one source (i.e.\ a vertex with only outgoing edges) and one sink (i.e.\ a vertex with only incoming edges), both on the outer face. We denote by $\mathcal{O}$ the set of bipolar orientations. The size of a bipolar orientation $m$ is its number of edges and it is denoted by $|m|$.

Every bipolar orientation can be plotted in the plane with every edge oriented from bottom to top (this is a consequence for instance of \cite[Proposition 1]{MR2734180}; see the black map on the left-hand side of \cref{fig:bip_orient} for an example). We think of the outer face as split in two: the \textit{left outer face}, and the \textit{right outer face}. The orientation of the edges around each vertex/face is constrained: we sum up these local constraints, setting some vocabulary, on the right-hand side of \cref{fig:bip_orient}. We call indegree/outdegree of a vertex the number of incoming/outgoing edges around this vertex. We call left degree (resp.\ right degree) of an inner face the number of left (resp.\ right) edges around that face.

\begin{figure}[htbp]
	\centering
	\includegraphics[scale=0.8]{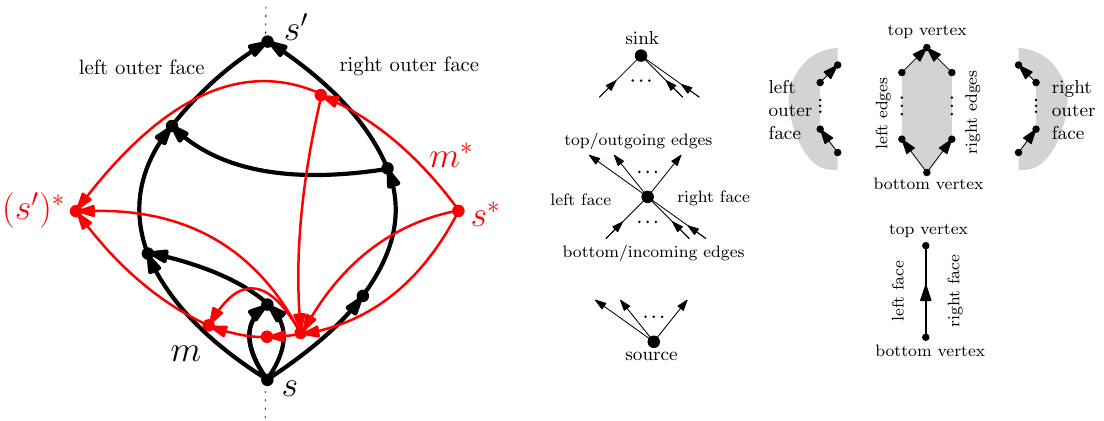}\\
	\caption{On the left-hand side, in black, a bipolar orientation $m$ of size 10 drawn with every edge oriented from bottom to top. In red, its dual map $m^*$ (defined below), drawn with every edge oriented from right to left. On the right-hand side, the behavior of the orientation around each vertex/face/edge. Note for instance that in the clockwise ordering around each vertex different from the source and the sink there are top/outgoing edges, a right face, bottom/incoming edges, and a left face.\label{fig:bip_orient}}
\end{figure}

The \emph{dual map} $m^*$ of a bipolar orientation $m$ (called the \textit{primal}) is obtained by putting a vertex in each face of $m$, and an edge between two faces separated by an edge in $m$, oriented from the right face to the left face. The primal right outer face becomes the dual source, and the primal left outer face becomes the dual sink. Then $m^*$ is also a bipolar orientation (see the left-hand side of \cref{fig:bip_orient}).
The map $m^{**}$ is just $m$ with the orientation reversed, and $m^{****}=m$.

We now define a notion at the heart of the two bijections $\bow$ and $\bobp$ mentioned before.
Let $m$ be a bipolar orientation. Disconnecting every incoming edge but the rightmost one at every vertex turns the map $m$ into a plane tree $T(m)$ rooted at the source, which we call the \textit{down-right tree} of the map (see the left-hand side of \cref{fig:bip_orient _with_trees} for an example). The tree $T(m)$ contains every edge of $m$, and the clockwise contour exploration of $T(m)$ identifies an ordering of the edges of $m$. We denote by $e_1,\ldots,e_{|m|}$ the edges of $m$ in this order (see again \cref{fig:bip_orient _with_trees}). The tree $T(m^{**})$ can be obtained similarly from $m$ by disconnecting every outgoing edge but the leftmost, and is rooted at the sink. The following remarkable facts hold\footnote{This fact is also discussed in a slightly different way in \cite[Section 2.1]{MR3945746}.}:
\textit{The contour exploration of $T(m^{**})$ visits edges of $m$ in the order $e_{|m|},\ldots,e_1$. Moreover, one can draw $T(m)$ and $T(m^{**})$ in the plane, one next to the other, in such a way that the interface between the two trees traces a path, called \emph{interface path}\footnote{Note that the interface path coincides with the clockwise contour exploration of $T(m)$, an example is given in the first two pictures of \cref{fig:bip_orient _with_trees}.}, from the source to the sink visiting edges $e_1,\ldots, e_{|m|}$ in this order (see the middle picture of \cref{fig:bip_orient _with_trees} for an example).}

The following bijection between bipolar orientations and a specific family of two-dimensional walks in the non-negative quadrant was discovered by Kenyon, Miller, Sheffield and Wilson~\cite{MR3945746}.

\begin{definition}\label{defn:KMSW}
Let $n\geq 1$, $m \in \mathcal O_n$. We define $\bow(m)=(X_t,Y_t)_{1\leq t\leq n} \in (\Z_{\geq 0}^2)^n$ as follows: for $1\leq t\leq n$, $X_t$ is the height in the tree $T(m)$ of the bottom vertex of $e_t$ (i.e.\ its distance in $T(m)$ from the source $s$), and $Y_t$ is the height in the tree $T(m^{**})$ of the top vertex of $e_t$ (i.e.\ its distance in $T(m^{**})$ from the sink $s'$).
\end{definition}

An example is given in the right-hand side of \cref{fig:bip_orient _with_trees}.

\begin{figure}[htbp]
		\centering
		\includegraphics[scale=1]{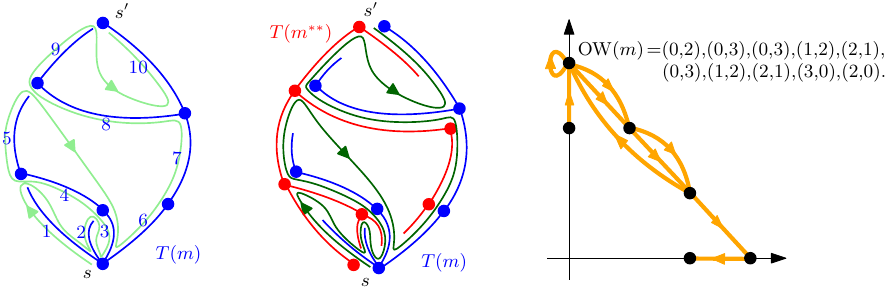}\\
		\caption{On the left-hand side the tree $T(m)$ built by disconnecting the bipolar orientation $m$ from \cref{fig:bip_orient} with the edges ordered according to the exploration process (in light green). In the middle, the two trees $T(m)$ and $T(m^{**})$ with the \emph{interface path} tracking the interface between the two trees (in dark green). On the right-hand side, the two-dimensional walk $\bow(m)$ defined in  \cref{defn:KMSW}. \label{fig:bip_orient _with_trees}}
\end{figure}

\begin{theorem}[Theorem 1 of \cite{MR3945746}] \label{thm:KMSW}
	The mapping $\bow$ is a size-preserving bijection between $\mathcal O$ and the set $\mathcal W$ of walks in the non-negative quadrant $\Z_{\geq 0}^2$ starting on the $y$-axis, ending on the $x$-axis, and with increments in 
	\begin{equation}
	\label{eq:admis_steps}
	\Steps = \{(+1,-1)\} \cup \{(-i,j), i\in \Z_{\geq 0}, j\in \Z_{\geq 0}\}.
	\end{equation}
\end{theorem}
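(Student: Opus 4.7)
The plan is twofold: first, to verify that $\bow(m) \in \mathcal W$ for every $m \in \mathcal O$; second, to construct an explicit inverse map $\mathcal W \to \mathcal O$.

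For the first part, the boundary conditions are immediate from the definition of the interface path: $e_1$ emanates from the source, forcing $X_1 = 0$, while $e_n$ terminates at the sink, forcing $Y_n = 0$. To control the increments, I would perform a case analysis on the local shape of the interface path at each \emph{corner} between consecutive edges $e_t$ and $e_{t+1}$. In the \emph{vertex-corner case}, the top of $e_t$ coincides with the bottom of $e_{t+1}$ at a common vertex $v$; the local orientation rules summarized in \cref{fig:bip_orient} force $e_t$ to be the rightmost incoming edge at $v$ (hence an edge of $T(m)$) and $e_{t+1}$ to be the leftmost outgoing edge at $v$ (hence an edge of $T(m^{**})$), and a direct reading of the two tree depths yields the increment $(+1,-1)$. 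In the \emph{face-corner case}, $e_t$ and $e_{t+1}$ lie on the boundary of a common inner face $f$; one identifies the drop of $X$ with one less than the right degree of $f$ and the rise of $Y$ with one less than the left degree of $f$, yielding an increment of the form $(-i,+j)$ with $i,j \geq 0$. Non-negativity of the walk then follows from non-negativity of depths in $T(m)$ and $T(m^{**})$.

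For the inverse, given $w \in \mathcal W$ I would construct $\bow^{-1}(w)$ by induction on $t$, maintaining a partially built bipolar orientation together with an ordered list of \emph{active} edges that forms its current right boundary. A $(+1,-1)$ step appends one new edge on top of the topmost active edge; a $(-i,+j)$ step merges the top endpoints of the $i+1$ topmost active edges into a single fresh vertex, closes the inner face they now bound, and attaches $j+1$ new edges going out of that vertex; at time $n$ all remaining active edges are glued into the sink. The identities $\bow \circ \bow^{-1} = \Id_{\mathcal W}$ and $\bow^{-1} \circ \bow = \Id_{\mathcal O}$ then follow by directly matching each gluing rule with the corresponding corner analysis of the first part.

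The main obstacle is the face-corner case: one must show rigorously that the drops in primal and dual tree depths across a face exactly match the degree data of that face, and, dually, that the ``active boundary'' of the inverse construction is always well defined and produces a genuinely acyclic planar map with a unique source and a unique sink on the outer face.
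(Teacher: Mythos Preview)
Your overall strategy---verify that $\bow$ lands in $\mathcal W$ by a corner-by-corner case analysis, then build an explicit inverse inductively along the walk---is exactly the approach taken in \cite{MR3945746} and recounted in \cref{sect:KMSW} of the present paper. However, two concrete errors in your execution would prevent the argument from going through as written.

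First, in the face-corner case you have the degrees swapped. When the interface path crosses an inner face $f$ between $e_t$ (its top-left edge) and $e_{t+1}$ (its bottom-right edge), the drop in $X$ equals the \emph{left} degree of $f$ minus one, and the rise in $Y$ equals the \emph{right} degree of $f$ minus one, not the other way round. This is because the path from the bottom of $e_t$ down to the bottom vertex of $f$ in $T(m)$ runs along the left boundary of $f$, and symmetrically in $T(m^{**})$ the path from the top of $e_{t+1}$ up to the top vertex of $f$ runs along the right boundary. The swap would propagate into a mismatched inverse.

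Second, and more seriously, your description of the inverse at a $(-i,+j)$ step is not the right operation. The active edges form a \emph{path} (the current right boundary of the partial map), so ``merging the top endpoints of the $i{+}1$ topmost active edges into a single fresh vertex'' would collapse $i{+}1$ distinct boundary vertices and destroy planarity. What the KMSW inverse actually does (see the construction of $\Inverse$ in \cref{sect:KMSW}) is glue a new polygon of left degree $i{+}1$ and right degree $j{+}1$ to the right boundary: its $i{+}1$ left edges are identified with $e_t$ and the $i$ edges below it, and its $j{+}1$ right edges are fresh (with the bottom-right one becoming $e_{t+1}$). Correspondingly, a $(+1,-1)$ step does \emph{not} always append a new edge: it simply advances the active label to the existing edge above $e_t$, creating a new edge only when none is present. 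Your version would double-create the right-boundary edges of each face. Once you replace your merging rule by this face-gluing rule, the verification that $\bow\circ\Inverse=\Id$ and $\Inverse\circ\bow=\Id$ goes through exactly as you outline.
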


We call $\mathcal W$ the set of \emph{tandem walks}, as done in \cite{bousquet2019plane}. For more explanations on the bijection $\bow$ and the set $\mathcal W$ we refer to \cref{sect:KMSW}.

\medskip

We now introduce a second bijection, fundamental for our results, between bipolar orientations and Baxter permutations, discovered by Bonichon, Bousquet-Mélou and Fusy~\cite{MR2734180}.

\begin{definition}\label{defn:bobp}
Let $n\geq 1, m\in \mathcal O_n$. Recall that every edge of the map $m$ corresponds to its dual edge in the dual map $m^*$. Let $\bobp(m)$ be the only permutation $\pi$ such that for every $1\leq i \leq n$, the $i$-th edge to be visited in the exploration of $T(m)$ corresponds to the $\pi(i)$-th edge to be visited in the exploration of $T(m^*)$.
\end{definition}

An example is given in \cref{fig:bip_orient_and_perm}.

\begin{theorem}[Theorem 2 of \cite{MR2734180}]\label{thm:bobp}
	The mapping $\bobp$ is a size-preserving bijection between the set $\mathcal O$ of bipolar orientations and the set $\mathcal P$ of Baxter permutations.
\end{theorem}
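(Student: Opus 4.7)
My plan is to establish the theorem in two halves: first that $\bobp(m)\in\mathcal P$ for every $m\in\mathcal O$, and then to exhibit an explicit inverse $\Psi:\mathcal P\to\mathcal O$. Well-definedness is immediate, since $T(m)$ and $T(m^*)$ each contain all $n=|m|$ edges (of $m$ and $m^*$ respectively), the clockwise contour explorations give canonical total orders on them, and the primal-dual edge correspondence is a bijection, so $\bobp(m)$ is a permutation of $\{1,\ldots,n\}$.

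For the Baxter property, I would exploit the interface-path description recalled just before the statement: the index $i$ of $e_i$ is exactly the rank at which the interface of $T(m)$ and $T(m^{**})$ crosses $e_i$, and similarly $f_j$ is the $j$-th edge crossed by the dual interface. Two consecutive edges $e_j,e_{j+1}$ in $T(m)$ share a distinguished incidence (either $e_{j+1}$ is the leftmost outgoing edge at the head of $e_j$, or $e_{j+1}$ follows $e_j$ after a contour backtrack to an ancestor), and this local picture completely determines the relative order of $f_{\pi(j)}$ and $f_{\pi(j+1)}$ and the subtree of $T(m^*)$ enclosed between them. If $\pi$ contained $\pi(j+1)<\pi(i)<\pi(k)<\pi(j)$ with $i<j<k$, then $f_{\pi(i)}$ and $f_{\pi(k)}$ would have to lie inside that enclosed subtree, contradicting $i<j$ and $j+1<k$; the second forbidden pattern is handled symmetrically by swapping the roles of incoming and outgoing edges. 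For the inverse, I would proceed by induction on $n$ using the two-label generating tree of $\mathcal P$ from \cite{MR2028288}: every Baxter permutation of size $n$ arises by inserting a new entry, in one of a controlled set of positions, into one of size $n-1$, and each such insertion should correspond to a local edge-addition move on $m$ (extending $T(m)$ by a new rightmost child at an active vertex, possibly after backtracking along the rightmost path), with the two labels simultaneously counting on the permutation side the number of legal insertion positions and on the map side the number of active vertices and faces on the right boundary.

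The hard part is the inductive step of this inverse construction: one has to verify that each local edge-addition preserves planarity, acyclicity, and the single-source/single-sink property of the growing bipolar orientation, and that the generating-tree labels match on the nose so that the procedure is reversible. The Baxter pattern-avoidance condition is exactly what rules out the obstructions at this stage, so the bookkeeping matching the two generating trees vertex by vertex is the step on which the entire argument rests and where the combinatorial content of the theorem is concentrated.
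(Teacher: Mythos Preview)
This theorem is not proved in the paper you are working from: it is quoted as Theorem~2 of \cite{MR2734180}, and the paper only explains (in \cref{sect:Baxt_bipol}) why the reformulated mapping $\bobp$ of \cref{defn:bobp} coincides with the original bijection $\Psi$ of Bonichon, Bousquet-M\'elou and Fusy. So there is no ``paper's own proof'' to compare against; the authors simply import the result.

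That said, your sketch is a plausible outline of the argument in \cite{MR2734180} itself, and in spirit it is on the right track: the original proof does proceed by matching generating trees for $\mathcal O$ and $\mathcal P$ with two-dimensional labels, exactly as you suggest in the second half. Two cautions, though. First, your pattern-avoidance argument for the forward direction is too vague as written: the phrase ``the subtree of $T(m^*)$ enclosed between them'' and the ensuing contradiction need a precise statement about how the dual exploration interleaves with the primal one, and this is genuinely delicate --- in the original paper it is handled through the structural correspondence with the generating tree rather than by a direct forbidden-pattern check of the kind you sketch. Second, in the inverse construction, you correctly identify the crux (matching labels and checking that each edge addition preserves the bipolar property), but you should be aware that the original proof does not treat these as separate ``bookkeeping'' steps: the isomorphism of generating trees is established once and for all, and bijectivity follows immediately. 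If you want to write a self-contained proof, the cleanest route is to exhibit the two generating trees explicitly and show they have the same rewriting rule on labels, rather than trying to verify Baxter-ness and invertibility by independent ad hoc arguments.
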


The definition given in \cref{defn:bobp} is a simple reformulation of the bijection presented in \cite{MR2734180}, for more details see \cref{sect:Baxt_bipol}.

\begin{figure}[hbtp]
		\centering
		\includegraphics[scale=0.6]{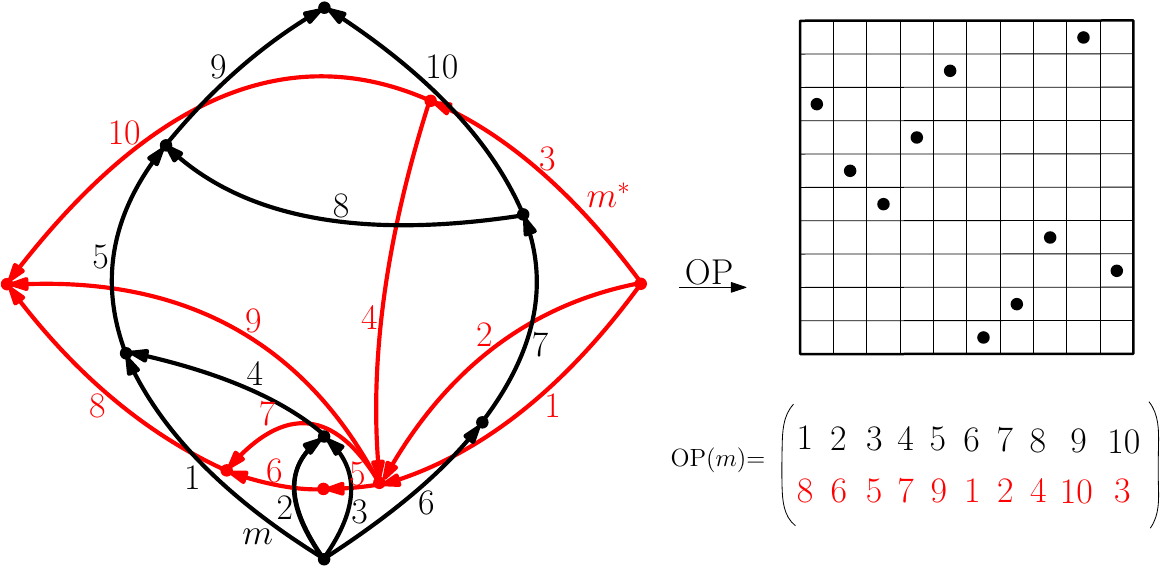}\\
		\caption{A schema explaining the mapping $\bobp$. On the left-hand side, the bipolar orientation $m$ and its dual $m^*$, from \cref{fig:bip_orient}. We plot in black the labeling of the edges of $m$ obtained in \cref{fig:bip_orient _with_trees} and in red the labeling of the edges of $m^*$ obtained by the same procedure. On the right-hand side, the permutation $\bobp(m)$ (together with its diagram) obtained by pairing the labels of the corresponding primal and dual edges between $m$ and $m^*$.  \label{fig:bip_orient_and_perm}}
\end{figure}

\subsection{Coalescent-walk processes}\label{sect:coal_walk_intro}

So far we have considered three families of objects: Baxter permutations ($\mathcal{P}$), tandem walks ($\mathcal W$), and bipolar orientations ($\mathcal{O}$). We saw that they are linked by the mappings $\bow$ and $\bobp$. 

To investigate local and scaling limits of Baxter permutations, it is natural to first prove local and scaling limits results for tandem walks and then try to transfer these convergences to permutations through the mapping $\bobp \circ \bow^{-1}$. However, the definition of this composite mapping makes it not very tractable, and our first combinatorial result is a rewriting of it.

Consider a tandem walk $W = (X,Y) \in \mathcal W_n$ and the corresponding Baxter permutation $\sigma = \bobp \circ \bow^{-1} (W)$. 
We introduce the \textit{coalescent-walk process} driven by $W$. It is a family of discrete walks $Z = \{Z^{(i)}\}_{1\leq i \leq n}$, where $Z^{(i)}=Z^{(i)}_t$ has time indices  $t\in\{i,\ldots,n\}$ and it is informally defined as follows: $Z^{(i)}$ starts at $0$ at time $i$, takes the same steps as $Y$ when it is non-negative, takes the same steps as $-X$ when it is negative unless such a step would force $Z^{(i)}$ to become non-negative. If the latter case happens at time $j$, then $Z^{(i)}$ is forced to coalesce with $Z^{(j)}$ at time $j+1$. For a precise definition we refer the reader to \cref{sec:discrete_coal}. An illustration of a coalescent-walk process is given on the left-hand side of \cref{fig:Coal_process_exemp}.
 \begin{figure}[hbtp]
	\centering
	\includegraphics[scale=0.85]{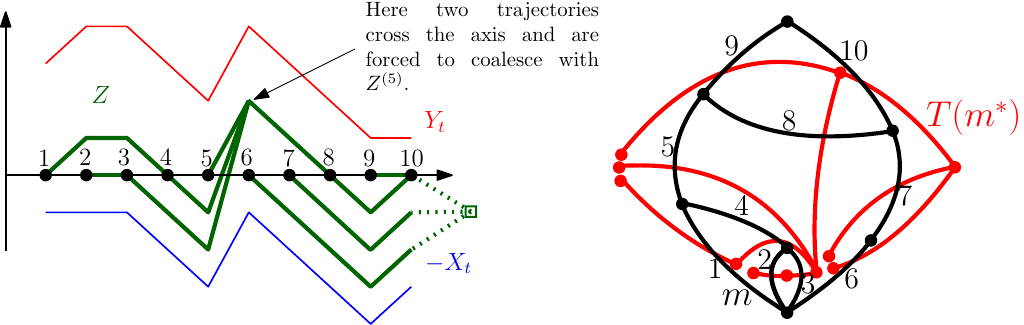}\\
	\caption{The coalescent-walk process $Z = \wcp(W)$ associated with the walk $W=(X,Y) = \bow(m)$. The walk $Y$ is plotted in red and $-X$ is plotted in blue. On the right-hand side, the map $m$ together with the tree $T(m^*)$ drawn in red. \label{fig:Coal_process_exemp}}
\end{figure}
We denote by $\mathcal C_n$ the set of coalescent-walk processes obtained in this way from tandem walks in $\mathcal W_n$, and we define $\wcp:\mathcal W_n \to \mathcal C_n$ to be the mapping that associates a tandem walk $W$ with the corresponding coalescent-walk process $Z$.
 
In a coalescent-walk process, trajectories do not cross but coalesce, hence the name.

\begin{remark}
	To the best of our knowledge this is the first article where \emph{coalescent-walk processes} have been formally considered. We remark that in \cref{sec:discrete_coal} we will present a general definition of \emph{coalescent-walk process}, but then we will mainly focus on coalescent-walk processes associated with tandem walks. Nevertheless, in a  work-in-progress we plan to use similar constructions associated with other families of walks in cones.
	
	We finally mention for completeness that an implicit and very specific version of coalescent-walk process was considered in \cite{GHS} (for more details see \cref{sec:lqg_sde}).
\end{remark}

From a coalescent-walk process one can construct a permutation of the integers. We denote by $\Perms_n$ the set of permutations of size $n$. If $Z\in \mathcal C_n$, we denote $\cpbp(Z)$ the only permutation $\pi \in \Perms_n$ such that for $i,j\in [n]$ with $i<j$, $\pi(i)<\pi(j)$ if and only if $Z^{(i)}_j<0$. We will see that $\cpbp:\mathcal C_n \to \mathcal P_n$, hence $\cpbp(Z)$ is a Baxter permutation. The reader can check that in the case of \cref{fig:Coal_process_exemp} we have $\cpbp(Z) = 8\,6\,5\,7\,9\,1\,2\,4\,10\,3$, which corresponds to $\bobp \circ \bow^{-1}(W)$ (see \cref{fig:bip_orient _with_trees,fig:bip_orient_and_perm}) witnessing an instance of our main combinatorial result.

\begin{theorem}\label{thm:diagram_commutes}
For all $n\in \Z_{>0}$, the following diagram of bijections commutes
 \begin{equation}
 \label{eq:comm_diagram}
 \begin{tikzcd}
 \mathcal{W}_n \arrow{r}{\wcp}  & \mathcal{C}_n \arrow{d}{\cpbp} \\
 \mathcal{O}_n \arrow{u}{\bow} \arrow{r}{\bobp}& \mathcal{P}_n
 \end{tikzcd} \;.
 \end{equation}
\end{theorem}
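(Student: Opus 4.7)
The goal is to prove the identity $\bobp(m)=\cpbp(\wcp(\bow(m)))$ for every $m\in\mathcal O_n$. Combined with \cref{thm:KMSW,thm:bobp}, this commutes the diagram; bijectivity of $\wcp$ and $\cpbp$ then follows, since $\wcp$ is surjective by definition of $\mathcal C_n$ and $\bow,\bobp$ are already known to be bijections. Fix $m\in\mathcal O_n$, set $W=(X,Y)=\bow(m)$ and $Z=\wcp(W)$, and let $e_1,\dots,e_n$ denote the edges of $m$ in the order induced by the contour exploration of $T(m)$. By \cref{defn:bobp}, $\bobp(m)(i)$ is the rank of the dual edge $e_i^*$ in the contour exploration of $T(m^*)$, while by \cref{defn:KMSW} and the definition of $\cpbp$, the permutation $\cpbp(Z)$ is determined, for any $i<j$, by the sign of $Z^{(i)}_j$. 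It therefore suffices to prove the equivalence
\begin{equation*}
e_i^* \text{ precedes } e_j^* \text{ in the contour exploration of } T(m^*) \iff Z^{(i)}_j<0,
\qquad 1\le i<j\le n.
\end{equation*}

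The main obstacle, and the heart of the argument, is to give a combinatorial interpretation of the trajectory $(Z^{(i)}_t)_{t\ge i}$ in terms of the map $m$. The piecewise rule defining $\wcp$ (follow $Y$ above $0$, follow $-X$ below $0$, coalesce when forced) strongly parallels the structure of the dual tree $T(m^*)$; I would therefore prove as an intermediate claim that each trajectory $Z^{(i)}$ encodes the ancestral path in $T(m^*)$ starting from the face of $m$ lying immediately to the right of $e_i$ and proceeding towards the dual sink. More precisely, I would show that $Z^{(i)}_t$ is a signed relative height of $e_t$ with respect to this ancestral branch: a non-negative value equals a height in $T(m^{**})$ when $e_t$ sits on one side of the branch, and a negative value equals (minus) a height in $T(m)$ when $e_t$ sits on the other side. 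Under this reading, the coalescence of two trajectories $Z^{(i)}$ and $Z^{(i')}$ at time $t$ corresponds exactly to the meeting of their dual ancestral paths at their lowest common ancestor in $T(m^*)$, which is consistent with the deterministic coalescence rule of the coalescent-walk process.

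To establish the intermediate claim I would proceed by induction on $t-i$, analysing the effect of the step of $W$ at time $t$ according to its shape. A step $(+1,-1)$ (the generic interface-path step) moves one edge further in the contour of $T(m)$ and one level up in $T(m^{**})$, and the local geometry of $m$ around the endpoint of $e_t$ induces precisely the prescribed update of $Z^{(i)}_t$. A step $(-k,\ell)$ with $k,\ell\ge 0$ (crossing a face of left degree $k{+}1$ and right degree $\ell{+}1$, cf.\ \cref{thm:KMSW}) changes the relative position of $e_t$ with respect to the ancestral branch of $e_i^*$ in the way read off from $\Steps$ in \eqref{eq:admis_steps}, and it is exactly at such steps that a coalescence may be forced. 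The bookkeeping is delicate — one must fix the reference height carefully and track the boundary events where a trajectory would otherwise cross $0$ — but it is routine once the correct invariant has been identified.

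With the intermediate claim at hand, the desired equivalence reduces to a standard property of clockwise contour explorations of rooted plane trees: for two edges with $i<j$, $e_i^*$ is visited before $e_j^*$ exactly when $e_j^*$ lies on the prescribed side of the ancestral branch of $e_i^*$ at the time of the visit, and this planar side is precisely the one recorded by the sign of $Z^{(i)}_j$. This yields $\bobp(m)=\cpbp(Z)=\cpbp(\wcp(\bow(m)))$ and completes the proof of \cref{thm:diagram_commutes}.
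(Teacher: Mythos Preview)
Your outline is headed in the right direction and is morally the same idea as the paper's proof: both hinge on showing that the coalescent-walk process $Z=\wcp(\bow(m))$ encodes the tree $T(m^*)$. The paper formalizes this as \cref{prop:eq_trees}: the edge-labeled forest $\fortree(Z)$ built from the branching structure of the trajectories coincides with $T(m^*)$ (labeled by the exploration order of $T(m)$) once the roots are identified. The commutation of the diagram then follows immediately, since $\bobp(m)$ is read off $T(m^*)$ and $\cpbp(Z)$ is read off $\fortree(Z)$ in exactly the same way (\cref{rk:cpbp_through_fortree}).

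The gap in your proposal is that you never state the invariant precisely and do not carry out the induction; you only assert that ``the bookkeeping is delicate \ldots\ but routine once the correct invariant has been identified''. Your phrase ``signed relative height of $e_t$ with respect to this ancestral branch'' is suggestive but not a definition, and without it the inductive step for a $(-i,j)$ increment (the only interesting case, where coalescence can occur) cannot actually be checked. The paper resolves this by replacing the trajectory-by-trajectory invariant with a structural one: it introduces an auxiliary $\Z$-planted forest $\DualForest(m)$ (\cref{def:F_k}), whose root-indices are the heights on the right boundary of the current partial map, and proves $\DualForest(\Inverse(W|_{[j,k]}))=\fortree(Z|_{[j,k]})$ by induction on $k$ (\cref{prop:fortree}). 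The induction is then a clean case analysis on the last increment of $W$, matching the evolution of the dual forest under the map-growing rule $\Inverse$ against the evolution of $\fortree(Z)$ under \cref{eq:distrib_incr_coal}. This is exactly the ``correct invariant'' you were looking for, and it has the advantage of treating all trajectories simultaneously rather than fixing $i$ and inducting on $t-i$.
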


Note that the mappings involved in the diagram are denoted using two letters that refer to the domain and co-domain.
The proof of \cref{thm:diagram_commutes} is given in \cref{sect:equiv_bij}. 
The key-step is the following fact, proved in \cref{prop:eq_trees}: \emph{Given a bipolar orientation $m$, then the ``branching structure" of the trajectories of the coalescent-walk process $\wcp\circ\bow(m)$ is equal to the tree $T(m^*)$}. The reader is invited to verify it in \cref{fig:Coal_process_exemp}.

\subsection{Local limit results}

We can now consider local limits, more precisely Benjamini--Schramm limits, of the four families in \cref{eq:comm_diagram}. Informally, Benjamini--Schramm convergence for discrete objects looks at the convergence of the neighborhoods of any fixed size of a uniformly distinguished point, called the root of the object.
In order to properly define the Benjamini--Schramm convergence for the four families, we need to present the spaces of infinite objects and the respective local topologies. This is done in \cref{sect:inf_loc_obj,sect:local topologies}, but we give a quick summary here.
\begin{itemize}
	\item $\widetilde \Walks_\bullet$ is the space of two-dimensional walks indexed by a finite or infinite interval of $\Z$ containing zero, with value $(0,0)$ at time $0$, local convergence being finite-dimensional convergence.
	\item $\widetilde \Coals_\bullet$ is the space of coalescent-walk processes indexed by a finite or infinite interval of $\Z$ containing zero, local convergence being finite-dimensional convergence.
	\item The space $\widetilde \Perms_\bullet$ of infinite permutations and its local topology were defined in \cite{borga2018local}. In this context, an infinite permutation is a total ordering on a finite or infinite interval of $\Z$ containing zero.
	\item The space $\widetilde \Maps_\bullet$ of infinite rooted maps is equipped with the local topology derived from the local convergence for graphs of Benjamini and Schramm. See for instance \cite{curienrandom} for an introduction.
\end{itemize}
In the first three items, the index $0$ has to be understood as the root of the infinite object, and comparison between a rooted finite object and an infinite one is done after applying the appropriate shift.

For every $n\in \Z_{>0}$, let $\bm W_n$, $\bm Z_n$, $\bm \sigma_n$, and $\bm m_n$ denote uniform objects of size $n$ in $\mathcal W_n$, $\mathcal C_n$, $\mathcal P_n$, and $\mathcal O_n$, respectively, related by the four bijections in the commutative diagram in \cref{eq:comm_diagram}.
We define below the candidate local random limiting objects. Let $\nu$ denote the probability distribution on $\Z^2$ given by:
\begin{equation}\label{eq:step_distribution_walk}
\nu = \frac 12 \delta_{(+1,-1)} + \sum_{i,j\geq 0} 2^{-i-j-3}\delta_{(-i,j)},\quad \text{where $\delta$ denotes the Dirac measure},
\end{equation}
and let\footnote{Here and throughout the paper we denote random quantities using \textbf{bold} characters.} $\overline{\bm W} = (\overline{\bm X},\overline{\bm Y}) = (\overline{\bm W}_t)_{t\in \Z}$ be a two-sided two-dimensional random walk with step distribution $\nu$, having value $(0,0)$ at time 0. Remark that $\overline {\bm W}$ is not confined to the non-negative quadrant.

A formal definition of the other limiting objects requires an extension of the mappings in \cref{eq:comm_diagram} to infinite-volume objects\footnote{The terminology \emph{finite/infinite-volume} refers to the fact that the objects are defined in a compact/non-compact set.
For instance a Brownian motion with time space $\mathbb R$ is an infinite-volume object and a Brownian excursion with time space $[0,1]$ is a finite-volume object.}
which is done in \cref{sec:discrete_coal}, \cref{sect:infinite_bij} and \cref{sect:inf_loc_obj}.
Admitting we know such extensions, let $\overline{\bm Z} = \wcp(\overline{\bm W})$ be the corresponding infinite coalescent-walk process, $\overline{\bm \sigma} = \cpbp(\overline{\bm Z})$ the corresponding infinite permutation on $\Z$, and $\overline{\bm m} = \bow^{-1}(\overline{\bm W})$ the corresponding infinite map.

\begin{theorem}
	[Quenched Benjamini--Schramm convergence]
	\label{thm:local}
Consider the sigma-algebra $\mathfrak{B}_n\coloneqq\sigma(\bm W_n)=\sigma(\bm Z_n)=\sigma(\bm \sigma_n)=\sigma(\bm m_n)$. Let $\bm i_n$ be an independently chosen uniform index of $[n]$.
We have the following convergence in probability in the space of probability measures on $\widetilde \Walks_\bullet\times\widetilde \Coals_\bullet\times\widetilde \Perms_\bullet\times\widetilde \Maps_\bullet$,
		\begin{equation}\mathcal{L}\Big(\big((\bm W_n, \bm i_n), (\bm Z_n, \bm i_n), (\bm \sigma_n, \bm i_n), (\bm m_n, \bm i_n)\big)\Big| \mathfrak{B}_n\Big)\xrightarrow[n\to\infty]{P}\mathcal{L}\left(\overline{\bm W},\overline{\bm Z},\overline{\bm \sigma},\overline{\bm m}\right),\label{eq:quenched_conv_walks}
		\end{equation}
	where $\mathcal{L}(\cdot)$ denotes the law of a random variable.
\end{theorem}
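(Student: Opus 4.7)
The plan is to use the commutative diagram \eqref{eq:comm_diagram} to reduce the joint statement to a one-object quenched Benjamini--Schramm limit for the walk $\bm W_n$, and to prove the latter via the classical second-moment approach.

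\emph{Step 1 (reduction to the walk).} The extensions of $\bow^{-1}$, $\wcp$ and $\cpbp$ to infinite-volume objects, which will be constructed in the subsequent sections, are \emph{local}: the image in any bounded neighborhood of the root is determined by a bounded neighborhood of the root in the source. Locality implies continuity with respect to the local topologies on $\widetilde\Walks_\bullet$, $\widetilde\Coals_\bullet$, $\widetilde\Perms_\bullet$ and $\widetilde\Maps_\bullet$, and since continuous mappings preserve convergence in probability of laws, it suffices to prove the analogue of \eqref{eq:quenched_conv_walks} for the walk component alone, the convergence of the three other components following by pushforward.

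\emph{Step 2 (annealed convergence).} A direct check of \eqref{eq:step_distribution_walk} and \eqref{eq:admis_steps} yields the identity
\[
\nu\big((x,y)\big)=2^{x-y-3}\qquad\text{for every }(x,y)\in\Steps .
\]
Hence the $\nu$-mass of a tandem walk from $(0,Y_1)$ to $(X_n,0)$---i.e.\ the product of $\nu$ along its increments---equals $2^{X_n+Y_1-3(n-1)}$ and depends only on the endpoints. Consequently, once the endpoints are fixed, the uniform measure on $\mathcal W_n$ coincides with the law of an i.i.d.\ $\nu$-random walk conditioned to stay in the quadrant and to hit these endpoints, and the full uniform law on $\mathcal W_n$ is the mixture of these conditioned laws over the endpoint distribution. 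A Denisov--Wachtel-type local limit theorem for walks in a cone then shows that under this mixture the endpoints are typically of order $\sqrt n$, and that at a uniform time $\bm i_n$ the walker lies at a macroscopic distance from both coordinate axes; the increments in a bounded window around $\bm i_n$ are therefore asymptotically i.i.d.\ with law $\nu$, yielding the annealed convergence $(\bm W_n,\bm i_n)\to\overline{\bm W}$ in distribution.

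\emph{Step 3 (from annealed to quenched, and main obstacle).} A standard second-moment criterion reduces the quenched convergence \eqref{eq:quenched_conv_walks} for the walk component to the joint annealed convergence at two independent uniform roots $\bm i_n,\bm i'_n$ toward two \emph{independent} copies of $\overline{\bm W}$. Since $|\bm i_n-\bm i'_n|$ is of order $n$ with high probability, the two windows around the roots involve disjoint portions of the walk separated by a macroscopic gap; applying the local limit theorem of Step~2 jointly to the two windows and to the bridge between them yields the desired factorization. The main technical obstacle is precisely this cone local limit theorem, as the step distribution $\nu$ is unbounded with only geometric tails in both directions; I expect to obtain it by adapting the Denisov--Wachtel framework and exploiting the exponential form $\nu\sim 2^{x-y}$ to tame the contribution of atypically large steps.
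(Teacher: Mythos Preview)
Your overall strategy---reduce to the walk component, prove quenched convergence there via the second-moment method, and push forward to the other three objects---matches the paper's approach, and Steps~2--3 are morally in line with what the paper does (it uses an explicit absolute-continuity lemma, \cref{lem:AbsCont,lem:LLT}, rather than invoking Denisov--Wachtel directly, but this is a difference of packaging rather than of idea).

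There is, however, a genuine gap in Step~1. Your claim that $\bow^{-1}$ is \emph{local}, and hence continuous for the local topologies, is not correct as stated. For $\wcp$ and $\cpbp$ the locality is exact (the paper shows they are $1$-Lipschitz, \cref{prop:continuty_of_cpbp}), but the map $\Inverse=\bow^{-1}$ is \emph{not} local in your sense: a ball of radius $h$ in the map around the root edge need not be determined by any bounded window of the walk, because a vertex could a priori have unbounded degree, forcing you to read arbitrarily many walk increments to see all faces incident to it. The paper handles this by proving first that the limiting infinite map $\overline{\bm m}=\Inverse(\overline{\bm W})$ is almost surely locally finite (indeed an infinite map of the plane, \cref{prop:inf_map_property}), and then that $\Inverse$ is almost surely \emph{continuous at the point} $\overline{\bm W}$ (\cref{prop:continuty_of_Inverse}). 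Almost-sure continuity at the limit is enough to transport convergence in distribution of random measures, via a mapping theorem for random measures. The proof of local finiteness of $\overline{\bm m}$ is itself nontrivial: it uses that each trajectory of the infinite coalescent-walk process $\overline{\bm Z}$ is a genuine random walk (\cref{prop:trajectories_are_rw}), hence recurrent, to bound vertex degrees via \cref{coroll:vertex degree}. You should flag this as a separate ingredient rather than sweep it under ``locality implies continuity''.
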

We note that the mapping $\bow^{-1}$ naturally endows the map $\bm m_n$ with an edge labeling and the root $\bm i_n$ of $\bm m_n$ is chosen according to this labeling. An immediate corollary, which follows by averaging, is the simpler \textit{annealed} statement.
\begin{corollary}[Annealed Benjamini--Schramm convergence]
	We have the following convergence in distribution in the space $\widetilde \Walks_\bullet\times\widetilde \Coals_\bullet\times\widetilde \Perms_\bullet\times\widetilde \Maps_\bullet$,	\begin{equation}\label{eq:annealed_conv_walks}
	((\bm W_n, \bm i_n), (\bm Z_n, \bm i_n), (\bm \sigma_n, \bm i_n), (\bm m_n, \bm i_n)) \xrightarrow[n\to\infty]{d} (\overline{\bm W},\overline{\bm Z},\overline{\bm \sigma},\overline{\bm m}).
\end{equation}
\end{corollary}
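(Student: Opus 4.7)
The plan is to deduce the annealed statement from the quenched one by a routine averaging argument, which is why the authors call it immediate. The key point is that the limiting law $\mathcal L(\overline{\bm W},\overline{\bm Z},\overline{\bm \sigma},\overline{\bm m})$ on the right-hand side of \cref{eq:quenched_conv_walks} is \emph{deterministic}, so convergence in probability in the Polish space of probability measures on $\widetilde \Walks_\bullet\times\widetilde \Coals_\bullet\times\widetilde \Perms_\bullet\times\widetilde \Maps_\bullet$ can be upgraded to convergence of the averaged laws.

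Concretely, first I would fix an arbitrary bounded continuous test function $f:\widetilde \Walks_\bullet\times\widetilde \Coals_\bullet\times\widetilde \Perms_\bullet\times\widetilde \Maps_\bullet \to \R$, and write, using the tower property,
\[
\E\!\left[f\big((\bm W_n,\bm i_n),(\bm Z_n,\bm i_n),(\bm \sigma_n,\bm i_n),(\bm m_n,\bm i_n)\big)\right]
= \E\!\left[\Phi_f\!\left(\mathcal L\big(\cdots \big|\mathfrak B_n\big)\right)\right],
\]
where $\Phi_f(\mu)\coloneqq \int f\,d\mu$ is a bounded continuous functional on the space of probability measures equipped with the topology of weak convergence. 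By \cref{thm:local}, the conditional law converges in probability to the deterministic law $\mathcal L(\overline{\bm W},\overline{\bm Z},\overline{\bm \sigma},\overline{\bm m})$, hence $\Phi_f$ applied to it converges in probability to the constant $\E[f(\overline{\bm W},\overline{\bm Z},\overline{\bm \sigma},\overline{\bm m})]$.

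Next, I would invoke the bounded convergence theorem: $\Phi_f$ is uniformly bounded by $\|f\|_\infty$, so the convergence in probability lifts to convergence of expectations, giving
\[
\E\!\left[f\big((\bm W_n,\bm i_n),(\bm Z_n,\bm i_n),(\bm \sigma_n,\bm i_n),(\bm m_n,\bm i_n)\big)\right] \xrightarrow[n\to\infty]{} \E\!\left[f(\overline{\bm W},\overline{\bm Z},\overline{\bm \sigma},\overline{\bm m})\right].
\]
Since $f$ was arbitrary among bounded continuous functions, the Portmanteau theorem yields \cref{eq:annealed_conv_walks}. There is essentially no obstacle here: the only thing to double-check is that the product of the four local topologies is Polish (which it is, as each factor is Polish by the discussion in \cref{sect:inf_loc_obj,sect:local topologies}), so that the space of probability measures on it is itself Polish and the notion of ``convergence in probability'' used in \cref{thm:local} is well-posed.
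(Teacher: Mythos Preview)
Your proposal is correct and is precisely the averaging argument the paper has in mind; the paper does not spell out a proof beyond the phrase ``follows by averaging,'' and your write-up is the standard way to make that phrase rigorous.
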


Intuitively, in the quenched case, the random discrete objects are frozen, whereas in the annealed case, the random discrete objects and the choice of the roots are treated on the same level.

Using the theory developed in \cite{borga2018local}, quenched convergence of permutations is equivalent to a statement on \textit{consecutive patterns densities}. For $\pi \in \Perms_k$ and $\sigma \in \Perms_n$, denote $\widetilde\coc(\pi,\sigma)$ the proportion of the $n-k+1$ sets of $k$ consecutive indices of $[n]$ that induce the pattern $\pi$ in $\sigma$. Denote $\widetilde{\coc}(\pi,\overline{\bm \sigma})$ the probability that the restriction of the total order $\overline{\bm \sigma}$ to an interval of size $|\pi|$ induces the pattern $\pi$ (the choice of the interval is not relevant since $\overline{\bm \sigma}$ is \emph{shift-invariant}, for more details see \cite[Section 2.6]{borga2018local}). By \cite[Corollary 2.38]{borga2018local}, quenched convergence of Baxter permutations implies the following.
\begin{corollary}
	We have the following convergence in probability w.r.t.\ the product topology on $[0,1]^\Perms$
	\begin{equation}\label{eq:cocc_conv}
	\left(\widetilde{\coc}(\pi,\bm{\sigma}_n)\right)_{\pi\in\Perms}\stackrel{P}{\to}\left(\widetilde{\coc}(\pi,\overline{\bm \sigma})\right)_{\pi\in\Perms}.
	\end{equation}
\end{corollary}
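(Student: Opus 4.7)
The plan is to deduce the statement directly from Theorem 1.5 via the dictionary between quenched Benjamini--Schramm convergence of permutations and convergence of consecutive pattern densities established in \cite{borga2018local}, specifically the equivalence of \cite[Corollary 2.38]{borga2018local}. So the work consists of extracting the relevant marginal convergence and checking that the hypotheses of the cited corollary are met.

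First, I would project the joint quenched convergence \eqref{eq:quenched_conv_walks} of \cref{thm:local} onto the third coordinate. Since the projection $\widetilde \Walks_\bullet\times\widetilde \Coals_\bullet\times\widetilde \Perms_\bullet\times\widetilde \Maps_\bullet \to \widetilde \Perms_\bullet$ is continuous, this gives the marginal quenched convergence
\[
\mathcal{L}\!\big((\bm \sigma_n, \bm i_n)\,\big|\, \mathfrak{B}_n\big)\xrightarrow[n\to\infty]{P}\mathcal{L}(\overline{\bm \sigma})
\]
in the space of probability measures on $\widetilde \Perms_\bullet$ equipped with the local topology of \cite{borga2018local}. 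This is precisely the hypothesis of the quenched-to-consecutive-pattern equivalence.

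Second, I would apply \cite[Corollary 2.38]{borga2018local}. The mechanism underlying this corollary is that, for any fixed $\pi\in\Perms$, the event ``the pattern induced in $\bm\sigma_n$ by the $|\pi|$ consecutive indices centered at the uniform root $\bm i_n$ equals $\pi$'' is a cylinder event in $\widetilde\Perms_\bullet$ and its indicator is continuous for the local topology; moreover the conditional probability of this event given $\mathfrak{B}_n$ equals $\widetilde\coc(\pi,\bm\sigma_n)$ up to a boundary correction of order $|\pi|/n$, while the same probability under $\mathcal{L}(\overline{\bm\sigma})$ equals $\widetilde\coc(\pi,\overline{\bm\sigma})$ by shift-invariance. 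Hence quenched local convergence implies $\widetilde{\coc}(\pi,\bm{\sigma}_n)\xrightarrow{P}\widetilde{\coc}(\pi,\overline{\bm \sigma})$ for every fixed $\pi$, and since the product topology on $[0,1]^{\Perms}$ is that of pointwise convergence on the countable set $\Perms$, joint convergence in probability follows.

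There is essentially no obstacle beyond correctly invoking the black box from \cite{borga2018local}; the only point requiring mild care is matching conventions, namely that the space $\widetilde \Perms_\bullet$ of infinite permutations used in \cref{thm:local} coincides with the one on which \cite[Corollary 2.38]{borga2018local} is formulated, which is indicated in \cref{sect:local topologies}.
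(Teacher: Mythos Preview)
Your proposal is correct and matches the paper's own argument exactly: the paper simply states that the corollary follows from the quenched convergence of Baxter permutations in \cref{thm:local} via \cite[Corollary 2.38]{borga2018local}, which is precisely what you do after projecting \eqref{eq:quenched_conv_walks} onto the $\widetilde\Perms_\bullet$ coordinate.
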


We collect a few comments on these results.
	\begin{enumerate}
	\item \cref{eq:quenched_conv_walks,eq:cocc_conv} witness a concentration phenomenon. Indeed, in both instances, the left-hand side is random, and the right-hand side is deterministic.
		\item The fact that the four convergences are joint follows from the fact that the extensions of the mappings in \cref{eq:comm_diagram} to infinite-volume objects are a.s.\ continuous.
		\item The annealed Benjamini-Schramm convergence for  bipolar orientations to the so-called \textit{Uniform Infinite Bipolar Map} was already proven in \cite[Prop. 3.10]{gwynne2017mating} (see  \cref{sect:planarmaps} for the relations between our work and existing works in the theory of planar maps); the quenched version is however a new result.  
	\end{enumerate}
\bigskip

\subsection{Scaling limit results}
\label{sect:intro_scaling_limit_results}

We now turn to our main result.
For $n\geq 1$, let $\bm \sigma_n$ be a uniform Baxter permutation of size $n$ and $\bm m_n=\bobp^{-1}(\bm \sigma_n)$ the corresponding uniform bipolar orientation with $n$ edges. Let $\bm W_n = \bow(\bm m_n)$ and $\bm W_n^* = \bow(\bm m_n^*)$ be the two tandem walks associated with $\bm m_n$ and its dual $\bm m_n^*$.	
Let ${\conti W}_n$ and ${\conti W}_n^*$ be the two continuous functions from $[0,1]$ to $\R_{\geq 0}^2$ that linearly interpolate between the points ${\conti W}_n^\theta\left(\frac kn\right) = \frac 1 {\sqrt {2n}} {\bm W}_{n}^\theta(k)$ for $1\leq k \leq n$ and $\theta\in\{\emptyset,*\}$.

Let $\conti W = (\conti X(t),\conti Y(t))_{t\geq 0}$ be a \textit{standard two-dimensional Brownian motion of correlation -1/2}, that is a continuous two-dimensional Gaussian process such that the components $\conti X$ and $\conti Y $ are standard one-dimensional Brownian motions, and $\mathrm{Cov}(\conti X(t),\conti Y(s)) = -1/2 \cdot (t\wedge s)$. Let $\conti W_e$ be a \textit{two-dimensional Brownian excursion of correlation -1/2 in the non-negative quadrant}, that is the process $(\conti W(t))_{0\leq t\leq 1}$ conditioned on $\conti W(1) = (0,0)$ and on staying in the non-negative quadrant $\R_{\geq 0}^2$. A rigorous definition is given in \cref{sec:appendix}.

Consider the time-reversal and coordinate-swapping mapping
$s:\mathcal C([0,1],\R^2) \to \mathcal C([0,1],\R^2)$ defined by $s(f,g) = (g(1-\cdot), f(1-\cdot))$. Consider also the mapping $R: \mathcal M \to \mathcal M$ that rotates a permuton by an angle $-\pi/2$. 
%that is $R(\mu)(A) = \mu\left(\begin{psmallmatrix}
%0&-1\\ 1 & 0
%\end{psmallmatrix}
%\cdot A\right)$ for every Borel set $A\subseteq [0,1]^2$.

\begin{theorem}	\label{thm:joint_intro}
	There exist two measurable mappings $r:\mathcal C([0,1], \R_{\geq 0}^2) \to \mathcal C([0,1],\R_{\geq 0}^2)$ and $\phi:\mathcal C([0,1], \R_{\geq 0}^2) \to \mathcal M$ (both explicitly described in \cref{sec:final}) such that we have the convergence in distribution
	\begin{equation}
	\label{eq:scal_lim_comp}
	({\conti W}_n,{\conti W}_n^*, \mu_{\bm \sigma_n}) \to (\conti W_e, \conti W_e^*, \bm \mu_B),
	\end{equation}	
	where $\conti W_e^* = r(\conti W_e)$, and $\bm \mu_B  = \phi(\conti W_e)$.
	In particular, we have $r(\conti W_e) \stackrel d= \conti W_e$. Moreover, we have the following equalities that hold at $\Prob_{\conti W_e}$-almost every point of $\mathcal C([0,1], \R_{\geq 0}^2)$,
	\begin{gather*}
	r^2 = s, \quad
	r^4 = \Id,\quad
	\phi \circ r = R\circ \phi.
	\end{gather*} 
\end{theorem}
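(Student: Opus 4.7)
The plan is to establish the joint convergence in \eqref{eq:scal_lim_comp} by lifting the commutative diagram \eqref{eq:comm_diagram} to the continuous setting, using the scaling limit of the coalescent-walk process $\bm Z_n$ as the central analytic object, and then to read off the algebraic identities from their discrete counterparts.

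First, I would prove the invariance principle $\conti W_n \stackrel{d}{\to} \conti W_e$ in $\mathcal C([0,1], \R_{\geq 0}^2)$. The step distribution $\nu$ in \eqref{eq:step_distribution_walk} has zero mean and a covariance matrix proportional to $\left(\begin{smallmatrix} 1 & -1/2 \\ -1/2 & 1 \end{smallmatrix}\right)$ (a direct computation on the two sums in $\nu$), which yields a limit two-dimensional Brownian motion of correlation $-1/2$. The conditioning that $\bm W_n$ starts on the $y$-axis, ends on the $x$-axis, and stays in $\Z_{\geq 0}^2$ upgrades this to an excursion in the non-negative quadrant via a Denisov--Wachtel-type invariance principle for random walks in cones.

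Second, I would construct the continuous coalescent flow. As announced in the abstract, the suitably rescaled discrete coalescent-walk process $\bm Z_n = \wcp(\bm W_n)$ should converge to the coalescing flow (in the sense of Le Jan--Raimond) of solutions of a perturbed Tanaka SDE driven by $\conti W_e$; denote the resulting continuous object $\overline{\conti Z}_e$. This is the main analytic step and, I expect, the principal obstacle: it requires existence, uniqueness and stability for this (non-classical) SDE together with a convergence statement that survives the ``bad'' points of the Brownian path where the discrete selection rule for the drift is ambiguous. Once $\overline{\conti Z}_e$ is constructed, I would define $\phi(\conti W_e)$ as the continuous analog of $\cpbp$: informally, the pushforward of Lebesgue measure on $[0,1]$ by $t \mapsto (t, F(t))$, where $F(t)$ encodes the ``rank'' of the trajectory $\overline{\conti Z}_e^{(t)}$ inside the flow. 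Measurability of $\phi$ is then inherited from that of the SDE solution map, and the convergence $\mu_{\bm \sigma_n} \stackrel{d}{\to} \phi(\conti W_e)$ follows from the commutative diagram combined with the scaling limit of $\bm Z_n$ and a.s.\ continuity of $\phi$ at $\conti W_e$.

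Third, I would construct $r$. A key fact mentioned right after \eqref{eq:comm_diagram} is that the branching structure of $\bm Z_n$ coincides with the down-right tree $T(\bm m_n^*)$; hence the dual walk $\bm W_n^* = \bow(\bm m_n^*)$ is a deterministic function of $\bm Z_n$, reading off the heights in $T(\bm m_n^*)$ and in $T(\bm m_n^{***})$ from the coalescence times of $\bm Z_n$ and of its time-reversed analog. Pushing this construction through the continuous flow $\overline{\conti Z}_e$ produces a pair of continuous excursions which I call $r(\conti W_e) = \conti W_e^*$. The joint convergence of the triple in \eqref{eq:scal_lim_comp} then follows from joint convergence of $(\bm W_n, \bm Z_n)$ together with a.s.\ continuity of $\phi$ and $r$. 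In particular $r(\conti W_e) \stackrel{d}{=} \conti W_e$ since $\bm W_n^*$ is itself a uniform element of $\mathcal W_n$ (the dual map $\bm m_n^*$ is a uniform bipolar orientation with $n$ edges because $m\mapsto m^*$ is a bijection on $\mathcal O_n$).

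Fourth, the three algebraic identities. Each is inherited from a discrete identity for the map $r_n : \bow(m) \mapsto \bow(m^*)$ on $\mathcal W_n$, then extended by almost sure continuity of the limits. Since $m^{****} = m$, the discrete map satisfies $r_n^4 = \Id$, giving $r^4 = \Id$ a.s. Since $m^{**}$ is $m$ with reversed orientation, which at the $\bow$-level corresponds precisely to time-reversal and coordinate swap of the tandem walk (the discrete analog of $s$), we obtain $r_n^2 = s_n$ and hence $r^2 = s$ a.s. Finally, dualization of a bipolar orientation acts on the associated Baxter permutation by the diagram rotation $\pi \mapsto (\pi^{-1})^{\mathrm{rev}}$, a property that can be read off from \cref{defn:bobp} by exchanging the roles of $T(m)$ and $T(m^*)$; at the permuton level this reads $\mu_{\bobp(m^*)} = R(\mu_{\bobp(m)})$, i.e.\ $\phi\circ r_n = R\circ\phi$ at the discrete level, and passing to the limit yields $\phi\circ r = R\circ\phi$ at $\Prob_{\conti W_e}$-almost every point.
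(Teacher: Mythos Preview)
Your overall architecture matches the paper's: prove $\conti W_n\to\conti W_e$ via Denisov--Wachtel, build the continuous coalescent process from the perturbed Tanaka SDE, define the permuton from it, and read off the identities $r^2=s$, $r^4=\Id$, $\phi\circ r=R\circ\phi$ from their discrete counterparts (\cref{prop:rev_coal_prop}, \cref{thm:rotation}). However, there is a genuine gap in how you propose to pass to the limit.

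You repeatedly invoke ``a.s.\ continuity of $\phi$ and $r$ at $\conti W_e$'' to transfer convergence from $\conti W_n$ to $(\conti W_n^*,\mu_{\bm\sigma_n})$. The paper does \emph{not} prove (and does not claim) that either map is continuous, and there is no reason to expect it: the solution map of the SDE \eqref{eq:flow_SDE_intro} is only known to be measurable, and the local-time functional that builds $r$ (see below) is certainly not continuous in the uniform topology. The paper circumvents this entirely. For the permuton (\cref{thm:permuton}), it argues by tightness of $\mu_{\bm\sigma_n}$ (compactness of $\mathcal M$), takes a subsequential limit jointly with the convergence of finitely many trajectories $\conti Z_n^{(\bm u_i)}\to\conti Z_e^{(\bm u_i)}$ (\cref{thm:discret_coal_conv_to_continuous}), and identifies the limit via the pattern criterion of \cref{prop:patterns} together with \cref{lem:subpermapproxpermuton}. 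For $r$ (\cref{thm:joint_scaling_limits}), the same tightness-plus-identification scheme is used: one passes the discrete identity $\bm X_n^*(\bm\sigma_n(i))=\bm L_{\bm Z_n}^{(i)}(n)-1$ of \cref{cor:local_time} to the limit, obtaining $\widetilde{\conti X}(\varphi_{\conti Z_e}(\bm u))=\conti L_e^{(\bm u)}(1)$ on a sequence of i.i.d.\ uniform points, which pins down $\conti W_e^*$ as a measurable function of $\conti W_e$. The crucial analytic input here is not continuity but the strong-solution/pathwise-uniqueness property of the SDE, which guarantees that $\conti Z_e^{(u)}$ and $\conti L_e^{(u)}$ are $\sigma(\conti W_e)$-measurable.

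A second, related point: your description of $r$ as reading off ``heights from the coalescence times'' of the flow misses the actual mechanism. In the paper, $\bm X_n^*$ is the \emph{discrete local time at zero} of the trajectories of $\bm Z_n$ (\cref{cor:local_time}), so the continuous $r$ is built from $\conti L_e^{(u)}(1)$. This introduces a genuine technical obstacle absent from your sketch: one must show that the rescaled local times converge all the way to the endpoint $t=1$, not just on compact subsets of $[0,1)$. The paper handles this with a separate tightness lemma (\cref{lem:local_time_does_not_disappear}), whose proof is combinatorial and uses the dual tree structure; without it the construction of $r$ does not go through.
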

We give a few remarks on this result:
\begin{enumerate}
	\item The convergence of the first or second marginal was obtained in \cite{MR3945746} as an immediate application of the results of \cite{duraj2015invariance} on walks in cones.
	\item Our strategy of proof is based on  coalescent-walk processes, which describe the relation between $\conti W_n$, $\conti W_n^*$ and $\bm \sigma_n$ in a way that allows itself to take limits. In the remainder of this section we explain what the scaling limit of coalescent-walk processes is, providing the reader with some insights on how the coupling of the right-hand side of \cref{eq:scal_lim_comp} is constructed. Precise statements, including explicit constructions of the mappings $r$ and $\phi$, are given in \cref{sec:final} (see in particular \cref{thm:permuton,thm:joint_scaling_limits}).
	\item The limiting permuton $\bm \mu_B$, called the \emph{Baxter permuton}, is a new fractal random measure on the unit square (see \cref{defn:Baxter_perm} for a precise definition and an explicit construction of the mapping $\phi$). 
	\item The map $r:\mathcal C([0,1], \R_{\geq 0}^2) \to \mathcal C([0,1],\R_{\geq 0}^2)$ is almost surely determined by the relations in \cref{eq:definition_map_r_1} below.
	\item Recall that each coordinate of $\bm W_n$ or $\bm W_n^*$ records the height function of a tree which can be drawn on $\bm m_n$ or its dual. So this statement can be interpreted as joint convergence of four trees to a coupling of four Brownian CRTs. We discuss the relation with Conjecture 4.4 of \cite{MR3945746}, the main result of \cite{GHS}, and other related works, in \cref{sect:planarmaps}.
\end{enumerate}

\medskip

The proof of \cref{thm:joint_intro} is based on a result on scaling limits of the coalescent-walk processes $\bm Z_n=\wcp (\bm W_n)$, which appears to be of independent interest.
We give here some brief explanations and
we refer the reader to \cref{sec:coalescent} for more precise results.

The definition of the coalescent-walk process $Z=\wcp(W)$ associated with a two-dimensional walk $W$ is a sort of ``discretized" version of the following family of stochastic differential equations driven by \textit{the same} two-dimensional process $\conti W = (\conti X,\conti Y)$ and defined for $u\in\mathbb R$ by
\begin{align}
\begin{cases}d\conti Z^{(u)}(t) = \idf_{\{\conti Z^{(u)}(t)> 0\}} d\conti Y(t) - \idf_{\{\conti Z^{(u)}(t)\leq 0\}} d\conti X(t), &t\geq u,\\
\conti Z^{(u)}(t)=0, &t\leq u.
\end{cases} \label{eq:flow_SDE_intro}
\end{align}
We emphasize that we have a different equation for every $u\in\R$, but all depend on \textit{the same} process $\conti W = (\conti X,\conti Y)$.
For a fixed $u\in\R$, this equation, that goes under the name of \textit{perturbed Tanaka's SDE}, has already been studied in the literature \cite{MR3098074,MR3882190} in the case where $\conti W$ is a two-dimensional Brownian motion of correlation $\rho$ with $\rho\in(-1,1)$, and more generally when the correlation coefficient varies with time. In particular, pathwise uniqueness and existence of a strong solution are known. Since the scaling limit of $\bm W_n$ (that is conditioned to start on the $x$-axis and end on the $y$-axis) is a two-dimensional Brownian \emph{excursion} $\conti W_e$ of correlation $-1/2$, one can expect that the scaling limit for the coalescent-walk process $\bm Z_n = \wcp(\bm W_n)$ is a sort of flow of solutions $\{\conti Z_e^{(u)}(t)\}_{u\in[0,1]}$ of the SDEs in \cref{eq:flow_SDE_intro} driven by $\conti W_e$ (instead of $\conti W$). This intuition is made precise in \cref{thm:discret_coal_conv_to_continuous} and it is the key-step for proving \cref{thm:joint_intro}.

\medskip

The study of flows of solutions driven by the \emph{same noise} is the subject of the theory of coalescing flows of Le Jan and Raimond, specifically that of \textit{flows of mappings}. See \cite{MR2060298} and the references therein. We point out that we do not need to make use of this theory. Indeed, in our proof of \cref{thm:joint_intro} we consider solutions of \cref{eq:flow_SDE_intro} for only a countable number of distinct $u$ at a time for a specific equation which admits strong solutions.
In particular, \cref{thm:discret_coal_conv_to_continuous} gives convergence of a countable number of trajectories in the product topology. Stronger convergence results, such as the ones obtained for the \textit{Brownian web} (see \cite{MR3644280} for a comprehensive survey) would be desirable.

We now turn on discussing the implications of \cref{thm:joint_intro} on the scaling limit of bipolar orientations and their trees.

\subsection{Scaling limits of bipolar orientations and relations with other works}\label{sect:planarmaps}

Scaling limits of random planar maps have been thoroughly studied with motivations from string theory and conformal field theory. Convergence results for many models of random planar maps were obtained, both as random metric spaces (with the celebrated theorems of Le Gall and Miermont \cite{le2013uniqueness,miermont2013brownian} showing convergence to the \textit{Brownian map}) and, more recently, as random Riemannian surfaces (with the remarkable achievements of Holden and Sun~\cite{HoldenSun} showing convergence to the $\sqrt{8/3}$-Liouville quantum gravity).

The number of bipolar orientations of a given planar map is computed by coefficient extraction in the Tutte polynomial. This makes bipolar orientations one of the various combinatorially tractable models of \textit{planar maps with additional structure}, like spanning-tree decorated maps, loop-decorated maps, and so on. Uniform objects in such combinatorial classes are not uniform planar maps anymore.  Their scaling limit is expected to differ, and be connected to $\gamma$\textit{-Liouville quantum gravity} ($\gamma$-LQG for short) for some $\gamma\neq \sqrt{8/3}$.
While convergence results as random surfaces remain open,  weaker topologies such as \textit{Peanosphere convergence} have been investigated with success. We refer to \cite{gwynne2019mating} for a comprehensive survey.

The specific case of bipolar orientations was first studied by Kenyon, Miller, Sheffield and Wilson~\cite{MR3945746}, using their bijection $\bow$ interpreted as a \textit{mating-of-trees} encoding of bipolar orientations.
Using the remarkable works of Denisov and Wachtel~\cite{MR3342657} and Duraj and Wachtel~\cite{duraj2015invariance} on scaling limits of random walks conditioned to stay in a cone, they showed that the random tandem walk associated with a random bipolar orientation  converges to a two-dimensional Brownian excursion of correlation $-1/2$ in the non-negative quadrant (this corresponds to the convergence of the first marginal in \cref{eq:scal_lim_comp}). This is called
\textit{Peanosphere convergence} of the maps decorated by their interface path to a $\sqrt{4/3}$-LQG sphere together with a SLE${}_{12}$ curve.

A stronger result was given by Gwynne, Holden and Sun~\cite{GHS}, in the specific case of the \textit{uniform infinite-volume bipolar triangulation (UIBT)}. They show a joint scaling limit result for the coding two-dimensional walks of this map and of its dual. The limit is a coupling of two plane Brownian motions defined by LQG and imaginary geometry theory.

The convergence of the first two marginals in \cref{thm:joint_intro} gives a parallel result for \emph{finite-volume bipolar orientations} instead of \emph{infinite-volume bipolar triangulations}.  This result gives a new answer to Conjecture 4.4 of~\cite{MR3945746} as follows.

\medskip

Conjecture 4.4 of \cite{MR3945746} can be split in two parts:
	\begin{itemize}
		\item Using the same notation as in \cref{thm:joint_intro}, Conjecture 4.4 states that $({\conti W}_n,{\conti W}_n^*)$ jointly converge in distribution.
		
		\item In addition, Conjecture 4.4 states that the distribution of the joint limit $(\conti W_e, \conti W_e^*)$ can be extracted from a $\sqrt{4/3}$-LQG surface decorated with an independent SLE$_{12}$ curve and the "dual" SLE$_{12}$ curve.	 
	\end{itemize}
	We now explain how our results in \cref{thm:joint_intro} (combined with some known results in the literature) answers also the second part of this conjecture.
	
	\medskip 
	
	We start by explaining how $\conti W_e$ encodes a SLE$_{12}$ curve decorating a $\sqrt{4/3}$-LQG surface. To do that, we need to use various classical objects related to the LQG theory. We do not define here  all these objects since they are only needed for this section, but we provide precise references for each of the mentioned objects\footnote{Most of the references in this section point to the self-contained survey paper \cite{gwynne2019mating}, where the authors chose to focus (in the finite-volume case) on \emph{quantum disks with boundary length} $\ell$. In the present paper we need to consider \emph{quantum spheres} instead of \emph{quantum disks}. We remark that a quantum sphere can be viewed as a quantum disk with boundary length 0. If the reader is still not satisfied, we point out that all the definitions used in this section can be also found in \cite{MR4010949}.}, hoping this helps the reader.
	
	Let $\mathcal C=\R \times (0, 2\pi)$. In order to be precise, we need to consider a $\sqrt{4/3}$-Liouville quantum sphere $({\mathcal C}, \bm h^{\mathcal C}, -\infty)$ with quantum area one (see \cite[Definition 3.20]{gwynne2019mating}) and one marked point at $-\infty$. Let also $\bm \eta^{\mathcal C}$ be a space-filling SLE$_{12}$ loop on
	${\mathcal C}$ from $-\infty$ to $-\infty$ (see \cite[Definition 3.27]{gwynne2019mating}), sampled independently of $\bm h^{\mathcal C}$ and then parametrized by the $\mu_{\bm h^{\mathcal C}}$-LQG area measure (see \cite[Section 3.3]{gwynne2019mating}), 
	so that $\mu_{\bm h^{\mathcal C}}(\bm \eta^{\mathcal C}([0, t])) = t$ for each $t\in[0, 1]$. 
	
	For $t\in[0,1]$, let $\conti L_t$ be
	the $\nu_{\bm h^{\mathcal C}}$-LQG length measure (see \cite[Section 3.3]{gwynne2019mating}) of the left outer boundary of $\bm \eta^{\mathcal C}([0, t])$ and $\conti R_t$ be
	the $\nu_{\bm h^{\mathcal C}}$-LQG length measure
	of the right outer boundary of $\bm \eta^{\mathcal C}([0, t])$.
	From \cite[Theorem 1.1]{MR4010949} (see also \cite[Theorem 4.10]{gwynne2019mating}) the process $\conti Z_t=(\conti L_t,\conti R_t)$ defined above has the law of a two-dimensional Brownian excursion of correlation $-1/2$ in the non-negative quadrant.
	In addition, the process $\conti Z_t$ a.s.\ determines $({\mathcal C}, \bm h^{\mathcal C},\bm \eta^{\mathcal C}, -\infty)$ as a curve-decorated quantum surface.  
	
	This explains how the process $\conti W_e$ in \cref{thm:joint_intro} encodes a SLE$_{12}$ curve decorating a $\sqrt{4/3}$-Liouville quantum sphere.
	
	\medskip
	
	We remark that \cref{thm:joint_intro} (and its more complete formulation in \cref{thm:joint_scaling_limits}) explicitly determines the joint distribution of the pair $(\conti W_e, \conti W_e^*)$. Indeed, we prove in \cref{thm:joint_scaling_limits} that $ \conti W_e^*=r( \conti W_e)$ and the map $r:\mathcal C([0,1], \R_{\geq 0}^2) \to \mathcal C([0,1],\R_{\geq 0}^2)$ is almost surely determined by the relations in \cref{eq:definition_map_r_1}.
	
	\medskip
	
	We finally comment on the "dual" SLE$_{12}$ curve mentioned in Conjecture 4.4. To do that we need to use some results from \cite{GHS}. Unfortunately, as already mentioned before, our results are stated for uniform finite-volume bipolar orientations, while the results in \cite{GHS} are stated for uniform infinite-volume triangulations. It should not come as a surprise the fact that both results can be adapted to the other case. In what follows we explain one possible strategy to build a connection between our results and the ones in \cite{GHS}.
	
	Using the methods developed in the present paper, it is straightforward to reproduce the scaling limit result obtained in \cite[Theorem 1.6]{GHS}, that is the scaling limit in the Peanosphere topology for uniform infinite-volume triangulations\footnote{It simply ammounts in considering a family of tandem walks with different admissible steps, but with exactly the same correlation $-1/2$, see \cite[Remark 2]{MR3945746}}. To fix the notation, we state this result.
	
	\begin{theorem}
		Let $({\widetilde{\conti W}}_n,{\widetilde{\conti W}}_n^*)$ be the pair of tandem walks obtained from a uniform infinite-volume triangulation of size $n$.
		There exists an (explicit) measurable mapping $r:\mathcal C([0,1], \R_{\geq 0}^2) \to \mathcal C([0,1],\R_{\geq 0}^2)$ such that we have the convergence in distribution
		\begin{equation}
			({\widetilde{\conti W}}_n,{\widetilde{\conti W}}_n^*) \to (\conti W, \conti W^*),
		\end{equation}	
		where $\conti W$ is a two-dimensional Browian motion of correlation $-1/2$ and $\conti W^* = r(\conti W)$. 
	\end{theorem}

	We recall that the pair of Brownian motions $(\conti W, \conti W^*)$ describes a $\sqrt{4/3}$-LQG surface decorated with an independent SLE$_{12}$ curve and the "dual" SLE$_{12}$ curve as explained in \cite[Section 1.5]{GHS}. It is straightforward to transfer this description in the finite-volume case where the pair $(\conti W, \conti W^*)$ is replaced by a pair of two-dimensional Browian excursions of correlation $-1/2$ (for instance this procedure is implemented in \cite[Section 5.4]{LSW} in a slightly different case). 
	Therefore the distribution of our joint limit $(\conti W_e, \conti W_e^*)$ in \cref{thm:joint_intro} -- that is the same both for uniform finite-volume triangulations and bipolar orientations -- describes a $\sqrt{4/3}$-Liouville quantum sphere decorated with an independent SLE$_{12}$ curve and the "dual" SLE$_{12}$ curve.

	Our contribution is the new description of the map $r:\mathcal C([0,1], \R_{\geq 0}^2) \to \mathcal C([0,1],\R_{\geq 0}^2)$ above: it gives a new perspective on the description of the coupling between $\conti W_e$ and $\conti W_e^*$. Indeed, in our paper this coupling is explicitly described in terms of the SDEs in \cref{eq:flow_SDE_intro} and the Baxter permuton (see again \cref{eq:definition_map_r_1}), while in \cite{GHS} this coupling is only implicitly described building on some constructions derived from imaginary geometry (see for instance the comment before \cite[Proposition 3.2]{GHS}). We plan to investigate this new description in a much more general setting in some future projects, see \cref{sec:lqg_sde}.

\subsection{Generality of our techniques and open problems}
\label{sec:perspectives}

We discuss here some problems that we would like to address in future projects and some possible further applications of the techniques that we developed in the current paper.

\paragraph{Properties of the Baxter permuton} The Baxter
permuton $\bm \mu_B$ is a new fractal random measure on the unit square. We think it might be worth it to investigate its properties. The first two natural questions that we would like to answer are the following:
\begin{itemize}
\item What is the density of the intensity measure $\E[\bm \mu_B]$?
\item What is the Hausdorff dimension of the support of $\bm \mu_B$?
\end{itemize}
We point out that similar questions were solved for the Brownian separable permuton (i.e.\ the permuton limit for separable permutations) in a recent work of the second author~\cite{maazoun}. For instance, it was shown that almost surely, the support of the Brownian separable permuton
is totally disconnected, and its Hausdorff
dimension is 1 (with one-dimensional Hausdorff measure bounded above by $\sqrt 2$). For an expression for the intensity measure see \cite[Theorem 1.7]{maazoun}.

\paragraph{Strong convergence for coalescent-walk processes} As already mentioned, it would be desirable to improve the convergence of discrete coalescent-walk processes to continuous coalescent-walk processes in a stronger topology. As in the case of the Brownian web, this would allow to study coalescence points, non-uniqueness points, and the interaction between the coalescent-walk process and its backwards version, features that are not captured in our results.
\paragraph{Generality of our techniques} We strongly believe that our techniques used for proving scaling limit results for uniform bipolar orientations would still apply (with minor modifications) to the weighted models of bipolar orientations considered in \cite[Theorem 2.6]{MR3945746}, including in particular uniform bipolar $k$-angulations for every $k\geq 3$. We point out that this weighting is not very natural in terms of the corresponding Baxter permutations.
 
\paragraph{Universality of the Baxter permuton and possible generalizations} We believe that the robustness of our techniques goes further, and hope to apply them to many other families of permutations, showing that the Baxter permuton $\bm \mu_B$ is a \emph{universal limiting object}.
\begin{itemize}
	\item We will observe in \cref{sect:sep_perm} that the case of separable permutations can be treated with coalescent-walk processes too. In particular, we will explain that their limiting permuton, i.e.\ the Brownian separable permuton mentioned in \cref{sect:perm_sect}, is related to the Tanaka's SDE, which is \cref{eq:flow_SDE_intro} when the driving process $\conti W$ is a two-dimensional Brownian excursion of correlation $\rho = 1$.
	\item Recall that in the case of Baxter permutations, we had $\rho = -1/2$. It would be interesting to find families of permutations that correspond to yet other values of $\rho$.
	Baxter permutations avoiding the pattern $2413$, which form a subset of Baxter permutations, and a superset of separable permutations, could be a good candidate for a first answer to the question above. In \cite[Proposition 5]{MR2734180}, they are shown to be in bijection, through $\bobp$, with rooted non-separable planar maps.
	
	\item We would like to explore several other families of permutations where a bijection with two-dimensional walks is available. A technique is presented in a recent work of the first author~\cite{BorgaCLT2020} to sample uniform permutations in families enumerated through
	generating trees with $d$-dimensional labels as conditioned
	random colored walks in $\Z^d$.  
	Examples of families of permutations with two-dimensional labels can be found for instance in \cite{MR2028288,MR2376115, MR3882946, MR3961884}.
	\item In \cref{sec:lqg_sde},  we show how the LQG literature \cite{GHS,LSW} suggests a more general version of the SDE \eqref{eq:flow_SDE_intro} with an additional parameter $p$ and a local time term (for more details see the SDE \eqref{eq:generalized}). The SDE \eqref{eq:flow_SDE_intro} corresponds to the special case $p=1/2$. Such a generality might be needed to treat some of the models cited above.

	The study of Schnyder woods by \cite{LSW} correspond to $\rho = -\frac {\sqrt 2}2$ and $p = \frac {\sqrt 2}{1+\sqrt 2}$. However, the corresponding model of permutations is not natural (it is a weighted model of Baxter permutations). It would also be interesting to study the generalizations of Schnyder woods described in \cite{MR2871142}.
\end{itemize}

\subsection{Outline of the paper}
\noindent \textbf{\cref{sec:discrete}.}
After setting some definitions and recalling some properties of the bijection $\bow$, we properly define  coalescent-walk processes and prove \cref{thm:diagram_commutes}. This section contains all discrete arguments used in the rest of the paper.

\medskip 
\noindent \textbf{\cref{sec:local}.} This section is devoted to the proof of \cref{thm:local}. We first define the local topologies and the infinite-volume objects. Then the argument follows readily from local convergence of uniform tandem walks $\bm W_n$ to the random walk $\overline{\bm W}$, and local continuity of the mappings $\bow^{-1}, \wcp,\cpbp$. In particular, the local limit of Baxter permutations is defined from the infinite-volume coalescent-walk process $\overline{\bm Z} = \wcp(\overline{\bm W})$, which enjoys the nice property that its trajectories are random walks (\cref{prop:trajectories_are_rw}). This turns out to be useful also in the following sections.

 \medskip
 \noindent \textbf{\cref{sec:coalescent}.}
To proceed with the proof of \cref{thm:joint_intro}, we need to show that the trajectories of the coalescent-walk process $\bm Z_n = \wcp(\bm W_n)$ converge in distribution, jointly with $\bm W_n$. We prove this for coalescent-walk processes driven by unconditioned random walks (\cref{thm:coal_con_uncond}). The proof relies on the pathwise uniqueness property of the SDE \eqref{eq:flow_SDE_intro}. We then transfer this result to two-dimensional excursions in the non-negative quadrant, culminating in \cref{thm:discret_coal_conv_to_continuous}, which is the basis for the next section.

 \medskip
 \noindent \textbf{\cref{sec:final}.}
 We finally state and prove \cref{thm:permuton,thm:joint_scaling_limits}, which are more precise versions of \cref{thm:joint_intro}.
 
  \medskip
  \noindent \textbf{\cref{sec:appendix}.}
  This section contains absolute continuity results and local limits theorem used in the study of conditioned walks in the non-negative quadrant, extracted from \cite{MR3342657,duraj2015invariance, bousquet2019plane} or stated in a different form when needed.

  \medskip
\noindent \textbf{\cref{sec:general}.}
This section contains comments on possible generalizations.

  \medskip
\noindent \textbf{\cref{sect:simulations}.}
In this final section we explain how the simulations for Baxter permutations presented in the first page of this paper are obtained.

\paragraph*{Acknowledgments} Thanks to Mathilde Bouvel, Valentin Féray and Grégory Miermont for their dedicated supervision and enlightening discussions. Thanks to Nicolas Bonichon, Nina Holden, Emmanuel Jacob, Jason Miller, Kilian Raschel, Xin Sun, Olivier Raimond and Vitali Wachtel for enriching discussions and pointers.
We finally thank the anonymous referees for all their precious and useful comments as well as Emmanuel Kammerer for some remarks.

\section{Bipolar orientations, walks in cones, Baxter permutations and coalescent-walk processes}
\label{sec:discrete}

This section contains the combinatorial material relevant to our arguments. We first settle in \cref{sec:discrete_maps} some definitions and terminology related to planar maps and rooted trees. Then in \cref{sect:KMSW} we describe the reverse $\bow$ bijection and in \cref{sect:Baxt_bipol} we show that the definition of $\bobp$ given in \cref{defn:bobp} is equivalent to the one presented in \cite{MR2734180}. Finally, \cref{sec:discrete_coal} is the combinatorial heart of the paper: we properly introduce coalescent-walk processes and the mappings $\cpbp$ and $\wcp$, proving \cref{thm:diagram_commutes}.

\subsection{Planar maps and rooted trees} 
\label{sec:discrete_maps}
A \emph{planar map} is a finite connected graph embedded in the plane with no edge-crossings, considered up to orientation-preserving homeomorphisms of the plane. A map has vertices, edges, and faces, the latter being the connected components of the plane remaining after deleting the edges. The outer face is unbounded, the inner faces are bounded\footnote{The outer face plays a special role in the maps we consider. In the usual terminology of the literature, they are \textit{planar maps with one boundary}.}.

Alternatively, a planar map is a finite collection of finite polygons (the inner faces), glued along some pairs of edges, so that the resulting surface has the topology of the disc, i.e.\ is simply connected and has one boundary. We call finite map an arbitrary gluing of a finite collection of finite polygons.
A submap  $m'$ of a planar map $m$ is a subset of the inner faces of $m$, where gluing of faces in $m'$ inherits from the gluing in $m$. The submap $m'$ is in general a finite map, and it is a planar map if and only if it is simply connected.

\medskip

For our purposes, we view rooted plane trees with the root at the bottom. A rooted plane tree may be seen as a set of edges equipped with a parent-child relation, where each edge has at most one parent. The children of each edge are ordered as well as the parentless edges, that are the edges on top of the root. Other edges sit on top of their respective parent in the prescribed ordering.

The \emph{down-right tree} $T(m)$ of a bipolar orientation $m$ was defined informally in the introduction. In this context, we have the following more rigorous definition:
\begin{itemize}
	\item The edges of $T(m)$ are the edges of $m$.
	\item Let $e\in m$ and $v$ its bottom vertex. The parent of $e$ in $T(m)$ is the right-most incoming edge of $v$, if it exists.
	\item The ordering of edges on top of $e$ in $T(m)$ is inherited from their ordering on top of their common bottom vertex in $m$.
\end{itemize}

We conclude this section recalling that the \emph{exploration} of a tree $T$ is the visit of its vertices (or its edges) starting from the root and following the contour of the tree in the clockwise order. Moreover, the \emph{height process} of a tree $T$ is the sequence of integers obtained by recording for each visited vertex (following the exploration of $T$) its distance to the root.

\subsection{The Kenyon-Miller-Sheffield-Wilson bijection}
\label{sect:KMSW}

We recall the definition of the mapping $\bow : \mathcal O \to \mathcal W$ given in the introduction since it is fundamental for what follows. Recall that for a bipolar orientation $m$, we denote by $e_1,\ldots,e_{|m|}$ the edges of $m$ in the order determined by the interface path.
\begin{definition}\label{defn:KMSW2}
	Let $n\geq 1$, $m \in \mathcal O_n$. We define $\bow(m)=(X_t,Y_t)_{1\leq t\leq n} \in (\Z_{\geq 0}^2)^n$ as follows: for $1\leq t\leq n$, $X_t$ is the height in the tree $T(m)$ of the bottom vertex of $e_t$
	(i.e.\ its distance in $T(m)$ from the source $s$),
	and $Y_t$ is the height in the tree $T(m^{**})$ of the top vertex of $e_t$
	(i.e.\ its distance in $T(m^{**})$ from the sink $s'$).
\end{definition}

Recall also that $\mathcal{W}$ is the set of tandem walks, i.e.\ two-dimensional walks in the non-negative quadrant, starting at $(0,h)$ and ending at $(k,0)$ for some $h\geq 0, k\geq 0$, with increments in $\Steps = \{(+1,-1)\} \cup \{(-i,j), i\in \Z_{\geq 0}, j\in \Z_{\geq 0}\}$. 

An equivalent way of understanding $\bow$ is as follows.

\begin{remark}\label{rem:height_process}
	Let $m\in \mathcal O$ and $\bow(m) = ((X_t,Y_t))_{1\leq t \leq n}$. The walk $(0,X_1+1,\ldots,X_{|m|}+1)$ is the height process of the tree $T(m)$. The walk $(0,Y_{|m|}+1, Y_{|m|-1}+1,\ldots ,Y_1+1)$ is the height process of the tree $T(m^{**})$.
\end{remark}

We now explain some properties of the mapping $\bow$. Let $m\in \mathcal O$ and $\bow(m) = ((X_t,Y_t))_{1\leq t \leq n}$. Suppose that the left outer face of $m$ has $h+1$ edges and the right outer face of $m$ has $k+1$ edges, for some $h,k\geq 0$. Then the walk $(X_t,Y_t)_{t\in[|m|]}$ starts at $(0,h)$, ends at $(k,0)$, and stays by definition in the non-negative quadrant $\mathbb{Z}_{\geq 0}^2$.

We give an interpretation to the increments of the walk, i.e.\ the values of $(X_{t+1},Y_{t+1})-(X_t,Y_t)$. 
We say that two edges of a tree are consecutive if one is the parent of the other.
The interface path of the map $m$, defined in \cref{sect:discr_obj}, has two different behaviors: 
\begin{itemize}
	\item either it is following two edges $e_t$ and $e_{t+1}$ that are consecutive, both in $T(m)$ and $T(m^{**})$, in which case the increment is $(+1,-1)$;
	\item or it is first following $e_t$, then it is traversing a face of $m$, and finally is following $e_{t+1}$, in which case the increment is $(-i,+j)$ with $i,j\in\Z_{\geq 0}$, and the traversed face has left degree $i+1$ and right degree $j+1$.
\end{itemize}

\begin{exmp}\label{exemp:walk}
	Consider the map $m$ in Fig.~\ref{fig:bip_orient _with_trees}. 
The corresponding walk $\bow(m)$ plotted on the right-hand side of \cref{fig:bip_orient _with_trees} is:
	\begin{equation*}
	\begin{split}
	&W_{1}=(0,2),W_{2}=(0,3),W_{3}=(0,3),W_{4}=(1,2),W_{5}=(2,1),\\
	&W_{6}=(0,3),W_{7}=(1,2),W_{8}=(2,1),W_{9}=(3,0),W_{10}=(2,0).
	\end{split}
	\end{equation*}
	Note, for instance, that $W_6 - W_5=(-2,2)$, indeed between the edges 5 and 6 the interface path is traversing a face with $3$ edges on the left boundary and $3$ edges on the right boundary.
	On the other hand $W_9-W_8 = W_8 - W_7 = W_7 - W_6 = (+1,-1)$. Indeed, in these cases, the interface path is following consecutive edges.
\end{exmp}
 
We finish this section by describing the inverse bijection  $\bow^{-1}$. We actually construct a mapping $\Inverse$ on a larger space of walks, whose restriction to $\mathcal W$ is the inverse of $\bow$.

Let $I$ be an interval (finite or infinite) of $\Z$. Let $\Walks(I)$ be the set of two-dimensional walks with time space $I$ considered up to an additive constant. More precisely $\Walks(I)$ is the quotient $(\Z^2)^I / \sim$, where $w\sim w'$ if and only if there exists $x\in \Z^2$ such that $w(i) = w'(i)+x$ for all $i\in I$. We usually take an explicit representative of elements of $\Walks(I)$, chosen according to the context. For instance, if $0\in I$, we often take the representative that satisfies $w(0)=(0,0)$, called ``pinned at zero".
Let $\Walks_\Steps(I)\subset \Walks(I)$ be the restriction to two-dimensional walks with increments in $\Steps$. For every $n\geq 1$, $\mathcal W_n$ is naturally embedded in $\Walks_\Steps([n])$, with an explicit representative

Let $I=[j,k]$ be a finite integer interval. We shall define  $\Inverse$ on every walk in $ \Walks_\Steps(I)$ by induction on the size of $I$, and denote by $\Maps(I)$ the image of $\Walks_A(I)$ by $\Inverse$. An element $m\in\Maps(I)$ is a bipolar orientation, together with a subset of edges  labeled by $j,\ldots,k$ (which identifies a subinterval of the interface path started on the left boundary and ended on the right boundary of $m$). The edges labeled $j,\ldots,k$ are called \textit{explored edges}, and the edge labeled $k$ is called \textit{active}. The other edges, called \textit{unexplored}, are either below $j$ on the left boundary, or above $k$ on the right boundary. Bipolar orientations of size $n\geq 1$ are the elements of $\Maps([n])$ with no unexplored edges. The elements of $\Maps(I)$ are called \textit{marked bipolar orientations} by \cite{bousquet2019plane}.

The base case for our induction is $I=\{j\}$. In this case, an element $W$ of $\Walks_A(\{j\})$ is mapped to a single edge with label $j$. If $W \in \Walks_A([j,k+1])$ with $k\geq j$, then denote by $m' = \Inverse(W|_{[j,k]})$ and

\begin{enumerate}
	\item if $W_{k+1} - W_{k} = (1,-1)$, then $\Inverse(W)$ is obtained from $m'$, by giving label $k+1$ to the edge immediately above the edge of label $k$. If no such edge exists, a new edge is added on top of the sink with label $k+1$.
	\item If $W_{k+1} - W_{k} = (-i,j)$, then $\Inverse(W)$ is obtained from $m'$ by adding a face of left-degree $i+1$ and right-degree $j+1$. Its left boundary is glued to the right boundary of $m'$, starting with identifying the top-left edge of the new face with $e_k$, and continuing with edges below.
	The bottom-right edge of the new face is given label $k+1$, hence is now active. All other edges that were not present in $m'$ are unexplored.
\end{enumerate}

An example of this construction (inductively building $\Inverse(W)$ with $W\in \mathcal W$) is given in \cref{fig:bip_orient_from_walk}. For an example of application of the mapping $\Inverse$ to a walk that is not a tandem walk see \cref{fig:bip_orient_from_not_tandem_walk}. In this case, the final map still has unexplored edges.

\begin{figure}[htbp]
	\centering
	\includegraphics[scale=0.72]{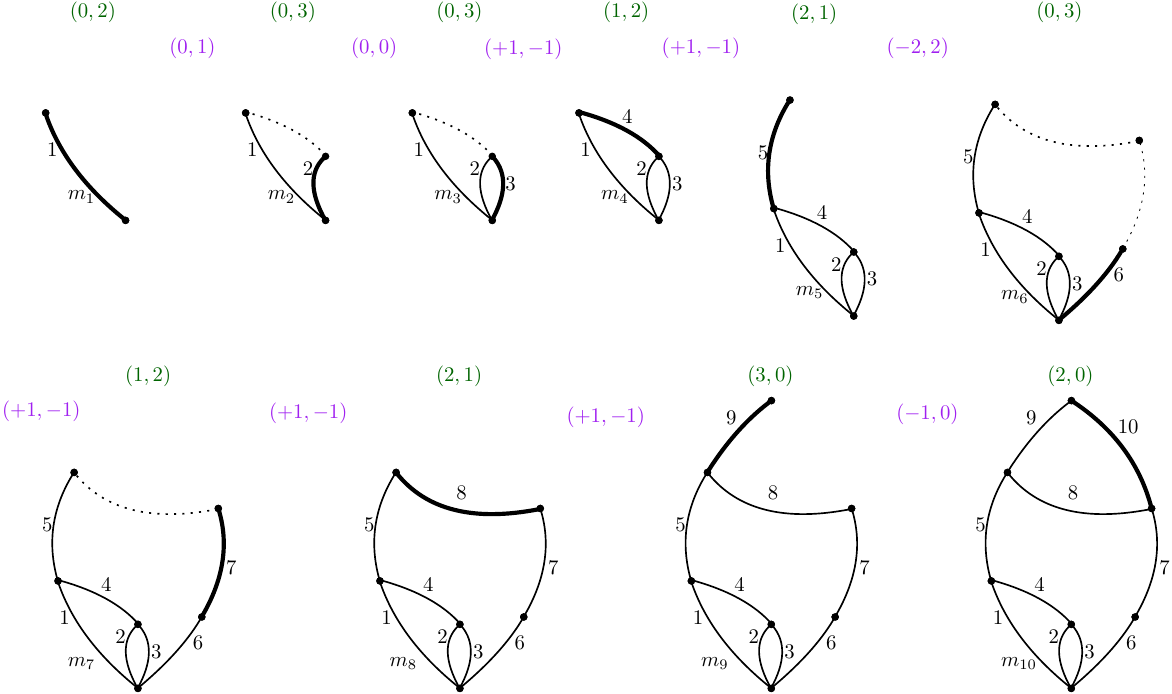}\\
	\caption{The sequence of bipolar orientations $m_k=\Inverse(W|_{[1,k]})$ determined by the walk $W$ considered in \cref{exemp:walk}. Note that $m_{10}$ is exactly the map $m$ in \cref{fig:bip_orient} and \cref{fig:bip_orient _with_trees}. For each map $m_k$, we indicate on top of it (in green) the value $W_k$. Between two maps $m_k$ and $m_{k+1}$, we report (in purple) the corresponding increment $W_{k+1}-W_k$. For every map $m_k$, we draw the explored edges with full lines, the unexplored edges with dotted lines, and we additionally highlight the active edge $e_k$ in bold.   \label{fig:bip_orient_from_walk}}
\end{figure}

\begin{figure}[htbp]
	\centering
	\includegraphics[scale=0.85]{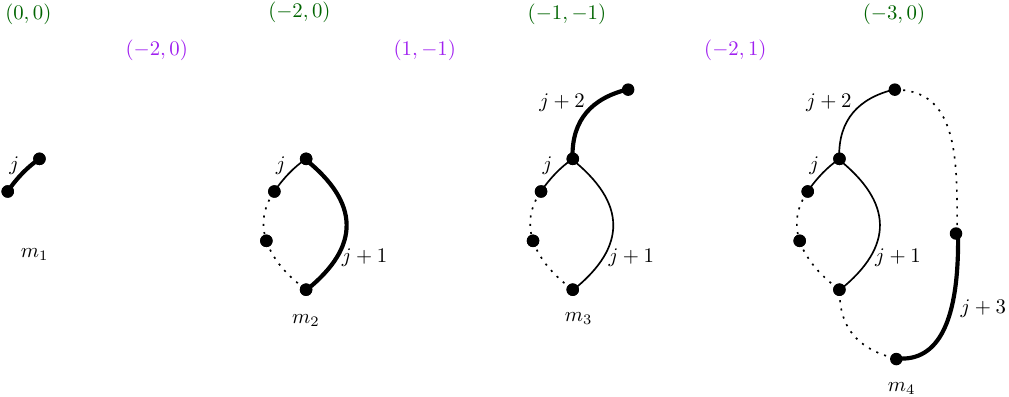}\\
	\caption{The sequence of bipolar orientations $m_k=\Inverse(W|_{[j,j+k-1]})$ determined by the walk $W_j=(0,0),W_{j+1}=(-2,0),W_{j+2}=(-1,-1),W_{j+3}=(-3,0)$, that is an element of the set $\Walks_\Steps([j,j+3])$. This walk is not a tandem walk. We used the same notation as in \cref{fig:bip_orient_from_walk}.  \label{fig:bip_orient_from_not_tandem_walk}}
\end{figure}

\begin{proposition}[Theorem 1 of \cite{MR3945746}] \label{prop:inverse}
	For every finite interval $I$, the mapping $\Inverse: \Walks_A(I)\to \Maps(I)$ is a bijection. Moreover, for every $n\geq 1$, $\Theta(\mathcal W_n) = \mathcal O_n$ and $\bow^{-1}: \mathcal W_n \to \mathcal O_n$ coincides with $\Inverse$. 
	Finally, if $W \in \Walks_A(I)$ and $[j,k] \subset I$, then the map $\Inverse(W|_{[j,k]})$ is the submap obtained from $\Inverse(W)$ by keeping
	\begin{enumerate}
		\item edges with label $j,\ldots,k$ (explored edges);
		\item faces that have explored edges on both their left and right boundary (explored faces);
		\item other edges incident to explored faces (unexplored edges).
	\end{enumerate}
\end{proposition}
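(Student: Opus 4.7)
The plan is to proceed by induction on the length of the interval $I$, following the inductive construction of $\Inverse$ given just before the statement, and to exhibit the inverse of $\Inverse$ explicitly as a generalization of $\bow$.

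First I would check that the construction $\Inverse$ is well-defined on $\Walks_\Steps(I)$, i.e.\ that at each inductive step the resulting object is indeed a marked bipolar orientation in $\Maps(I)$. This requires verifying that the two constructions (adding the edge above the active edge in case 1, and gluing a new face on the right boundary in case 2) preserve the properties of a bipolar orientation with a labeled subinterval of the interface path: the orientation remains acyclic with a unique source and sink, the local constraints around each vertex/face (as summarized in \cref{fig:bip_orient}) are maintained for every vertex/face whose neighborhood is closed, and the labels $j,\dots,k$ do trace an initial subinterval of the interface path.

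Next I would define a candidate inverse $\Lambda : \Maps(I) \to \Walks_\Steps(I)$ by mimicking \cref{defn:KMSW2}: for $m\in \Maps(I)$ with $I=[j,k]$, set $\Lambda(m)_t = (X_t, Y_t)$ where $X_t$ is the distance from the source, in a suitably adapted down-right tree $T(m)$, of the bottom vertex of the edge labeled $t$, and $Y_t$ is the analogous quantity in $T(m^{**})$. By \cref{rem:height_process} and the two-case description of the increments of $\bow$ recalled just after \cref{defn:KMSW2} (consecutive edges give step $(+1,-1)$, traversing a face of left-degree $i+1$ and right-degree $j+1$ gives step $(-i,+j)$), the increments of $\Lambda(m)$ lie in $\Steps$, so $\Lambda$ is well-defined. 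I would then check $\Inverse \circ \Lambda = \Id$ and $\Lambda \circ \Inverse = \Id$ by induction on $|I|$: in each inductive step the two cases of the construction of $\Inverse$ correspond exactly to the two types of increments described above, and adding the new edge/face changes the heights of the active edge's endpoints by precisely the prescribed amount.

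For the second part, I would verify that a walk $W\in\Walks_\Steps([n])$ lies in $\mathcal W_n$ if and only if $\Inverse(W)$ has no unexplored edges, in which case it is a genuine bipolar orientation. The starting condition $X_1 = 0$ says the active edge begins on the left boundary; the final condition $Y_n = 0$ means that after the last step the active edge lies on the right boundary with no further unexplored edges above it; staying in the non-negative quadrant ensures that no gluing during the construction tries to identify more edges than are available on the current right boundary. Conjunction of these conditions gives exactly a fully explored bipolar orientation, so $\Inverse(\mathcal W_n) = \mathcal O_n$, and agreement with $\bow^{-1}$ is immediate from the fact that $\Lambda$ extends $\bow$.

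Finally, the submap property follows directly from the inductive construction: at each inductive step we only add material (a single edge on top, or a new face on the right together with its outer unexplored edges), and never modify the previously-built structure. Hence $\Inverse(W|_{[j,k-1]})$ sits inside $\Inverse(W|_{[j,k]})$ as the submap consisting of the explored edges, the faces whose left and right boundaries have both been closed off, and the unexplored edges incident to such faces, which is precisely the description given in the statement. The main technical obstacle is the case $W_k-W_{k-1}=(-i,j)$ with large $i$, where one must check that the gluing of the new face with $i+1$ left edges to the right boundary of $m'$ is geometrically legal; this is where the inequality $X_{k-1}\geq i$ enforced by the non-negativity of the first coordinate is used, and this is the content that makes the correspondence between $\mathcal W_n$ and $\mathcal O_n$ size-preserving and bijective.
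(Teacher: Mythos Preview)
The paper does not prove this proposition: it is stated with attribution to \cite{MR3945746} (Kenyon--Miller--Sheffield--Wilson) and used as a black box. So there is no proof in the paper to compare yours against.

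That said, your outline is essentially the standard argument and is correct in spirit, with one confusion worth flagging. You write that ``the gluing of the new face with $i+1$ left edges to the right boundary of $m'$ is geometrically legal; this is where the inequality $X_{k-1}\geq i$ enforced by the non-negativity of the first coordinate is used.'' But for a general $W\in\Walks_\Steps(I)$ there is no non-negativity constraint, and the construction of $\Inverse$ is still perfectly well-defined: when the right boundary of $m'$ runs out of edges to glue to, the remaining left edges of the new face simply become fresh unexplored edges at the bottom of the left boundary (this is exactly what \cref{fig:bip_orient_from_not_tandem_walk} illustrates). Non-negativity only enters in the second part of the statement, as the characterization of which walks in $\Walks_\Steps([n])$ produce a map with no unexplored edges. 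So the gluing is always legal; what the quadrant condition buys you is precisely that at the end there are no leftover unexplored edges below on the left (controlled by $X\geq 0$) or above on the right (controlled by $Y\geq 0$). You should separate these two roles cleanly in your write-up.

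A second minor point: your inverse $\Lambda$ uses ``a suitably adapted down-right tree $T(m)$'' for marked bipolar orientations, but you do not say what the adaptation is. For elements of $\Maps(I)$ with unexplored edges, the simplest fix is to record $(X_t,Y_t)$ as the height of the bottom (resp.\ top) vertex of $e_t$ along the right (resp.\ left) boundary relative to the active edge, rather than as a distance to a source or sink that may sit among unexplored edges; this makes $\Lambda$ well-defined up to the additive constant that $\Walks_\Steps(I)$ already quotients by.
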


\subsection{Baxter permutations and bipolar orientations}
\label{sect:Baxt_bipol}

We first explain here why our mapping $\bobp$ given in \cref{defn:bobp}  is the same as the bijection $\Psi$ of Bonichon, Bousquet-Mélou and Fusy \cite[Section 3.2]{MR2734180}.

The definition of $\Psi$ can be rephrased in our setting as follows. Let $m \in \mathcal O_n$ be a bipolar orientation. We denote by $m^{-1}$ the symmetric image of $m$ along the vertical axis. Consider the tree $T(m^{-1})$, and set $\Psi(m)$ to be the only permutation $\pi \in \Perms_n$ such that for every $1\leq i \leq n$, the $i$-th edge to be visited in the exploration of $T(m)$ corresponds to the $\pi(i)$-th edge to be visited in the exploration of $T(m^{-1})$. It was observed in \cite[Remark 11]{MR2734180} that the exploration of $T(m^{-1})$ visits edges of $m$ in the same\footnote{Actually a stronger result holds: $T(m^{-1})$ and $T(m^{***})$ are related by a classic bijection of the set of finite trees, which is the counterpart of the \textit{Kreweras complement} for non-crossing partitions: the Lukasiewicz walk of $T(m^{***})$ is the reversal of the height function of $T(m^{-1})$. In particular they have the same scaling limit. This is similar to \cite[Lemma 2.4]{GHS}.} order as the exploration of $T(m^*)$, justifying that $\bobp(m) = \Psi(m)$.

We have the following additional properties of the mapping $\bobp$.

\begin{theorem}[\cite{MR2734180}, Theorems 2 and 3, and Propositions 1 and 4]\label{thm:rotation}
	One can draw $m$ on the diagram of $\bobp(m)$ in such a way that every edge of $m$ passes through the corresponding point of $\bobp(m)$. Moreover, we have the following symmetry properties: 
	\begin{enumerate}
		\item denoting by $\sigma^*$ the permutation obtained by rotating the diagram of $\sigma\in \mathfrak S_n$ clockwise by angle $\pi/2$, we have $\bobp(m^*) = \bobp(m)^*$;
		\item we have $\bobp(m^{-1}) = \bobp(m)^{-1}$.
	\end{enumerate}
\end{theorem}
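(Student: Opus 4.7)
The plan is to address each of the three claims in turn, using the definition of $\bobp$ via the two contour explorations of $T(m)$ and $T(m^*)$.

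For the embedding claim, I would draw each edge $e_i$ of $m$ as a monotone curve passing through the point $(i,\bobp(m)(i))$ of the unit rectangle (suitably scaled). By construction, $i$ is the index of $e_i$ in the clockwise exploration of $T(m)$, while $\bobp(m)(i)$ is the index of its dual in the exploration of $T(m^*)$. The validity of this as a planar drawing reduces to two local checks. Around every inner vertex, the cyclic order of incident edges in $m$ must be compatible with their horizontal coordinates: incoming edges (with smaller indices in $T(m)$) lie to the left, outgoing edges (with larger indices) to the right. Around every inner face, the cyclic order must be compatible with the vertical coordinates via the dual exploration of $T(m^*)$. Both compatibilities follow directly from the down-right tree construction and the definition of the interface path.

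For the duality symmetry $\bobp(m^*)=\bobp(m)^*$, I would compute $\bobp(m^*)$ from first principles: if $\bobp(m)(i)=j$, then $e_i$ pairs with the $j$-th edge of the exploration of $T(m^*)$. Applying duality once more, the dual of this dual edge is $e_i$ viewed as an edge of $m^{**}$, and the exploration of $T(m^{**})$ traverses the edges of $m$ in reverse order (a fact already stated in the introduction). Unwinding the definition of $\bobp(m^*)$ yields $\bobp(m^*)(j)=n+1-i$, which coincides with the explicit formula for the clockwise $\pi/2$-rotation of a permutation diagram, sending $\sigma$ to $k\mapsto n+1-\sigma^{-1}(k)$.

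The reflection symmetry $\bobp(m^{-1})=\bobp(m)^{-1}$ is comparatively direct. Reflecting $m$ along a vertical axis swaps the roles of ``left'' and ``right''; by the observation recalled earlier in this subsection (Remark~11 of \cite{MR2734180}), the exploration of $T(m^{-1})$ visits the edges of $m$ in the same order as $T(m^*)$, and symmetrically for the dual. This effectively exchanges the roles of the two indices in the definition of $\bobp$, which at the level of permutations is inversion. The main obstacle in this overall plan is the bookkeeping required for the duality symmetry: one must simultaneously track the four trees $T(m),T(m^*),T(m^{**}),T(m^{***})$ and their contour orders, matching two exploration reversals (one in the primal, one in the dual) to the single $\pi/2$-rotation of permutation diagrams, a step where sign or direction errors are easy to commit.
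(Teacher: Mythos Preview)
The paper does not prove this theorem: it is stated with attribution to \cite{MR2734180} (Theorems~2 and~3, Propositions~1 and~4) and used as a black box. There is therefore no proof in the present paper to compare your attempt against.

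That said, your sketch is sound. The duality computation is correct: from $\bobp(m)(i)=j$ and the fact (recalled in the introduction) that the exploration of $T(m^{**})$ visits edges in the order $e_n,\ldots,e_1$, one gets $\bobp(m^*)(j)=n+1-i$, which is exactly the formula for a clockwise $\pi/2$ rotation of the diagram. Your reflection argument is also correct once one applies the observation about $T(m^{-1})$ and $T(m^*)$ to $m^{-1}$ itself, yielding that $T((m^{-1})^*)$ visits edges in the same order as $T(m)$; this swaps the two coordinates in the definition of $\bobp$, hence gives inversion. The embedding claim is the part where your sketch is thinnest: checking that the local cyclic orders around vertices and faces are compatible with the $(i,\bobp(m)(i))$ coordinates is precisely the substance of \cite[Proposition~1]{MR2734180}, and filling in that verification carefully (rather than asserting it) is where the work lies.
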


\subsection{Discrete coalescent-walk processes}
\label{sec:discrete_coal}

This subsection is devoted to defining coalescent-walk processes and our specific model of coalescent-walk processes obtained from tandem walks by the mapping $\wcp$. We then define the permutation and forest naturally associated with a coalescent-walk process.

\begin{definition}
	\label{def:discrete_coal_process}
	Let $I$ be a (finite or infinite) interval of $\Z$. We call \emph{coalescent-walk process} on $I$ a family $\{(Z^{(t)}_s)_{s\geq t, s\in I}\}_{t\in I}$ of one-dimensional walks such that:
	\begin{itemize}
		\item for every $t\in I$, $Z^{(t)}_t=0$;
		\item for $t'\geq t \in I$, if $Z^{(t)}_k\geq Z^{(t')}_k$ (resp.\ $Z^{(t)}_k\leq Z^{(t')}_k$) then $Z^{(t)}_{k'}\geq Z^{(t')}_{k'}$ (resp.\ $Z^{(t)}_{k'}\leq Z^{(t')}_{k'}$) for every $k'\geq k$.
	\end{itemize}
\end{definition}

Note that, as a consequence, if there is a time $k$ such that $Z^{(t)}_k= Z^{(t')}_k$, then $Z^{(t)}_{k'}=Z^{(t')}_{k'}$ for every $k'\geq k$.
In this case, we say that $Z^{(t)}$ and $Z^{(t')}$ are \emph{coalescing} and we call \emph{coalescent point} of $Z^{(t)}$ and $Z^{(t')}$ the point $(\ell,Z^{(t)}_\ell)$ such that  $\ell=\min\{k\geq \max\{t,t'\}|Z^{(t)}_{k}=Z^{(t')}_{k}\}$.

We denote by $\Coals(I)$ the set of coalescent-walk processes on some interval $I$.

\subsubsection{The coalescent-walk process associated with a two-dimensional walk}
\label{sect: bij_walk_coal}
We introduce formally the coalescent-walk processes driven by some specific two-dimensional walks that include tandem walks. Let $I$ be a (finite or infinite) interval of $\Z$. 
Recall that $\Walks_\Steps(I)$ denotes the set of walks considered up to an additive constant, indexed by $I$, and that take their increments in $\Steps$, defined in \cref{eq:admis_steps} page~\pageref{eq:admis_steps}.
\begin{definition}\label{eq:distrib_incr_coal}
Let $W\in\Walks_\Steps(I)$ and denote by $W_t = (X_t,Y_t)$ for $t\in I$. The \emph{coalescent-walk process associated with} $W$
is the family of walks $\wcp(W) = \{Z^{(t)}\}_{t\in I}$, defined for $t\in I$ by $Z^{(t)}_t=0,$ and for all $\ell\geq t$ such that $\ell+1 \in I$,

	\begin{itemize}
		\item if $W_{\ell+1}-W_{\ell}=(1,-1)$ then $Z^{(t)}_{\ell+1}-Z^{(t)}_{\ell}=-1$;
		\item if $W_{\ell+1}-W_{\ell}=(-i,j)$, for some $i,j\geq 0$, then
		\begin{equation*}
		Z^{(t)}_{\ell+1}-Z^{(t)}_{\ell}=
		\begin{cases}
		j, &\quad\text{if}\quad Z^{(t)}_{\ell}\geq0,\\
		i,&\quad\text{if}\quad Z^{(t)}_{\ell}<0\text{ and }Z^{(t)}_{\ell}<-i,\\
		j-Z^{(t)}_{\ell},&\quad\text{if}\quad Z^{(t)}_{\ell}<0\text{ and }Z^{(t)}_{\ell}\geq -i.
		\end{cases}
		\end{equation*} 
\end{itemize}
\end{definition}
Note that this definition is invariant by addition of a constant to $W$. We check easily that $\wcp(W)$ is a coalescent-walk process meaning that $\wcp$ is a mapping $\Walks_\Steps(I) \to \Coals(I)$.
We also set 
$\mathcal{C}_n= \wcp(\mathcal W_n)$ and $\mathcal{C}=\cup_{n\in \Z_{\geq 0}}\mathcal{C}_n.$ For two examples, we refer the reader to the left-hand side of \cref{fig:Coal_process_exemp} (for the case of tandem walks) and to \cref{example_discrete_coal} (for the case of non tandem walks). We finally suggest to the reader to compare the formal \cref{eq:distrib_incr_coal} with the more intuitive definition given in \cref{sect:coal_walk_intro}.

\begin{figure}[htbp]
	\centering
		\includegraphics[scale=.77]{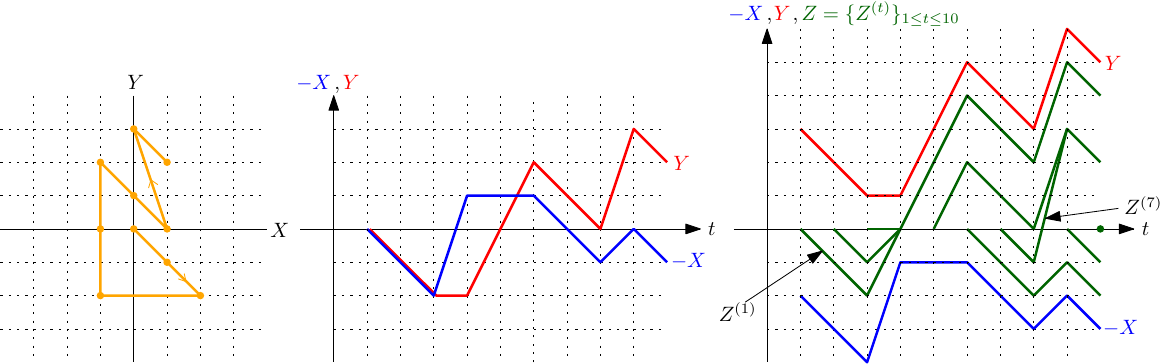}\\
		\caption{Construction of the coalescent-walk process associated with the orange walk $W=(W_t)_{1\leq t\leq 10}$ on the left-hand side. In the middle diagram the two walks $Y$ (in red) and $-X$ (in blue) are plotted. Finally, on the right-hand side the two walks are shifted (one towards the top and one to the bottom) and the ten walks of the coalescent-walk process are plotted in green. \label{example_discrete_coal}}
\end{figure}

\begin{observation}
	 The $y$-coordinates of the coalescent points of a coalescent-walk process obtained in this fashion are non-negative. 
\end{observation}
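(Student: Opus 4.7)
The plan is to prove the observation by a short case analysis on the step of the driving walk $W$ at the time of coalescence. Fix $t \leq t'$ in $I$ and let $\ell := \min\{k \geq t' : Z^{(t)}_k = Z^{(t')}_k\}$ be their coalescence time; the goal is to show $Z^{(t)}_\ell \geq 0$. The trivial case is $\ell = t'$, where by construction $Z^{(t')}_{t'} = 0$, so the $y$-coordinate of the coalescent point is $0$.

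Assume now $\ell > t'$. By minimality, $Z^{(t)}_{\ell-1} \neq Z^{(t')}_{\ell-1}$ while $Z^{(t)}_\ell = Z^{(t')}_\ell$. I would split on the type of increment $W_\ell - W_{\ell-1}$. If it equals $(+1,-1)$, then by \cref{eq:distrib_incr_coal} both walks receive the deterministic increment $-1$, so their gap is preserved and they cannot meet here --- contradiction. Hence the step must be of the form $(-i,j)$ with $i,j \geq 0$. For such a step, the increment rule in \cref{eq:distrib_incr_coal} depends on which of three regimes $Z^{(s)}_{\ell-1}$ lies in: $\{\geq 0\}$, $\{\in [-i,0)\}$, or $\{< -i\}$. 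Within either of the two outer regimes the increment is constant ($+j$ or $+i$ respectively), so two walks both lying in the same outer regime cannot meet for the first time at $\ell$. Therefore at least one of $Z^{(t)}_{\ell-1}, Z^{(t')}_{\ell-1}$ lies in the middle regime $[-i,0)$, and the rule sends that walk to the value $j$.

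It remains to rule out mixed configurations that could produce a negative common value. If both $Z^{(t)}_{\ell-1}$ and $Z^{(t')}_{\ell-1}$ are in $[-i,0)$, they both land at $j \geq 0$. If one is in $[-i,0)$ landing at $j$ and the other has value $a \geq 0$ landing at $a+j$, coalescence forces $a = 0$ and the common value is again $j$. Finally, if one is in $[-i,0)$ landing at $j$ and the other has value $b < -i$ landing at $b+i < 0$, equality would require $j = b+i < 0$, impossible since $j \geq 0$. Hence in every scenario that actually produces coalescence the common value at $\ell$ equals $j \geq 0$, which concludes the argument.

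I do not expect any real obstacle here: the whole proof is an exhaustive local verification of the piecewise rule from \cref{eq:distrib_incr_coal}. The only mildly subtle point is the mixed $\{\geq 0\}$ versus $[-i,0)$ case, where one has to notice that coalescence pins the non-negative walk to exactly $0$ at time $\ell-1$; the other sub-cases are immediate.
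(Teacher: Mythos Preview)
Your case analysis is essentially correct, and the paper itself offers no proof at all --- this is stated as a bare observation --- so there is nothing to compare against. One small logical slip: after ruling out the cases where both walks sit in the \emph{same} outer regime, you jump to ``therefore at least one lies in the middle regime $[-i,0)$''. That does not yet follow, because you have not excluded the mixed outer--outer configuration, say $Z^{(t)}_{\ell-1}\geq 0$ and $Z^{(t')}_{\ell-1}<-i$. This case is also impossible (the first walk lands at a value $\geq j \geq 0$ while the second lands at $Z^{(t')}_{\ell-1}+i<0$, so they cannot be equal), but you should say so explicitly before drawing the conclusion. With that one sentence added, the argument is complete.
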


\subsubsection{The permutation associated with a coalescent-walk process}\label{sect:from_coal_to_perm}

Given a coalescent-walk process $Z = \{Z^{(t)}\}_{t\in I} \in \Coals(I)$ defined on a (finite or infinite) interval $I$, we can define a binary relation $\leq_Z$ on $I$ as follows:

\begin{equation}\label{eq:coal_to_perm}
\begin{cases}i\leq_Z i,\\
i\leq_Z j,&\text{ if }i<j\text{ and }\ Z^{(i)}_j<0,\\
j\leq_Z i,&\text{ if }i<j\text{ and }\ Z^{(i)}_j\geq0.\end{cases}
\end{equation}

\begin{proposition} 
	\label{prop:tot_ord}
	$\leq_Z$ is a total order on $I$. 
\end{proposition}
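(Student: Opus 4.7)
My plan is to verify the three defining properties of a total order (reflexivity, antisymmetry/totality, transitivity). Reflexivity is built into the definition. For totality and antisymmetry it suffices to notice that for $i \neq j$, say $i<j$, the definition stipulates that exactly one of $Z^{(i)}_j < 0$ or $Z^{(i)}_j \geq 0$ holds, so exactly one of $i \leq_Z j$ or $j \leq_Z i$ is declared true. These two parts are immediate; the real content of the statement is transitivity.

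For transitivity, I will repeatedly use the non-crossing property from \cref{def:discrete_coal_process}: if $t \leq t'$ and $Z^{(t)}_k$ and $Z^{(t')}_k$ are weakly ordered at some time $k \geq t'$, the same weak inequality persists for all larger times. Equivalently, if at any time $Z^{(t)}$ sits (weakly) below $Z^{(t')}$, it stays (weakly) below forever; in particular, since $Z^{(t')}_{t'} = 0$, the sign of $Z^{(t)}_{t'}$ controls the future relative order of the two walks.

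So assume $a \leq_Z b$ and $b \leq_Z c$ with $a,b,c$ pairwise distinct, and split into the six cases according to the $\Z$-order of $\{a,b,c\}$. In each case I translate $a\leq_Z b$ and $b\leq_Z c$ into sign inequalities on trajectories of $Z$ at appropriate times, and then invoke non-crossing to derive the sign inequality that corresponds to $a \leq_Z c$. For instance, when $a<b<c$, the hypotheses read $Z^{(a)}_b < 0$ and $Z^{(b)}_c < 0$; since $Z^{(a)}_b < 0 = Z^{(b)}_b$, non-crossing forces $Z^{(a)}_c \leq Z^{(b)}_c < 0$, giving $a \leq_Z c$. When $c<a<b$, the hypotheses read $Z^{(a)}_b < 0$ and $Z^{(c)}_b \geq 0$; if we had $Z^{(c)}_a < 0 = Z^{(a)}_a$, then non-crossing (with $t=c,\,t'=a$) would give $Z^{(c)}_b \leq Z^{(a)}_b < 0$, contradicting $Z^{(c)}_b \geq 0$, so $Z^{(c)}_a \geq 0$ and $a \leq_Z c$. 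The four remaining cases are analogous symmetric applications of the same principle.

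The main (and only) obstacle is the bookkeeping of the six cases; there is no subtle analytic difficulty because the non-crossing property, once rephrased as ``the sign of $Z^{(t)}_s$ at a chosen reference time $s$ controls the asymptotic order,'' essentially identifies $\leq_Z$ with a lexicographic comparison of trajectories. I will present the six cases compactly, perhaps in a small table, and conclude that $(I,\leq_Z)$ is totally ordered.
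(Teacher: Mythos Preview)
Your proposal is correct and uses the same key ingredient as the paper, namely the non-crossing property of \cref{def:discrete_coal_process} applied contrapositively. The paper's proof is slightly more economical in its case split: rather than considering the six possible $\Z$-orderings of $\{a,b,c\}$, it fixes once and for all $i<j<k$ and splits according to the signs of $Z^{(i)}_k$ and $Z^{(j)}_k$ (both negative, both non-negative, or one of each). In the first two cases $k$ is the $\leq_Z$-maximum (resp.\ minimum) of the triple and transitivity is automatic whatever the $\leq_Z$-order of $i$ and $j$; in the mixed case, say $Z^{(i)}_k<0\leq Z^{(j)}_k$, one has $i\leq_Z k\leq_Z j$, and non-crossing forces $Z^{(i)}_j<0$ (else $Z^{(i)}_j\geq Z^{(j)}_j=0$ would propagate to time $k$), giving $i\leq_Z j$. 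This reduces the bookkeeping from six cases to essentially three, but your six-case version is equally valid and perhaps more transparent if one wants to track exactly which hypothesis is used where.
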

\begin{proof}
	Since every pair in $I$ is comparable by definition, we just have to check that $\leq_Z$ is an order. By construction it is antisymmetric and reflexive. For transitivity, take $i<j<k$.
	If $Z^{(i)}_k$ and $Z^{(j)}_k$ are both negative,
	then $i\leq_Z k$ and $j\leq_Z k$ and whatever the relative ordering between $i$ and $j$,
	transitivity holds on $\{i,j,k\}$.
	If they are both non-negative, the same reasoning holds. 
	If one of them is non-negative, and one of them is negative, say $Z^{(i)}_k< 0 \leq Z^{(j)}_k$ (the other case is similar), then $i\leq_Z k$ and $k\leq_Zj$. Now by definition of coalescent-walk process, it must be that $Z^{(i)}_j<Z^{(j)}_j = 0$, so that $i\leq_Z j$ and transitivity holds on $\{i,j,k\}$.
\end{proof}

This definition allows to associate a permutation with a coalescent-walk process on the interval $[n]$.

\begin{definition}Fix $n\in\Z_{\geq 0}$. Let $Z = \{Z^{(t)}\}_{i\in [n]} \in \Coals([n])$ be a coalescent-walk process on $[n]$. Denote $\cpbp(Z)$ the unique permutation $\sigma \in \Perms_n$ such that for all $1\leq i, j\leq n$, $$\sigma(i)\leq\sigma(j) \iff i\leq_Z j.$$
\end{definition}

We will furnish an example that clarifies the definition above in \cref{exmp:coal_tree} below.

We now need to introduce some more notation. If $x_1,\dots ,x_n$ is a sequence of distinct numbers, let $\std(x_1,\dots ,x_n)$ be the unique permutation $\pi$ in $\Perms_n$ that is in the same relative order as $x_1,\dots ,x_n,$ \emph{i.e.}, $\pi(i)<\pi(j)$ if and only if $x_i<x_j.$
Given a permutation $\sigma\in\Perms_n$ and a subset of indices $I\subseteq[n]$, let $\pat_I(\sigma)$ be the permutation induced by $(\sigma(i))_{i\in I},$ namely, $\pat_I(\sigma)\coloneqq\text{std}\big((\sigma(i))_{i\in I}\big).$
For example, if $\sigma=87532461$ and $I=\{2,4,7\}$ then $\pat_{\{2,4,7\}}(87532461)=\text{std}(736)=312$.

\begin{proposition}
	\label{prop:patterns}
	Let $\sigma$ be a permutation obtained from a coalescent-walk process $Z=\{Z^{(t)}\}_{1\leq t\leq n}$ via the mapping $\cpbp$. Let $I=\{i_1<\dots<i_k\}\subseteq[n]$. Then $\pat_I(\sigma)=\pi$ if and only if the following condition holds: for all $1\leq \ell< s \leq k,$
	$$Z^{(i_\ell)}_{i_s} \geq 0   \iff  \pi(s)<\pi(\ell).$$
\end{proposition}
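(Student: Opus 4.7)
The statement is essentially a direct unwinding of the definitions of $\cpbp$ and of pattern extraction, so the proof plan is short. The plan is to reduce everything to the pairwise comparisons governing both $\sigma$ and $\pi$.

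First I would record the one-line consequence of the definition of $\cpbp$: for any two indices $a < b$ in $[n]$,
\[
\sigma(a) < \sigma(b) \iff a \leq_Z b \iff Z^{(a)}_b < 0,
\]
and equivalently $\sigma(a) > \sigma(b) \iff Z^{(a)}_b \geq 0$. (Strict inequality is fine since $\sigma$ is a permutation, so ties do not occur.) This is immediate from the definition of $\cpbp$ combined with the defining clauses of $\leq_Z$ in \eqref{eq:coal_to_perm}.

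Next I would translate $\pat_I(\sigma) = \pi$ into pairwise statements. By definition of $\std$, the equality $\pat_I(\sigma) = \pi$ is equivalent to the conjunction, over all pairs $\ell < s$ in $\{1,\dots,k\}$, of the equivalences
\[
\sigma(i_\ell) < \sigma(i_s) \iff \pi(\ell) < \pi(s).
\]
Applying the first step with $a = i_\ell$ and $b = i_s$ (recall $i_\ell < i_s$), each such pairwise equivalence becomes
\[
Z^{(i_\ell)}_{i_s} \geq 0 \iff \pi(\ell) > \pi(s),
\]
which is precisely the condition in the statement. Both directions of the proposition follow: the forward direction uses that $\pat_I(\sigma) = \pi$ implies every pairwise comparison; the reverse direction uses that any permutation of $\{1,\dots,k\}$ is determined by its set of pairwise comparisons, so knowing them all forces $\pat_I(\sigma) = \pi$.

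There is no serious obstacle here; the only thing to be careful about is bookkeeping between the strict/non-strict inequalities in the definition of $\leq_Z$ versus the strict inequalities in the definition of $\std$, which is harmless because $\sigma$ takes distinct values. Consequently the proof is essentially a two-step substitution and should fit in a few lines.
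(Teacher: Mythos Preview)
Your proposal is correct and follows essentially the same approach as the paper, which also reduces the statement to the chain of equivalences $Z^{(i_\ell)}_{i_s} \geq 0 \iff i_s \leq_Z i_\ell \iff \sigma(i_s) \leq \sigma(i_\ell) \iff \pi(s) < \pi(\ell)$. The paper compresses this into a single displayed line, but the content is identical to your two-step substitution.
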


\begin{proof}
	The statement immediately follows once one notes that
	\begin{equation*}
	Z^{(i_\ell)}_{i_s} \geq 0  \iff i_s\leq_Z i_\ell \iff \sigma(i_s)\leq\sigma(i_\ell)\iff  \pi(s)<\pi(\ell).
	\end{equation*}
	This is enough to conclude the proof.
\end{proof}

Note that thanks to \cref{prop:patterns}, pattern extraction in the permutation $\cpbp(Z)$ depends only on a finite number of trajectories, a key step towards proving permuton convergence for uniform Baxter permutations.

\subsubsection{The coalescent forest of a coalescent-walk process}\label{sect:from_coal_to_trees} 
Note that given a coalescent-walk process on $[n]$, the plane drawing of the family of the trajectories $\{Z^{(t)}\}_{t\in I}$ identifies a natural tree structure, more precisely, a \emph{$\Z$-planted plane forest}, as per the following definition.
\begin{definition}
	A \emph{$\Z$-planted plane  tree} is a rooted plane tree such that the root has an additional parent-less edge which is equipped with a number in $\Z$ called its (root-)index.
	
	A \emph{$\Z$-planted plane forest} is an ordered sequence of $\Z$-planted plane trees $(T_1,\ldots, T_\ell)$ such that the (root-)indices are weakly increasing along the sequence of trees. A $\Z$-planted plane forest admits an exploration process, which is the concatenation of the exploration processes of all the trees, following the order of the sequence.
\end{definition}

An example of a $\Z$-planted plane forest is given in the middle of \cref{coal_tree} (each tree is drawn with the root on the right; trees are ordered from bottom to top; the root-indices are indicated on the right of each tree).

We give here a formal definition of the $\Z$-planted plane forest corresponding to a coalescent-walk process. For a more informal description, we suggest to look at \cref{coal_tree} and at the description given in \cref{exmp:coal_tree}.

\begin{definition}\label{defn:corr_trees}
Let $Z$ be a coalescent-walk process on a finite interval $I$. Its \emph{labeled forest}, denoted $\fortree(Z)$ for ``labeled forest", is a $\Z$-planted plane forest with additional edge labels in $I$, defined as follows:
\begin{itemize}
	\item the edge-set is $I$, vertices are identified with their parent edge, and the edge $i\in I$ is understood as bearing the label $i$.
	\item For any pair of edges $(i,j)$ with $i<j$, $i$ is a descendant of $j$ if $(j,0)$ is the coalescent point of $Z^{(i)}$ and $Z^{(j)}$.
	\item Children of a given parent are ordered by $\leq_Z$.
	\item The different trees of the forests are ordered such that their root-edges are in increasing $\leq_Z$-order.
	\item The index of the tree whose root-edge has label $i$ is the value $Z^{(i)}_{\max I}$.
\end{itemize}
\end{definition}

\begin{figure}[htbp]
	\centering
		\includegraphics[scale=0.85]{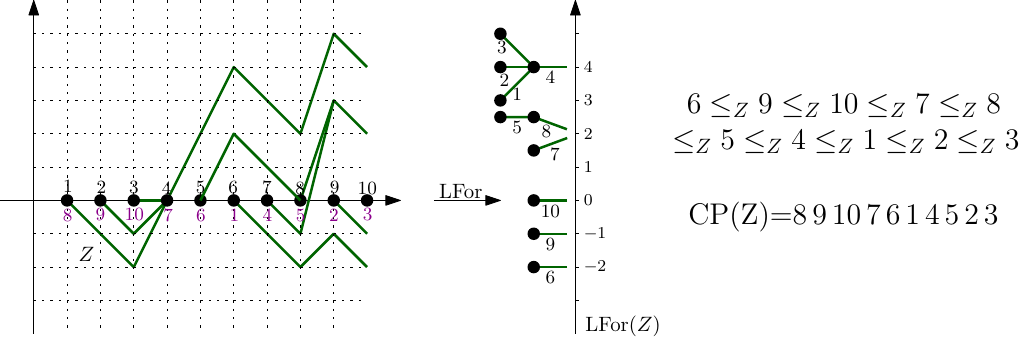}\\
		\caption{In the middle of the picture, the labeled forest $\fortree(Z)$ corresponding to the coalescent-walk process represented on the left that was obtained in \cref{example_discrete_coal}. How this labeled forest is constructed is explained in \cref{exmp:coal_tree}. On the right-hand side we also draw the associated total order $\leq_Z$ and the associated permutation $\cpbp(Z)$.  \label{coal_tree}}
\end{figure}

We have the following result, which is immediate from the properties of a coalescent-walk process.

\begin{proposition}\label{prop:fortree_cpbp}
	$\fortree(Z)$ is a $\Z$-planted plane forest, equipped with a labeling of its edges by the values of $I$.
	Moreover the total order $\leq_Z$ on $I$ coincides with the total order given by the exploration process of the labeled forest $\fortree(Z)$.
\end{proposition}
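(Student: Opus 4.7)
The plan has two parts: first verify that $\fortree(Z)$ is a well-defined $\Z$-planted plane forest, then match its exploration order with the total order $\leq_Z$.

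For the first part, I would argue that each non-root edge has a unique direct parent. If $j^* > i$ is the smallest index with $Z^{(i)}_{j^*} = 0$, then $Z^{(i)}_{j^*} = 0 = Z^{(j^*)}_{j^*}$, and the coalescence clause of \cref{def:discrete_coal_process} (applied once as $\geq$ and once as $\leq$) forces $Z^{(i)}_k = Z^{(j^*)}_k$ for every $k \geq j^*$. Hence any other $j > j^*$ with $Z^{(i)}_j = 0$ also satisfies $Z^{(j^*)}_j = 0$, so $j$ is a proper ancestor of $i$ in the forest, not an additional direct parent. That children of a given parent (resp.\ the roots) are totally ordered by the restriction of $\leq_Z$ is then immediate from \cref{prop:tot_ord}. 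Monotonicity of the root-indexes along the $\leq_Z$-ordering of the roots follows from non-crossing: for two roots $i < j$ with $i \leq_Z j$, one has $Z^{(i)}_j < 0 = Z^{(j)}_j$, hence $Z^{(i)}_k \leq Z^{(j)}_k$ for every $k \geq j$, and in particular at $k = \max I$.

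For the second part, the engine is the following lemma, which I would prove by induction along the parent chain of $e$: \emph{if $a$ is an ancestor of $e$ in $\fortree(Z)$, then $Z^{(e)}_k = Z^{(a)}_k$ for every $k \geq a$; in particular $a \leq_Z e$.} Granting this, an induction on tree size shows that the contour exploration of each tree $T$ agrees with $\leq_Z$ on its edges: the root $r$ comes first because $r \leq_Z e$ for every $e \in T$; and for two children $c_p <_Z c_q$ of a same parent with descendants $e$ of $c_p$ and $e'$ of $c_q$, the identity $Z^{(e)}_{c_q} = Z^{(c_p)}_{c_q}$ yields $e <_Z c_q \leq_Z e'$, so the subtree of $c_p$ precedes that of $c_q$ in $\leq_Z$, matching the recursive structure of the exploration.

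For edges in distinct trees with roots $r <_Z r'$, a case analysis on the integer positions of $r, r', e \in T_r, e' \in T_{r'}$ shows that $e <_Z e'$. When $e'$ has integer index $\geq r$, the lemma reduces the comparison directly to one between $r$ and $r'$. The delicate case is $e' < r$: there, non-crossing together with $Z^{(e')}_{r'} = 0$ (lemma applied to the descendant $e'$ of $r'$) and $Z^{(e)}_{r'} = Z^{(r)}_{r'} < 0$ rule out the wrong ordering $e' <_Z e$, by propagating the assumed inequality forward to time $r'$ and reaching a contradiction with the vanishing of $Z^{(e')}$ at $r'$. This across-trees step is the main obstacle I anticipate: within a tree, the lemma transports $\leq_Z$-comparisons cleanly to the root; across trees, only non-crossing is available and it must be carefully pivoted at the times $r$ and $r'$ where the relevant $Z$-values vanish.
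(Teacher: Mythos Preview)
The paper does not prove this proposition: it is declared ``immediate from the properties of a coalescent-walk process''. Your write-up is therefore considerably more detailed than the original, and the overall strategy---the coalescence lemma (an ancestor shares the trajectory of its descendant from the ancestor's birth time onward) combined with non-crossing---is exactly the right one.

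There is however a recurring gap in your case analyses. In the within-tree step you invoke ``the identity $Z^{(e)}_{c_q} = Z^{(c_p)}_{c_q}$'', but the coalescence lemma only yields this when $c_q \geq c_p$ in integer order; if instead $c_p <_Z c_q$ with $c_p > c_q$ as integers, the identity is unavailable and your deduction of $e <_Z c_q$ breaks down. The same issue reappears in your across-trees delicate case, where ``$Z^{(e)}_{r'} = Z^{(r)}_{r'} < 0$'' tacitly assumes $r < r'$ as integers; when $r > r'$ (still with $r <_Z r'$) neither equality holds.

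The repair in both situations is the mirror of the argument you already sketch: when $c_p > c_q$ (resp.\ $r > r'$), pivot at time $c_p$ (resp.\ $r$) instead. There the lemma gives $Z^{(e)}_{c_p} = 0$ while $Z^{(e')}_{c_p} = Z^{(c_q)}_{c_p} > 0$ (strictly, since $c_p$ is not an ancestor of $c_q$); then if $e' <_Z e$ held, non-crossing propagated forward from time $\max(e,e')$ to time $c_p$ would force $Z^{(e')}_{c_p} \leq Z^{(e)}_{c_p} = 0$, a contradiction. So the gap is real but minor, and closes with one more instance of the technique you already use.
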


\begin{remark}\label{rk:cpbp_through_fortree}
	In the case where $I=[n]$ for some $n\in\Z_{\geq 0}$, the permutation $\pi = \cpbp(Z)$ is readily obtained from $\fortree(Z)$: for $1\leq i \leq n$, $\pi(i)$ is the position in the exploration of $\fortree(Z)$ of the edge with label $i$.
\end{remark}

\begin{exmp}\label{exmp:coal_tree}
	 \cref{coal_tree} shows the labeled forest of trees $\fortree(Z)$ corresponding to the coalescent-walk process $Z=\{Z^{(t)}\}_{t\in[10]}$ plotted on the left-hand side (where the labeled forest is plotted from bottom to top). It can be obtained by marking with ten dots the points $\{(t,Z^{(t)}_t=0)\}_{t\in[10]}$. The edge structure of the trees in $\fortree(Z)$ is given by the green lines starting at each dot, and interrupted at the next dot. The lines that go to the end uninterrupted (for example this is the case of the line starting at the fourth dot), correspond to the root-edges of the different trees. The indices of these root-edges are determined by the final heights of the corresponding interrupted lines.   
	The plane structure of $\fortree(Z)$ is inherited from the drawing of $Z$ in the plane. 
	
	We determine the order $\leq_Z$ by considering the exploration process of the labeled forest:  $6\leq_Z 9\leq_Z 10\leq_Z 7 \leq_Z 8 \leq_Z 5 \leq_Z 4 \leq_Z 1 \leq_Z 2 \leq_Z 3$. As a result, $\cpbp(Z) =8\,9\,10\,7\,6\,1\,4\,5\,2\,3$. 
	
	Equivalently, we can pull back (on the points $(t,Z^{(t)}_t=0)$ of the coalescent-walk process) the position of the edges in the exploration process (these positions are written in purple), and then $\cpbp(Z)$ is obtained by reading these numbers from left to right. 	
	
\end{exmp}

\subsection{From walks to Baxter permutations via coalescent-walk processes}
\label{sect:equiv_bij}
We are now in position to prove the main result of this section, that is \cref{thm:diagram_commutes}.

We are going to show that $\bobp = \cpbp\circ\wcp \circ \bow.$
The key ingredient is to show that the dual tree $T(m^*)$ of a bipolar orientation can be recovered from its encoding two-dimensional walk by building the associated coalescent-walk process $Z$ and looking at the labeled forest $\fortree(Z)$.
More precisely, let $W = (W_t)_{t\in [n]} = \bow (m)$ be the walk encoding a given bipolar orientation $m$, and $Z = \wcp(W)$ be the corresponding coalescent-walk process. Then the following result holds.

\begin{proposition}\label{prop:eq_trees} 
The following are equal:
	\begin{itemize}
	\item the dual tree $T(m^*)$ with edges labeled according to the order given by the exploration of $T(m)$;
	\item the tree obtained by attaching all the edge-labeled trees of $\fortree(Z)$ to a common root.
	\end{itemize}
\end{proposition}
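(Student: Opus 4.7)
I would proceed by induction on $n = |m|$, mirroring the recursive construction of $m$ from $W = \bow(m)$ via the inverse bijection $\Theta$ of \cref{prop:inverse}. Writing $m_k = \Theta(W|_{[1,k]})$ for the intermediate marked bipolar orientations, at each step $k$ I maintain a matching between (i) the sub-forest of $T(m^*)$ spanning the dual edges $e_1^*, \ldots, e_k^*$, where parent assignments to some $e_{t'}^*$ with $t' > k$ are left pending, and (ii) the partial coalescent-walk forest obtained from the restricted trajectories $\{(Z^{(t)}_\ell)_{t \leq \ell \leq k}\}_{t\in[k]}$, where trajectories that have not yet coalesced are treated as pending roots.

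The main technical ingredient is an auxiliary invariant stating that at every time $k$, for each $t\leq k$, the value $Z^{(t)}_k$ encodes the vertical position of $e_t^*$ relative to the currently active dual edge along the right boundary of $m_k^*$. A key consequence is that just before a step $(-i,j)$ at time $k$, the trajectory values at time $k-1$ of the $i+1$ primal edges incorporated into the left boundary of the new face are exactly $0, -1, \ldots, -i$ from top to bottom (the rightmost such trajectory being $Z^{(k-1)}$, which started at $0$).

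For the inductive step I perform a case analysis on $W_k - W_{k-1}$. The step $(+1,-1)$ is routine: $Z^{(k)}$ opens a new pending root at $0$, all older trajectories decrement by $1$, no dual vertex is closed, and no new parent-child relation is settled. The step $(-i,j)$ is the geometric heart of the argument: the new face $F$ closes a dual vertex $v^* \in m^*$ whose incoming dual edges are exactly the duals of the left boundary of $F$. By \cref{eq:distrib_incr_coal} together with the auxiliary invariant, the $i+1$ trajectories corresponding to these left-boundary edges all jump to height $j$ at time $k$, so they evolve in lockstep from then on, and their first simultaneous return to $0$ occurs at the time $s$ when the contour exploration reaches the top-right-boundary edge of $F$. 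Hence they coalesce with $Z^{(s)}$ at $(s,0)$ and become siblings with common parent $e_s^*$ in $\fortree(Z)$. A direct embedding argument identifies $e_s^*$ with the rightmost incoming dual edge at $v^*$, which by definition of the down-right tree is the parent in $T(m^*)$ of the left-boundary dual edges of $F$, completing the inductive step.

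The main obstacle, in my view, is the careful formulation and propagation of the auxiliary invariant (especially the handling of unexplored edges created by earlier $(-i',j')$ steps with $j' \geq 1$ and later swallowed into the left boundary of a subsequent face, which affect the right-boundary positions), together with the geometric identification of the rightmost incoming dual edge at $v^*$ as the top-right-boundary edge of $F$. Once these are established, combining all inductive steps gives the desired equality between $T(m^*)$ (attached at the dual source) and $\fortree(Z)$ (with all its trees attached to a common root).
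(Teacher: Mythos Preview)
Your approach is essentially the same as the paper's: induction along the recursive construction $m_k=\Theta(W|_{[1,k]})$, maintaining that the restricted dual forest agrees with the restricted coalescent forest. The paper formalizes your ``sub-forest with pending parent assignments'' as the $\Z$-planted forest $\DualForest(m_k)$ (Definition~\ref{def:F_k}), and your auxiliary invariant is exactly that each pending root's $\Z$-index equals the current trajectory value $Z^{(t)}_k$. Proving $\DualForest(m_k)=\fortree(Z|_{[1,k]})$ step by step (Proposition~\ref{prop:fortree}) is cleaner than your deferred argument that follows a packet of trajectories until the time $s$ when the top-right edge of $F$ is reached, because it removes the need to argue separately about unexplored edges and about when exactly that edge is explored.

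One genuine slip: your claim that the $(+1,-1)$ step ``settles no new parent--child relation'' is false. Trajectories with value $1$ at time $k-1$ drop to $0$ at time $k$ and become children of $e_k^*$; dually, the new active dual edge $e_k^*$ is the parent of those previously pending edges of index $1$. In fact, for the $(-i,j)$ step with $j>0$, the coalescence you describe at time $s$ happens precisely via such a later $(+1,-1)$ step. Relatedly, not all $i+1$ left-boundary edges of the new face need be explored, so you cannot speak of ``the $i+1$ trajectories''; the correct statement is that the pending trajectories whose current value lies in $\{0,-1,\ldots,-i\}$ (a possibly proper subset) all jump to $j$. Both points are automatically handled by the paper's step-by-step bookkeeping of $\Z$-indices.
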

\cref{thm:diagram_commutes} then follows immediately, by construction of $\bobp(m)$ from $T(m^*)$ and $T(m)$ (\cref{thm:bobp}) and of $\cpbp(Z)$ from $\fortree(Z)$ (\cref{rk:cpbp_through_fortree}). \cref{prop:eq_trees} is illustrated in an example in \cref{fig:bip_orient_with_coal}.

\begin{figure}[tbh]
	\centering
	\includegraphics[scale=0.85]{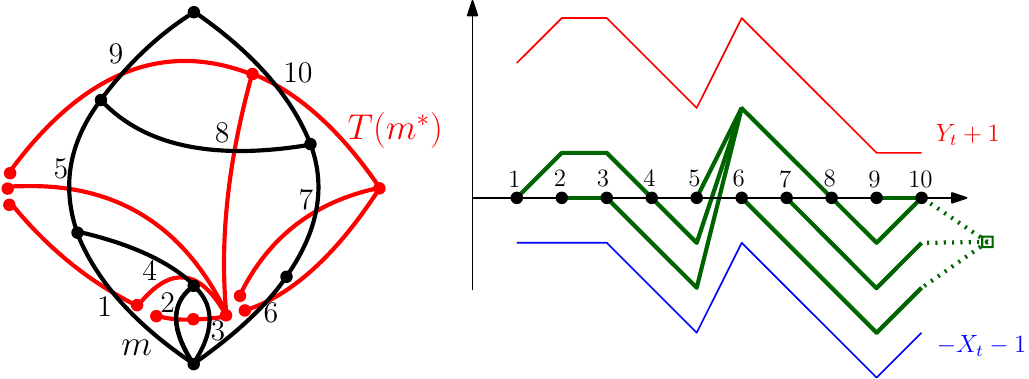}
	\caption{In the left-hand side the map $m$ from \cref{fig:bip_orient_and_perm} with the dual tree $T(m^*)$ in red with edges labeled according to the order given by the exploration of $T(m)$. In the right-hand side the associated coalescent-walk process $Z=\wcp\circ\bow(m)$. Note that the red tree (with its labeling) and the green tree (with its labeling) are equal.
		\label{fig:bip_orient_with_coal}}
\end{figure}

An interesting corollary of \cref{prop:eq_trees}, useful for later purposes (see for instance \cref{sect:anti-invo}), is the following result. Given a coalescent-walk process $Z$, we introduce the discrete local time process $L_Z = \left(L_Z^{(i)}(j)\right)$, $1\leq i \leq j \leq n$, defined by
\begin{equation}
\label{eq:local_time_process}
L_Z^{(i)}(j)=\#\left\{k\in [i,j]\middle|Z^{(i)}_k=0\right\}.
\end{equation}

\begin{corollary}\label{cor:local_time}
	Let $(m,W,Z,\sigma)$ be objects of size $n$ in $\mathcal{O}\times\mathcal{W}\times\mathcal{C}\times\mathcal{P}$ connected by the commutative diagram in \cref{eq:comm_diagram}. 
	Then the height process $(X^*_i)_{i\in [n]}$ of $T(m^*)$ is equal to
	$\left(L_Z^{(\sigma^{-1}(i))}(n)\right)_{i\in [n]}$. In other words, $$X^*_{\sigma(i)}=L_Z^{(i)}(n)-1,\quad{i\in [n]}.$$
\end{corollary}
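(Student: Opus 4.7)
My plan is to read off the statement from \cref{prop:eq_trees}. That proposition identifies $T(m^*)$, equipped with the edge-labeling in the $T(m)$-exploration order, with the tree obtained from the coalescent forest $\fortree(Z)$ by attaching all its trees to a common root. Under this identification, the edge of $T(m^*)$ labeled $i$ is the $\sigma(i)$-th edge in the $T(m^*)$-exploration, because $\sigma=\bobp(m)=\cpbp\circ\wcp\circ\bow(m)$ by \cref{thm:diagram_commutes} and the definition of $\bobp$ in \cref{defn:bobp} is precisely the matching of labels of primal and dual edges along the two contour explorations. Consequently $X^*_{\sigma(i)}$, being the height in $T(m^*)$ of the bottom vertex of the $\sigma(i)$-th edge, is nothing but the number of strict edge-ancestors of the edge labeled $i$ in the combined tree; attaching to a common root only adds one vertex at the bottom, without introducing any edge-ancestor above the root-edges of $\fortree(Z)$.

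The remaining work is to show that this ancestor count equals $L_Z^{(i)}(n)-1$. Working from \cref{defn:corr_trees}, I would first observe that the parent of edge $i$ in $\fortree(Z)$ is the smallest $j\in(i,n]$ with $Z^{(i)}_j=0$, should such a $j$ exist: for this $j$, both $Z^{(i)}_j=0$ and $Z^{(j)}_j=0$, so $(j,0)$ is the coalescent point of $Z^{(i)}$ and $Z^{(j)}$ in the sense of \cref{def:discrete_coal_process}, and the forest structure singles out the smallest such $j$ as the genuine parent. Then, using that $Z^{(i)}$ and $Z^{(j)}$ coincide from time $j$ onwards (by the defining monotonicity property of a coalescent-walk process), the parent of $j$ is the next zero of $Z^{(i)}$ after $j$, and iterating this observation yields that the full chain of edge-ancestors of $i$ is exactly $\{k\in(i,n] : Z^{(i)}_k=0\}$. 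Its cardinality is $L_Z^{(i)}(n)-1$ by the definition in \cref{eq:local_time_process}, which is the desired equality.

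The bulk of the argument therefore rests on \cref{prop:eq_trees}, which has already been established; the rest is a bookkeeping translation between heights in $T(m^*)$ and zero-counts of the one-dimensional walks $Z^{(i)}$. I do not expect any serious obstacle; the only point requiring a little care is the uniqueness of the parent in $\fortree(Z)$, which is enforced by the plane forest structure of \cref{defn:corr_trees} and the coalescence property that any two meetings of $Z^{(i)}$ with walks started at different later times are consistent with a single ancestor chain.
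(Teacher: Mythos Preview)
Your proposal is correct and follows precisely the approach the paper intends: the paper states this result as a direct corollary of \cref{prop:eq_trees} without spelling out any details, and your argument fills those details in correctly. In particular, your key observation---that the strict edge-ancestors of $i$ in $\fortree(Z)$ are exactly the times $k\in(i,n]$ with $Z^{(i)}_k=0$, because after coalescence at the first such zero the trajectory $Z^{(i)}$ coincides with that of its parent---is exactly the content behind the corollary (and is also what underlies \cref{obs:ancestry line} immediately afterwards).
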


\begin{proof}
	We start by recalling that labeling $T(m)$ and $T(m^*)$ according to their exploration processes, then $\sigma(i)$ is equal to the edge in $T(m^*)$ dual to the edge $i$ in $T(m)$.
	
	Now by definition, the height of the edge $\sigma(i)$ in $T(m^*)$ is equal to the number of edges in the unique paths from $\sigma(i)$ to the root of $T(m^*)$ (without counting the edge $\sigma(i)$). Using \cref{prop:eq_trees} and the remark at the beginning of the proof, the latter quantity is equal to $L_Z^{(i)}(n)-1$.
\end{proof}

We make the following observation useful for later purposes.

\begin{observation}\label{obs:ancestry line}
	Consider the tree $T(m^*)$ with edges labeled according to its exploration process. Let $P_i$ be the ancestry line in $T(m^*)$ of the edge $\sigma(i)$, i.e.\ the sequence of edges in the unique path in $T(m^*)$ from the edge $\sigma(i)$ to the root of $T(m^*)$.	
	Then, for $1\leq i \leq j\leq n$, $L_Z^{(i)}(j)$ is equal to the number of edges in $P_i$ with a $T(m)$-label weakly smaller than $j$.	
\end{observation}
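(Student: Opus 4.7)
The plan is to derive this observation as a direct consequence of \cref{prop:eq_trees} combined with the definition of the parent relation in $\fortree(Z)$.

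First, I would use \cref{prop:eq_trees} to identify $T(m^*)$ (with its $T(m)$-labeling) with the $\Z$-planted plane forest $\fortree(Z)$ attached to a common root. By the definition of $\bobp$ (\cref{defn:bobp}), the edge of $T(m^*)$ whose $T(m^*)$-exploration label is $\sigma(i)$ is precisely the edge whose $T(m)$-label is $i$. Under the identification with $\fortree(Z)$, this corresponds to the edge labeled $i$, i.e.\ the one associated with the trajectory $Z^{(i)}$. So $P_i$ may be read inside $\fortree(Z)$ as the ancestry line from edge $i$ up to the root-edge of its tree.

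Next, I would characterize the $T(m)$-labels along this ancestry line. By \cref{defn:corr_trees}, edge $j$ is the parent of edge $i$ (with $i<j$) exactly when $(j,0)$ is the coalescent point of $Z^{(i)}$ and $Z^{(j)}$; since $Z^{(j)}_j=0$, this reduces to $j$ being the smallest index $>i$ with $Z^{(i)}_j=0$. Moreover, once $Z^{(i)}$ and $Z^{(j)}$ coalesce, the definition of a coalescent-walk process forces $Z^{(i)}_k=Z^{(j)}_k$ for all $k\geq j$, so iterating yields that the set of $T(m)$-labels appearing strictly above $i$ along $P_i$ is exactly
\[
\{k\in(i,n]\,:\,Z^{(i)}_k=0\}.
\]
Adding the edge $i$ itself (which contributes the time $k=i$, where $Z^{(i)}_i=0$), the total set of $T(m)$-labels on $P_i$ is $\{k\in[i,n]\,:\,Z^{(i)}_k=0\}$.

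Finally, intersecting this set with $[i,j]$ and taking cardinality gives exactly $L_Z^{(i)}(j)$ by the defining formula \eqref{eq:local_time_process}. There is no real obstacle: every step is a direct unpacking of definitions, and the only point needing mild care is keeping track of two different labelings (the $T(m)$- versus $T(m^*)$-exploration labels) and of the fact that the root-edge of each tree in $\fortree(Z)$ does belong to its ancestry line, so that the last return of $Z^{(i)}$ to $0$ in $[i,n]$ is correctly counted.
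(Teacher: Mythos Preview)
Your proposal is correct and follows exactly the reasoning the paper leaves implicit: the Observation is stated without proof, as it is an immediate consequence of \cref{prop:eq_trees} together with the parent relation in \cref{defn:corr_trees}. Your unpacking---identifying the edge of $T(m^*)$-label $\sigma(i)$ with the edge of $T(m)$-label $i$, then showing by coalescence that the ancestors of $i$ in $\fortree(Z)$ are precisely the zeroes of $Z^{(i)}$---is the intended argument.
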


The rest of this section is devoted to proving \cref{prop:eq_trees}. We will proceed by induction on the size of $m$. Because a restriction of a walk in $\mathcal W$ is not necessarily in $\mathcal W$, we will work on the larger space $\mathfrak W_\Steps$ and the corresponding space of maps $\Maps(I)$.

We start with a few definitions. Let $I$ be a finite interval and take $m\in \Maps(I)$, recalling the definition of $\Maps(I)$ from \cref{sect:KMSW}. In particular, $m$ is a bipolar orientation with explored edges labeled by $I$, and possibly unexplored edges at the top of its right boundary and at the bottom of its left boundary. The edge labeled by $\max I$ is called active. For $i\in I$, denote by $e_i$ the explored edge bearing label $i$ in $m$. Denote $e_i^*$ its dual edge in the map $m^*$.
%\gregory{explain better the nature of $m$}
\begin{definition}\label{def:F_k}
	The $\Z$-planted, edge-labeled, plane forest $\DualForest(m)$ is the restriction of $T(m^*)$ to edges $(e^*_i)_{i\in I}$. More precisely,
	\begin{enumerate}
		\item The edge-set of $\DualForest(m)$ is $(e^*_i)_{i\in I}$.
		\item An edge $e^*_i$ is on top of $e^*_j$ if it is the case in $T(m^*)$.
		\item Parent-less edges are planted in $\Z$ according to the following rule: 
		\begin{enumerate}
			\item if $e^*_i$ is dual to an explored edge on the right boundary of $m$, then it is indexed by its non-positive height relative to the active edge;
			\item otherwise, $e_i$ is at the left of an inner face $f$ of $m$, so that the parent of $e^*_i$ in $T(m^*)$ must be dual to an unexplored edge which is at the top-right edge of $f$. We index $e^*_i$ by the (positive) height of this edge relative to the active edge.
		\end{enumerate}
	\end{enumerate}
\end{definition}

This construction is illustrated in \cref{fig:bip_orient_m_k_F_k}.
  \begin{figure}[tbh]
  	\centering
  	\includegraphics[scale=0.9]{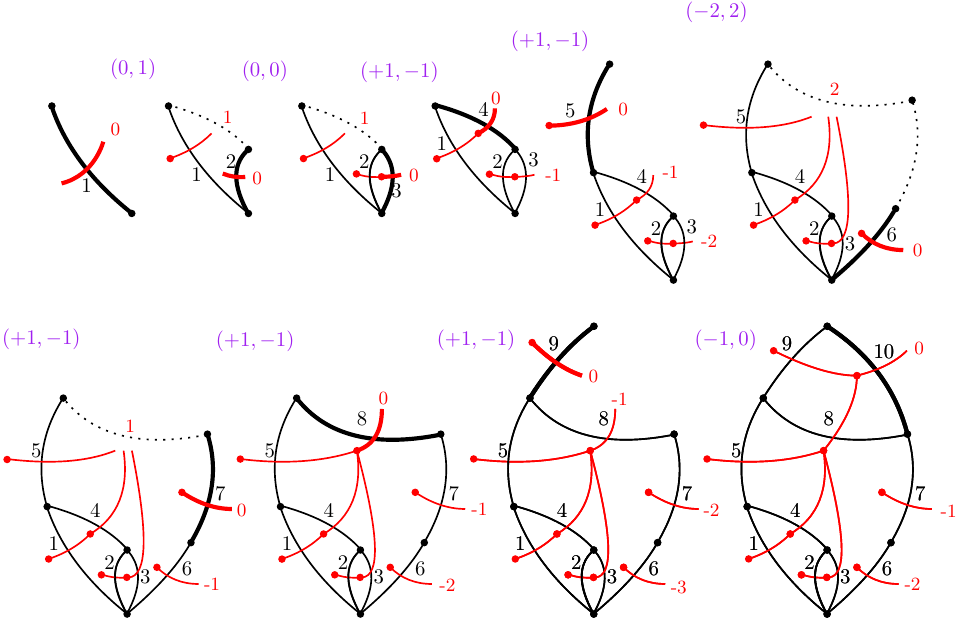}
  	\caption{The forests $\DualForest(m_1),\ldots,\DualForest(m_{10})$ drawn on top of the maps $m_1,\ldots,m_{10}$ of \cref{fig:bip_orient_from_walk}, with the increments of the walk $W$ in purple. We notice that for $m_{10}=m \in \mathcal O$, $\DualForest(m)$ is just $T(m^*)$ disconnected at its root.
  		\label{fig:bip_orient_m_k_F_k}}
  	\end{figure} 	
We remark that if $m\in \mathcal O$ (this is for instance the case of the last picture in \cref{fig:bip_orient_m_k_F_k}), then $\DualForest(m)$ is simply obtained by disconnecting $T(m^*)$ at its root, labeling all edges according to their position in the exploration of $T(m)$, and indexing root-edges by their (non-positive) height on the right boundary of $m$. Thus, the next result implies \cref{prop:eq_trees}.
  
  \begin{proposition}\label{prop:fortree}
	  	Let $I$ be a finite interval and $W \in \Walks_A(I)$. Set $Z = \wcp(W)$ and $m = \Inverse(W)$. Then
	  	\[\DualForest(m) = \fortree(Z).\]
  \end{proposition}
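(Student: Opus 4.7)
My plan is to proceed by induction on $|I|$. The base case $|I|=1$ is immediate: both $\DualForest(m)$ and $\fortree(Z)$ reduce to a single $\Z$-planted, labeled edge with index $0$ (the unique edge is on the right boundary and is active, while the unique trajectory $Z^{(j)}$ satisfies $Z^{(j)}_j=0$). For the inductive step with $I=[j,k]$, set $W' = W|_{[j,k-1]}$, $m' = \Inverse(W')$, $Z' = \wcp(W')$, so that the inductive hypothesis gives $\fortree(Z') = \DualForest(m')$. I then describe the update rule when passing to $\fortree(Z)$ and $\DualForest(m)$, and check that the two updates agree in each of the cases for $W_k - W_{k-1} \in \Steps$. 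Generically the update consists of three effects: (a) shifting the $\Z$-indices of existing root-edges, (b) re-attaching some former root-edges as children of a new parent, and (c) adding new root-edges, one of which corresponds to the label $k$.

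Case $W_k - W_{k-1} = (+1,-1)$. On the coalescent side every trajectory drops by $1$, so every root-index decreases by $1$; the new trajectory $Z^{(k)}$ starts at $0$, contributing a new root-edge of index $0$; and any tree of previous root-index $+1$ is re-attached as a child of the new edge $k$ (since its trajectory reaches $0$ at time $k$). On the map side, $m$ is obtained from $m'$ by activating $e_k$ directly above $e_{k-1}$ on the right boundary (either revealing an unexplored edge or creating a new one). The active edge moves up by one, so all heights relative to it decrease by $1$, giving (a); the new edge $e_k$ lies on the right boundary, hence $e_k^*$ is a root-edge of $T(m^*)$ with index $0$ by Rule (a) of \cref{def:F_k}, giving (c); and any root-edge of $\DualForest(m')$ of positive index $+1$ was counted by Rule (b) precisely because its parent in $T(m^*)$ was dual to the unexplored edge at height $+1$ above $e_{k-1}$, which in $m$ is $e_k$ itself, giving (b).

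Case $W_k - W_{k-1} = (-i_0,j_0)$. On the coalescent side, trajectories with previous value $\geq 0$ increase by $j_0$, those strictly below $-i_0$ increase by $i_0$, and the critical trajectories with previous value in $[-i_0,0)$ are all forced to the common value $j_0$ (joining the non-negative trajectories and $Z^{(k)}$ in the boundary case $j_0=0$). On the map side, $m$ is obtained from $m'$ by gluing a new face $f$ of left-degree $i_0+1$ and right-degree $j_0+1$ whose left boundary absorbs the $i_0+1$ topmost right-boundary edges of $m'$ (those whose duals had previous root-indices $-i_0,\ldots,0$), whose bottom-right edge is the new active edge $e_k$, and whose remaining $j_0$ right-boundary edges are fresh unexplored edges. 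At the new vertex $f$ of $m^*$ the $i_0+1$ absorbed duals become outgoing edges sharing a common parent in $T(m^*)$, namely the rightmost incoming edge at $f$, which is the dual of the topmost right-boundary edge of $f$ (an unexplored edge if $j_0>0$, or $e_k$ itself if $j_0=0$). Hence the $i_0+1$ affected trees are regrouped as children of a single common new parent, all other root-indices shift consistently with the move of the active edge to $e_k$, and the $j_0$ new unexplored edges contribute new root-edges of indices $1,\ldots,j_0$ via Rule (b). This matches the coalescent-side update with $j_0$ as the new common value of absorbed trajectories.

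The main obstacle is the second case, which requires correctly identifying the rightmost incoming dual edge at the new vertex $f$ in $m^*$ (using the cyclic order around $f$ coming from the embedding and the ``top-outgoing/right-face/bottom-incoming/left-face'' structure of $m^*$ as a bipolar orientation), and then checking that the plane order on the regrouped children inherited from this cyclic order coincides with the $\leq_Z$-order on the coalescing trajectories, which is determined by their previous values in $[-i_0,0]$. Once this bookkeeping is performed, both the structural and plane-order components of the update agree in each case, closing the induction and establishing $\fortree(Z) = \DualForest(m)$ for every finite interval $I$.
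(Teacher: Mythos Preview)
Your approach is the same as the paper's: induction on $|I|$, comparing the one-step updates of $\DualForest(m)$ and $\fortree(Z)$ according to the last increment. The base case and the $(+1,-1)$ case are handled correctly.

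The $(-i_0,j_0)$ case, however, contains a genuine error. You write that ``the $j_0$ new unexplored edges contribute new root-edges of indices $1,\ldots,j_0$ via Rule (b)''. This is false. By \cref{def:F_k}, the edge-set of $\DualForest(m)$ is $(e_i^*)_{i\in I}$; unexplored edges are \emph{not} in $\DualForest(m)$ and contribute nothing. Moreover, after the step there are no root-edges of index $1,\ldots,j_0-1$ at all. What actually happens when $j_0>0$ is: the $i_0+1$ duals whose primal edges were absorbed into the left boundary of $f$ acquire, in $T(m^*)$, the common parent dual to the top-right edge of $f$; since that edge is unexplored, these $i_0+1$ duals \emph{remain parent-less in $\DualForest(m)$}, all with the \emph{same} index $j_0$ by Rule (b). The new edge $e_k^*$ is a parent-less edge of index $0$ with no children. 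Root-edges of previous positive index $p$ stay parent-less with index $p+j_0$; those of previous index $<-i_0$ stay parent-less with index shifted by $+i_0$. When $j_0=0$, the top-right edge of $f$ is $e_k$ itself, so the $i_0+1$ absorbed duals become children of $e_k^*$ in $\DualForest(m)$. Your sentence ``the $i_0+1$ affected trees are regrouped as children of a single common new parent'' is therefore only correct for $\DualForest(m)$ in the sub-case $j_0=0$; for $j_0>0$ they remain root-edges (you conflate the parent in $T(m^*)$ with a parent in $\DualForest(m)$).

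Once this is corrected, the comparison with the coalescent side goes through exactly as in the paper: for $j_0>0$, trajectories with previous value in $[-i_0,0]$ all land at $j_0$ (parent-less, common index $j_0$), those $\geq 1$ shift by $+j_0$, those $<-i_0$ shift by $+i_0$, and $Z^{(k)}$ is a new root-edge of index $0$; for $j_0=0$, the trajectories landing at $0$ coalesce with $Z^{(k)}$ and become its children. This is precisely the two-sub-case split the paper makes.
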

  \begin{proof}
  	 We will proceed by induction on the size of $I$. If $I = \{k\}$, both $\DualForest(m)$ and $\fortree(Z)$ consist of a single edge planted at $0$ with label $k$. For the induction step, assume $I = [j,k+1]$ with $j\leq k$, and let $m' = \Inverse(W|_{[j,k]})$. We will compare how $F = \DualForest(m)$ is obtained from $F' = \DualForest(m')$ and how $\fortree(Z)$ is obtained from $\fortree(Z|_{[j,k]})$, distinguishing two cases according to the increment $W_{k+1}- W_k$ (compare the following with \cref{fig:comparison_forest_evolution}). 
  	
  	\noindent \underline{\emph{First case:}} $W_{k+1} - W_k = (+1,-1)$. 
  	In this case, $m$ is obtained from $m'$ by moving the active edge up by one. So that $F$ has an additional edge (dual to $e_{k+1}$) compared to $F'$. This edge is parent-less, has index $0$, and is now the parent of all (previously parent-less) edges that used to have index $1$. All other parent-less edges remain parent-less and their indices decrease by $1$.
  	
  	On the other hand, the coalescent-walk process $Z$ is obtained from $Z|_{[j,k]}$ by adding an new walk starting at zero at time $k+1$ and adding to all the walks in $Z|_{[j,k]}$ a step equal to $-1$. This makes all the walks that were at time $k$ at height $1$ to coalesce with the new walk. 
  	
  	Using \cref{def:F_k}, the arguments above show that if $F'$ is equal to $\fortree(Z|_{[j,k]})$, then $F'$ is equal to $\fortree(Z)$.
  	
  	\noindent \underline{\emph{Second case:}} $W_{k+1} - W_k = (-i,j)$, for $i\geq 0, j\geq 0$. In this case, $m$ is obtained from $m'$ by gluing a face $f$ of left-degree $i+1$ and right-degree $j+1$ to the right boundary of $m'$. The previous active edge $e_k$ is at the top-left of $f$, and the new active edge $e_{k+1}$ at the bottom-right of $f$. Parent-less edges of positive index in $F'$ are still parent-less edges in $F$, and their index increases by $j$. Parent-less edges of index smaller than $-i$ are still parent-less edges, and their index increases by $i$.  Parent-less edges of $F'$ with index $0,-1,\ldots, - i$ are children in $T(m^*)$ of the edge dual to the top-right edge $\overline e$ of $f$. We now have two sub-cases.
  	
  	\begin{enumerate}
  		\item If $j=0$, then $\overline e = e_{k+1}$ is the new active edge, so parent-less edges of $F'$ with index $0,\ldots,-i$ are now children of the new parent-less edge $e_{k+1}$ at height $0$.
  		\item If $j>0$, then $\overline e$ is not an explored edge, so parent-less edges of $F'$ with index $0,\ldots,-i$ are still parent-less edges, but their index is now the height of $\overline e$ which is $j$. Finally, $e_{k+1}$ is a new parent-less edge of index $0$ with no children.
  	\end{enumerate}	
  
  	On the other hand, the coalescent-walk process $Z$ is obtained from $Z|_{[j,k]}$ by adding an new walk starting at zero at time $k+1$ and adding 
  	\begin{itemize}
  		\item to all the walks in $Z|_{[j,k]}$ that were strictly positive at time $k$ a step equal to $j$;
  		\item to all the walks in $Z|_{[j,k]}$ that were at height $\ell\in\{0,-1,\dots,-i\}$ at time $k$ a step equal to $j+\ell$;
  		\item to all the other walks in $Z|_{[j,k]}$ a step equal to $-i$;
  	\end{itemize}
	This has the following consequence: 
	\begin{enumerate}
		\item If $j=0$, all the walks that were at time $k$ at height $\ell\in\{0,-1,\dots,-i\}$ coalesce with the new walk at zero; 
		\item If $j>0$, all the walks that were at time $k$ at height $\ell\in\{0,-1,\dots,-i\}$ coalesce among themselves at height $j$.
	\end{enumerate}
	
  	Using \cref{def:F_k}, the arguments above show that if $F'$ is equal to $\fortree(Z|_{[j,k]})$, then $F'$ is equal to $\fortree(Z)$ (both if $j=0$ and $j>0$).
  \end{proof}
\begin{figure}[tbh]
	\centering
	\includegraphics[scale=0.73]{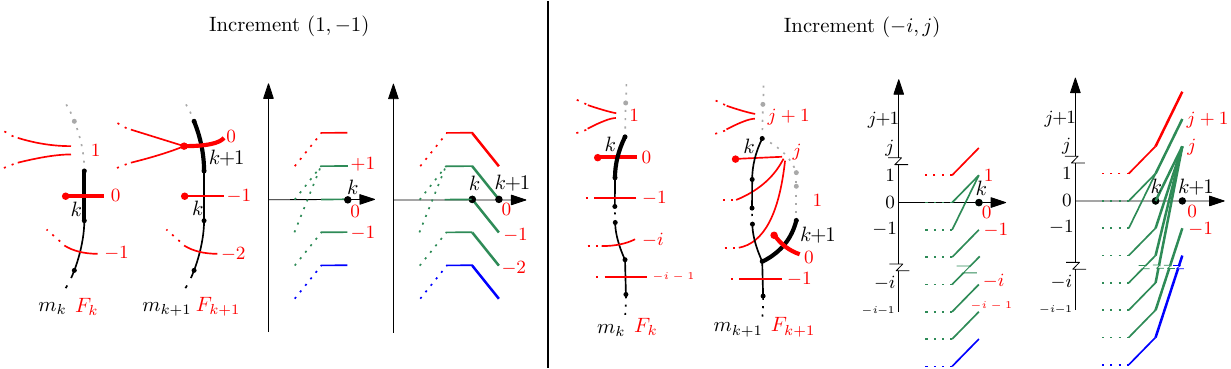}
	\caption{\label{fig:comparison_forest_evolution}
		The evolution between the forests $F=F_k$ and $F'=F_{k+1}$ in parallel with the increments of the coalescent-walk process $Z$ between times $k$ and $k+1$. On the left-hand side the first case of the proof of \cref{prop:fortree} and on the right-hand side the second one (subcase 2).}
\end{figure}

We collect for future use the following consequence.

\begin{corollary}\label{coroll:vertex degree}
	Let $I$ be a finite interval and $W \in \Walks_A(I)$. Set $Z = \wcp(W)$ and $m = \Inverse(W)$. Fix $i\in I$ and call $e$ the corresponding edge in $m$. Let $v$ the top vertex of $e$. 
	
	If $j>i$ is the smallest index greater than $i$ such that $Z^{(i)}_j=0$ and $Z^{(i)}_{j+1}<0$ (provided that such a $j$ exists), and $s>j+1$ is the smallest index greater than $j+1$ such that $Z^{(i)}_s\geq0$ (provided that such an $s$ exists) then
	the number of outgoing edges of $v$ in $m$ is bounded by $s-j-1$.
\end{corollary}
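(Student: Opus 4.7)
The plan is to bound the number of outgoing edges of $v$ by identifying them with the children of $e_j$ in $T(m)$ and then controlling the size of the subtree of $T(m)$ rooted at $e_j$ via the negative excursion of $Z^{(i)}$.

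First, I would observe that the step $W_{j+1}-W_j$ must equal $(1,-1)$. Indeed, inspecting the definition in \cref{eq:distrib_incr_coal}, a step of the form $(-i',j')$ sends the non-negative value $Z^{(i)}_j=0$ to $Z^{(i)}_{j+1}=j'\geq 0$, so only a $(1,-1)$-increment can force $Z^{(i)}_{j+1}<0$. By the combinatorial description of $(1,-1)$-steps in $\bow$ recalled in \cref{sect:KMSW}, this means $e_j$ and $e_{j+1}$ are consecutive in both $T(m)$ and $T(m^{**})$: letting $w$ denote their shared vertex (top of $e_j$ and bottom of $e_{j+1}$), the edge $e_j$ is the rightmost incoming edge of $w$ and $e_{j+1}$ is the leftmost outgoing edge of $w$.

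The crux is to show that $w=v$. Since $Z^{(i)}_t\geq 0$ on $[i,j]$, the recurrence in \cref{eq:distrib_incr_coal} gives $Z^{(i)}_t=Y_t-Y_i$ on this interval; in particular $Y_j=Y_i$ and $Y_t\geq Y_i$ for all $t\in[i,j]$. Via \cref{rem:height_process}, this translates into a statement about the DFS edge-exploration of $T(m^{**})$ (which visits $e_n,\dots,e_1$ in this order with height process $(Y_t+1)$): both $e_i$ and $e_j$ are visited at depth $Y_i+1$ in $T(m^{**})$, while the depths of the edges in between remain $\geq Y_i+1$. A standard property of the DFS edge-exploration of a plane tree then forces $e_i$ and $e_j$ to be siblings in $T(m^{**})$, hence to share the same parent. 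Since the parent of an edge $e$ in $T(m^{**})$ is by definition the leftmost outgoing edge of the top vertex of $e$ in $m$, and since an edge uniquely determines its top vertex, we conclude that $e_i$ and $e_j$ share the same top vertex in $m$, i.e., $w=v$.

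With $e_j$ identified as the rightmost incoming edge of $v$, the outgoing edges of $v$ in $m$ are precisely the children of $e_j$ in $T(m)$, and they lie inside the subtree $T_{e_j}$ of $T(m)$ rooted at $e_j$; this subtree is explored during times $[j,j+|T_{e_j}|-1]$. It suffices therefore to show that $|T_{e_j}|\leq s-j$. For $t\in[j+1,s-1]$ no coalescence back to the non-negative region has yet occurred, so by \cref{eq:distrib_incr_coal} the increments of $Z^{(i)}$ match those of $-X$; telescoping from $Z^{(i)}_{j+1}=-1$ and using $X_{j+1}=X_j+1$ (from the $(1,-1)$-step at time $j+1$) yields $Z^{(i)}_t=X_j-X_t$. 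For $t\in[j+1,j+|T_{e_j}|-1]$ the edge $e_t$ is a proper descendant of $e_j$ in $T(m)$, so $X_t+1>X_j+1$ and hence $Z^{(i)}_t<0$. By minimality of $s$ this forces $s-1\geq j+|T_{e_j}|-1$, so the number of outgoing edges of $v$ is at most $|T_{e_j}|-1\leq s-j-1$, as desired. The main obstacle is the identification $w=v$: converting the analytic inequality $Z^{(i)}\geq 0$ on $[i,j]$ into the geometric statement that the interface path returns to the vertex $v$ at time $j$ requires carefully translating the constraint $Y_t\geq Y_i=Y_j$ into the DFS sibling relation in $T(m^{**})$.
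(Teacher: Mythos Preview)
Your approach is correct and is essentially a detailed version of the paper's one-line proof, which merely asserts that outgoing edges of $v$ must carry labels in $\{j+1,\ldots,s-1\}$ and then counts. Your steps identifying the $(1,-1)$-increment at time $j$, proving $w=v$ via the $T(m^{**})$ sibling argument, and recognizing the outgoing edges of $v$ as the children of $e_j$ in $T(m)$ correctly supply the geometry that the paper leaves implicit.

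There is, however, a small circularity in the last step. You establish the identity $Z^{(i)}_t = X_j - X_t$ only for $t\in[j+1,s-1]$, and then invoke it for $t\in[j+1,j+|T_{e_j}|-1]$ to deduce $Z^{(i)}_t<0$ there; but this second range is contained in the first only once you already know $|T_{e_j}|\le s-j$, which is exactly the claim. The cleanest fix is to run the argument in the other direction. From the identity on $[j+1,s-1]$ you get $X_t>X_j$ there, so $e_{j+1},\ldots,e_{s-1}$ are proper descendants of $e_j$ in $T(m)$. Now analyse the step at time $s$: since $Z^{(i)}_{s-1}<0\le Z^{(i)}_s$, the increment $W_s-W_{s-1}=(-i',j')$ must fall in the coalescing case $-i'\le Z^{(i)}_{s-1}<0$, i.e.\ $X_{s-1}-X_j\le i'$, so $X_s=X_{s-1}-i'\le X_j$. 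Hence $e_s$ is \emph{not} a proper descendant of $e_j$, and by DFS contiguity the subtree rooted at $e_j$ is exactly $\{e_j,\ldots,e_{s-1}\}$, giving at most $s-j-1$ children.

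One further minor omission: you assert ``$Z^{(i)}_t\ge 0$ on $[i,j]$'' without proof. This holds because, by \cref{eq:distrib_incr_coal}, the only way a trajectory passes from a nonnegative value to a strictly negative one is via a $(1,-1)$ step taken from the value $0$; since $j$ is by hypothesis the first time this occurs, $Z^{(i)}$ cannot have gone negative before.
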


\begin{proof}
	It is enough to note that the only edges of $m$ that can be outgoing edges of $v$ are the ones corresponding to the indices $j+1,j+2,\dots, s-1$.
\end{proof}

\subsection{Anti-involutions for discrete coalescent-walk processes and the trees of bipolar orientations}
\label{sect:anti-invo}

We now want to investigate the relations between the four trees $T(m)$, $T(m^*)$, $T(m^{**})$, $T(m^{***})$ of a bipolar orientation $m$ (and its dual maps) and some corresponding coalescent-walk processes. The results presented in this section will be also useful for \cref{sec:final}.

\subsubsection{The reversed coalescent-walk process}

Let $m$ be a bipolar orientation,  $W=\bow(m)$ be the corresponding tandem walk, and $Z=\wcp\circ\bow(m)=\wcp(W)$ be the corresponding coalescent-walk process. We set

\begin{itemize}
	\item $LT(m^*)$ to be the tree $T(m^*)$ with edges labeled according to the order given by the exploration of $T(m)$.
	\item $\labtree(Z)$ to be the tree obtained by attaching all the edge-labeled trees of $\fortree(Z)$ to a common root.
\end{itemize}

We saw in \cref{prop:eq_trees} that $LT(m^*)=\labtree(Z)$.
We now want to recover from the walk $W$ (and a new associated coalescent-walk process) the tree $LT(m^{***})$, i.e.\ the tree $T(m^{***})$ with edges labeled according to the order given by the exploration of $T(m^{**})$. For that, we have to consider the following.

\begin{definition}
	Fix $n\in\Z_{>0}.$
	Given a one dimensional walk $X=(X_t)_{t\in [n]}$ we denote by $\cev{X}$ the time reversed walk $(X_{n+1-t})_{t\in [n]}$.
	Given a two-dimensional walk $W=(X,Y)=(X_t,Y_t)_{t\in [n]}$, we denote by $\cev{W}$ the time reversed and coordinates swapped walk $(\cev{Y},\cev{X})$.
\end{definition}

\begin{proposition}\label{prop:rev_coal_prop}
	Let $m$ be a bipolar orientation and  $W=\bow(m)$ be the corresponding walk. Consider the walk $\cev{W}$ and the corresponding coalescent-walk process $\cev{Z}\coloneqq\wcp(\cev{W})$.
	Then 
	$$\bow (m^{**})=\cev{W} \quad\text{and}\quad LT(m^{***})= \labtree(\cev{Z}).$$
\end{proposition}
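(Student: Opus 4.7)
The strategy is to establish the two claims in sequence: the first is a direct bookkeeping with \cref{defn:KMSW2}, and the second is then just an application of \cref{prop:eq_trees} to the bipolar orientation $m^{**}$.

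First I would prove $\bow(m^{**}) = \cev{W}$ by unwinding \cref{defn:KMSW2}. Writing $m' = m^{**}$, recall three facts recalled in the excerpt: (i) the edges of $m'$ are the edges of $m$ with their orientation reversed, so the top and bottom vertices of each edge are exchanged; (ii) $(m')^{**} = m^{****} = m$; (iii) the contour exploration of $T(m^{**})$ visits the edges of $m$ in the reverse order $e_{|m|},\ldots,e_1$. Consequently the $t$-th edge of $m'$ in its canonical ordering is $e'_t = e_{|m|+1-t}$. Applying \cref{defn:KMSW2} to $m'$, using $T(m') = T(m^{**})$ and $T((m')^{**}) = T(m)$, yields
\begin{align*}
X'_t &= \text{height in } T(m^{**}) \text{ of the bottom vertex of } e'_t = \text{height in } T(m^{**}) \text{ of the top vertex of } e_{|m|+1-t} = Y_{|m|+1-t},\\
Y'_t &= \text{height in } T(m) \text{ of the top vertex of } e'_t = \text{height in } T(m) \text{ of the bottom vertex of } e_{|m|+1-t} = X_{|m|+1-t}.
\end{align*}
Hence $\bow(m^{**}) = (\cev{Y},\cev{X}) = \cev{W}$.

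For the second claim, I would apply \cref{prop:eq_trees} to the bipolar orientation $m^{**}$ in place of $m$. That proposition identifies the dual tree $T((m^{**})^{*}) = T(m^{***})$, labeled according to the exploration order of $T(m^{**})$, with the tree $\labtree(\wcp(\bow(m^{**})))$. By definition this labeled dual tree is exactly $LT(m^{***})$, and by the first part $\bow(m^{**}) = \cev{W}$, so $\wcp(\bow(m^{**})) = \wcp(\cev W) = \cev Z$. Combining, $LT(m^{***}) = \labtree(\cev Z)$.

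The only non-routine step is the first, where one must keep track of the simultaneous reversals of edge orientation (swapping top/bottom endpoints) and of the edge ordering along the interface path; I would double-check on the example of \cref{fig:bip_orient _with_trees} that these two reversals compose consistently with the swap of coordinates in the definition of $\cev{W}$. Once the first identity is in hand, the tree identity is immediate from \cref{prop:eq_trees}, so no further obstacles arise.
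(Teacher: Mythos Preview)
Your proof is correct and follows the same approach as the paper: apply \cref{prop:eq_trees} to $m^{**}$ and use the identity $\bow(m^{**})=\cev{W}$. The paper's proof simply states the latter identity as something to ``note'', whereas you spell out the verification from \cref{defn:KMSW2} explicitly; your added detail is sound and arguably makes the argument more self-contained.
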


An example of the coalescent-walk process $\cev{Z}=\wcp(\cev{W})$ is given on the left-bottom side of \cref{fig:anti-invo} in the case of the bipolar orientation $m$ considered in \cref{fig:bip_orient}, that is the map that we always used for our examples.

\begin{proof}[Proof of \cref{prop:rev_coal_prop}]
	Note that using \cref{prop:eq_trees} with the map $m^{**}$ (instead of $m$) and the walk $W^{**} = \bow (m^{**})$ (instead of $W = \bow (m)$) we obtain that 
	$LT(m^{***})=\labtree(Z^{**}),$
	where $Z^{**} = \wcp(W^{**})$. In order to conclude, it is enough to note that $W^{**}=\cev{W}$ and so $\labtree(Z^{**})=\labtree(\cev{Z})$.
\end{proof}

\subsubsection{Two anti-involution mappings}

We now know that given a bipolar orientation $m$ and the corresponding walk $W=\bow(m)$, we can read the trees $LT(m^*)$ and $LT(m^{***})$ in the coalescent-walk processes $Z=\wcp(W)$ and $\cev{Z}=\wcp(\cev{W})$ respectively.
Obviously, considering the bipolar map $m^*$ and the corresponding walk $W^*=\bow(m^*)$, we can read the trees $LT(m^{**})$ and $LT(m^{****})=LT(m)$ in the coalescent-walk processes $Z^*=\wcp(W^*)$ and $\cev{Z^*}=\wcp(\cev{W^*})$ respectively. 

Actually, we can determine the walk $W^*=\bow(m^*)$ directly from the coalescent-walk processes $Z$ and $\cev{Z}$, as explained in \cref{prop:anti-inv} below. 
We recall that the discrete local time process $L_Z = \left(L_Z^{(i)}(j)\right)$, $1\leq i \leq j \leq n$,  was defined by
$L_Z^{(i)}(j)=\#\left\{k\in [i,j]\middle|Z^{(i)}_k=0\right\}.$ We also recall (see \cref{thm:rotation}) that $\sigma^*$ denotes the permutation obtained by rotating the diagram of a permutation $\sigma$ clockwise by angle $\pi/2$.

\begin{proposition}\label{prop:anti-inv}
	Let $m$ be a bipolar orientation of size $n$. Set $W^*=(X^*,Y^*)=\bow(m^*)$, $\sigma=\bobp(m)$, $W=\bow(m)$, $Z=\wcp(W)$ and $\cev{Z}=\wcp(\cev{W})$. Then
	\begin{equation*}
	(X^*_{i})_{i\in[n]}=\left(L_Z^{(\sigma^{-1}(i))}(n)-1\right)_{i\in [n]}\quad\text{and}\quad
	(Y^*_{i})_{i\in[n]}=\left(L_{\cev{Z}}^{(\sigma^{*}(i))}(n)-1\right)_{i\in [n]}.
	\end{equation*} 
\end{proposition}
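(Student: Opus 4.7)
The first equation is a direct restatement of \cref{cor:local_time}, which encodes the height process of the dual tree $T(m^*)$ (hence the first coordinate $X^*$ of $\bow(m^*)$) in terms of the local-time functional of $Z$.

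For the second equation the plan is to apply \cref{cor:local_time} not to the original data $(m, W, Z, \sigma)$ but to the commuting diagram associated with the orientation-reversed map $m^{**}$. By \cref{prop:rev_coal_prop}, this diagram reads $(m^{**}, \cev W, \cev Z, \bobp(m^{**}))$, so the conclusion of \cref{cor:local_time} expresses the first coordinate of $\bow((m^{**})^*) = \bow(m^{***})$ as $L_{\cev Z}^{(\bobp(m^{**})^{-1}(i))}(n) - 1$. A second use of \cref{prop:rev_coal_prop}, this time applied to $m^*$, identifies $\bow(m^{***}) = \bow((m^*)^{**}) = \cev{W^*} = (\cev{Y^*}, \cev{X^*})$, so its first coordinate equals $\cev{Y^*}$, i.e.\ $Y^*$ read backwards. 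Equating the two descriptions yields
\begin{equation*}
Y^*_{n+1-i} = L_{\cev Z}^{(\bobp(m^{**})^{-1}(i))}(n) - 1.
\end{equation*}

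The final step is to rewrite this in the form stated in the proposition, which reduces to identifying $\bobp(m^{**})^{-1}(n+1-i)$ with $\sigma^*(i)$. From \cref{thm:rotation} one has $\bobp(m^*) = \sigma^*$, and iterating gives $\bobp(m^{**}) = \sigma^{**}$. The geometric interpretation of $\sigma \mapsto \sigma^*$ as a clockwise rotation of the diagram by $\pi/2$ shows that $\sigma^* = \rho \circ \sigma^{-1}$, where $\rho(k) = n+1-k$ is the order-reversing involution on $[n]$; a short computation then yields $\sigma^{**} \circ \sigma^* = \rho$, which is exactly the identity $(\sigma^{**})^{-1}(n+1-i) = \sigma^*(i)$ needed to absorb the index reversal. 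The whole proof is therefore a bookkeeping exercise combining \cref{cor:local_time} with the two layers of symmetry provided by \cref{prop:rev_coal_prop} (primal vs.\ orientation-reversed) and \cref{thm:rotation} (primal vs.\ dual); the only slightly delicate point is to correctly track the four dualities $m, m^*, m^{**}, m^{***}$ and their permutation analogues.
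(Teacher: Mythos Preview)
Your proof is correct and follows essentially the same route as the paper's: apply \cref{cor:local_time} to the diagram associated with $m^{**}$, identify $\bow(m^{***})$ with $\cev{W^*}$, and finish with the permutation identity $(\sigma^{**})^{-1}(n+1-i)=\sigma^*(i)$. The only cosmetic differences are that the paper invokes \cref{rem:height_process} directly (rather than a second call to \cref{prop:rev_coal_prop}) to obtain $X^{***}_i=Y^*_{n+1-i}$, and packages the final identity as $\pi(n+1-i)=(\pi^{-1})^{***}(i)$ instead of your computation $\sigma^{**}\circ\sigma^*=\rho$; these are equivalent.
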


\begin{proof}
	The fact that $(X^*_{i})_{i\in[n]}=\left(L_Z^{(\sigma^{-1}(i))}(n)-1\right)_{i\in [n]}$ follows from \cref{cor:local_time}.
	Let $Z^{**}=\wcp\circ\bow(m^{**})$. Using \cref{cor:local_time} with the map $m^{**}$ (instead of $m$) and \cref{rem:height_process} we obtain that 
	$\left(Y^*_{n+1-i}\right)_{i\in[n]}=\left(L_{Z^{**}}^{OP(m^{**})^{-1}(i)}(n)-1\right)_{i\in [n]},$
	and so, since we know from \cref{prop:rev_coal_prop} that $Z^{**}=\cev{Z}$, we conclude that
	$(Y^*_{i})_{i\in[n]} = \left(L_{\cev{Z}}^{OP(m^{**})^{-1}(n+1-i)}(n)-1\right)_{i\in [n]}.$
	It remains to show that $OP(m^{**})^{-1}(n+1-i)=\sigma^{*}(i)$. From \cref{thm:rotation} we have that 
		$$OP(m^{**})^{-1}(n+1-i)=(OP(m)^{**})^{-1}(n+1-i)=OP(m)^{*}(i)=\sigma^{*}(i),$$
		where in the second equality we used that, for any permutation $\pi$ of size $n$, $\pi(n+1-i)=(\pi^{-1})^{***}(i)$. This ends the proof.
\end{proof}

From the above proposition it is meaningful to consider the following two mappings. 
We set $\wpc$ to be the mapping from the set of walks $\mathcal{W}$ to the set of pairs of discrete coalescent-walk processes $\mathcal{C}\times\mathcal{C}$, defined by
$$\wpc(W)=(\wcp(W),\wcp(\cev{W}))\quad\text{for all $W\in\mathcal{W}$}.$$
We also set $\pcw$ to be the mapping from the set of pairs of coalescent-walk processes in $\wpc(\mathcal{W})\subseteq \mathcal C \times \mathcal C$ to the set of walks $\mathcal{W}$, defined by
$$\pcw(Z,\cev{Z})=\left(L_Z^{(\sigma^{-1}(i))}(|Z|)-1,L_{\cev{Z}}^{(\sigma^{*}(i))}(|Z|)-1\right)_{i\in [|Z|]},\quad\text{for all $(Z,\cev{Z})\in \wpc(\mathcal{W})$},$$
where, if $(Z,\cev{Z})=\wpc(W)$, then $\sigma=\bobp\circ\bow^{-1}(W)=\cpbp\circ\wcp(W)$, i.e.\ $\sigma$ is the Baxter permutation associated with $W$.

\medskip

From our constructions and \cref{prop:rev_coal_prop,prop:anti-inv} we have the following.

\begin{theorem} \label{thm:discrete_invo}
	Fix $W^0\in\mathcal{W}$. Consider the following sequence
	$$W^0\xmapsto{ \wpc }(Z_1,\cev{Z}_1)\xmapsto{ \pcw }W^1\xmapsto{ \wpc }(Z_2,\cev{Z}_2)\xmapsto{ \pcw }W^2\xmapsto{ \wpc }\dots\xmapsto{ \wpc }(Z_4,\cev{Z}_4)\xmapsto{ \pcw }W^4.$$
	Then setting $m=\bow^{-1}(W^0)$, i.e.\ the bipolar map associated with $W^0$, we have that
	\begin{align*}
	&W^i=\bow\left(m^{*^{i}}\right),\quad\text{for}\quad i=0,1,2,3,4,\\
	\Big(\labtree\left(Z_i\right),&\labtree\left(\cev{Z}_i\right)\Big)=\left(LT\left(m^{*^i}\right),LT\left(m^{*^{i+2}}\right)\right),\quad\text{for}\quad i=1,2,3,4.
	\end{align*}
	Therefore (since $m=m^{****}$) $W^0=W^4$ and so $(\pcw\circ\wpc)^4=\Id=(\wpc\circ\pcw)^4$.
\end{theorem}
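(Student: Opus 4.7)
The plan is to prove the two displayed equalities by a simple induction on $i$, feeding each step into the next, and then to deduce the final involution identities from $m=m^{****}$ and the injectivity of $\wpc$.

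\textbf{Base case.} For $i=0$, the equality $W^{0}=\bow(m^{*^{0}})=\bow(m)$ is just the definition of $m$. The tree equalities are only asserted for $i\geq 1$, so there is nothing to check.

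\textbf{Inductive step.} Assume $W^{i-1}=\bow(m^{*^{i-1}})$. Since $W^{i-1}\in\mathcal{W}$, the pair $(Z_i,\cev{Z}_i)=\wpc(W^{i-1})$ lies in $\wpc(\mathcal{W})$, so $\pcw$ is well-defined on it. I apply the three ingredients already available in the paper to the bipolar orientation $m^{*^{i-1}}$:
\begin{itemize}
\item By \cref{prop:eq_trees}, $\labtree(Z_i)=\labtree(\wcp\circ\bow(m^{*^{i-1}}))=LT((m^{*^{i-1}})^{*})=LT(m^{*^{i}})$.
\item By \cref{prop:rev_coal_prop}, $\labtree(\cev{Z}_i)=LT((m^{*^{i-1}})^{***})=LT(m^{*^{i+2}})$.
\item By \cref{prop:anti-inv} (which is exactly the statement that $\pcw\circ\wpc\circ\bow=\bow\circ(\cdot)^{*}$),
$$W^{i}=\pcw(\wpc(W^{i-1}))=\pcw(\wpc(\bow(m^{*^{i-1}})))=\bow((m^{*^{i-1}})^{*})=\bow(m^{*^{i}}).$$
\end{itemize}
This closes the induction and proves both displayed equalities for every relevant $i$.

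\textbf{Conclusion of the involution identities.} Since $m^{****}=m$, the $i=4$ case of the walk equality gives $W^{4}=\bow(m)=W^{0}$. As $W^{0}\in\mathcal{W}$ was arbitrary, this reads $(\pcw\circ\wpc)^{4}=\Id_{\mathcal{W}}$. To transport this to $(\wpc\circ\pcw)^{4}=\Id$ on $\wpc(\mathcal{W})$, I observe that any $(Z,\cev{Z})\in\wpc(\mathcal{W})$ is of the form $\wpc(W^{0})$ for some $W^{0}\in\mathcal{W}$; iterating the identity $\wpc\circ\pcw\circ\wpc=\wpc\circ(\pcw\circ\wpc)$ four times and using the result just obtained yields $(\wpc\circ\pcw)^{4}(Z,\cev{Z})=\wpc(W^{4})=\wpc(W^{0})=(Z,\cev{Z})$.

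\textbf{Anticipated difficulty.} There is no substantial obstacle; the whole argument is bookkeeping on the exponent of $*$, once \cref{prop:eq_trees}, \cref{prop:rev_coal_prop} and \cref{prop:anti-inv} are in hand. The only point that deserves care is the index shift for the reversed coalescent-walk process (the $+3$ arising from $m\mapsto m^{***}$ in \cref{prop:rev_coal_prop} versus the $+2$ appearing in the statement), which is reconciled by noting that $(Z_i,\cev{Z}_i)=\wpc(W^{i-1})$ and applying \cref{prop:rev_coal_prop} to $m^{*^{i-1}}$ rather than to $m^{*^{i}}$.
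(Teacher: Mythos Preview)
Your proof is correct and follows exactly the approach the paper intends: the paper states that the theorem follows directly ``from our constructions and \cref{prop:rev_coal_prop,prop:anti-inv}'' without spelling out the induction, and you have simply made that explicit. The bookkeeping on the exponent of $*$ is handled correctly, including the point you flag about applying \cref{prop:rev_coal_prop} to $m^{*^{i-1}}$.
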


The coalescent-walk processes $Z_1$, $\cev{Z}_1$, $Z_2$ and $\cev{Z}_2$, and the corresponding edge-labeled trees $LT(m^{*})$, $LT(m^{***})$, $LT(m^{**})$ and $LT(m)$, in the specific case of our running example, are plotted in \cref{fig:anti-invo}. 

\begin{figure}
	\includegraphics[scale=.45]{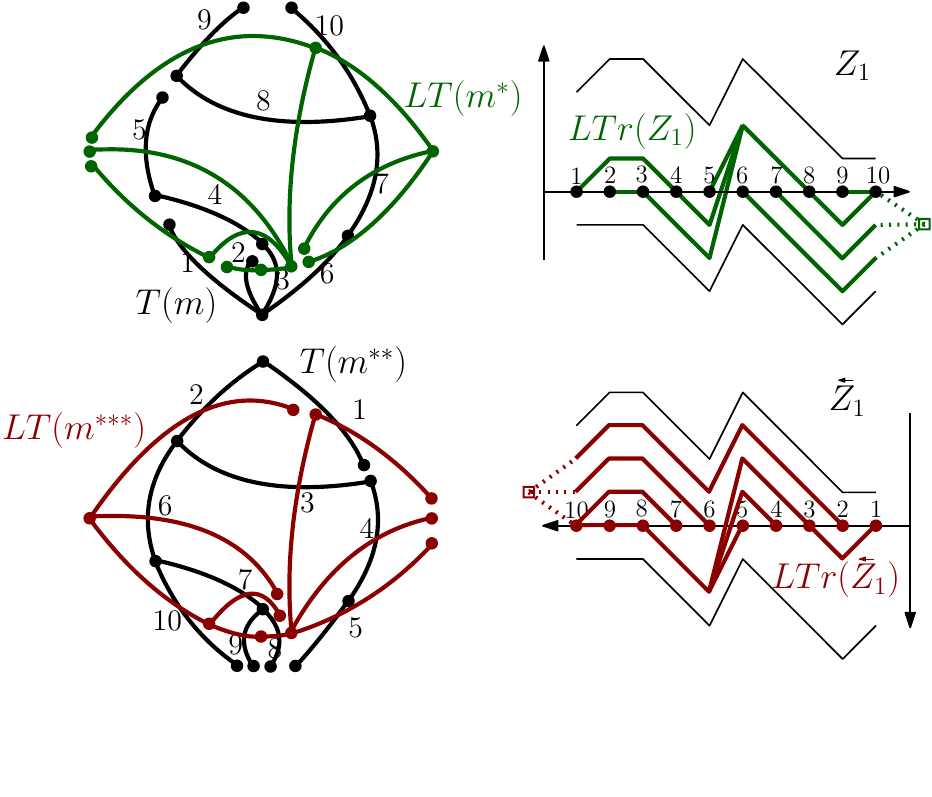}
	\includegraphics[scale=.45]{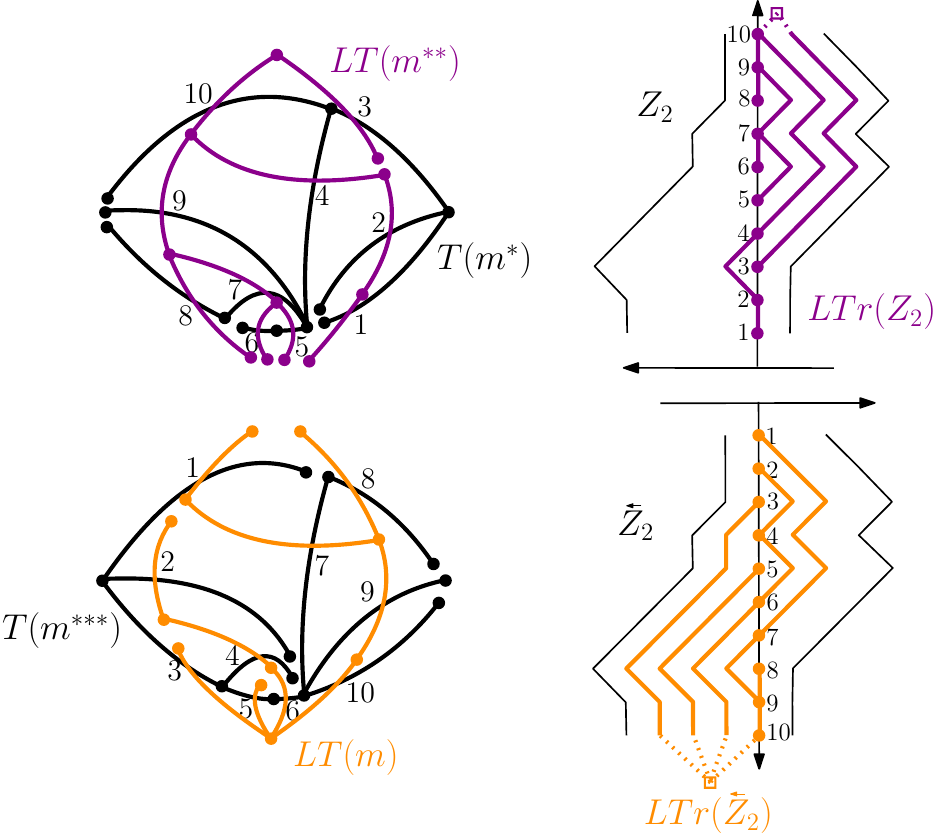}
	\caption{\label{fig:anti-invo} On the left-hand side the coalescent-walk processes $Z_1$ and $\cev{Z}_1$ and the corresponding edge-labeled trees $LT(m^{*})$ and $LT(m^{***})$. On the right-hand side the coalescent-walk processes $Z_2$ and $\cev{Z}_2$, and the corresponding edge-labeled trees $LT(m^{**})$ and $LT(m)$. We oriented the coalescent-walk processes in such a way that the comparison between trees is convenient.}
\end{figure}

\section{Local limit results}
\label{sec:local}

The goal of this section is to prove \cref{thm:local}, a joint quenched local convergence for the four families of objects put in correspondence by the mappings in the commutative diagram in \cref{eq:comm_diagram}.

\subsection{Mappings of the commutative diagram in the infinite-volume}\label{sect:infinite_bij}

Let $I$ be a (finite or infinite) interval of $\Z$. Recall that $\Walks_\Steps(I)$ is the subset of two-dimensional walks with time space $I$ and with increments in $\Steps$ (that was defined in \cref{eq:admis_steps} page~\pageref{eq:admis_steps}). Recall also that $\Coals(I)$ is the set of coalescent-walk processes on $I$, and that the mapping $\wcp : \Walks_\Steps(I) \to \Coals(I)$ sends walks to coalescent-walk processes, both in the finite and infinite-volume case.

In this section we extend the mappings $\cpbp$ and $\Inverse$ defined earlier to infinite-volume objects. Recall that the mapping $\Inverse$ was defined on a set of unconditioned walks in such a way that the restriction to $\mathcal W$ is the inverse of $\bow$ (see \cref{sect:KMSW}).

We start with the mapping $\cpbp$. We denote by $\Perms(I)$ the set of total orders on $I$, which we call permutations on $I$. This terminology makes sense because for $n\geq 1$, the set $\Perms_n$ of permutations of size $n$ is readily identified with $\Perms([n])$, by the mapping $\sigma \mapsto \preccurlyeq_\sigma$, where 
\begin{equation*}
i \preccurlyeq_\sigma j\qquad\text{if and only if}\qquad \sigma(i)\leq\sigma(j)\;.
\end{equation*} 
Then we can extend $\cpbp: \Coals(I) \to \Perms(I)$ to the case when $I$ is infinite, by setting $\cpbp(Z)$ to be the total order $\leq_Z$  on $I$ defined in \cref{sect:from_coal_to_perm}. This is consistent, through the stated identification, with our previous definition of $\cpbp$ when $I = [n]$.

Finally, we also extend $\Inverse$ to the infinite-volume case. We need first to clarify what is our definition of \emph{infinite planar maps}. From now on,  a planar map is a gluing along edges of a finite number of finite polygons.

\begin{definition}\label{defn:Inf_map}
An \emph{infinite oriented quasi-map} is an infinite collection of finite polygons with oriented edges, glued along some of their edges in such a way that the orientation is preserved. 
In the case the graph corresponding to an infinite oriented quasi-map is locally finite (i.e.\ every vertex has finite degree) then we say that it is an \emph{infinite oriented map}. 

We call \emph{boundary} of an infinite oriented map, the collection of edges of the finite polygons that are not glued with any other edge.
An infinite oriented map $m$ is 
\begin{itemize}
	\item \textit{simply connected} if for every finite submap $f\subset m$ there exists a finite submap $f'\subset m$ which is a planar map (i.e.\ is simply connected) with $f\subset f'\subset m$;
	\item \textit{boundaryless} if the boundary of $m$ is empty.
\end{itemize}
When these two conditions are verified, we say that $m$ is an \textit{infinite map of the plane}\footnote{We point out that when the map is locally finite, it is well defined as a topological manifold, and the combinatorial notions of simple connectivity and boundarylessness defined above are equivalent to the topological ones. By the classification of two-dimensional surfaces, $m$ is an infinite map of the plane if and only if it is homeomorphic to the plane.}.
\end{definition}

Let $I$ be an infinite interval and $w\in \Walks_\Steps(I)$. We recall that we can view a finite bipolar orientation as a finite collection of finite inner faces, together with an adjacency relation on the oriented edges of these faces. This allows us to construct $\Inverse(w)$ as a projective limit, as follows: from \cref{prop:inverse}, if $J$ and $J'$ are finite intervals and $J'\subset J \subset I$ then $\Inverse(w|_{J'})$ is a submap of $\Inverse(w|_J)$ defined in a unique way. This means that the face set of $\Inverse(w|_{J'})$ is included in that of $\Inverse(w|_J)$, and that two faces of $\Inverse(w|_{J'})$ are adjacent if and only if they are adjacent through the same edge in $\Inverse(w|_J)$.
Then, taking $J_n$ a growing sequence of finite intervals such that $I = \cup_n J_n$, we set $\Inverse(w) := \mathrlap {\  \uparrow} \bigcup_{n\geq 0} \Inverse(w|_{J_n})$ to be an infinite collection of finite polygons, together with a gluing relation between the edges of these faces, i.e.\ $\Inverse(w)$ is an infinite oriented quasi-map.

\subsection{Random infinite limiting objects}\label{sect:inf_loc_obj}
We define here what will turn out to be our local limiting objects. Recall that $\nu$ denotes the following probability distribution on $A$ (defined in \cref{eq:admis_steps} page~\pageref{eq:admis_steps}):
\begin{equation}\label{eq:walk_distrib}
\nu = \frac 12 \delta_{(+1,-1)} + \sum_{i,j\geq 0} 2^{-i-j-3}\delta_{(-i,j)},\quad\text{where $\delta$ denotes the Dirac measure,}
\end{equation}
and recall that $\overline{\bm W} = (\overline{\bm X},\overline{\bm Y}) = (\overline{\bm W}_t)_{t\in \Z}$ is a bi-directional two-dimensional random  walk with step distribution $\nu$, having value $(0,0)$ at time zero. The interest of introducing the probability measure $\nu$ resides in the following way of obtaining uniform elements of $\mathcal W_n$ as conditioned random walks. For all $n\geq 1$, let $\Walks_{A,\exc}^n \subset \Walks_A([0,n-1])$ be the set of two-dimensional walks $W=(W_t)_{0\leq t\leq n-1}$ in the non-negative quadrant, starting and ending at $(0,0)$, with increments in $A$. Notice that for $n\geq 1$, the mapping $\mathcal \Walks_{A,\exc}^{n+2} \to \mathcal W_{n}$ removing the first and the last step, i.e.\ $W \mapsto (W_{t})_{1\leq t \leq n}$,
is a bijection. A simple calculation then gives the following (obtained also in \cite[Remark 2]{MR3945746}):

\begin{proposition}\label{prop:unif_law}
	Conditioning on $\{(\overline{\bm W}_t)_{0\leq t\leq {n+1}}  \in \mathcal \Walks_{A,\exc}^{n+2}\}$, the law of  $(\overline{\bm W}_t)_{0\leq t \leq n+1}$ is the uniform distribution on $\Walks_{A,\exc}^{n+2}$, and the law of $(\overline{\bm W}_t)_{1\leq t \leq n}$ is the uniform distribution on $\mathcal W_n$.
\end{proposition}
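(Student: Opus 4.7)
The plan is a direct computation: show that the $\nu^{\otimes(n+1)}$-probability of any fixed trajectory in $\Walks_{A,\exc}^{n+2}$ does not depend on the trajectory, so that conditioning on the event $\{(\overline{\bm W}_t)_{0\leq t\leq n+1}\in \Walks_{A,\exc}^{n+2}\}$ yields the uniform measure. The second assertion will follow from the stated bijection between $\Walks_{A,\exc}^{n+2}$ and $\mathcal W_n$ via deletion of the first and last increment.

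Concretely, fix $w\in \Walks_{A,\exc}^{n+2}$ and suppose that among its $n+1$ increments, exactly $k$ are of the form $(+1,-1)$ while the remaining ones are $(-i_1,j_1),\dots,(-i_{n+1-k},j_{n+1-k})$ with $i_\ell, j_\ell\in\Z_{\geq 0}$. Since $w$ starts and ends at $(0,0)$, summing the coordinates of the increments gives
\[k-\sum_{\ell=1}^{n+1-k}i_\ell=0\quad\text{and}\quad -k+\sum_{\ell=1}^{n+1-k}j_\ell=0,\]
hence $\sum_\ell(i_\ell+j_\ell)=2k$. Inserting this into the product of step probabilities obtained from \eqref{eq:walk_distrib},
\[\P\bigl((\overline{\bm W}_t)_{0\leq t\leq n+1}=w\bigr)=\Bigl(\tfrac12\Bigr)^{k}\prod_{\ell=1}^{n+1-k}2^{-i_\ell-j_\ell-3}=2^{-k}\cdot 2^{-2k}\cdot 2^{-3(n+1-k)}=2^{-3(n+1)},\]
which is independent of $w$. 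This proves the first statement.

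For the second statement, I would simply observe that the map $\Walks_{A,\exc}^{n+2}\to\mathcal W_n$ given by $W\mapsto (W_t)_{1\leq t\leq n}$ is a bijection, as already recorded in the paragraph preceding the proposition: the constraint that $W_0=(0,0)$ and that the first increment lies in $\Steps$ forces $W_1$ to be on the $y$-axis, and symmetrically the constraint that $W_{n+1}=(0,0)$ forces $W_n$ to lie on the $x$-axis, so the inverse map recovers the removed endpoints uniquely. Pushing the uniform distribution on $\Walks_{A,\exc}^{n+2}$ through this bijection gives the uniform distribution on $\mathcal W_n$. No step here presents any real obstacle; the key content is the algebraic identity that the constraint of being an excursion in the quadrant exactly cancels the $i$ and $j$ dependence of the weights in $\nu$, which is precisely the point of the $2^{-i-j-3}$ normalization chosen in \eqref{eq:walk_distrib}.
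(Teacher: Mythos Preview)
Your proof is correct and is precisely the ``simple calculation'' the paper alludes to without spelling out: the key identity $\sum_\ell(i_\ell+j_\ell)=2k$ forced by the excursion condition makes every trajectory have weight $2^{-3(n+1)}$, and the bijection with $\mathcal W_n$ is exactly the one the paper records in the paragraph preceding the proposition.
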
 

Let $\overline{\bm Z} = \wcp(\overline{\bm W})$ be the corresponding coalescent-walk process.

\begin{proposition}\label{prop:trajectories_are_rw}
	For every $t\in \Z$, $\overline{\bm Z}^{(t)}$ has the distribution of a random walk with the same step distribution as $\overline{\bm Y}$ (which is the same as that of $-\overline{\bm X}$).
\end{proposition}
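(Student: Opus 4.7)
The plan is to show that $\overline{\bm Z}^{(t)}$ is a time-homogeneous Markov chain (with respect to the filtration generated by $\overline{\bm W}$) whose one-step transition kernel coincides with that of a random walk having the $\overline{\bm Y}$ step distribution. Since $\overline{\bm Z}^{(t)}_t = 0$, this immediately yields the claim.

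The Markov property follows directly from \cref{eq:distrib_incr_coal}: the increment $\overline{\bm Z}^{(t)}_{\ell+1} - \overline{\bm Z}^{(t)}_\ell$ is a deterministic function of the current value $\overline{\bm Z}^{(t)}_\ell$ and the fresh increment $\overline{\bm W}_{\ell+1} - \overline{\bm W}_\ell$, which is independent of the past of $\overline{\bm W}$ and hence of $\overline{\bm Z}^{(t)}_{\ell}$. It remains to compute the conditional law of this increment given $\overline{\bm Z}^{(t)}_\ell = z$ and verify that it equals the $\overline{\bm Y}$ step distribution for every $z \in \mathbb{Z}$; this is the crux of the proof.

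When $z \geq 0$ the computation is immediate: using \cref{eq:distrib_incr_coal}, the step is $-1$ with probability $\nu(\{(+1,-1)\}) = 1/2$ and equals $j$ for each $j \geq 0$ with probability $\sum_{i \geq 0} \nu(\{(-i,j)\}) = 2^{-j-2}$, which is exactly the $\overline{\bm Y}$ step distribution. When $z = -k$ with $k \geq 1$, one must distinguish, among $\overline{\bm W}$-steps of the form $(-i,j)$, whether $i < k$ or $i \geq k$: the first case (branch $Z^{(t)}_\ell < -i$) contributes a step equal to $i$, forcing $i \in [0, k-1]$; the second (branch $-i \leq Z^{(t)}_\ell < 0$) contributes a step equal to $j + k \geq k$. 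Summing the probabilities $2^{-i-j-3}$ over the unused coordinate in each branch, one checks that every nonnegative value $s$ is achieved with probability $2^{-s-2}$, and the step $-1$ is still achieved with probability $1/2$, so the full distribution once again matches that of $\overline{\bm Y}$.

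The only subtle point is the combinatorial bookkeeping in the case $z < 0$: the partition of $\overline{\bm W}$-outcomes into the three branches of \cref{eq:distrib_incr_coal} depends on $z$, yet the two nonnegative-step branches glue together in just the right way so that the total distribution becomes $z$-independent. I do not expect any analytical obstacle beyond carrying out this geometric-series calculation; it is an algebraic coincidence tailored to the geometric law $\nu$, and it will be convenient later (for instance, in the scaling limit arguments of \cref{sec:coalescent}) that each individual trajectory of the coalescent-walk process is honestly a random walk.
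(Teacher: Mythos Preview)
Your proposal is correct and follows essentially the same approach as the paper: both arguments establish that the conditional law of the increment given the past depends only on the fresh $\overline{\bm W}$-step and is independent of the current value $z$. The only stylistic difference is that where you carry out the geometric-series bookkeeping for $z<0$ explicitly, the paper compresses this into a single invocation of the memoryless property of the geometric distribution, writing the increment as $-\bm B + (1-\bm B)\big(\bm U \idf_{\bm U < q} + (\bm V+q)\idf_{\bm U \geq q}\big)$ with $q=-z$ and observing that the bracketed term is distributed like $\bm V$ regardless of $q$.
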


\begin{remark}
	Recall that the increments of a walk of a coalescent-walk process are not always equal to the increments of the corresponding walk (see the last case considered in \cref{eq:distrib_incr_coal}). The statement of \cref{prop:trajectories_are_rw} is a sort of ``miracle'' of the geometric distribution.
\end{remark}

\begin{proof}[Proof of \cref{prop:trajectories_are_rw}]
	Let us fix $k\geq t$. By construction, $(\overline{\bm Z}^{(t)}_s)_{t\leq s\leq k}$ is a measurable functional of $(\overline{\bm W}_s)_{t\leq s\leq k}$. As a result, $\overline{\bm W}_{k+1}-\overline{\bm W}_k = (\overline{\bm X}_{k+1}-\overline{\bm X}_k, \overline{\bm Y}_{k+1}-\overline{\bm Y}_k)$ is independent of $(\overline{\bm Z}^{(t)}_s)_{t\leq s\leq k}$. We can rewrite $\overline{\bm W}_{k+1}-\overline{\bm W}_k = \bm B\times(1,-1) + (1-\bm B)\times(-\bm U,\bm V)$ where $\bm B$ is a Bernoulli random variable of parameter $1/2$, $\bm U,\bm V$ are geometric of parameter $1/2$, i.e.\ $\P(\bm U=\ell)=2^{-\ell-1}$, for all $\ell\geq 0$, altogether independent, and independent of $(\overline{\bm Z}^{(t)}_s)_{t\leq s\leq k}$.
	
	Now from the definition of $\overline{\bm Z}$ (\cref{eq:distrib_incr_coal}), we get
	\begin{equation*}
	\overline{\bm Z}^{(t)}_{k+1} -\overline{\bm Z}^{(t)}_{k}= - \bm B + (1-\bm B)\left[ \idf_{\overline{\bm Z}^{(t)}_{k}\geq 0} \bm V+ \idf_{\overline{\bm Z}^{(t)}_{k}< 0} \left(\bm U \idf_{\bm U < -\overline{\bm Z}^{(t)}_{k}} + (\bm V-\overline{\bm Z}^{(t)}_{k})\idf_{\bm U \geq -\overline{\bm Z}^{(t)}_{k}} \right)\right].
	\end{equation*}
	So the law of $\left(\overline{\bm Z}^{(t)}_{k+1} -\overline{\bm Z}^{(t)}_{k} \Big| (\overline{\bm Z}^{(t)}_s)_{t\leq s\leq k}\right)$ is equal to
	\begin{equation*}
	\begin{cases} 
	\mathcal L(-\bm B + (1-\bm B) \bm V), &\text{ if}\quad \overline{\bm Z}^{(t)}_{k}\geq 0,\\
	\mathcal L\left.\Big(-\bm B + (1-\bm B)\cdot\left(\bm U \idf_{\bm U < q} + (\bm V + q)\idf_{\bm U \geq q} \right)\Big)\right|_{q = -\overline{\bm Z}^{(t)}_{k}},&\text{ if} \quad\overline{\bm Z}^{(t)}_{k}< 0.
	\end{cases}
	\end{equation*}
	By the memoryless property of the geometric distribution, $\bm U \idf_{\bm U < q} + (\bm V + q)\idf_{\bm U \geq q}$ is distributed like $\bm V$ regardless of $q$. As a result, since $\overline{\bm Z}^{(t)}_{k+1} -\overline{\bm Z}^{(t)}_{k}$ is independent of $(\overline{\bm Z}^{(t)}_s)_{s\leq k}$, we have that
	$$\mathcal L\left(\overline{\bm Z}^{(t)}_{k+1} -\overline{\bm Z}^{(t)}_{k} \big| (\overline{\bm Z}^{(t)}_s)_{s\leq k}\right) = \mathcal L(-\bm B+\bm (1-\bm B)\bm V) = \mathcal L(\overline{\bm Y}_{k+1}-\overline{\bm Y}_k)= \mathcal L(-\overline{\bm X}_{k+1}+\overline{\bm X}_k).$$ 
	This implies the result in the statement of the proposition.	
\end{proof}

Let us now consider $\overline {\bm m} = \Inverse(\overline {\bm W})$. Recall that it is an infinite quasi-map, i.e.\ a countable union of finite polygons, glued along edges. 

\begin{proposition}\label{prop:inf_map_property}
	Almost surely, $\overline{\bm m}$ is an infinite map of the plane. In particular it is locally finite.
\end{proposition}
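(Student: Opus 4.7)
I will verify the three defining properties of an infinite map of the plane in \cref{defn:Inf_map}: local finiteness, simple connectivity, and boundarylessness.

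\emph{Local finiteness.} Since $\overline{\bm m}$ has countably many vertices, it suffices to show almost surely that each vertex has finite degree. Fix an edge $e_{i}$ of $\overline{\bm m}$ and let $v$ denote its top vertex (the bottom case is symmetric). By \cref{prop:trajectories_are_rw}, the trajectory $\overline{\bm Z}^{(i)}$ is distributed as a one-dimensional random walk with step distribution equal to that of $\overline{\bm Y}$. A direct computation on $\nu$ (\cref{eq:walk_distrib}) shows this step distribution is centered with exponential tails, so the walk is recurrent and unbounded on both sides. In particular, the stopping times
\begin{equation*}
j_{\infty} = \min\{t > i : \overline{\bm Z}^{(i)}_{t} = 0,\ \overline{\bm Z}^{(i)}_{t+1} < 0\},\quad s_{\infty} = \min\{t > j_{\infty}+1 : \overline{\bm Z}^{(i)}_{t} \geq 0\}
\end{equation*}
are almost surely finite. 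Applying \cref{coroll:vertex degree} to $\Inverse(\overline{\bm W}|_{[-n,n]})$ for any $n \geq s_{\infty}$ yields that the outdegree of $v$ in this finite marked bipolar orientation is at most $s_{\infty}-j_{\infty}-1$, a bound preserved when passing to the limit. For the indegree of $v$, the same reasoning applied to the reverse coalescent-walk process $\wcp(\cev{\overline{\bm W}})$ gives the required bound: by (the natural infinite-volume extension of) \cref{prop:rev_coal_prop}, outgoing edges of $v$ in $\Inverse(\cev{\overline{\bm W}})$ correspond bijectively to incoming edges of $v$ in $\overline{\bm m}$, and $\cev{\overline{\bm W}}$ is equal in distribution to $\overline{\bm W}$ by the symmetry of $\nu$ under coordinate swap.

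\emph{Simple connectivity.} By construction $\overline{\bm m} = \bigcup_{n}\Inverse(\overline{\bm W}|_{[-n,n]})$, and each $\Inverse(\overline{\bm W}|_{[-n,n]})$ is a submap of $\overline{\bm m}$ via \cref{prop:inverse}. Any finite submap $f$ of $\overline{\bm m}$ has all its faces contained in some $\Inverse(\overline{\bm W}|_{[-n,n]})$, which is a marked bipolar orientation and thus topologically a disk. So $f' = \Inverse(\overline{\bm W}|_{[-n,n]})$ is a planar submap of $\overline{\bm m}$ containing $f$.

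\emph{Boundarylessness.} Let $e$ be any edge of $\overline{\bm m}$, and let $n_{0}$ be minimal so that $e$ appears in $\Inverse(\overline{\bm W}|_{[-n_{0},n_{0}]})$. If $e$ is already interior at this stage, nothing is to prove. Otherwise an unglued side of $e$ lies on the right (resp.\ left) outer boundary, and we handle the right case, the left being symmetric via $\cev{\overline{\bm W}}$. An inspection of the step rules in the construction of $\Inverse$ shows that each future step $\overline{\bm W}_{k+1}-\overline{\bm W}_{k} = (-i,j)$ glues $i+1$ consecutive edges of the current right outer boundary to the left boundary of the newly attached face. In particular, the unglued side of $e$ becomes glued as soon as $\overline{\bm X}$ drops sufficiently low after time $n_{0}$. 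Since $\overline{\bm X}$ is a centered one-dimensional random walk with upward jumps bounded by $+1$ and geometrically distributed downward jumps, it is recurrent and satisfies $\liminf_{k \to \infty}\overline{\bm X}_{k} = -\infty$ almost surely; so suitable steps do occur at arbitrarily large times.

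\emph{Main obstacle.} The most delicate point is the boundarylessness argument: tracking precisely the position of a given unglued side on the evolving right outer boundary of $\Inverse(\overline{\bm W}|_{[-n_{0},n]})$ as $n$ grows, and matching it with the required drop in $\overline{\bm X}$. Once this bookkeeping is carried out, the conclusion reduces cleanly to the almost-sure unboundedness from below of the one-dimensional walks $\overline{\bm X}$ and $\overline{\bm Y}$.
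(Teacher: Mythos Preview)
Your proof is correct and follows the same three-part structure as the paper's: local finiteness via \cref{coroll:vertex degree} and \cref{prop:trajectories_are_rw} together with the $\overline{\bm m}^{**}$-symmetry, simple connectivity directly from the directed-union construction, and boundarylessness via a recurrence argument plus the same symmetry. The one noteworthy difference is in the boundarylessness step: the paper shows that $e_i$ acquires a right face by observing that the trajectory $\overline{\bm Z}^{(i)}$ almost surely returns to zero and then invoking \cref{prop:fortree} (so that $e_i^*$ has a parent in the dual forest), whereas you argue directly on the walk $\overline{\bm X}$, tracking when a future $(-i,j)$ step brings the $x$-coordinate back down to the height $X_{i_0}$ at which $e_{i_0}$ sits on the right boundary. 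Both reduce to the recurrence of a centered one-dimensional walk; your route is slightly more elementary in that it avoids the forest correspondence, while the paper's route reuses the coalescent machinery already in place.
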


\begin{proof} 
	We show that a.s.\ every vertex of $\overline{\bm m}$ has finite degree, i.e.\ that $\overline{\bm m}$ is a.s.\ locally finite. 
	Let $i\in \Z$ be the index of some edge $e\in \overline{\bm m}$ and denote by $v$ its top vertex. Let $\bm j\geq i$ be the smallest index such that $\overline{\bm Z}^{(i)}_{\bm j} = 0$, $\overline{\bm Z}^{(i)}_{\bm j+1}<0$, and $\bm s\geq \bm j+1$ be the smallest index such that $\overline{\bm Z}^{(i)}_{\bm s}\geq 0$. The indices $\bm j$ and $\bm s$ exist a.s.\ thanks to \cref{prop:trajectories_are_rw}. These conditions, together with \cref{coroll:vertex degree}, imply that the number of outgoing edges of $v$ is bounded by $\bm s- \bm j-1$. 
	Repeating the same argument with the map ${\overline{\bm m}}^{**} \stackrel d= \overline{\bm m}$ (recall that $\overline{\bm m}^{**}$ denotes $\overline{\bm m}$ with the orientation of the edges reversed) we can prove the same result for the number of ingoing edges of $v$, proving that $v$ has a.s. finite degree.
	Since this argument can be done for all vertices of $\overline{\bm m}$ we can conclude that $\overline{\bm m}$ is a.s.\ locally finite.
	
	Let us now show that a.s.\ every edge of $\overline{\bm m}$ is adjacent to two faces, i.e.\ that $\overline{\bm m}$ is a.s.\ boundaryless. Let $i\in \Z$ be the index of some edge $e\in \overline{\bm m}$, and $\bm j\geq i$ be the smallest index after $i$ where $\overline{\bm Z}^{(i)}_{\bm j} = 0$, which exists a.s.\ thanks to \cref{prop:trajectories_are_rw}. Considering the finite map $\Inverse(\overline{\bm W}|_{[i,\bm j]})$, we see using \cref{prop:fortree} that $e$ has a.s.\ a face at its right and so this happens also in $\overline{\bm m}$. By countable intersection, this is a.s.\ true for every edge of $\overline{\bm m}$. The same property is also a.s.\ true at the left because $(\overline{\bm m})^{**}$ has the same distribution as $\overline{\bm m}$. So $\overline {\bm m}$ is a.s.\ boundaryless.

	The infinite map $\overline{\bm m}$ being a.s.\ simply connected is immediate: by definition of $\overline{\bm m}$, every finite submap $\bm f\subset \overline{\bm m}$ is included in one of the finite planar maps $\bm f_n=\Inverse(\overline{\bm W}|_{J_n})$.
\end{proof}

\subsection{Local topologies}\label{sect:local topologies}
We define a (finite or infinite) \emph{rooted walk} (resp.\ \emph{rooted coalescent-walk process}, \emph{rooted permutation}) as a walk (resp.\ coalescent-walk process, permutation) on a (finite or infinite) interval of $\Z$ containing $0$. More formally, we define the following sets (with the corresponding notions of size):
\begin{align*}
\widetilde \Walks_\bullet &\coloneqq \bigsqcup_{I \ni 0} \Walks(I),&&\text{where}\quad |W| \coloneqq |I|\text{ if }W\in \Walks(I),\\
\widetilde \Coals_\bullet &\coloneqq \bigsqcup_{I \ni 0} \Coals(I),&&\text{where}\quad |Z| \coloneqq |I|\text{ if }Z\in \Coals(I),\\
\widetilde \Perms_\bullet &\coloneqq \bigsqcup_{I \ni 0} \Perms(I),&&\text{where}\quad |\sigma| \coloneqq |I|\text{ if }\sigma\in \Perms(I).
\end{align*}
Of course, $0$ has to be understood as the root of any object in one of these classes. For $n\in \Z_{>0} \cup \{\infty\}$, $\Walks_\bullet^n$ is the subset of objects in $\widetilde \Walks_\bullet$ of size $n$, i.e.\ $\Walks_\bullet^n=\bigcup_{I \ni 0, |I|=n} \Walks(I)$, and $\Walks_\bullet$ denotes the set of finite-size objects. We also define analogs for $\widetilde \Coals_\bullet, \widetilde \Perms_\bullet$.

We justify the terminology. A rooted object of size $n$ can be also understood as an unrooted object of size $n$ together with an index in $[n]$ which identifies the root through the following identifications\footnote{Note that the natural identification for walks would be $(W,i) \longmapsto (W_{i+t} - W_i)_{t\in [-i+1, n-i]}$, but since we are considering walks up to an additive constant then the identification $(W,i) \longmapsto (W_{i+t})_{t\in [-i+1, n-i]}$ is equivalent.}: 
\begin{align*}
\Walks^n \times [n] \longrightarrow \Walks^n_\bullet,
&\quad(W,i) \longmapsto (W_{i+t})_{t\in [-i+1, n-i]}\in \Walks([-i+1, n-i]),\\
\Coals^n \times [n] \longrightarrow \Coals^n_\bullet,
 &\quad((Z^{(s)}_t)_{s,t\in [n]},i) \longmapsto (Z^{(i+s)}_{i+t})_{s,t\in [-i+1, n-i]}\in \Coals([-i+1, n-i]),\\
\Perms^n \times [n] \longrightarrow \Perms^n_\bullet,
 &\quad(\sigma,i) \longmapsto \leqsi\in \Perms([-i+1, n-i]),\quad 
\text {where}\quad 	\ell\leqsi j\iff \sigma_{\ell+i}\leq\sigma_{j+i}.
\end{align*}

We may now define \emph{restriction functions}: for $h\geq 1$, $I$ an interval of $\Z$ containing 0, and $\square\in  \Walks(I)$, $\Coals(I)$ or $\Perms(I)$, we define
\[r_h(\square) =\square |_{I \cap [-h,h]}.\]
So, for all $h\geq 1$, $r_h$ is a well defined function $\widetilde \Walks_\bullet \to \Walks_\bullet$, $\widetilde \Coals_\bullet \to \Coals_\bullet$, and $\widetilde \Perms_\bullet \to \Perms_\bullet$. Finally local distances on either $\widetilde \Walks_\bullet$, $\widetilde \Coals_\bullet$ or $\widetilde \Perms_\bullet$ are defined by a common formula:
\begin{equation}\label{eq:local_distance}
d\big(\square_1,\square_2\big)=2^{-\sup\big\{h\geq 1\;:\;r_h(\square_1)=r_h(\square_2)\big\}},
\end{equation}
with the conventions that $\sup\emptyset=0,$ $\sup\Z_{>0}=+\infty$ and $2^{-\infty}=0.$

The metric space $(\widetilde \Perms_{\bullet},d)$ is a compact space (see \cite[Theorem 2.16]{borga2018local}) and so complete and separable, i.e.\ it is a Polish space. On the other hand, $(\widetilde \Walks_{\bullet},d)$ and $(\widetilde \Coals_{\bullet},d)$ are Polish, but not compact (see for instance \cite[Section 1.2.1]{curienrandom}).

For the case of planar maps, the theory is slightly different. For a finite interval $I$, we denote by $\Maps(I)$ the set of planar oriented maps with edges labeled by the integers in the interval $I$. We point out that we do not put any restriction on the possible choices for the labeling.
We also define the set of finite rooted maps $\Maps_\bullet = \bigsqcup_{I\ni 0,|I|<\infty} \Maps(I)$, where the edge labeled by $0$ is called root.

A finite rooted map is obtained by rooting (i.e.\ distinguishing an edge) a finite unrooted map, by the identification $\Maps([n]) \times [n] \to \Maps_\bullet^n$ which sends $(m,i)$ to the map obtained from $m$ by shifting all labels by $-i$. The distance is defined similarly as in \cref{eq:local_distance}.
Let $B_h(m)$ be the ball of radius $h$ in $m$, that is the submap of $m$ consisting of the faces of $m$ that contain a vertex at distance less than $h$ from the tail of the root-edge. We set $d(m,m') =2^{-\sup\big\{h\geq 1\;:\;B_h(m) = B_h(m')\big\}},$ with the same conventions as before. We do not describe the set of possible limits, but simply take $\widetilde \Maps_{\bullet}$ to be the completion of the metric space $(\Maps_\bullet, d)$. In particular, it is easily seen that $\widetilde \Maps_{\bullet}$ contains all the infinite rooted planar maps.

\begin{proposition} 
	\label{prop:continuty_of_cpbp}
	The mappings $\wcp: \widetilde \Walks_\bullet \to	\widetilde \Coals_\bullet$ and $\cpbp: \widetilde \Coals_\bullet \to 
	\widetilde \Perms_\bullet$
	are $1$-Lipschitz.
\end{proposition}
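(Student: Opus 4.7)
The plan is to reduce both statements to a single structural observation: each of the mappings $\wcp$ and $\cpbp$ commutes with the restriction functions $r_h$ introduced just before \cref{eq:local_distance}. Once this commutation is established, the $1$-Lipschitz property follows immediately from the definition of the local distance: if $r_h(W_1)=r_h(W_2)$, then
\[r_h(\wcp(W_1))=\wcp(r_h(W_1))=\wcp(r_h(W_2))=r_h(\wcp(W_2)),\]
so the supremum of the $h$ for which the restrictions of the images agree is at least as large as that for the inputs, yielding $d(\wcp(W_1),\wcp(W_2))\leq d(W_1,W_2)$; an identical argument gives the bound for $\cpbp$.

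To prove $\wcp\circ r_h = r_h\circ\wcp$, I would fix $W\in\Walks_\Steps(I)$ with $0\in I$, set $J=I\cap[-h,h]$, and compare the two resulting elements of $\Coals(J)$ trajectory by trajectory. For each starting time $t\in J$, \cref{eq:distrib_incr_coal} determines $Z^{(t)}_{\ell+1}$ inductively from $Z^{(t)}_\ell$ and the single increment $W_{\ell+1}-W_\ell$; this increment is preserved under restriction whenever $\ell,\ell+1\in J$, so a short induction on $\ell-t$ shows that $\wcp(r_h(W))^{(t)}_\ell=\wcp(W)^{(t)}_\ell$ for every admissible pair $(t,\ell)$ in $J$.

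For $\cpbp\circ r_h = r_h\circ\cpbp$ the argument is even more direct: by \cref{eq:coal_to_perm}, the comparison between two distinct points $i<j$ in $I\cap[-h,h]$ with respect to $\leq_Z$ is entirely determined by the sign of the single value $Z^{(i)}_j$, and that value is part of $r_h(Z)$. Hence the total orders $\cpbp(r_h(Z))$ and $r_h(\cpbp(Z))$ on $I\cap[-h,h]$ coincide on every pair.

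I do not anticipate any real obstacle. The crux is merely that the update rule defining $\wcp$ and the comparison rule defining $\cpbp$ are strictly local in the time variable, so no coalescence event or long-range order relation occurring outside the window $[-h,h]$ can alter what happens inside it; this is exactly what the two commutation identities capture.
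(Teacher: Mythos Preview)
Your proposal is correct and matches the paper's approach exactly: the paper's proof is the single sentence ``This is immediate since by definition, $r_h(\wcp(W)) = \wcp(r_h(W))$ and $r_h(\cpbp(Z)) = \cpbp(r_h(Z))$,'' and you have simply unpacked why these commutation identities hold and why they imply the $1$-Lipschitz bound.
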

\begin{proof}
	This is immediate since by definition, $r_h(\wcp(W)) = \wcp(r_h(W))$ and $r_h(\cpbp(Z)) = \cpbp(r_h(Z))$.
\end{proof}
\begin{proposition} 
	\label{prop:continuty_of_Inverse}
	The mapping $\Inverse: \widetilde \Walks_\bullet \to \widetilde \Maps_\bullet$ is almost surely continuous at $\overline{\bm W}$.
\end{proposition}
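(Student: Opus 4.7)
\emph{Plan.} We aim to show that on an event of full probability, for every $k \geq 1$ there exists some (random) $h \geq k$ such that $r_h(W) = r_h(\overline{\bm W})$ forces $B_k(\Inverse(W)) = B_k(\overline{\bm m})$, where $\overline{\bm m} = \Inverse(\overline{\bm W})$. The entire argument will be carried out on the full-probability event provided by \cref{prop:inf_map_property}, on which $\overline{\bm m}$ is a locally finite infinite map of the plane.

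\emph{Key steps.} Fix $k \geq 1$. Let $V_k$ be the (finite) set of vertices of $\overline{\bm m}$ at graph-distance at most $k$ from the tail of the root-edge, and $F_k$ the (finite) family of faces incident to some vertex in $V_k$; these data determine $B_k(\overline{\bm m})$. Each face $f \in F_k$ is created by a single step of $\overline{\bm W}$, at an index $t_f \in \Z$ which is precisely the label of its bottom-right edge in the inductive construction of $\Inverse$. Pick a finite integer interval $[a,b] \ni 0$ containing every $t_f$, and set $h = \max(|a|,|b|)$. By items (2) and (3) of \cref{prop:inverse}, every face of $F_k$ is an explored face of the submap $\Inverse(\overline{\bm W}|_{[a,b]})$, and all of its boundary edges (explored or unexplored) are retained in this submap. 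Hence as soon as $r_h(W) = r_h(\overline{\bm W})$, we obtain $\Inverse(W|_{[a,b]}) = \Inverse(\overline{\bm W}|_{[a,b]})$, which is a common submap of $\Inverse(W)$ and $\overline{\bm m}$ fully containing $B_k(\overline{\bm m})$.

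\emph{Main obstacle.} The delicate step is the reverse inclusion: showing that in $\Inverse(W)$ no vertex of $V_k$ acquires additional incident faces from steps of $W$ lying outside $[a,b]$. Here one exploits that $F_k$ already exhausts the faces of $\overline{\bm m}$ incident to $V_k$, so every $v \in V_k$ is \emph{enclosed} in $\Inverse(\overline{\bm W}|_{[a,b]})$ in the sense that all of its incident edges in the final map are already present; in particular $v$ lies neither on the right boundary nor at the top of the sink of this finite marked bipolar orientation. The inductive construction of $\Inverse$ recalled in \cref{sect:KMSW} only adds new faces along the right boundary and new edges at the top of the sink, and by the projective-limit consistency of \cref{prop:inverse} the same conclusion propagates to all finite restrictions $\Inverse(W|_{[a',b']})$ with $[a',b'] \supseteq [a,b]$. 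Consequently enclosed vertices retain the same incidences in $\Inverse(W)$ as in $\Inverse(W|_{[a,b]})$. Combined with the previous step, graph-distances from the root-edge tail to vertices of $V_k$ coincide in $\Inverse(W)$ and $\overline{\bm m}$, so $B_k(\Inverse(W)) = B_k(\overline{\bm m})$. Since $k$ is arbitrary, $\Inverse$ is almost surely continuous at $\overline{\bm W}$.
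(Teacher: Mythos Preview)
Your proof is correct and follows essentially the same approach as the paper: invoke \cref{prop:inf_map_property} to get a locally finite boundaryless map, find a finite interval $[a,b]$ whose restriction already contains all faces of $B_k(\overline{\bm m})$, and conclude that any $W$ agreeing with $\overline{\bm W}$ on that interval has the same ball. The paper's version is much terser and leaves your ``reverse inclusion'' step entirely implicit, so your explicit treatment via enclosed vertices is a genuine improvement in clarity. One small remark: your appeal to the forward inductive construction (``new faces only along the right boundary or above the sink'') literally covers only extensions $[a,b]\to[a,b']$ with $b'>b$; for general $[a',b']\supseteq[a,b]$ the cleaner justification is purely topological---once every face of $\overline{\bm m}$ incident to $v\in V_k$ is present in the planar submap $\Inverse(\overline{\bm W}|_{[a,b]})$, the cycle of faces around $v$ is complete, so no planar supermap (in particular any $\Inverse(W|_{[a',b']})$, which is planar by \cref{prop:inverse}) can attach a new face at $v$.
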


\begin{proof}
	Assume we have a realization $\overline{W}$ of $\overline{\bm W}$ such that $\overline{ m} = \Inverse(\overline{ W})$ is a rooted infinite oriented planar map, in particular is locally finite (this holds for almost all realizations thanks to \cref{prop:inf_map_property}). Let $h > 0$. The ball $B_h(\overline{m})$ is a finite subset of $\overline{m}$. Since $\overline{m} = \bigcup_n \Inverse(\overline{W}|_{[-n,n]})$, there must be $n$ such that $\Inverse(\overline{W}|_{[-n,n]}) \supset B_h(\overline{m})$. As a result, for all $W'\in\widetilde \Walks_\bullet$ such that $d(W',\overline{W})<2^{-n}$, then $B_h(\Inverse(W'))= B_h(\Inverse(\overline{ W}))$. This shows a.s.\ continuity.
\end{proof}

\subsection{Proofs of the local limit results}

We turn to the proof of \cref{thm:local}. We will prove local convergence for walks and then transfer to the other objects by continuity of the mappings $\wcp,\cpbp,\Inverse$. Let us recall that thanks to \cref{prop:unif_law}, $\bm W_n$ is distributed like $\overline{\bm W}|_{[n]}$ under a suitable conditioning. This will allow us to prove the following technical lemmas.

\begin{lemma}\label{lem:quenched_conv_walks1}
	Fix $h\in\Z_{>0}$ and $W \in \Walks([-h,h])\subset \Walks_\bullet^{2h+1}$.
	Fix $0<\varepsilon<1$. Then, uniformly for all $i$ such that $\lfloor n\varepsilon \rfloor +h < i< \lfloor (1-\varepsilon)n \rfloor-h$,
	\begin{equation*}\label{eq:statement_uenched_conv_walks1}
	\mathbb{P}\left(
	r_{h}({\bm W}_n,i)=W
	\right)
	\to\mathbb{P}\left(
	r_{h}(\overline{\bm W})=W \right).
	\end{equation*}
\end{lemma}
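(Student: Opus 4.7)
The plan is to reduce the statement to an asymptotic comparison of two sums involving the ``killed'' transition kernel of the walk $\overline{\bm W}$ in the quadrant, and then apply the local limit theorems for walks in cones collected in \cref{sec:appendix} (originating from \cite{MR3342657,duraj2015invariance,bousquet2019plane}) to show the ratio tends to the desired target.

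First I would use \cref{prop:unif_law} to realize $\bm W_n \stackrel{d}{=} (\overline{\bm W}_t)_{1\le t\le n}$ conditioned on
\begin{equation*}
\mathcal E_n\coloneqq\big\{\overline{\bm W}_0=\overline{\bm W}_{n+1}=(0,0);\ \overline{\bm W}_t\in\Z_{\ge 0}^2 \text{ for all } 0\le t\le n+1\big\}.
\end{equation*}
Fix the representative $W=(W_{-h},\dots,W_h)$ with $W_0=(0,0)$, set $j\coloneqq i-h$, $k\coloneqq i+h$, $q\coloneqq W_h-W_{-h}$, and $p(W)\coloneqq\prod_{\ell=-h}^{h-1}\nu(W_{\ell+1}-W_\ell)$. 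Since $\overline{\bm W}$ has i.i.d.\ increments of law $\nu$, $\mathbb P(r_h(\overline{\bm W})=W)=p(W)$, so the target becomes $\mathbb P(r_h(\bm W_n,i)=W)\to p(W)$.

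Next, I would apply the Markov property of $\overline{\bm W}$ at times $j$ and $k$. Introducing the restricted transition probabilities
\begin{equation*}
Q_j(a)\coloneqq\mathbb P_{(0,0)}\!\left(\overline{\bm W}_j=a,\ \overline{\bm W}_t\in\Z_{\ge 0}^2\ \forall\,0\le t\le j\right),\quad R_\ell(b)\coloneqq\mathbb P_b\!\left(\overline{\bm W}_\ell=(0,0),\ \overline{\bm W}_t\in\Z_{\ge 0}^2\ \forall\,0\le t\le \ell\right),
\end{equation*}
the excursion conditioning and the independence of the increments of $\overline{\bm W}$ rewrite the target probability as
\begin{equation*}
\mathbb P(r_h(\bm W_n,i)=W)\ =\ p(W)\cdot\frac{\sum_{a\in\Z_{\ge 0}^2}Q_j(a)\,\idf_{G(a,W)}\,R_{n+1-k}(a+q)}{\sum_{a\in\Z_{\ge 0}^2}Q_j(a)\,R_{n+1-j}(a)},
\end{equation*}
where $G(a,W)$ is the event that the deterministic segment $(a+W_\ell-W_{-h})_{\ell=-h}^{h}$ stays in $\Z_{\ge 0}^2$; note that $G(a,W)$ holds as soon as both coordinates of $a$ exceed the fixed constant $M_W\coloneqq \max_\ell\|W_\ell\|_\infty$.

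Finally, I would invoke the local limit theorems for walks conditioned to stay in a cone, recalled in \cref{sec:appendix}. They give an asymptotic of the form $Q_j(a)\sim j^{-\alpha}V(a)\,g(a/\sqrt j)$ uniformly for $a$ in compact subsets of $(0,\infty)^2$ rescaled by $\sqrt j$, where $V$ is the positive discrete harmonic function for the killed walk and $g$ a smooth density; the analogue holds for $R_\ell$. Two consequences are: \emph{(i)} the contribution from $a$ with a coordinate $\le M_W$ is negligible in both sums, since $V$ vanishes on the boundary of $\Z_{\ge 0}^2$; \emph{(ii)} as $\sqrt j\to\infty$ (guaranteed by $i>\lfloor\varepsilon n\rfloor+h$) and symmetrically for the time on the other side (guaranteed by $i<\lfloor(1-\varepsilon)n\rfloor-h$), the bounded shift $+q$ and the constant time-shift from $n+1-k$ to $n+1-j$ become a $(1+o(1))$ perturbation, uniformly in the stated range of $i$. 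Dividing, the ratio above tends to $1$ and we obtain $\mathbb P(r_h(\bm W_n,i)=W)\to p(W)$ uniformly in $i$.

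The main obstacle is controlling uniformity across the full range $\lfloor\varepsilon n\rfloor+h<i<\lfloor(1-\varepsilon)n\rfloor-h$, which requires the two-sided uniformity of the cone local limit theorem, together with handling the boundary terms in $a$ by the vanishing of $V$. This is precisely the content of the absolute-continuity estimates gathered in \cref{sec:appendix}.
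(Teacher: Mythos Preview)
Your approach is correct and provides a valid alternative, but it takes a genuinely different route from the paper and contains two small technical slips worth flagging.

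\textbf{Comparison with the paper.} The paper (which only says this proof is ``similar and simpler'' than that of \cref{lem:quenched_conv_walks2}) decomposes the excursion at the \emph{fixed} times $\lfloor n\varepsilon\rfloor$ and $n+2-\lfloor n\varepsilon\rfloor$, invoking the packaged absolute-continuity \cref{lem:AbsCont} with its tilting function $\alpha^{0,0}_{n+2,\lfloor n\varepsilon\rfloor}$. Then \cref{obs:infofwalks} absorbs the insertion of the deterministic segment $W$ into a bounded perturbation of $(\inf\overline{\bm W},\overline{\bm W}_{\text{end}})$, and \cref{lem:LLT} together with \cref{prop:brown_ex} shows the expected tilting factor converges to $1$. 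You instead decompose at the \emph{varying} times $j=i-h$ and $k=i+h$ and compare numerator and denominator directly via the raw local limit estimates of \cref{lem:techn_estimates}. Both strategies work; the paper's choice has the advantage of reusing verbatim the lemmas that also drive the scaling-limit arguments in \cref{sec:coalescent}, whereas yours is arguably more direct for this particular statement but requires you to redo part of the work hidden in \cref{lem:LLT} (cutoff of the sum, Riemann approximation, tail bound via the meander CLT).

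\textbf{Two minor corrections.} First, the form of the local limit theorem you quote is slightly off: \cref{eq:LLTmeander} gives $Q_j(a)= j^{-5/2}\big(\tfrac{V(0)}{8\sqrt\pi}\,g(a/\sqrt{2j})+o(1)\big)$ uniformly in $a$, with the harmonic function $V$ evaluated at the \emph{starting} point $(0,0)$, not at the endpoint $a$; the dependence on $a$ is entirely through $g$. Second, and consequently, your justification that the boundary strip $\{a:\text{a coordinate of }a\le M_W\}$ is negligible cannot rest on ``$V$ vanishes on the boundary'', since $V$ is not evaluated at $a$ in your sums. The correct reason is that $g$ vanishes on $\partial\R_{\ge 0}^2$ (see \cref{eq:fuction_g}), or one can argue by a direct counting-plus-tail-cutoff as in the proof of \cref{lem:LLT}. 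With these fixes your argument goes through; uniformity in $i$ follows because both $j$ and $n+1-k$ are at least of order $\varepsilon n$, so the $o(1)$ errors in \cref{lem:techn_estimates} are controlled uniformly over the stated range, and the constant shifts $+q$ in space and $-2h$ in time are $(1+o(1))$ perturbations of the leading terms.
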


\begin{lemma}\label{lem:quenched_conv_walks2}
	Fix $h\in\Z_{>0}$ and $W \in \Walks([-h,h])\subset \Walks_\bullet^{2h+1}$.
	Fix $0<\varepsilon<1$. Then, uniformly for all $i,j$ such that $\lfloor n\varepsilon \rfloor +h < i,j< \lfloor (1-\varepsilon)n \rfloor-h$ and $|i-j|>2h$,
	\begin{equation}\label{eq:statement_uenched_conv_walks2}
	\mathbb{P}\left(
	r_{h}({\bm W}_n,i)=r_{h}({\bm W}_n,j)=W
	\right)
	\to\mathbb{P}\left(
	r_{h}(\overline{\bm W})=W \right)^2.
	\end{equation}
\end{lemma}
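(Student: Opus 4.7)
The plan is to build on the one-window statement \cref{lem:quenched_conv_walks1} and exploit both the Markov property of $\overline{\bm W}$ and a local limit theorem for walks in the quadrant to show that the two windows become asymptotically independent. The right-hand side of \eqref{eq:statement_uenched_conv_walks2} already has a natural interpretation: for the unconditioned walk $\overline{\bm W}$, the window $r_h(\overline{\bm W}, k)$ depends only on the $2h$ consecutive increments of $\overline{\bm W}$ centered at $k$. Since the hypothesis $|i-j| > 2h$ makes these two blocks of increments disjoint, and the increments of $\overline{\bm W}$ are i.i.d., we immediately get $\mathbb{P}\bigl(r_h(\overline{\bm W}, i) = r_h(\overline{\bm W}, j) = W\bigr) = \mathbb{P}\bigl(r_h(\overline{\bm W}) = W\bigr)^2$, which is the target right-hand side.

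To transfer this to $\bm W_n$, recall from \cref{prop:unif_law} that $\bm W_n$ has the law of $\overline{\bm W}|_{[1, n]}$ conditioned on the excursion event $E_n \coloneqq \bigl\{(\overline{\bm W}_t)_{0\leq t\leq n+1} \in \Walks^{n+2}_{A, \exc}\bigr\}$. Writing $s_1 = i - h < s_2 = i + h \leq s_3 = j - h < s_4 = j + h$, the Markov property of $\overline{\bm W}$ together with a decomposition over the boundary values $\overline{\bm W}(s_\ell)$ yields
\[
\mathbb{P}\bigl(r_h(\bm W_n, i) = r_h(\bm W_n, j) = W\bigr) = \nu^{\otimes 2h}(W)^2 \sum_{a, c} \frac{q^+_{s_1}(0, a)\, \chi_W(a)\, q^+_{s_3 - s_2}(b_a, c)\, \chi_W(c)\, q^+_{n+1-s_4}(d_c, 0)}{q^+_{n+1}(0, 0)},
\]
where $q^+_k(x, y)$ counts paths of length $k$ with increments in $A$ from $x$ to $y$ that stay in $\Z_{\geq 0}^2$, $\chi_W(z) \in \{0,1\}$ indicates whether the window $W$ applied from $z$ keeps the walk in the quadrant, and $b_a, d_c$ are the endpoints of that window starting at $a, c$. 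An analogous (simpler) formula holds for the one-window probability appearing in \cref{lem:quenched_conv_walks1}; in both cases, the multiplicative prefactor $\nu^{\otimes 2h}(W)$ or $\nu^{\otimes 2h}(W)^2$ appears because the window increments are prescribed.

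The crux is then to show that the ratio of the two-window sum to the product of the two one-window sums (indexed by $i$ and $j$ separately) tends to $1$, uniformly over $i, j$ in the prescribed range. The key input is the Denisov--Wachtel local limit theorem for two-dimensional walks conditioned to stay in a cone (\cite{MR3342657, duraj2015invariance}, revisited in \cref{sec:appendix}), which gives uniform asymptotics for $q^+_k(x, y)$ when $x, y$ lie at scale $\sqrt{k}$ in the interior of the quadrant: informally, $q^+_k(x, y) \approx C \cdot V(x) V'(y) \cdot k^{-\alpha}$ (in the appropriate normalization), with $V, V'$ discrete harmonic for the quadrant. Under $E_n$ and for $s_\ell$ in the bulk $(\varepsilon n, (1-\varepsilon)n)$, the walk values $\bm W_n(s_\ell)$ concentrate at scale $\sqrt{n}$ away from the axes; restricted to this ``good'' region the $q^+$-asymptotics combine so that the $V, V'$ and $k^{-\alpha}$ factors cancel between numerator and denominator, leaving exactly the prefactor $\nu^{\otimes 2h}(W)^2$ in the limit. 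The main technical obstacle is the uniform control of the boundary contribution: one needs to bound the sum restricted to configurations where $a$ or $c$ is within $o(\sqrt{n})$ of an axis, which is done using the boundary and absolute continuity estimates for excursions in the quadrant from \cite{duraj2015invariance, bousquet2019plane} collected in \cref{sec:appendix}. Once this truncation is justified, the asymptotic factorization \eqref{eq:statement_uenched_conv_walks2} follows with the required uniformity in $i, j$.
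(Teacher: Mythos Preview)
Your approach is correct in outline but takes a genuinely different route from the paper. You propose a direct Markov decomposition at the four window endpoints, writing the conditioned probability as a ratio of sums of products of quadrant transition kernels $q^+$, and then invoking the Denisov--Wachtel local limit estimates pointwise on each factor to show asymptotic factorization. This works, but the cancellation you sketch (``the $V,V'$ and $k^{-\alpha}$ factors cancel'') requires nontrivial uniform control, especially near the boundary, and your proposal leaves that execution as a gesture towards \cref{sec:appendix}.

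The paper instead uses the absolute continuity formalism of \cref{lem:AbsCont}: the conditioned law on the bulk $[\lfloor n\varepsilon\rfloor, n-\lfloor n\varepsilon\rfloor]$ is the unconditioned law tilted by a single density factor $\alpha^{0,0}_{n+2,\lfloor n\varepsilon\rfloor}(\inf \overline{\bm W}, \overline{\bm W}_{\text{end}})$. The key trick (\cref{obs:infofwalks}) is that conditioning on the two windows equal to $W$ amounts to \emph{inserting} two copies of $W$ into an otherwise unconditioned walk $\overline{\bm S}$; this shifts the infimum and endpoint by bounded amounts, so the density factor becomes a function of $\overline{\bm S}$ alone, independent of the window event. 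One then only needs $\alpha^{0,0}_{n,\lfloor n\varepsilon\rfloor}\to\alpha_\varepsilon$ uniformly (\cref{lem:LLT}), Donsker for $\overline{\bm S}$, and the identity $\E[\alpha_\varepsilon(\inf\conti W,\conti W_{1-2\varepsilon})]=1$ from \cref{prop:brown_ex}. This packaging avoids the explicit four-piece sum and the separate boundary truncation you would need; conversely, your approach is more self-contained in that it does not rely on the specific tilting lemmas of \cref{sec:appendix}, only on the raw local limit theorem.
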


We just prove the second lemma, the proof of the first one is similar and simpler. Before doing that, we do the following observation, useful for the proof of \cref{lem:quenched_conv_walks2}. 
In what follows, if $W = (X,Y)$ is a two-dimensional walk, then $\inf W = (\inf X, \inf Y)$.

\begin{observation}\label{obs:infofwalks}
	Let $x=(x_i)_{i\in[0,n]}=(\sum_{j=1}^i y_j)_{i\in[n]}$ be a one-dimensional deterministic walk starting at zero of size $n$, i.e.\ $y_j\in\Z$ for all $j\in [n]$. Let $h\ll n$ and consider a second deterministic one-dimensional walk $x'=(x'_i)_{i\in[0,h]}=(\sum_{j=1}^i y'_j)_{i\in[0,h]}$. Fix also $k,\ell$ such that $0\leq k<\ell\leq n$ and consider the walk $x''=(x''_i)_{i\in[0,n+2h]}$ obtained by inserting two copies of the walk $x'$ in the walk $x$ at time $k$ and $\ell$. That is, for all $i\in[0,n+2h]$,
	\begin{equation*}
		x''_i= \sum_{j=1}^k y_j\cdot\mathds{1}_{j\leq i}+\sum_{j=1}^h y'_j\cdot\mathds{1}_{j+k\leq i}+\sum_{j=k+1}^\ell y_j\cdot\mathds{1}_{j+h\leq i}+\sum_{j=1}^h y'_j\cdot\mathds{1}_{j+\ell+h\leq i}+\sum_{j=\ell+1}^n y_j\cdot\mathds{1}_{j+2h\leq i}.
	\end{equation*}
	Then 
	$$\inf_{i\in[0,n+2h]}\{x''_i\}=\inf_{i\in[0,n]}\{x_i\}+\Delta,$$
	where $\Delta=\Delta(x,k,\ell,x')\in\R^2$ and it is bounded by twice the total variation of $x'$.
\end{observation}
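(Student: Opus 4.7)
The plan is to write $x''$ explicitly on each of the five natural subintervals determined by the insertion points, compare piecewise with $x$, and read off the bound on $\Delta$.

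First I would observe that by the definition of $x''$, one has the following piecewise description: $x''_i = x_i$ for $i\in[0,k]$; $x''_i = x_k + x'_{i-k}$ for $i\in[k,k+h]$; $x''_i = x_{i-h} + x'_h$ for $i\in[k+h, \ell+h]$; $x''_i = x_\ell + x'_h + x'_{i-\ell-h}$ for $i\in[\ell+h, \ell+2h]$; and $x''_i = x_{i-2h} + 2x'_h$ for $i\in[\ell+2h, n+2h]$. Set $M\coloneqq \max_{j\in[0,h]} |x'_j|$ and note that $M$ is bounded by the total variation of $x'$ (since $x'_0 = 0$), and also $|x'_h|\leq M$.

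Next, for the lower bound on $\inf x''$, each of the five pieces can be bounded below by $\inf_{[0,n]} x - 2M$: on $[0,k]$ it equals a value of $x$; on the two ``insertion'' segments the added term lies in $[-M, M]$ (plus possibly a second contribution from $x'_h$); on the middle segment and the tail, one picks up an additive $x'_h$ or $2x'_h$, each bounded in absolute value by $M$ and $2M$ respectively.

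For the upper bound, I would pick $i^*$ realizing $\inf_{[0,n]} x$ and evaluate $x''$ at the image of $i^*$ (which is $i^*$, $i^*+h$, or $i^*+2h$ depending on which of the regions $[0,k]$, $(k,\ell]$, $(\ell,n]$ contains $i^*$). In each case one gets $x''$ equal to $\inf x + c$ with $|c|\leq 2M$. Combining both bounds yields $|\Delta| = |\inf x'' - \inf x| \leq 2M \leq 2\,TV(x')$.

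There is no real obstacle here; the only mild subtlety is keeping track of the additive shifts $x'_h$ and $2x'_h$ that propagate past the insertion points. The statement in the paper mentions $\Delta\in\R^2$, but since the walks in the observation are one-dimensional, the argument applies coordinatewise and I would state and carry it out in one dimension, then invoke it componentwise in the two-dimensional applications later in the paper.
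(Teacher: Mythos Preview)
Your argument is correct and complete. The paper states this as an observation without proof, so there is no comparison to make; your piecewise decomposition into the five segments and the two-sided bound via $M=\max_{j\in[0,h]}|x'_j|\leq TV(x')$ is precisely the natural verification, and your remark about applying it coordinatewise to handle the two-dimensional setting is the intended use.
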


\begin{proof}[Proof of \cref{lem:quenched_conv_walks2}]
	Set $E\coloneqq\left\{r_{h}(\overline{\bm W}|_{[n]},i)=r_{h}(\overline{\bm W}|_{[n]},j)=W\right\}$. By \cref{prop:unif_law}, the left-hand side of \cref{eq:statement_uenched_conv_walks2} can be rewritten as $\mathbb{P}\left(E \middle|(\overline{\bm W}_t)_{0\leq t\leq {n+1}}  \in \Walks_{A,\exc}^{n+2} \right)$. Using \cref{lem:AbsCont}, we have that
	\begin{multline}\label{eq:first_rewriting}
	\mathbb{P}\left(E \middle|(\overline{\bm W}_t)_{0\leq t\leq {n+1}}  \in \Walks_{A,\exc}^{n+2} \right)
	=\E\left[\mathds{1}_{\widetilde{E}}\cdot 
	\alpha_{n+2,\lfloor n\varepsilon \rfloor}^{0,0}\left(-\inf_{0\leq k \leq n+2-2\lfloor n\varepsilon \rfloor} \overline{\bm W}_k\;,\;\overline{\bm W}_{n+2-2\lfloor n\varepsilon \rfloor}\right)
	\right],
	\end{multline} 
	where  $\alpha_{n+2,\lfloor n\varepsilon \rfloor}^{0,0}(a,b)$ is a function defined in \cref{eq:alpha_tilting_function} page \pageref{eq:alpha_tilting_function} and 
	$$\widetilde{E}\coloneqq\left\{r_{h}(\overline{\bm W}|_{[n]},i-\lfloor n\varepsilon \rfloor)=r_{h}(\overline{\bm W}|_{[n]},j-\lfloor n\varepsilon \rfloor)=W\right\}.$$
	From \cref{obs:infofwalks}, conditioning on $\widetilde{E}$, we have that
	\begin{multline}\label{eq:equality_walks}
	\left(-\inf_{0\leq k \leq n+2-2\lfloor n\varepsilon \rfloor} \overline{\bm W}_k\;,\;
	\overline{\bm W}_{n+2-2\lfloor n\varepsilon \rfloor}
	\right)\\
	=\left(-\inf_{0\leq k \leq n+2-2\lfloor n\varepsilon \rfloor-2(2h+1)} \overline{\bm S}_k-  \bm\Delta\;,\;\overline{\bm S}_{n+2-2\lfloor n\varepsilon \rfloor-2(2h+1)}+2\delta\right),
	\end{multline}
	where $\overline{\bm S}=(\overline{\bm S}_t)_{t\in \Z}$ is the walk obtained from $(\overline{\bm W}_t)_{t\in \Z}$ removing the $2h+1$ steps around $i-\lfloor n\varepsilon \rfloor$ and $j-\lfloor n\varepsilon \rfloor$, $\delta = W_h - W_{-h}$ and $\bm \Delta$ is a deterministic function of $(\overline{\bm S}_t)_{t\in \Z}$, $i$, $j$ and $W$, bounded by twice the total variation of $W$.
	%In particular, $\bm \Delta$ is also independent of $(\overline{\bm W}_t)_{t\in \Z}$.
	Using the relation in \cref{eq:equality_walks} we can rewrite the right-hand side of \cref{eq:first_rewriting} as
	\begin{equation*}
	\mathbb{P}(\widetilde{E})
	\cdot
	\E\left[\alpha_{n+2,\lfloor n\varepsilon \rfloor}^{0,0}\left(-\inf_{0\leq k \leq n-2\lfloor n\varepsilon \rfloor-2(2h+1)} \overline{\bm S}_k- \bm \Delta\;,\;\overline{\bm S}_{n-2\lfloor n\varepsilon \rfloor-2(2h+1)}+2\delta\right)
	\right],
	\end{equation*}
	where we used the independence between $\widetilde{E}$ and the right-hand side of \cref{eq:equality_walks}. Note that $\P(\widetilde{E})=
	\mathbb{P}\left(r_{h}(\overline{\bm W})=W \right)^2$ since  $|i-j|>2h$ by assumption.
	We now show that 
	the second factor of the equation above converges to 1 uniformly for all $i$,$j$.
	Set for simplicity of notation 
	\begin{equation*}
	f(\overline{\bm S})=\left(-\inf_{0\leq k \leq n-2\lfloor n\varepsilon \rfloor-2(2h+1)} \overline{\bm S}_k - \bm \Delta\;,\;\overline{\bm S}_{n-2\lfloor n\varepsilon \rfloor-2(2h+1)}+2\delta\right).
	\end{equation*}
	By \cref{lem:LLT} we have
	$$\sup_{i,j}\Bigg|\E\left[\alpha_{n+2,\lfloor n\varepsilon \rfloor}^{0,0}\left(f(\overline{\bm S})\right)\right]-\E\left[\alpha_\eps\left(g(\overline{\bm S})\right)\right]\Bigg|\to0,$$
	where $\alpha_\eps(\cdot)$ is defined in \cref{eq:fuction_alpha}, and
	$$g(\overline{\bm S})=\left(-\frac 1{\sqrt { n}}\left(\inf_{0\leq k \leq n-2\lfloor n\varepsilon \rfloor-2(2h+1)} \overline{\bm S}_k\right) - \frac {\bm\Delta}{\sqrt{ n}}\;,\;\frac 1{\sqrt { n}}\overline{\bm S}_{n-2\lfloor n\varepsilon \rfloor-2(2h+1)} + \frac {2\delta}{\sqrt{ n}}\right).$$
	Therefore, in order to conclude, it is enough to show that $\E\left[\alpha_\eps\left(g(\overline{\bm S})\right)\right]\to1$. 
	
	We have that $\E\left[\alpha_\eps\left(g(\overline{\bm S})\right)\right]\to\E\left[\alpha_\eps
 	\left({-\inf_{0\leq t \leq 1-2\eps}\conti W(t),\conti W(1-2\eps)}\right)\right]$,
where
$\conti W = (\conti X,\conti Y)$ is a standard two-dimensional Brownian motion of correlation $-1/2$. This follows from the fact that $\bm \Delta$ is bounded, that $\alpha_\eps$ is a continuous and bounded function (see \cref{lem:LLT}), and that 
$$\left(\frac{1}{\sqrt n}\overline{\bm S}_{\lfloor nt\rfloor}\right)_{t\in[0,1]}\stackrel{d}{\longrightarrow}\left(\bm{\conti W}_t\right)_{t\in[0,1]}.$$
 The latter claim is a consequence of Donsker's theorem and the basic computation $\Var(\nu) = \begin{psmallmatrix}2 &-1 \\ -1 &2 \end{psmallmatrix}$. In addition, we have that $\E\left[\alpha_\eps
 \left({-\inf_{0\leq t \leq 1-2\eps}\conti W(t),\conti W(1-2\eps)}\right)\right]=1$ by \cref{prop:brown_ex} (used with $h=1$), and so we can conclude the proof. 
\end{proof}

We can now prove the main result of this section, i.e.\ the quenched local limit result presented in the introduction.

\begin{proof}[Proof of \cref{thm:local}]
	We start by proving that
	\begin{equation}\label{eq:firststepintheproof}
	\mathcal{L}\Big((\bm W_n, \bm i_n)\Big|\bm W_n\Big)\stackrel{P}{\longrightarrow}\mathcal{L}\left(\overline{\bm W}\right).
	\end{equation}
	For that it is enough (see for instance \cite[Corollary 2.38]{borga2018local} for a similar argument in the case of permutations) to show that, for any $h\geq 1$ and fixed finite rooted walk $W\in \Walks([-h,h]) \subset \Walks_\bullet$,
	\begin{equation}\label{eq:goaloftheproof}
	\mathbb{P}\left(r_{h}(\bm W_n,\bm i_n)=W
	\mid
	\bm W_n\right)
	\stackrel{P}{\longrightarrow}\mathbb{P}(r_{h}(\overline{\bm W})=W).
	\end{equation}
	Note that 
	\begin{align*}
	\mathbb{P}\left(r_{h}(\bm W_n,\bm i_n)=W
	\mid
	\bm W_n\right)
	&=\frac{\#\left\{j\in[n] : 
		r_h(\bm W_n,j)=W)\right\}}{n}
	=\frac 1 n \sum_{j\in [n]}\mathds{1}_{\left\{r_{h}(\bm W_n,j)=W)\right\}}.
	\end{align*}
	We use the second moment method to prove that this sum converges in probability to $\mathbb{P}(r_{h}(\overline{\bm W})=W)$.
	We first compute the first moment:
	\begin{equation*}
	\E\left[\frac 1 n \sum_{j\in [n]}\mathds{1}_{\left\{r_{h}({\bm W_n},j)=W)\right\}}\right]=\frac 1 n \sum_{j\in [n]}\mathbb{P}\left(r_{h}({\bm W_n},j)=W\right)
	\to\mathbb{P}\left(r_{h}(\overline{\bm W})=W\right),
	\end{equation*}
	where for the limit we used \cref{lem:quenched_conv_walks1}.
	We now compute the second moment:
	\begin{multline*}
	\E\left[\left(\frac 1 n \sum_{j\in [n]}\mathds{1}_{\left\{r_{h}(({\overline{\bm W}}_t)_{t\in [n]},j)=W)\right\}}\right)^2\;\right]\\
	=\frac{1}{n^2}\sum_{i,j\in [n], {|i-j|>2h}}\P\left(r_{h}({\bm W_n},i)=r_{h}({\bm W_n},j)=W\right) + O(1/n).
	\end{multline*}
	This converges to $\mathbb{P}(r_{h}(\overline{\bm W})=W)^2$ by \cref{lem:quenched_conv_walks2}.
	The computations of the first and second moment, together with Chebyshev's inequality lead to the proof of \cref{eq:goaloftheproof} and so to the quenched convergence of walks.
	
	Now to extend the result to other objects, we will use continuity of the mappings $\wcp,\cpbp,\Inverse$ (see \cref{prop:continuty_of_cpbp} and \cref{prop:continuty_of_Inverse}). 
	Using a combination of the results stated in \cite[Theorem 4.11, Lemma 4.12]{kallenberg2017random} we have that \cref{eq:firststepintheproof} implies the following convergence
	\begin{equation}\label{eq:fbvewuifviwefewoub}
	\mathcal{L}\Big(\big((\bm W_n, \bm i_n), (\wcp(\bm W_n), \bm i_n), (\cpbp\circ\wcp(\bm W_n), \bm i_n), (\Inverse(\bm W_n), \bm i_n)\big)\Big| \mathfrak{B}_n\Big)\stackrel{P}{\longrightarrow}\mathcal{L}\left(\overline{\bm W},\overline{\bm Z},\overline{\bm \sigma},\overline{\bm m}\right),
	\end{equation}
	and so \cref{eq:quenched_conv_walks} holds. The specific result that we used for \cref{eq:fbvewuifviwefewoub} is a generalization of the \emph{mapping theorem} for random measures: Let $(\bm{\mu}_n)_{n\in\Z_{>0}}$ be a sequence of random measures on a space $E$ that converges in distribution to a random measure $\bm{\mu}$ on $E.$ Let $F$ be a function from $E$ to a second space $H$ such that the set $D_F$ of discontinuity points of $F$ has measure $\bm{\mu}(D_F)=0$ a.s.. Then the sequence of pushforward random measures $(\bm{\mu}_n\circ F^{-1})_{n\in\Z_{>0}}$ converges in distribution to the pushforward random measure $\bm{\mu}\circ F^{-1}.$
\end{proof}

\section{Scaling limits of coalescent-walk processes}\label{sec:coalescent}

In this section we deal with scaling limits of coalescent-walk processes both in the finite and infinite-volume case. The main result in this section is \cref{thm:discret_coal_conv_to_continuous}. This will be the main ingredient in the proofs of the two main theorems in \cref{sec:final}, namely \cref{thm:permuton} and \cref{thm:joint_scaling_limits}. Nevertheless, we believe that our intermediate results, \cref{thm:coal_con_uncond} and \cref{prop:coal_con_cond}, are of independent interest.

All the spaces of continuous functions considered below are implicitly endowed with the topology of uniform convergence on every compact set.

\subsection{The continuous coalescent-walk process}

We start by defining the candidate continuous limiting object: it is formed by the solutions of the following family of stochastic differential equations (SDEs) indexed by $u\in \R$ and driven by a two-dimensional process $\conti W = (\conti X,\conti Y)$:
\begin{equation}\label{eq:flow_SDE}
\begin{cases}
d\conti Z^{(u)}(t) = \idf_{\{\conti Z^{(u)}(t)> 0\}} d\conti Y(t) - \idf_{\{\conti Z^{(u)}(t)\leq 0\}} d \conti X(t),& t\geq u,\\
\conti Z^{(u)}(t)=0,&  t\leq u.
\end{cases} 
\end{equation}
Existence and uniqueness of a solution of the SDE above (for $u\in \R$ fixed) were already studied in the literature in the case where the driving process $\conti W$ is a Brownian motion, in particular with the following result. 

We recall that a \emph{standard two-dimensional Brownian motion of correlation $\rho$} is a continuous two-dimensional Gaussian process such that the components $\conti X$ and $\conti Y $ are standard one-dimensional Brownian motions, and $\mathrm{Cov}(\conti X(t),\conti Y(s)) = \rho \cdot (t\wedge s)$.
\begin{theorem}[Theorem 2 of \cite{MR3098074}, Proposition 2.2 of \cite{MR3882190}]\label{thm:ext_and_uni}
	\label{thm:uniqueness}
	Fix $\rho \in (-1,1)$. Let $T\in (0,\infty]$ and let $\conti W = (\conti X,\conti Y)$ be a standard two-dimensional Brownian motion of correlation $\rho$ and time-interval $[0,T)$.
	We have pathwise uniqueness and existence of a strong solution for the SDE:
	\begin{equation} \label{eq:SDE}
	\begin{cases}
	d \conti Z(t) &= \idf_{\{\conti Z(t)> 0\}} d \conti Y(t) - \idf_{\{\conti Z(t)\leq 0\}} d \conti X(t), \quad  0\leq t < T,\\
	\conti Z(0)&=0.
	\end{cases} 
	\end{equation}
	Namely, letting $(\Omega, \mathcal F, (\mathcal F_t)_{0\leq t < T}, \Prob)$ be a filtered probability space satisfying the usual conditions, and assuming that $\conti W$ is an $(\mathcal F_t)_t$-Brownian motion, 
	\begin{enumerate}
		\item if $\conti Z,\widetilde{\conti Z}$ are two $(\mathcal F_t)_t$-adapted continuous processes that satisfy \cref{eq:SDE} almost surely, then $\conti Z=\widetilde{\conti Z}$ almost surely.
		\item There exists an  $(\mathcal F_t)_t$-adapted continuous process $\conti Z$ which satisfies \cref{eq:SDE} almost surely.
	\end{enumerate}
	In particular, there exists for every $t\in (0,T]$ a measurable mapping $\solution_t : \mathcal C([0,t)) \to \mathcal C([0,t))$, called the \emph{solution mapping}, such that 
	\begin{enumerate}[resume]
		\item setting $\conti Z = \solution_t(\conti W|_{[0,t)})$, then $\conti Z$ satisfies \cref{eq:SDE} almost surely on the interval $[0,t)$.
		\item For every $0\leq s \leq t \leq T$, then $F_t(\conti W|_{[0,t)})|_{[0,s)} = F_s (\conti W|_{[0,s)})$ almost surely.
\end{enumerate} 
\end{theorem}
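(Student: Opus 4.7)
The plan is to proceed in three stages: weak existence via regularization, pathwise uniqueness via a difference computation, and finally assembly into a strong solution with the required solution mapping.

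For weak existence, I would first regularize the discontinuous coefficients. Let $h_\varepsilon : \R \to [0,1]$ be a smooth nondecreasing function with $h_\varepsilon \equiv 0$ on $(-\infty,0]$ and $h_\varepsilon \equiv 1$ on $[\varepsilon,\infty)$, approximating $\idf_{\{t>0\}}$ as $\varepsilon \downarrow 0$. The SDE
\[
d\conti Z_\varepsilon = h_\varepsilon(\conti Z_\varepsilon)\, d\conti Y - (1 - h_\varepsilon(\conti Z_\varepsilon))\, d\conti X
\]
has bounded Lipschitz coefficients and so admits a unique strong solution $\conti Z_\varepsilon$. Since the coefficients are uniformly bounded, standard moment estimates for stochastic integrals give tightness of $\{\conti Z_\varepsilon\}_\varepsilon$ in $\mathcal C([0,t),\R)$, jointly with $\conti W$. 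Extract a subsequential weak limit $(\conti Z, \conti W)$. To identify $\conti Z$ as a solution of \eqref{eq:SDE}, I would use Skorokhod's representation theorem together with the key observation that $\conti Z$ is a Brownian motion by Lévy's characterization (its quadratic variation is $t$), and hence $\int_0^t \idf_{\{\conti Z(s) = 0\}} ds = 0$ almost surely. This allows passing to the limit in $\int_0^t h_\varepsilon(\conti Z_\varepsilon)\, d\conti Y$ since the regions $\{|\conti Z| < \varepsilon\}$ have vanishing contribution.

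The main obstacle is pathwise uniqueness, which I expect to be the technical core. Suppose $\conti Z$ and $\widetilde{\conti Z}$ are two strong solutions on the same filtered probability space. Using the identity $\idf_{\{Z>0\}} - \idf_{\{\widetilde Z>0\}} = -(\idf_{\{Z\leq 0\}} - \idf_{\{\widetilde Z\leq 0\}})$, the difference $D = \conti Z - \widetilde{\conti Z}$ satisfies
\[
dD(t) = \bigl(\idf_{\{\conti Z(t)>0\}} - \idf_{\{\widetilde{\conti Z}(t)>0\}}\bigr)\, d(\conti X + \conti Y)(t).
\]
Note $\conti X + \conti Y$ is a Brownian motion of variance $2(1+\rho)$, and crucially the integrand equals $\sgn(D)\,\idf_{\{\sgn(Z)\neq \sgn(\widetilde Z)\}}$, hence vanishes whenever $\conti Z$ and $\widetilde{\conti Z}$ have the same sign. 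I would then decompose $\conti Y = \rho \conti X + \sqrt{1-\rho^2}\,\conti B$ with $\conti B \perp \conti X$, rewriting \eqref{eq:SDE} as $d\conti Z = (\rho \idf_{\{Z>0\}} - \idf_{\{Z\leq 0\}})\, d\conti X + \sqrt{1-\rho^2}\, \idf_{\{Z>0\}}\, d\conti B$, so that the condition $\rho<1$ provides nondegenerate independent noise on $\{Z>0\}$. The strategy is then to apply Tanaka's formula to $|D|$, combine with an estimate for the local time $L^0(D)$ obtained from the occupation-time formula, and reach a Gronwall-type inequality forcing $D\equiv 0$. Alternatively, one can follow a localization argument around times where $Z$ and $\widetilde Z$ straddle zero, exploiting the fact that $-1<\rho<1$ prevents degeneracy of $X+Y$ and $X-Y$ simultaneously. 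I expect this step to require the careful local-time/occupation-time machinery used in \cite{MR3098074,MR3882190}.

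Once weak existence and pathwise uniqueness are in place, the classical Yamada--Watanabe theorem produces a measurable solution mapping $F_t : \mathcal C([0,t)) \to \mathcal C([0,t))$ from paths of $\conti W|_{[0,t)}$ to $\conti Z|_{[0,t)}$, and the strong solution on the original probability space is recovered as $\conti Z = F_t(\conti W|_{[0,t)})$. Finally, the consistency property $F_t(\conti W|_{[0,t)})|_{[0,s)} = F_s(\conti W|_{[0,s)})$ for $s \le t$ is an immediate consequence of pathwise uniqueness applied on $[0,s)$: both sides are $(\mathcal F_r)_{0\le r<s}$-adapted continuous solutions of the same SDE on $[0,s)$, and hence coincide almost surely.
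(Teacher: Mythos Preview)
The paper does not supply its own proof of this theorem: it is quoted verbatim as Theorem~2 of \cite{MR3098074} and Proposition~2.2 of \cite{MR3882190}, and the authors simply use it as a black box. So there is no ``paper's proof'' to compare against; what remains is to assess your sketch on its own merits.

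Your overall architecture (weak existence by regularization, pathwise uniqueness, then Yamada--Watanabe to obtain the solution mapping and the consistency property) is the right shape, and your computation $dD = (\idf_{\{\conti Z>0\}} - \idf_{\{\widetilde{\conti Z}>0\}})\,d(\conti X+\conti Y)$ is correct. The genuine gap is the pathwise uniqueness step. Applying Tanaka's formula to $|D|$ yields $|D_t| = \int_0^t \idf_{\{\sgn \conti Z \neq \sgn \widetilde{\conti Z}\}}\,d(\conti X+\conti Y)_s + L_t^0(D)$, so $\E|D_t| = \E L_t^0(D)$, and one must now show $L_t^0(D)=0$. This is precisely where the discontinuity of the coefficients bites: there is no modulus-of-continuity bound of the form $|\sigma(x)-\sigma(y)|^2 \le C|x-y|$ to feed into a Gronwall argument, so your appeal to ``a Gronwall-type inequality'' does not go through as stated. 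The actual proofs in \cite{MR3098074,MR3882190} are more delicate: Prokaj uses a representation of $\conti Z$ in terms of two \emph{independent} Brownian motions (extracted from the decomposition $\conti Y = \rho \conti X + \sqrt{1-\rho^2}\,\conti B$ that you wrote down) together with a balayage-type argument, while Hajri--Raimond pass through the theory of stochastic flows. In either case the nondegeneracy $|\rho|<1$ enters crucially, as you correctly anticipated, but the mechanism is not a naive local-time/Gronwall estimate.

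Your treatment of items 3 and 4 (solution mapping via Yamada--Watanabe, consistency by pathwise uniqueness on the shorter interval) is fine once items 1 and 2 are in hand.
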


Assume from now on that $\conti W = (\conti X, \conti Y)$ is a standard two-dimensional Brownian motion of correlation $-1/2$ and time-interval $\R$. Let $(\mathcal F_t)_{t\in \R}$ be the canonical filtration of $\conti W$. For every $u\in \R$ we define
\begin{equation}\label{eq:frhveiwervfoew}
\conti Z^{(u)}(t) = \begin{cases}
F_\infty\Big((\conti W(u + s) - \conti W(u))_{0\leq s <\infty}\Big)(t-u),& t\geq u,\\
0, &t<u.
\end{cases}
\end{equation}

It is clear that $\conti Z^{(u)}$ is $(\mathcal F_t)_t$-adapted. Because \cref{eq:flow_SDE} is invariant by addition of a constant to $\conti W$, and because $\conti W(u + s) - \conti W(u)$ is a Brownian motion with time-interval $\R_+$, we see that for every fixed $u\in\R$, $\conti Z^{(u)}$ satisfies \cref{eq:flow_SDE} almost surely.

\cref{eq:frhveiwervfoew} makes the mapping $(\omega,u)\mapsto \conti Z^{(u)}$ jointly measurable -- joint measurability follows by composition because $F_\infty$ is measurable and $(\omega, u)\mapsto (\conti W(u + s) - \conti W(u))_{0\leq s <\infty}$ is also measurable. Hence by Tonelli's theorem, for almost every $\omega$, $\conti Z^{(u)}$ is a solution for almost every $u$. 

\begin{remark}\label{rem:no_flow}
	Given $\omega$ (even restricted to a set of probability one), we cannot say that $\{\conti Z^{(u)}\}_{u\in \R}$ forms a whole field of solutions to \cref{eq:flow_SDE} driven by $\conti W$. Indeed, we cannot guarantee that the SDE holds for all $u\in\R$ simultaneously. In fact, we expect thanks to intuition coming from Liouville Quantum Gravity, that there exist exceptional times $u\in\R$ where uniqueness fails, with two or three distinct solutions. This phenomenon is also observed in another coalescing flow of an SDE \cite{MR2094439} and in the Brownian web \cite{MR3644280}.
\end{remark}

\begin{remark}\label{rem:solution_of_SDE_are_BM}
	By Lévy's characterization theorem  \cite[Theorem IV.3.6]{revuz2013continuous}, for every fixed $u\in\mathbb R$, the process $\conti Z^{(u)}$ is a standard one-dimensional Brownian motion on $[u,\infty)$ with $\conti Z^{(u)}(u) = 0$. Note however that the coupling of $\conti Z^{(u)}$ for different values of $u\in\R$ is highly nontrivial.
\end{remark}

\begin{definition}
We call \emph{continuous coalescent-walk process} (driven by $\conti W$) the collection of stochastic processes $\left\{\conti Z^{(u)}\right\}_{u\in \R}$. 
\end{definition}

Since for all $u\in\R$, $(\conti Z^{(u)}(t))_{t\geq u}$ is a Brownian motion, one can define almost surely its local time process at zero  $\conti L^{(u)}$ (see \cite[Chapter VI]{revuz2013continuous}): namely for $t\geq u$, $\conti L^{(u)}(t)$ is the limit in probability of
\[\frac 1 {2\eps} \Leb\left(\left\{s\in [u,t]: |\conti Z^{(u)}(s)| <\eps \right\}\right).\]
By convention we set $\conti L^{(u)}(t) = 0$ for $t<u$ so that  $\conti L^{(u)}$ is a continuous process on $\R$.

In the next section we show that the \emph{continuous coalescent-walk process} is the scaling limit of the discrete infinite-volume coalescent-walk processes defined in \cref{sect: bij_walk_coal}.

\subsection{The unconditioned scaling limit result}

Let $\overline{\bm W} = (\overline{\bm X},\overline{\bm Y}) =(\overline{\bm X}_k,\overline{\bm Y}_k)_{k\in \Z}$ be the two-dimensional random walk (with step distribution $\nu$) defined below \cref{eq:step_distribution_walk} page~\pageref{eq:step_distribution_walk}, and let $\overline{\bm Z} = \wcp(\overline{\bm W})$ be the corresponding discrete coalescent-walk process. Let also $(\overline {\bm L}^{(i)}(j))_{-\infty<i\leq j <\infty} = L_{\overline{\bm Z}}$ be the local time process of $\overline{\bm Z}$ as defined in \cref{eq:local_time_process} page~\pageref{eq:local_time_process}. By convention, from now on we extend trajectories of $\overline{\bm Z}$ and $\overline {\bm L}$ to the whole $\Z$ by setting $\overline{\bm Z}^{(j)}(i) = \overline {\bm L}^{(j)}(i) = 0$ for $i,j \in \Z$, $i<j$.
We define rescaled versions: for all $n\geq 1, u\in \R$, let $\overline {\conti W}_n:\R\to \R^2$, $\overline {\conti Z}^{(u)}_n:\R\to\R$ and $\overline {\conti L}^{(u)}_n:\R\to\R$ be the continuous functions defined by linearly interpolating the following points:
\begin{equation}\label{eq:rescaled_version}
\overline{\conti W}_n\left(\frac kn\right) = \frac 1 {\sqrt {2n}} \overline{\bm W}_{k}, \quad
\overline{\conti Z}^{(u)}_{n}\left(\frac kn\right) = \frac 1 {\sqrt {2n}} \overline{\bm Z}^{(\lceil nu\rceil)}_{k}, \quad
\overline{\conti L}^{(u)}_{n}\left(\frac kn\right) = \frac 1 {\sqrt {2n}} \overline{\bm L}^{(\lceil nu\rceil)}(k),\quad k\in \Z.
\end{equation}
Our first scaling limit result for infinite-volume coalescent-walk processes is the following result that deals with a single trajectory.
\begin{theorem}\label{thm:coal_con_uncond}
	Fix $u\in \R$. 
	We have the following joint convergence in $\mathcal C(\R,\R)^{4}$: 
	\begin{equation}
	\label{eq:coal_con_uncond}
	\left(\overline{\conti W}_n,\overline{\conti Z}^{( u)}_n,\overline{\conti L}^{( u)}_n\right) 
	\xrightarrow[n\to\infty]{d}
	\left(\conti W,\conti Z^{(u)},\conti L^{(u)}\right).
	\end{equation}
\end{theorem}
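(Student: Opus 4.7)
The plan is to prove the theorem in three stages: joint tightness of the triple, identification of any subsequential limit as the strong solution of \cref{eq:flow_SDE}, and conclusion via pathwise uniqueness. By the shift-invariance of the step law $\nu$ and of the coalescent-walk construction, it suffices to treat $u=0$ throughout; the general case follows by translating time.

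\textbf{Joint tightness.} Donsker's invariance principle applied to $\overline{\bm W}$ gives $\overline{\conti W}_n \to \conti W$ in $\mathcal{C}(\R,\R^2)$: the rescaling by $\sqrt{2n}$ matches the diagonal entries $2$ of the covariance of $\nu$, and the off-diagonal entry $-1$ produces the correlation $-1/2$. By \cref{prop:trajectories_are_rw}, $\overline{\bm Z}^{(0)}$ is itself a centered random walk with the step law of $\overline{\bm Y}$, so Donsker again yields marginal convergence of $\overline{\conti Z}^{(0)}_n$ to a standard Brownian motion. Classical results on convergence of the rescaled number of zero-visits of a random walk to Brownian local time then give tightness of $\overline{\conti L}^{(0)}_n$, with any limit point equal to the local time at zero of the limit of $\overline{\conti Z}^{(0)}_n$. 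Joint tightness of the triple follows from Prokhorov.

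\textbf{Martingale decomposition and error control.} An inspection of \cref{eq:distrib_incr_coal} shows that the discrete recursion for $\overline{\bm Z}^{(0)}$ can be rewritten as
\begin{equation*}
\overline{\bm Z}^{(0)}_{\ell+1} - \overline{\bm Z}^{(0)}_\ell
= \idf_{\{\overline{\bm Z}^{(0)}_\ell > 0\}}(\overline{\bm Y}_{\ell+1} - \overline{\bm Y}_\ell)
- \idf_{\{\overline{\bm Z}^{(0)}_\ell \leq 0\}}(\overline{\bm X}_{\ell+1} - \overline{\bm X}_\ell)
+ \bm\eps_\ell,
\end{equation*}
where the discrepancy $\bm\eps_\ell$ vanishes except either when $\overline{\bm Z}^{(0)}_\ell = 0$ (boundary mismatch between the discrete $\geq 0$ rule and the SDE indicator $>0$) or when the middle case $-i \leq \overline{\bm Z}^{(0)}_\ell < 0$ is triggered by a step $(-i,j)$. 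The explicit geometric form of $\nu$ yields, by a short symmetry calculation, $\E[\bm\eps_\ell \mid \mathcal{F}_\ell] = 0$ and $\E[\bm\eps_\ell^2 \mid \mathcal{F}_\ell] \leq C\, 2^{-|\overline{\bm Z}^{(0)}_\ell|}\,\idf_{\{\overline{\bm Z}^{(0)}_\ell \leq 0\}}$, where $\mathcal{F}_\ell = \sigma(\overline{\bm W}_s : s \leq \ell)$. Hence $M^n_k := \sum_{\ell=0}^{k-1}\bm\eps_\ell$ is a martingale whose bracket satisfies $\E\langle M^n\rangle_n = O(\sqrt n)$ (using the standard occupation-time estimate $\E\,\#\{\ell<n:\overline{\bm Z}^{(0)}_\ell=-k\}=O(\sqrt n)$ uniformly in $k$, together with the geometric weight). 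By Doob's $L^2$-inequality, $\sup_{t\leq T}(2n)^{-1/2}|M^n_{\lfloor nt\rfloor}| \to 0$ in probability.

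\textbf{Identification of the SDE and conclusion.} Given a subsequential joint limit $(\conti W,\widehat{\conti Z},\widehat{\conti L})$, Skorokhod's representation lets us work on a space where the convergence is almost sure. Passing the rescaled discrete recursion to the limit—using the a.s.\ convergence of $\overline{\conti W}_n$, the vanishing of the rescaled martingale, and convergence of the discrete ``stochastic integrals'' $\sum \idf_{\{\overline{\bm Z}^{(0)}_\ell > 0\}}(\overline{\bm Y}_{\ell+1}-\overline{\bm Y}_\ell)$ to $\int \idf_{\{\widehat{\conti Z}(s)>0\}}d\conti Y(s)$ (and the analogous $-\conti X$ term), both well-defined since $\widehat{\conti Z}$ spends zero Lebesgue time at $0$ by Brownian scaling—we deduce that $\widehat{\conti Z}$ is a strong solution of \cref{eq:flow_SDE} with $u=0$ driven by $\conti W$. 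The pathwise-uniqueness clause of \cref{thm:ext_and_uni} (applicable since $-1/2\in(-1,1)$) then forces $\widehat{\conti Z}=\conti Z^{(0)}$ a.s., hence $\widehat{\conti L}=\conti L^{(0)}$. Since every subsequential limit has the same law, the full sequence converges.

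\textbf{Main obstacle.} The delicate point will be the passage to the limit of the discrete stochastic integrals above: although $\overline{\conti W}_n \to \conti W$ uniformly on compacts, the integrand $\idf_{\{\widehat{\conti Z} > 0\}}$ is discontinuous on $\{\widehat{\conti Z}=0\}$. A smoothing/regularisation argument exploiting the vanishing of the occupation density of $\widehat{\conti Z}$ at $0$ (standard in continuous-dependence theory for SDEs with discontinuous coefficients) together with the joint convergence of the local time $\overline{\conti L}^{(0)}_n$ should handle this, closing the argument.
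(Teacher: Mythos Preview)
Your high-level strategy (tightness via \cref{prop:trajectories_are_rw} and Donsker, then identification of any subsequential limit as a solution of \cref{eq:flow_SDE}, then pathwise uniqueness via \cref{thm:ext_and_uni}) is exactly the paper's. The martingale decomposition of the error $\bm\eps_\ell$ is correct and the moment bounds you state do hold; this is extra machinery the paper does not use, but it is not wrong.

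The substantive difference is in the identification step, and this is where your proposal is incomplete. You correctly flag as the ``main obstacle'' the convergence of the discrete sums $\sum \idf_{\{\overline{\bm Z}^{(0)}_\ell>0\}}\Delta\overline{\bm Y}_\ell$ to the stochastic integral $\int \idf_{\{\widehat{\conti Z}>0\}}d\conti Y$, but your sketch for resolving it (smoothing plus occupation-time control) is vague, and Kurtz--Protter-type stability results do not apply directly because the integrand is discontinuous exactly on the set you would need to control. The paper bypasses this entirely: after Skorokhod, on any open interval $(\bm a,\bm b)$ where the limit $\widetilde{\conti Z}>\eps$, uniform convergence forces $\overline{\conti Z}^{(u)}_n>\eps/2$ there for large $n$, and then the discrete rule gives $\overline{\conti Z}^{(u)}_n-\overline{\conti Y}_n$ \emph{exactly} constant on $(\bm a+1/n,\bm b)$. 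Passing to the limit, $\widetilde{\conti Z}-\conti Y$ is locally constant on $\{\widetilde{\conti Z}>\eps\}$, which is the statement $\int\idf_{\{\widetilde{\conti Z}>\eps\}}d\widetilde{\conti Z}=\int\idf_{\{\widetilde{\conti Z}>\eps\}}d\conti Y$; then stochastic dominated convergence (entirely on the continuous side) sends $\eps\to 0$. No discrete stochastic integrals, no error-term martingale.

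One further point you omit: to invoke pathwise uniqueness you must check that $\conti W$ is a Brownian motion with respect to a filtration to which the limit $\widehat{\conti Z}$ is also adapted. The paper does this explicitly by showing, via the discrete independence of past and future increments, that $\conti W(t+s)-\conti W(t)$ is independent of $\sigma(\conti W(r),\widetilde{\conti Z}(r):r\leq t)$. Without this, \cref{thm:ext_and_uni} does not apply to the pair $(\widehat{\conti Z},\conti Z^{(0)})$.
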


\begin{proof} The proof is spitted in two main parts.

\paragraph*{Convergence of $\overline{\conti W}_n$ and $(\overline{\conti Z}^{( u)}_n,\overline{\conti L}^{( u)}_n)$.}
The first step in the proof is to establish component-wise convergence in 
\cref{eq:coal_con_uncond}.
By Donsker's theorem, upon computing $\Var(\nu) = \begin{psmallmatrix}2 &-1 \\ -1 &2 \end{psmallmatrix}$,
we get that the rescaled random walk $\overline{\conti W}_n=(\overline{\conti X}_n,\overline{\conti Y}_n)$ converges to $\conti W=(\conti X, \conti Y)$ in distribution. Using \cref{prop:trajectories_are_rw}, we know that a single trajectory of the discrete coalescent-walk process has the distribution of a random walk, and applying again an invariance principle such as \cite[Theorem 1.1]{Borodin}, we get that $(\overline{\conti Z}^{(u)}_n({u+t}), \overline{\conti L}^{(u)}_n({u+t}))_{t\geq 0}$ converges to a one-dimensional Brownian motion and its local time, which is indeed distributed like $(\conti Z^{(u)},\conti L^{(u)})$ thanks to \cref{rem:solution_of_SDE_are_BM}.

\paragraph{Joint convergence.}
The second step in the proof is to establish joint convergence. By Prokhorov's theorem, both $\overline{\conti W}_n$ and $(\overline{\conti Z}^{( u)}_n,\overline{\conti L}^{( u)}_n)$ are tight sequences of random variables. Since the product of two compact sets is compact, then the left-hand side of \cref{eq:coal_con_uncond} forms a tight sequence. Therefore, again by Prokhorov's theorem, it is enough to identify the distribution of all joint subsequential limits in order to show the convergence in \cref{eq:coal_con_uncond}. Assume that along a subsequence, we have 
\begin{equation}\label{eq:joint_conve_to_prove}
\left(\overline{\conti W}_n,\overline{\conti Z}^{( u)}_n,\overline{\conti L}^{( u)}_n\right) 
\xrightarrow[n\to\infty]{d}
\left(\conti W,\widetilde {\conti Z},\widetilde{\conti L}\right),
\end{equation}
where $\widetilde {\conti Z}$ is a Brownian motion started at time $u$, and $\widetilde {\conti L}$ is its local time process at zero. The joint distribution of the right-hand side is unknown for now, but we will show that $\widetilde {\conti Z} = \conti Z^{(u)}$ almost surely, which will complete the proof. 
Using Skorokhod's theorem, we may define all involved variables on the same probability space and assume that the convergence in \cref{eq:joint_conve_to_prove} is in fact almost sure.

Let $(\mathcal G_t)_t$ be the smallest complete filtration to which $\conti W$ and $\widetilde{\conti Z}$ are adapted, i.e.\ $\mathcal G_t$ is the completion of $\sigma(\conti W(s),\widetilde{\conti Z}(s), s\leq t)$ by the negligible events of $\P$.
	We shall show that $\conti W$ is a $(\mathcal G_t)_t$-Brownian motion, that is for $t\in \R, s\in \R_{\geq 0}$, 
	$$\left(\conti W(t+s)-\conti W(t)\right) \indep \mathcal G_t.$$ 
	For fixed $n$, by definition of random walk, $\left(\overline{\conti W}_n({t+s}) - \overline{\conti W}_n(t)\right) \indep \sigma\left(\overline{\bm W}_k, k\leq \lfloor n t \rfloor\right)$. Therefore, 
	\begin{equation}
	\left(\overline{\conti W}_n({t+s})-\overline{\conti W}_n(t)\right) \ \indep\ \left(\overline{\conti W}_n(r),\overline{\conti Z}^{(u)}_n(r)\right)_{r\leq n^{-1}\lfloor nt \rfloor}.
	\end{equation}
	By convergence, we obtain that $\left(\conti W(t+s)-\conti W(t)\right)$ is independent of $\left(\conti W(r),\widetilde{\conti Z}(r)\right)_{r\leq t}$, completing the claim that $\conti W$ is a $(\mathcal G_t)_t$-Brownian motion.

	Now fix a rational $\eps>0$ and an open interval with rational endpoints $(\bm a,\bm b)$ on which $\widetilde{\conti Z}(t)>\eps$. Note that the interval $(\bm a,\bm b)$ depends on $\widetilde{\conti Z}(t)$. By almost sure convergence, there is $N_0$ such that for $n\geq N_0$, $\overline{\conti Z}^{(u)}_n>\eps/2$ on $(\bm a,\bm b)$. 
	On the interval $(\bm a+1/n,\bm b)$, the process $\overline{\conti Z}^{(u)}_n - \overline{\conti Y}_n$ is constant  by construction of the coalescent-walk process (see \cref{sect: bij_walk_coal}). 
	Hence the limit $\widetilde{\conti Z} - \conti Y$ is constant too on $(\bm a,\bm b)$ almost surely. We have shown that almost surely $\widetilde{\conti Z} - \conti Y$ is locally constant on $\{t>u:\widetilde{\conti Z}(t)>\eps\}$. This translates into the following almost sure equality:
	\[\int_{u}^t \idf_{\{\widetilde{\conti Z}(r)>\eps\}} d\widetilde{\conti Z}(r) = \int_u^t \idf_{\{\widetilde{\conti Z}(r)>\eps\}} d\conti Y(r), \quad t\geq u. \]
	The stochastic integrals are well defined: on the left-hand side by considering the canonical filtration of $\widetilde {\conti Z}$, on the right-hand side by considering $(\mathcal G_t)_t$.	
	The same can be done for negative values, leading to 
	\[\int_u^t \idf_{\{|\widetilde{\conti Z}(r)|>\eps\}} d\widetilde{\conti Z}(r)  =  \int_{u}^t \idf_{\{\widetilde{\conti Z}(r)>\eps\}} d\conti Y(r) -  \int_{u}^t \idf_{\{\widetilde{\conti Z}(r)<-\eps\}} d\conti X(r). \]
	By stochastic dominated convergence theorem, one can take the limit as $\eps\to 0$, \cite[Theorem IV.2.12]{revuz2013continuous}, and obtain 
	\[\int_u^t \idf_{\{\widetilde{\conti Z}(r)\neq0\}}d\widetilde{\conti Z}(r)  =  \int_{u}^t \idf_{\{\widetilde{\conti Z}(r)>0\}} d\conti Y(r) -  \int_{u}^t \idf_{\{\widetilde{\conti Z}(r)<0\}} d\conti X(r). \]
	Thanks to the fact that $\widetilde{\conti Z}$ is a Brownian motion, $\int_u^t \idf_{\{\widetilde{\conti Z}(r)=0\}}d\widetilde{\conti Z}(r) = 0$, so that the left-hand side of the equation above equals $\widetilde {\conti Z}(t)$.
	As a result $\widetilde {\conti Z}$ satisfies \cref{eq:flow_SDE} almost surely and we can apply pathwise uniqueness (\cref{thm:uniqueness}, item 1) to complete the proof that $\widetilde {\conti Z} = \conti Z^{(u)}$ almost surely.
\end{proof}

\subsection{The conditioned scaling limit result}
\label{sec:cond_conv}

In the previous section we saw a scaling limit result for infinite-volume coalescent-walk processes. We now deal with the finite-volume case, the one that we need for our results.

For all $n\geq 1$, let $\bm W_n$ be a uniform element of the space of tandem walks $\mathcal W_n$ and $\bm Z_n = \wcp(\bm W_n)$ be the corresponding uniform coalescent-walk process. Let also $\bm L_n = ( {\bm L}^{(i)}_{n}(j))_{1\leq i\leq j \leq n} = L_{{\bm Z_n}}$ be the local time process of $\bm Z_n$ as defined in \cref{eq:local_time_process} page~\pageref{eq:local_time_process}. For all $n\geq 1, u\in (0,1)$, let ${\conti W}_n:[0,1]\to \R^2$, ${\conti Z}^{(u)}_n:[0,1]\to\R$ and ${\conti L}^{(u)}_n:[0,1]\to\R$ be the continuous functions defined by linearly interpolating the following points defined for all $k\in [n]$,
\begin{equation}
{\conti W}_n\left(\frac kn\right) = \frac 1 {\sqrt {2n}} \bm W_{n}(k), \quad
{\conti Z}^{(u)}_{n}\left(\frac kn\right) = \frac 1 {\sqrt {2n}} \bm Z^{(\lceil nu\rceil)}_{n}(k), \quad
{\conti L}^{(u)}_{n}\left(\frac kn\right) = \frac 1 {\sqrt {2n}} \bm L^{(\lceil nu\rceil)}_{n}(k).
\end{equation}Our goal is to obtain a scaling limit result for these processes in the fashion of \cref{thm:coal_con_uncond}.

Let $\conti W_e$ denote a two-dimensional Brownian excursion of correlation $-1/2$ in the non-negative quadrant and denote by $(\Omega, \mathcal F, (\mathcal F_t)_{0\leq t\leq 1}, \Prob_\exc)$ the completed canonical probability
space of $\conti W_e$. From now on we work in this space. The law of the process $\conti W_e$ is characterized (for instance) by \cref{prop:brown_ex} in \cref{sec:appendix}. 
Using \cref{prop:unif_law} and \cref{prop:DW}, we have that $\conti W_e$ is the scaling limit of $\conti W_n$. Then, the scaling limit of $\conti Z_n$ should be the continuous coalescent-walk process driven by $\conti W_e$, i.e.\ the collection (indexed by $u\in[0,1]$) of solutions of \cref{eq:flow_SDE} driven by $\conti W_e$. 

Let us remark that since Brownian excursions are semimartingales \cite[Exercise XII.4.9]{revuz2013continuous}, it makes sense to consider stochastic integrals against such processes, so the SDE in \cref{eq:flow_SDE} driven by $\conti W_e$ is well defined. We can also transport existence and uniqueness of strong solutions from \cref{thm:ext_and_uni} using absolute continuity arguments as follows.

Denote by $\mathcal F^{(u)}_t$ the sigma-algebra generated by $\conti W_e(s) - \conti W_e(u)$  for $ u\leq s \leq t$ and completed by negligible events of $\Prob_\exc$.
\begin{theorem}\label{thm:ext_and_uni_excursion}
	For every $u\in(0,1)$, there exists a continuous $\mathcal F^{(u)}_t$-adapted stochastic process $\conti Z_e^{(u)}$ on $[u,1)$, such that
	\begin{enumerate}
	\item  the mapping $(\omega,u)\mapsto \conti Z_e^{(u)}$ is jointly measurable. 
	\item For every $0<u<r<1$, $\conti Z_e^{(u)}$ satisfies  \cref{eq:flow_SDE} with driving motion $\conti W_e$, restricted to the interval $[u,r]$, almost surely.
	\item If $0<u<r<1$ and $\widetilde {\conti Z}$ is an $\mathcal F^{(u)}_t$-adapted stochastic process that satisfies \cref{eq:flow_SDE}  with driving motion $\conti W_e$ on interval $[u,r]$, then $\widetilde  {\conti Z} =  \conti Z_e^{(u)}$ on $[u,r]$ almost surely. 
	\end{enumerate}
\end{theorem}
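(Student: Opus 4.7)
My strategy, following the remark immediately preceding the theorem, is to transfer existence, uniqueness and adaptedness from \cref{thm:ext_and_uni} via an absolute continuity argument on compact subintervals $[u,r] \subset (0,1)$. The crucial input is that the law of the shifted path $(\conti W_e(u+s) - \conti W_e(u))_{0 \leq s \leq r-u}$ on $\mathcal C([0,r-u],\R^2)$ is mutually absolutely continuous with respect to the law of a standard two-dimensional Brownian motion of correlation $-1/2$ on $[0,r-u]$. This follows from the representation of $\conti W_e$ as a Brownian motion conditioned to stay in the quadrant and end at the origin (a Doob $h$-transform whose density, involving the harmonic function of the cone, is strictly positive and finite on the event $\{\conti W_e(u), \conti W_e(r) \in (0,\infty)^2\}$); results of this flavour are collected in \cref{sec:appendix}.

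Granted this, I would construct $\conti Z_e^{(u)}$ by setting
\[
\conti Z_e^{(u)}(t) := F_{r-u}\bigl((\conti W_e(u+s) - \conti W_e(u))_{0 \leq s \leq r-u}\bigr)(t-u)
\]
for $t \in [u,r)$ and any $r \in (u,1)$, where $F_{r-u}$ is the deterministic measurable solution mapping supplied by \cref{thm:ext_and_uni}. Consistency across different choices of $r$ is exactly item 4 of \cref{thm:ext_and_uni}. Because $F_{r-u}$ only reads the post-$u$ increments, the resulting process is $\mathcal F^{(u)}_t$-adapted; joint measurability of $(\omega,u) \mapsto \conti Z_e^{(u)}$ follows from the measurability of $F_{r-u}$ combined with the joint measurability of $(\omega,u) \mapsto \conti W_e(u+\cdot) - \conti W_e(u)$.

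To verify item 2, I would argue that under the Brownian motion law the process produced by $F_{r-u}$ satisfies \cref{eq:flow_SDE} almost surely by construction; since satisfying the SDE on $[u,r]$ is a measurable event determined by the joint path $(\conti W_e|_{[u,r]}, \conti Z_e^{(u)}|_{[u,r]})$, mutual absolute continuity transfers it to $\Prob_\exc$. For uniqueness (item 3), any $\mathcal F^{(u)}_t$-adapted solution $\widetilde{\conti Z}$ is a measurable functional of the post-$u$ increments of $\conti W_e$; viewed under the Brownian motion law it remains adapted to a filtration satisfying the usual conditions and still satisfies the SDE, so pathwise uniqueness (item 1 of \cref{thm:ext_and_uni}) forces $\widetilde{\conti Z} = \conti Z_e^{(u)}$ almost surely on both sides.

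The main obstacle I anticipate is the filtration bookkeeping underlying the absolute continuity argument: one must ensure that the stochastic integrals against the components of $\conti W_e$ are well-defined under $\Prob_\exc$, that $\mathcal F^{(u)}_t$ is rich enough for the SDE to be posed in the sense of \cref{thm:ext_and_uni}, and that the ``usual conditions'' are preserved under the change of measure. This is standard but delicate; I would handle it by working with the completed right-continuous filtration generated by the full excursion, observing that the $h$-transformed process is a semimartingale on every compact subinterval of $(0,1)$, and noting that every functional appearing in my construction factors through the post-$u$ increments, so the reduction to \cref{thm:ext_and_uni} applies on both measure-theoretic sides without modification.
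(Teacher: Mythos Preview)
Your proposal is correct and follows essentially the same route as the paper: construct $\conti Z_e^{(u)}$ on each $[u,r]$ by applying the solution mapping $F_{r-u}$ to the shifted increments of $\conti W_e$, use item~4 of \cref{thm:ext_and_uni} together with absolute continuity (\cref{prop:brown_ex}) for consistency and to transport the almost-sure SDE property, and prove uniqueness by writing any $\mathcal F^{(u)}$-adapted solution as a measurable functional of the post-$u$ increments and invoking pathwise uniqueness under the Brownian law. Your remark that one actually uses \emph{mutual} absolute continuity (needed in the uniqueness step to push the ``solves the SDE'' property from $\Prob_\exc$ to the Brownian law) is a point the paper leaves implicit but which follows from strict positivity of the density $\alpha_\eps$.
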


\begin{proof}
	Recall the solution mappings $\solution_t$ defined in \cref{thm:ext_and_uni}. For $0<u<r<1$, we define the process $\conti R_{u,r}\in \mathcal C([u,r])$ as follows:
	 $$\conti R_{u,r}(t) \coloneqq \solution_{r-u}\Big((\conti W_e({u+s}) - \conti W_e({u}))_{0\leq s \leq r-u}\Big)(t-u),\quad u\leq t \leq r.$$
	  By definition, $\conti R_{u,r}$ is measurable with regards to $\mathcal F_r^{(u)}$. By \cref{prop:brown_ex}, the distribution of $((\conti W_e({u+s}) - \conti W_e({u}))_{0\leq s \leq r-u}$ is absolutely continuous with regards to that of a Brownian motion with time-interval $[0,r-u]$. Hence thanks to items 3 and 4 of \cref{thm:ext_and_uni}, 
	\begin{enumerate}
		\item $\conti R_{u,r}$ almost surely satisfies \cref{eq:flow_SDE} driven by $\conti W_e$ on interval $[u,r]$;
		\item for $0<u<r<r'<1$, we have $\conti R_{u,r} = (\conti R_{u,r'})|_{[u,r]}$ almost surely.
	\end{enumerate}
	Moreover, $(u,r,\omega)\mapsto \conti R_{u,r}$ is measurable by construction.
	
	Finally, this holds simultaneously for all rational $r,r'$ such that $0<u<r<r'<1$, so that there almost surely exists $\conti Z^{(u)} \in \mathcal C([u,1))$ whose restriction coincides with $\conti R_{u,r}$ for every rational $r$. Hence it almost surely satisfies \cref{eq:flow_SDE} driven by $\conti W_e$. 	
	For fixed $r_0\in (u,1)$, $\conti Z^{(u)}|_{[u,r_0]} = \conti R_{u,r_0}$, which is  $\mathcal F^{(u)}_{r_0}$-measurable. Hence $\conti Z^{(u)}$ is $\mathcal F^{(u)}$-adapted. This proves existence of a strong solution.
	
	We now move to the uniqueness claim. Consider two $\mathcal F^{(u)}$-adapted solutions $\conti Z^{(u)},\widetilde{\conti Z}^{(u)}$ of the SDE in \cref{eq:flow_SDE} driven by $\conti W_e$. By assumption $r<1$. 
	There must be $H,\widetilde  H : \mathcal C([u,r]) \to \mathcal C([u,r])$ so that almost surely, 
	$$\conti Z^{(u)} = H(\conti W_e({s}) - \conti W_e({u}), u\leq s \leq r)\quad\text{and}\quad\widetilde {\conti Z}^{(u)} = \widetilde H(\conti W_e({s}) - \conti W_e({u}), u\leq s \leq r).$$
	By absolute continuity (\cref{prop:brown_ex}), given a two-dimensional Brownian motion $\conti B$, the processes $H(\conti B)$ and $\widetilde H(\conti B)$ are solutions of the SDE in \cref{eq:flow_SDE} driven by $\conti B$. By pathwise uniqueness (\cref{thm:uniqueness}, item 2), $H(\conti B)=\widetilde H(\conti B)$ almost surely so that by absolute continuity, $\conti Z^{(u)} = \widetilde {\conti Z}^{(u)}$ almost surely.	
\end{proof}

Now for $u\in (0,1)$ denote by $\conti Z_e^{(u)}$ the strong solution of \cref{eq:flow_SDE} driven by $\conti W_e$ provided by the previous theorem. Note that the process $\conti Z_e^{(u)}$ is a continuous process on the interval $[u,1)$. Since $\conti Z_e^{(u)}(u) = 0$, we extend continuously $\conti Z_e^{(u)}$ on $[0,1)$ by setting $\conti Z_e^{(u)}(t) = 0$ for $0\leq t\leq u$. It will turn out (see \cref{prop:coal_con_cond}) that $\conti Z_e^{(u)}$ can also be extended continuously at time $1$.

  A remark similar to \cref{rem:no_flow} holds for the family $\{\conti Z_e^{(u)}\}_{u\in(0,1)}$, that is, we can only guarantee that for almost every $\omega$, $\conti Z_e^{(u)}$ is a solution of \cref{eq:flow_SDE} for almost every $u\in(0,1)$. Denote by $(\conti L_e^{(u)})_{u\leq t < 1}$ the local time process at zero of the semimartingale $\conti Z_e^{(u)}$ on $[u,1)$. By convention, set $\conti L_e^{(u)}(t) = 0$ for $0\leq t <u$.

\begin{definition}\label{defn:cont_coal_proc}
We call \emph{continuous coalescent-walk process} (driven by $\conti W_e$) the collection of stochastic processes $\left\{\conti Z^{(u)}_e\right\}_{u\in(0,1)}$. 
\end{definition}

We can now prove a scaling limit result for finite-volume coalescent-walk processes. We first deal with the case of a single trajectory $\conti Z_n^{(u)}$. Then we consider a more general case in \cref{thm:discret_coal_conv_to_continuous}.

\begin{proposition}\label{prop:coal_con_cond}
	Fix $u\in (0,1)$.
	The stochastic process ${\conti Z}_e^{( u)}$ can be extended to a continuous function on $[0,1]$ by setting ${\conti Z}_e^{( u)}(1) = 0$, and we
	have the following joint convergence in the product space of continuous functions $\mathcal C([0,1],\R^2)\times \mathcal C([0,1],\R)\times \mathcal C([0,1),\R)$: 
	\begin{equation}
	\label{eq:coal_con_cond}
	\left({\conti W}_n,{\conti Z}^{( u)}_n,{\conti L}^{( u)}_n \right)
	\xrightarrow[n\to\infty]{d}
	\left(\conti W_e,\conti Z_e^{(u)},\conti L_e^{(u)}\right).
	\end{equation}
\end{proposition}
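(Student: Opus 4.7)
The plan is to deduce this conditioned scaling limit from \cref{thm:coal_con_uncond} by an absolute continuity argument, combined with a tightness estimate at the right endpoint to handle the behavior of $\conti Z_n^{(u)}$ near $t = 1$.

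First I would prove convergence on compact subsets of $[0,1)$. Fix $\varepsilon \in (0,\,u\wedge(1-u))$. By the absolute continuity result \cref{lem:AbsCont} (already used in the proof of \cref{lem:quenched_conv_walks2}), the law of the restriction of $\bm W_n$ to indices in $[\lfloor n\varepsilon\rfloor, \lfloor n(1-\varepsilon)\rfloor]$ is absolutely continuous with respect to the law of the corresponding restriction of the unconditioned walk $\overline{\bm W}$, with Radon--Nikodym derivative $\alpha_{n,\lfloor n\varepsilon\rfloor}^{0,0}$ that is a bounded continuous functional of the endpoint and the infimum, and that converges (via \cref{lem:LLT}) to the tilting functional $\alpha_\varepsilon$ of \cref{eq:fuction_alpha}. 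Since the triple $(\bm W_n, \bm Z_n^{(u)}, \bm L_n^{(u)})$ is a deterministic measurable functional of $\bm W_n$, this absolute continuity carries over verbatim. Combined with \cref{thm:coal_con_uncond} at parameter $u$, this yields joint convergence of $(\conti W_n, \conti Z_n^{(u)}, \conti L_n^{(u)})|_{[\varepsilon,1-\varepsilon]}$ towards a tilted version of $(\conti W, \conti Z^{(u)}, \conti L^{(u)})$, which by \cref{prop:brown_ex} coincides with the law of $(\conti W_e, \conti Z_e^{(u)}, \conti L_e^{(u)})|_{[\varepsilon, 1-\varepsilon]}$. Since $\conti Z_n^{(u)}, \conti L_n^{(u)}, \conti Z_e^{(u)}, \conti L_e^{(u)}$ all vanish identically on $[0,u]$ by construction, letting $\varepsilon \to 0$ upgrades this to convergence on any compact subinterval of $[0,1)$. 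In particular, this settles convergence of $\conti L_n^{(u)}$ in $\mathcal C([0,1),\R)$, and of $(\conti W_n,\conti Z_n^{(u)})$ in $\mathcal C([0, 1-\varepsilon],\R^2)\times \mathcal C([0,1-\varepsilon],\R)$ for every $\varepsilon>0$.

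The second step is to upgrade the convergence of $(\conti W_n, \conti Z_n^{(u)})$ to the full interval $[0,1]$. For $\conti W_n$ alone this is \cref{prop:DW}. For $\conti Z_n^{(u)}$, one needs (i) tightness of the sequence in $\mathcal C([0,1],\R)$, or equivalently a uniform modulus-of-continuity estimate on windows $[1-\delta, 1]$ as $\delta\to 0$, and (ii) the identification $\conti Z_e^{(u)}(t) \to 0$ as $t\to 1^-$. For (i), the key observation is that the increment of a trajectory is controlled by the increment of the underlying walk: from \cref{eq:distrib_incr_coal}, $|\bm Z_n^{(u)}(k+1)-\bm Z_n^{(u)}(k)|$ is bounded by the total variation of the step $\bm W_n(k+1)-\bm W_n(k)$ except in the ``coalescing case'' where an extra term of order $|\bm Z_n^{(u)}(k)|$ appears, the latter being controlled by uniform bounds on $\bm W_n$ from \cref{prop:DW}. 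For (ii), once tightness is in place, any subsequential limit must agree with $\conti Z_e^{(u)}$ on $[0,1-\varepsilon]$ for every $\varepsilon>0$ by the first step; the boundary value at $t=1$ is then pinned by using the symmetries from \cref{sect:anti-invo}, specifically \cref{prop:rev_coal_prop}, which gives the distributional identity $\wcp(\bm W_n) \stackrel d = \wcp(\cev{\bm W}_n)$ and allows to relate the behavior of trajectories near $t=n$ to the behavior of (different) trajectories near $t=0$, where the value is deterministically zero.

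The main obstacle I anticipate is step (i): establishing uniform tightness of $\conti Z_n^{(u)}$ in a neighborhood of $t=1$, in spite of the degeneration of the absolute-continuity estimate there. The approach outlined above combines the deterministic increment bound derived from \cref{eq:distrib_incr_coal} with \cref{prop:DW}, but making this rigorous requires careful control of the coalescing contribution to the increments. A possible alternative is to exploit the bijection with bipolar orientations of \cref{sect:equiv_bij}: by \cref{cor:local_time}, boundedness of $\bm Z_n^{(u)}$ near $t=n$ is tied to boundedness of heights in the dual tree $T(\bm m_n^*)$, which can be controlled jointly via a convergence of the four trees coming from $\bm m_n$ and $\bm m_n^*$.
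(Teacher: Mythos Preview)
Your first step is correct and essentially matches the paper: absolute continuity (\cref{lem:AbsCont,lem:LLT,prop:brown_ex}) transfers the unconditioned convergence of \cref{thm:coal_con_uncond} to the conditioned walk on $[\eps,1-\eps]$, and letting $\eps\to 0$ along a countable set gives convergence on compacts of $[0,1)$.

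The gap is in your second step. You correctly identify control near $t=1$ as the obstacle, but the routes you propose (increment bounds from \cref{eq:distrib_incr_coal}, reversal via \cref{prop:rev_coal_prop}, dual-tree heights via \cref{cor:local_time}) are all unnecessarily complicated and it is not clear any of them closes cleanly. The paper instead uses a one-line deterministic pointwise bound that you missed: for a tandem walk in the quadrant,
\[
-\bm X_n(j)\;\leq\;\bm Z_n^{(k)}(j)\;\leq\;\bm Y_n(j),\qquad j\geq k,
\]
proved by a direct induction on $j$ using \cref{eq:distrib_incr_coal} (the point being that when $Z$ is nonnegative it moves with $Y$, when negative it moves with $-X$, and the ``coalescing'' case lands at a nonnegative value $\leq Y(j+1)$ since $Y\geq 0$). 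After rescaling this reads $|\conti Z_n^{(u)}(t)|\leq \max(\conti X_n(t),\conti Y_n(t))$. Since $\conti W_n\to\conti W_e$ uniformly on $[0,1]$ (\cref{prop:DW}) and $\conti W_e(1)=(0,0)$, one gets
\[
\limsup_{n\to\infty}\;\sup_{t\in[1-\eps,1]}|\conti Z_n^{(u)}(t)|\;\leq\;\|\conti W_e\|_{[1-\eps,1]}\xrightarrow[\eps\to 0]{}0
\]
after passing to an almost-sure coupling via Skorokhod. This single bound gives simultaneously (i) that $(\conti Z_n^{(u)})_n$ is Cauchy in $\mathcal C([0,1],\R)$, hence tight there, and (ii) that the limit at $t=1$ is $0$. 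No symmetry argument or increment control is needed.
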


\begin{remark}
	Note that the convergence of local times does not go up to time 1. This will be corrected later in \cref{lem:local_time_does_not_disappear} for a uniformly random starting point, using a combinatorial argument.
\end{remark}
\begin{proof}
	The convergence in distribution ${\conti W}_n\xrightarrow[n\to\infty]{d}\conti W_e$ is \cref{prop:DW}.
	Now let $0<\eps<u\wedge(1-u)$. By construction, $\left(({\conti W}_n-{\conti W}_n(u))|_{[u,1-\eps]},{\conti Z}^{( u)}_n|_{[u,1-\eps]},{\conti L}^{( u)}_n|_{[u,1-\eps]} \right)$
	is a measurable functional of $(\bm W_{n}(k) - \bm W_{n}(\lfloor \eps n \rfloor))_{\lfloor \eps n \rfloor\leq k \leq \lfloor (1-\eps)n \rfloor}$. Using \cref{thm:coal_con_uncond} together with
	 \cref{lem:AbsCont,lem:LLT,prop:brown_ex}, we get that  (the arguments are similar to the ones used in the proof of \cref{prop:DW})
	\begin{multline*}
	\left(({\conti W}_n - \conti W_n(u))|_{[u,1-\eps]},{\conti Z}^{( u)}_n|_{[u,1-\eps]},{\conti L}^{( u)}_n|_{[u,1-\eps]} \right)\\
	\xrightarrow[n\to\infty]{d}
	\left((\conti W_e - \conti W_e(u))|_{[u,1-\eps]},\conti Z_e^{(u)}|_{[u,1-\eps]},\conti L_e^{(u)}|_{[u,1-\eps]}\right).
	\end{multline*}
As in the proof of \cref{thm:coal_con_uncond}, we use Prokorov's theorem twice, and obtain that the sequence
\begin{equation}\label{eq:cv_1}
\left( \conti W_n,\,\,\left(({\conti W}_n - \conti W_n(u))|_{[u,1-\eps]},{\conti Z}^{( u)}_n|_{[u,1-\eps]},{\bm L}^{( u)}_n|_{[u,1-\eps]} \right)_{\eps \in \mathbb Q \cap (0,u\wedge 1-u)}
\right)
\end{equation}
is tight. The only possible limit in distribution is
\begin{equation}\label{eq:cv_2}
\left( \conti W_e,\,\,\left((\conti W_e - \conti W_e(u))|_{[u,1-\eps]},\conti Z_e^{(u)}|_{[u,1-\eps]},\conti L_e^{(u)}|_{[u,1-\eps]}\right)_{\eps \in \mathbb Q \cap (0,u\wedge 1-u)}
\right)
\end{equation}
because the processes ${\bm Z}^{( u)}_n|_{[u,1-\eps]}$ and ${\bm L}^{( u)}_n|_{[u,1-\eps]} $ are measurable functionals of $(\bm W_{n}(k) - \bm W_{n}(\lfloor \eps n \rfloor))_{\lfloor \eps n \rfloor\leq k \leq \lfloor (1-\eps)n \rfloor}$ and the restriction mapping that sends $\conti W_n$ to $(\conti W_n - \conti W_n(u))|_{[u,1-\eps]}$ is continuous (and so this relation must carry over to the limit). Hence there is convergence in distribution of the sequence in \cref{eq:cv_1} to the limit in \cref{eq:cv_2}. We may now use Skorokhod's theorem to obtain a large probability space where almost surely, we have uniform convergence on $[0,1]$ of ${\conti W}_n$ to $\conti W_e$, uniform convergence on $[u,1-\eps]$ of ${\conti Z}^{( u)}_n$ to $\conti Z_e^{(u)}$ and ${\conti L}^{( u)}_n$ to ${\conti L}_e^{( u)}$ for every rational $\eps>0$.

We can now use the deterministic bound $-{\bm X}_n \leq  {\bm Z}^{(k)}_n \leq {\bm Y}_n$ (easily proven by induction) which implies that 
\begin{multline*}
	\sup_{k,\ell\geq n}\lVert {\conti Z}^{(u)}_k - \conti Z_{\ell}^{(u)} \rVert_{[0,1]}
	\leq
	2\sup_{k\geq n} \lVert {\conti Z}^{( u)}_{k} -
		 \conti Z_e^{(u)} \rVert_{[0,1-\eps]}+2\sup_{k\geq n} \lVert {\conti Z}^{( u)}_{k} \rVert_{[1-\eps,1]}\\
	 \leq 2\sup_{k\geq n} \lVert {\conti Z}^{( u)}_{k} -
		 \conti Z_e^{(u)} \rVert_{[0,1-\eps]} + 2\lVert \conti W_e \rVert_{[1-\eps,1]} + 2\sup_{k\geq n}\lVert {\conti W}_k -\conti W_e \rVert_{[1-\eps,1]}.
\end{multline*}
Taking $n$ to infinity yields $\limsup_{n\geq 1} \sup_{k,\ell\geq n}\lVert {\conti Z}^{(u)}_k - \conti Z_{\ell}^{(u)} \rVert_{[0,1]} \leq \lVert \conti W_e \rVert_{[1-\eps,1]}$. Since $\eps$ is arbitrary,  $( {\conti Z}_n^{(u)})_n$ is actually a Cauchy sequence in $\mathcal{C}([0,1],\R)$ and converges uniformly to a continuous function, which necessarily takes value zero at time $1$ and coincides with $\conti Z^{(u)}$ on $[0,1)$. 
\end{proof}
We finish by stating a version of the previous result, for countably many uniformly chosen starting points. This is the foundation upon which the next section is built.
\begin{theorem}
	\label{thm:discret_coal_conv_to_continuous}
	Let $(\bm u_i)_{i\in\Z_{>0}}$ be a sequence of i.i.d.\ uniform random variables on $[0,1]$, independent of all other variables. We have the following joint convergence in the product space of continuous functions $\mathcal C([0,1],\R^2)\times (\mathcal C([0,1],\R)\times \mathcal C([0,1),\R))^{\mathbb Z_{>0}}$:
	\begin{equation*}
	\left(
	{\conti W}_n,\left({\conti Z}^{(\bm u_i)}_n, {\conti L}^{(\bm u_i)}_n\right)_{i\in\Z_{>0}}
	\right)
	\xrightarrow[n\to\infty]{d}
	\left( \conti {W}_e,\left({\conti Z}^{(\bm u_i)}_e, {\conti L}^{(\bm u_i)}_e\right)_{i\in\Z_{>0}}
	\right).
	\end{equation*}
\end{theorem}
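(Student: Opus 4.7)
The plan is to derive the statement from \cref{prop:coal_con_cond} (the single-trajectory version) in three stages: first extend to finitely many deterministic starting points, then randomize them, and finally pass to countably many via the product topology.

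\emph{Stage 1: finitely many deterministic starting points.} Fix $k\ge 1$ and $u_1,\ldots,u_k\in(0,1)$. I would show the joint convergence
\begin{equation*}
\left({\conti W}_n,\bigl({\conti Z}_n^{(u_i)},{\conti L}_n^{(u_i)}\bigr)_{1\le i\le k}\right)\xrightarrow[n\to\infty]{d}\left(\conti W_e,\bigl(\conti Z_e^{(u_i)},\conti L_e^{(u_i)}\bigr)_{1\le i\le k}\right).
\end{equation*}
Tightness of the left-hand side is immediate from tightness of each marginal (\cref{prop:coal_con_cond}), so by Prokhorov's theorem it suffices to identify any subsequential limit $\bigl(\conti W_e,(\widetilde Z_i,\widetilde L_i)_{i\le k}\bigr)$. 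Projecting onto each factor and applying \cref{prop:coal_con_cond}, the triple $(\conti W_e,\widetilde Z_i,\widetilde L_i)$ has the same joint distribution as $(\conti W_e,\conti Z_e^{(u_i)},\conti L_e^{(u_i)})$. By \cref{thm:ext_and_uni_excursion}, the strong solution $\conti Z_e^{(u_i)}$ is a measurable functional $\Psi_i(\conti W_e)$ of the driving excursion, and $\conti L_e^{(u_i)}$ is a measurable functional of $\conti Z_e^{(u_i)}$ (as the local time at zero of a semimartingale), hence also a measurable functional of $\conti W_e$. The joint law of $(\conti W_e,\widetilde Z_i,\widetilde L_i)$ is therefore concentrated on the graph of this functional, which forces $\widetilde Z_i=\conti Z_e^{(u_i)}$ and $\widetilde L_i=\conti L_e^{(u_i)}$ almost surely. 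The subsequential limit is thus uniquely determined.

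\emph{Stage 2: randomization.} Let $F$ be any bounded continuous test function on $\mathcal C([0,1],\R^2)\times\bigl(\mathcal C([0,1],\R)\times\mathcal C([0,1),\R)\bigr)^k$. Since $(\bm u_i)_i$ is i.i.d.\ uniform and independent of $\bm W_n$, conditioning on $(\bm u_1,\ldots,\bm u_k)$ yields
\begin{equation*}
\E\!\left[F\!\left(\conti W_n,\bigl(\conti Z_n^{(\bm u_i)},\conti L_n^{(\bm u_i)}\bigr)_{i\le k}\right)\right]=\int_{[0,1]^k}\E\!\left[F\!\left(\conti W_n,\bigl(\conti Z_n^{(u_i)},\conti L_n^{(u_i)}\bigr)_{i\le k}\right)\right]du_1\cdots du_k.
\end{equation*}
By Stage 1 the integrand converges pointwise for every $(u_1,\ldots,u_k)\in(0,1)^k$, and since $F$ is bounded the dominated convergence theorem delivers convergence of the right-hand side to $\E[F(\conti W_e,(\conti Z_e^{(\bm u_i)},\conti L_e^{(\bm u_i)})_{i\le k})]$. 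This is the required finite-dimensional convergence with random starting points. \emph{Stage 3 (countably many):} Convergence in distribution on a countable product of Polish spaces under the product topology is equivalent to joint tightness plus convergence of all finite-dimensional marginals, and tightness in the infinite product follows from marginal tightness by a standard diagonal argument based on Tychonoff's theorem. Stage 2 therefore implies the full statement.

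The main obstacle is the identification step in Stage 1: upgrading distributional equality of $(\conti W_e,\widetilde Z_i)$ and $(\conti W_e,\conti Z_e^{(u_i)})$ to the almost sure equality $\widetilde Z_i=\conti Z_e^{(u_i)}$. This rests crucially on the \emph{strong} solution property provided by \cref{thm:ext_and_uni_excursion}, which represents $\conti Z_e^{(u_i)}$ as a measurable functional of $\conti W_e$ and hence concentrates the joint law on the graph of that functional; without strong existence, one could only hope to identify the limiting process up to distribution, which would not be enough to conclude joint convergence.
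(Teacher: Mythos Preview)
Your proof is correct and follows essentially the same approach as the paper: establish joint convergence for finitely many deterministic starting points via tightness plus the measurability of $(\conti Z_e^{(u)},\conti L_e^{(u)})$ as functionals of $\conti W_e$, then randomize by integrating over the $u_i$ with dominated convergence, and finally pass to the countable product via finite-dimensional marginals. The paper's argument is more terse but structurally identical.
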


\begin{proof}
	Fix $u_1,\ldots,u_k\in (0,1)$. Joint tightness and the fact that $\conti Z_e^{(u)}$ and $\conti L_e^{(u)}$ are measurable functions of $\conti W_e$, imply that convergence in distribution in \cref{eq:coal_con_cond} holds jointly for $u\in \{u_1,\ldots,u_k\}$. This means that for every bounded continuous $\varphi: \mathcal C([0,1],\R^2)\times (\mathcal C([0,1],\R)\times \mathcal C([0,1),\R))^{\Z_{>0}}\to \R$, 
	\[\E\left[\varphi\left(
	{\conti W}_n,\left({\conti Z}^{( u_i)}_n, {\conti L}^{( u_i)}_n\right)_{1\leq i \leq k}
	\right)
	\right] \to 
	\E\left[\varphi
	\left( \conti {W}_e,\left({\conti Z}^{( u_i)}_e, {\conti L}^{( u_i)}_e\right)_{1\leq i \leq k}\right)	
	\right].\]
	With dominated convergence one can integrate this over $u_1,\ldots,u_k\in [0,1]$, which by Fubini--Tonelli's theorem gives
	\[\E\left[\varphi\left(
	{\conti W}_n,\left({\conti Z}^{(\bm u_i)}_n, {\conti L}^{(\bm u_i)}_n\right)_{1\leq i \leq k}
	\right)
	\right] \to 
	\E\left[\varphi
	\left( \conti {W}_e,\left({\conti Z}^{(\bm u_i)}_e, {\conti L}^{(\bm u_i)}_e\right)_{1\leq i \leq k}\right)	
	\right].\]
	As $k$ is arbitrary, this is the claim of convergence in distribution in the product topology.
\end{proof}

\section{Scaling limits of Baxter permutations and bipolar orientations}
\label{sec:final}

This section is split in two parts: in the first one, we construct the Baxter permuton (see \cref{defn:Baxter_perm}) from the continuous coalescent-walk process $\conti Z_e= \{\conti Z_e^{(u)}\}_{u\in [0,1]}$ introduced in \cref{defn:cont_coal_proc}, and we show that it is the limit of uniform Baxter permutations (see \cref{thm:permuton}). We also show that this convergence holds jointly with the one for the coalescent-walk process (proved in \cref{thm:discret_coal_conv_to_continuous}). Building on these results, in the second part, we prove a joint (scaling limit) convergence result for all the objects considered in this paper, i.e.\ tandem walks, Baxter permutations, bipolar orientations and coalescent-walk processes (see \cref{thm:joint_scaling_limits}). In both cases, a key ingredient is the convergence of the discrete coalescent-walk process to its continuous counterpart (\cref{thm:discret_coal_conv_to_continuous}).

\subsection{The permuton limit of Baxter permutations}

We recall some basic results on permuton limits that we need for this section. For a complete introduction we refer the reader to \cite[Section 2]{bassino2017universal} and references therein.

Firstly, the space of permutons $\mathcal M$, equipped with the topology of weak convergence of measures, is compact and metrizable by the metric $d_{\square}$ defined as follows: for every pair of permutons $(\mu,\mu'),$ 
$$d_{\square}(\mu,\mu')\coloneqq\sup_{R\in\mathcal{R}}|\mu(R)-\mu'(R)|,$$
where $\mathcal R$ denotes the set of rectangles contained in $[0,1]^2.$

We need now to define the permutation induced by $k$ points in the square $[0,1]^2$. Take a sequence of $k$ points $(X,Y)=((x_1,y_1),\dots, (x_k,y_k))$ in $[0,1]^2$ in general position, i.e.\ with distinct $x$ and $y$ coordinates. 
We denote by $\left((x_{(1)},y_{(1)}),\dots, (x_{(k)},y_{(k)})\right)$ the $x$-reordering of $(X,Y)$,
i.e.\ the unique reordering of the sequence $((x_1,y_1),\dots, (x_k,y_k))$ such that
$x_{(1)}<\cdots<x_{(k)}$.
Then the values $(y_{(1)},\ldots,y_{(k)})$ are in the same
relative order as the values of a unique permutation of size $k$, that we call the \emph{permutation induced by} $(X,Y)$.

Let $\bm\mu$ be a random permuton and $((\bm X_i,\bm Y_i))_{i\geq 1}$ be an i.i.d.\ sequence with distribution $\bm \mu$ conditionally\footnote{For a construction of a probability space where this is possible see \cite[Section 2.1]{bassino2017universal}.} on $\bm \mu$. On this space, we denote by $\Perm_k(\bm \mu)$ the permutation induced by $((\bm X_i,\bm Y_i))_{1\leq i \leq k}$. The following concentration result shows that $\bm \mu$ is close to $\Perm_k(\bm \mu)$ in probability when $k$ is large.

\begin{lemma}[Approximation of a random permuton by a random permutation {\cite[Lemma 2.3]{bassino2017universal}}]
	\label{lem:subpermapproxpermuton}
	There exists $k_0$ such that if $k>k_0$, 
	\[\Prob\left[
	d_{\square}(\mu_{\Perm_k(\bm{\nu})},\bm{\nu})
	\geq 16k^{-1/4}\right]
	\leq \frac12 e^{-\sqrt{k}}, \quad \text{for any random permuton $\bm{\nu}$.}
	\]
\end{lemma}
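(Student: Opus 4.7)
The plan is to first reduce to the deterministic case, then use standard empirical process theory (VC-type concentration) combined with a quantile-snapping argument to pass from the empirical measure of the sample to its associated permuton. Since the inequality is to hold for every random permuton $\bm \nu$, by conditioning on $\bm \nu$ and applying Fubini it suffices to prove the bound when $\bm \nu = \mu$ is a fixed (deterministic) permuton; the sampling $(\bm X_i, \bm Y_i)_{i\geq 1}$ then becomes an honest i.i.d.\ sequence from $\mu$.

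Let $\hat\mu_k \coloneqq \frac{1}{k}\sum_{i=1}^k \delta_{(\bm X_i, \bm Y_i)}$ be the empirical measure. For any rectangle $R$, I would split
\[
|\mu(R) - \mu_{\Perm_k(\mu)}(R)| \leq |\mu(R) - \hat\mu_k(R)| + |\hat\mu_k(R) - \mu_{\Perm_k(\mu)}(R)|.
\]
The first term is controlled uniformly over $R \in \mathcal{R}$ via the Vapnik--Chervonenkis inequality, since axis-aligned rectangles form a class of VC dimension $4$; this yields a bound of the form $\P[\sup_R |\mu(R)-\hat\mu_k(R)| > t] \leq Ck^4 e^{-2kt^2}$. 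The second term is a ``snapping'' error: by construction, $\mu_{\Perm_k(\mu)}$ is obtained from $\hat\mu_k$ by transporting each sample $(\bm X_i, \bm Y_i)$ to the cell of the $k\times k$ uniform grid indexed by the pair of ranks of $\bm X_i$ and $\bm Y_i$. Since $\mu$ has uniform marginals, the Dvoretzky--Kiefer--Wolfowitz inequality applied to each marginal gives $\sup_{1\leq i\leq k} |\bm X_{(i)} - i/k| \leq u$ and similarly for $\bm Y_{(i)}$, except on an event of probability $\leq 4 e^{-2ku^2}$. On the complementary event, each rectangle $R$ and its snapped counterpart differ only by a boundary strip of area $O(u)$, so the second term is $O(u)$ uniformly in $R$.

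Choosing $t \asymp k^{-1/4}$ and $u \asymp k^{-1/4}$ balances the two deviations so that both probabilities are of order $e^{-\sqrt{k}}$, while both errors are of order $k^{-1/4}$. Summing them and adjusting the numerical constants yields $d_\square(\mu_{\Perm_k(\mu)}, \mu) \leq 16 k^{-1/4}$ with probability at least $1 - \tfrac{1}{2}e^{-\sqrt{k}}$ for $k$ large enough.

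The main technical obstacle is calibrating the two error sources so that the failure probability is subexponential in $\sqrt{k}$ rather than in $k$: the $k^{-1/4}$ rate in the statement is precisely the symptom of this $e^{-\sqrt{k}}$ tail, since Hoeffding-type bounds on a single rectangle give $e^{-2kt^2}$, which matches $e^{-\sqrt{k}}$ only at $t\asymp k^{-1/4}$. A secondary subtlety is ensuring that the union bound over a suitable discretization of $\mathcal{R}$ (or, equivalently, the VC polynomial factor $k^4$) is absorbed by the exponential without worsening this rate, which is a routine but careful computation.
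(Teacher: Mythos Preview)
The paper does not prove this lemma; it simply quotes it from \cite{bassino2017universal}. Your sketch is correct and is essentially the argument given there: reduce to a deterministic permuton by conditioning, control $\sup_R|\mu(R)-\hat\mu_k(R)|$ by a VC/Hoeffding bound over axis-aligned rectangles, and control the displacement between the sample points and the grid cells of $\mu_{\Perm_k(\mu)}$ via DKW on the (uniform) marginals. The calibration $t,u\asymp k^{-1/4}$ giving tail $e^{-\sqrt k}$ is exactly the point, and the polynomial VC prefactor is harmlessly absorbed for large $k$. One small refinement worth making explicit: when bounding $|\hat\mu_k(R)-\mu_{\Perm_k(\mu)}(R)|$ you need the \emph{empirical} mass of the boundary frame, not its Lebesgue area; you get this by one more application of the VC bound (or by noting the frame has $\mu$-mass $O(u)$ because of uniform marginals, then using $|\hat\mu_k-\mu|\le t$ on it), which still fits into the $O(t+u)$ budget.
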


This lemma may be used to prove a nice characterization of permuton convergence in distribution: $\bm \mu_n$ converges to $\bm \mu$ in distribution if and only if $\Perm_k(\bm \mu_n)$ converges to $\Perm_k(\bm \mu)$ in distribution for every $k\geq 1$ \cite[Theorem 2.5]{bassino2017universal}. We will not use this result here but rather directly refer to the lemma above.

\medskip

We now introduce the candidate limiting permuton for Baxter permutations. Its definition is rather straightforward by analogy with the discrete case (see \cref{sect:from_coal_to_perm}). We consider the continuous coalescent-walk process ${\conti Z_e} = \{\conti Z_e^{(t)}\}_{t\in [0,1]}$. Actually $\conti Z_e^{(t)}$ was not defined for $t\in \{0,1\}$ (see \cref{defn:cont_coal_proc}). As what happens on a negligible subset of $[0,1]$ is irrelevant to the arguments to come, this causes no problems.

We first define a random binary relation $\leq_{\conti Z_e}$ on $[0,1]^2$ as follows (this is an analogue of the definition given in \cref{eq:coal_to_perm} page~\pageref{eq:coal_to_perm} in the discrete case):
\begin{equation}\label{eq:cont_coal_to_perm}
\begin{cases}t\leq_{\conti Z_e} t &\text{ for every }t\in [0,1],\\
t\leq_{\conti Z_e} s &\text{ for every }0\leq t<s \leq 1\text{ such that }\ \conti Z_e^{(t)}(s)<0,\\
s\leq_{\conti Z_e} t,&\text{ for every }0\leq t<s \leq 1\text{ such that }\ {\conti Z}_e^{(t)}(s)\geq0.\end{cases}
\end{equation}
Note that the map $(\omega, t,s)\mapsto \idf_{t\leq_{\conti Z_e} s}$ is measurable.

\begin{proposition} \label{prop:total_order}The relation $\leq_{\conti Z_e}$ is antisymmetric and reflexive. Moreover, there exists a random set $\bm A \subset [0,1]^2$ of a.s.\ zero Lebesgue measure, i.e.\ $\P(\Leb(\bm A)=0)=1$, such that the restriction of $\leq_{\conti Z_e}$ to $[0,1]^2\setminus \bm A$ is transitive almost surely.\end{proposition}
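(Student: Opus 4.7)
Reflexivity is immediate from the first line of \cref{eq:cont_coal_to_perm}. For antisymmetry, fix $0 \leq t < s \leq 1$: the real number $\conti Z_e^{(t)}(s)$ satisfies exactly one of $\conti Z_e^{(t)}(s) < 0$ or $\conti Z_e^{(t)}(s) \geq 0$, so exactly one of $t \leq_{\conti Z_e} s$ and $s \leq_{\conti Z_e} t$ holds, precluding any nontrivial symmetric pair.

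The bulk of the proof lies in transitivity, and the plan is to mimic the discrete argument of \cref{prop:tot_ord} after establishing a continuous analogue of the non-crossing property of coalescent-walk trajectories. Concretely, the target intermediate statement is the following: there exists a random set $\bm N \subset [0,1]$ with $\Leb(\bm N) = 0$ almost surely such that for every $r, s \in [0,1] \setminus \bm N$ with $r < s$, the difference $\conti D_{r,s}(t) := \conti Z_e^{(r)}(t) - \conti Z_e^{(s)}(t)$ keeps a constant sign on $[s,1]$, and as soon as $\conti D_{r,s}(\tau) = 0$ for some $\tau \in [s,1)$, the two trajectories coincide on $[\tau, 1]$. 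The coalescence half is immediate: on $[\tau, 1)$ both $\conti Z_e^{(r)}$ and $\conti Z_e^{(s)}$ are strong solutions of \cref{eq:flow_SDE} driven by $\conti W_e$ with the same initial value $\conti Z_e^{(r)}(\tau)$, so strong uniqueness (item 3 of \cref{thm:ext_and_uni_excursion}) forces them to agree. For the non-crossing half, the plan is to observe that on any open time interval where $\conti Z_e^{(r)}$ and $\conti Z_e^{(s)}$ share the same strict sign one has $d\conti D_{r,s}(t) = 0$ by \cref{eq:flow_SDE}, so $\conti D_{r,s}$ is of zero quadratic variation; then apply Tanaka's formula to $\conti D_{r,s}^{+}$ and $\conti D_{r,s}^{-}$ and bound the boundary contributions via the local times of $\conti Z_e^{(r)}$ and $\conti Z_e^{(s)}$ at $0$, which together with the coalescence observation above precludes a strict sign change.

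Granting this comparison property, set $\bm A := (\bm N \times [0,1]) \cup ([0,1] \times \bm N)$, which has $\Leb(\bm A) = 0$ almost surely by Fubini. Transitivity on $[0,1]^2 \setminus \bm A$ is then a verbatim translation of the discrete proof of \cref{prop:tot_ord}: given $r, s, u \in [0,1] \setminus \bm N$ in natural order, split into the four cases corresponding to the signs of $\conti Z_e^{(r)}(u)$ and $\conti Z_e^{(s)}(u)$; the two cases where the signs agree are trivial, and in the mixed case, say $\conti Z_e^{(r)}(u) < 0 \leq \conti Z_e^{(s)}(u)$, the comparison property yields $\conti Z_e^{(r)}(s) < \conti Z_e^{(s)}(s) = 0$, which supplies the missing relation $r \leq_{\conti Z_e} s$ and closes the cycle.

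The main obstacle is the comparison property itself. In the discrete setting it is hardwired into $\wcp$, whereas here it must be extracted from \cref{eq:flow_SDE} under the additional difficulty that $\conti Z_e^{(r)}$ and $\conti Z_e^{(s)}$ are not a priori adapted to a common filtration that simultaneously carries the SDE for both; the Tanaka calculation must therefore be performed on $[s,1)$ with respect to the filtration $(\mathcal F_t^{(s)})_{t\geq s}$, on which $\conti Z_e^{(s)}$ is the canonical strong solution of \cref{eq:flow_SDE} and $\conti Z_e^{(r)}|_{[s,1)}$ must be verified to satisfy the same SDE starting from $\conti Z_e^{(r)}(s)$ (this is the content of item 2 of \cref{thm:ext_and_uni_excursion} applied with $u = r$ on the sub-interval $[s,1)$).
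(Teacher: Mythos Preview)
Your overall architecture---establish a non-crossing/comparison property for the trajectories $\conti Z_e^{(\cdot)}$, then rerun the discrete transitivity argument of \cref{prop:tot_ord}---is exactly the paper's. The difference is in how non-crossing is obtained, and here you make life much harder than necessary.

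First, your ``coalescence half'' is actually the whole story. Once you know that whenever $\conti Z_e^{(r)}(\tau)=\conti Z_e^{(s)}(\tau)$ the two trajectories coincide on $[\tau,1)$, non-crossing follows by continuity alone: if $\conti D_{r,s}(s)\leq 0$ but $\conti D_{r,s}(t)>0$, intermediate values give $\tau\in[s,t)$ with $\conti D_{r,s}(\tau)=0$, hence $\conti D_{r,s}(t)=0$, a contradiction. So the entire Tanaka-formula programme is redundant.

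Second, your justification of the coalescence half has a gap. Item~3 of \cref{thm:ext_and_uni_excursion} gives uniqueness only for $\mathcal F^{(u)}$-adapted solutions started at a \emph{deterministic} time $u$ with initial value $0$; it does not apply directly from a random stopping time $\tau$ with a random nonzero initial value (and the SDE is not translation-invariant in the space variable, so you cannot reduce to the zero case). The paper handles this with an exchange trick, which is also the cleanest proof of non-crossing: assuming $\conti Z_e^{(r)}(s)\leq \conti Z_e^{(s)}(s)$ yet $\conti Z_e^{(r)}(t)>\conti Z_e^{(s)}(t)$ for some $t>s$, let $\tau$ be the first meeting time and set $\widetilde{\conti Z}=\conti Z_e^{(r)}$ on $[r,\tau]$, $\widetilde{\conti Z}=\conti Z_e^{(s)}$ on $(\tau,1)$. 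Then $\widetilde{\conti Z}$ is continuous, $\mathcal F^{(r)}$-adapted ($\tau$ is an $\mathcal F^{(r)}$-stopping time and both pieces are $\mathcal F^{(r)}$-adapted), and solves \cref{eq:flow_SDE} on $[r,1)$ with $\widetilde{\conti Z}(r)=0$. Now item~3 applies at the deterministic time $r$ and forces $\widetilde{\conti Z}=\conti Z_e^{(r)}$, hence $\conti Z_e^{(r)}(t)=\conti Z_e^{(s)}(t)$, contradiction. That is the paper's whole non-crossing argument.

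Finally, the paper takes $\bm A\subset[0,1]^2$ directly as the set of pairs $(r,s)$ for which non-crossing fails, and uses Fubini on the fact that this event has probability zero for each fixed pair. Your formulation via a one-dimensional null set $\bm N$ of starting points is strictly stronger and would require extra care: even for $r,s\notin\bm N$, the exchange argument above is an almost-sure statement whose exceptional set depends on the pair $(r,s)$, so you cannot conclude non-crossing for all such pairs simultaneously without reintroducing a Fubini step over pairs.
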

\begin{proof}
	Antisymmetry and reflexivity are immediate by definition. Therefore we just have to prove transitivity.
	
	Let us start by showing that, almost surely,  two distinct trajectories of the coalescent-walk process ${\conti Z_e} = \{\conti Z_e^{(t)}\}_{t\in [0,1]}$ do not cross. That is, if $0\leq r\leq s\leq t< 1$, and $\conti Z^{(r)}_e(s)\leq\conti Z^{(s)}_e(s)$, then $\conti Z^{(r)}_e(t)\leq\conti Z^{(s)}_e(t)$ almost surely. By contradiction, if $\conti Z^{(r)}_e(t)>\conti Z^{(s)}_e(t)$, then upon exchanging the trajectories when they first meet, one provides another solution of the SDE \eqref{eq:flow_SDE} page~\pageref{eq:flow_SDE} started at time $r$, in negation of the uniqueness claim (\cref{thm:ext_and_uni_excursion}, item 3). 
	
	By Fubini--Tonelli's theorem, there exists a random set $\bm A$ with $\P(\Leb(\bm A)=0)=1$, such that this non-crossing property holds on $[0,1]^2\setminus \bm A$ almost surely. From this result, the proof that $\leq_{\conti Z_e}$ is transitive on $[0,1]^2\setminus \bm A$ is the same as in the discrete case (see \cref{prop:tot_ord}).
\end{proof}

We now define a random function that encodes the total order $\leq_{\conti Z_e}$:
\begin{multline}\label{eq:level_function}
\varphi_{\conti Z_e}(t)\coloneqq\Leb\left( \big\{x\in[0,1]|x \leq_{\conti Z_e} t\big\}\right)\\
=\Leb\left( \big\{x\in[0,t)|\conti Z_e^{(x)}(t)<0\big\} \cup \big\{x\in[t,1]|\conti Z_e^{(t)}(x)\geq0\big\} \right),
\end{multline}
where here $\Leb(\cdot)$ denotes the one-dimensional Lebesgue measure. Note that since the mapping $(\omega, t,s)\mapsto \idf_{t\leq_{\conti Z_e} s}$ is measurable, the mapping $(\omega, t)\mapsto \varphi_{\conti Z_e}(t)$ is measurable too.

\begin{observation}
	Note that the function defined in \cref{eq:level_function} is inspired by the following: if $\sigma$ is the Baxter permutation associated with a coalescent-walk process $Z=\{Z^{(t)}\}_{t\in[n]}\in\mathcal{C}$, then
	$\sigma(i)=\#\{j\in[n]|j\leq_Z i\}$.	
\end{observation}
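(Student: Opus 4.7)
The plan is to unfold the definition of $\sigma = \cpbp(Z)$ and recognize that $\#\{j \in [n] \mid j \leq_Z i\}$ is nothing more than the rank of $i$ under the total order $\leq_Z$. Recall from \cref{sect:from_coal_to_perm} that $\sigma$ is defined as the unique element of $\Perms_n$ satisfying $\sigma(i) \leq \sigma(j) \iff i \leq_Z j$ for all $i,j \in [n]$. Equivalently, viewing permutations of $[n]$ as bijections $[n] \to [n]$, the permutation $\sigma$ is the unique order-isomorphism from $([n], \leq_Z)$ to $([n], \leq)$; this object is well-defined since $\leq_Z$ is a total order on the finite set $[n]$ by \cref{prop:tot_ord}.

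The only substantive step is then a general fact about finite totally ordered sets: for any total order $\preceq$ on $[n]$, the unique order-preserving bijection $\tau \colon ([n], \preceq) \to ([n], \leq)$ is the rank function $\tau(i) = \#\{j \in [n] \mid j \preceq i\}$. Indeed, since $\preceq$ is reflexive, $i$ is counted in its own rank; if $i$ has exactly $k$ elements below or equal to it in $\preceq$, then $i$ occupies the $k$-th position in the $\preceq$-ordering, and any order-isomorphism to $([n], \leq)$ must therefore send it to $k$. Specializing $\preceq = \leq_Z$ yields $\sigma(i) = \#\{j \in [n] \mid j \leq_Z i\}$.

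There is no real obstacle: the statement is a direct rephrasing of how $\cpbp(Z)$ is constructed from the order $\leq_Z$, via the discrete analogue of ``taking the rank function''. The interest of including it lies not in its depth but in the analogy it makes explicit with the continuous setting. Namely, $\varphi_{\conti Z_e}(t)$ from \cref{eq:level_function} replaces the counting measure $\#$ by the Lebesgue measure $\Leb$ on the $\leq_{\conti Z_e}$-downset of $t$; the observation therefore signals that $\varphi_{\conti Z_e}$ is precisely the continuous counterpart of the ``position'' or ``rank function'' of a Baxter permutation, and justifies introducing it as the building block for the forthcoming definition of the Baxter permuton.
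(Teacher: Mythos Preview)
Your argument is correct. The paper states this as an Observation without proof, since it is immediate from the definition of $\cpbp(Z)$: the unique permutation $\sigma$ with $\sigma(i)\leq\sigma(j)\iff i\leq_Z j$ is precisely the rank function of the total order $\leq_Z$, which is exactly what you spell out.
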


\begin{definition}\label{defn:Baxter_perm}
The \emph{Baxter permuton} $\bm \mu_B$ is the push-forward of the Lebesgue measure on $[0,1]$ via the mapping $(\Id,\varphi_{\conti Z_e})$, that is
\begin{equation}
\label{eq:def_permuton}
\bm \mu_B(\cdot)\coloneqq(\Id,\varphi_{\conti Z_e})_{*}\Leb (\cdot)= \Leb\left(\{t\in[0,1]|(t,\varphi_{\conti Z_e}(t))\in \cdot \,\}\right).
\end{equation} 
\end{definition}

The Baxter permuton $\bm \mu_B$ is a random measure on the unit square $[0,1]^2$ and the terminology is justified by the following lemma, that also states some results useful for the proof of \cref{thm:permuton}. The second item of the lemma is proved using similar ideas as for \cite[Proposition 3.1]{maazoun}.

\begin{lemma}\label{lem:boundary} The following claims hold:
\begin{enumerate}
\item For $0<t<s<1$, $\conti Z^{(t)}_e(s) \neq 0$ almost surely.
\item The random measure $\bm \mu_B$ is a.s.\ a permuton.
\item Almost surely, for almost every $t<s\in[0,1]$, 
	either
	$\conti Z_e^{(t)}(s)>0$ and $\varphi_{\conti Z_e}(s)<\varphi_{\conti Z_e}(t)$, or  $\conti Z_e^{(t)}(s)<0$ and $\varphi_{\conti Z_e}(s)>\varphi_{\conti Z_e}(t)$.
\end{enumerate}
\end{lemma}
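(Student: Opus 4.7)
The plan is to prove the three items in the order (1), (3), (2), since the permuton claim (2) will follow easily from the strict monotonicity relation of (3). I will use the discrete approximation \cref{thm:discret_coal_conv_to_continuous} for the hard step in (3).

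For (1), I will fix $0 < t < s < 1$ and use the absolute continuity of the excursion with respect to Brownian motion on any sub-interval of $(0,1)$ bounded away from the endpoints, recalled in \cref{sec:appendix} (in particular \cref{prop:brown_ex}). The process $\conti Z_e^{(t)}|_{[t,s]}$ is a measurable functional of the shifted path $(\conti W_e(t+r) - \conti W_e(t))_{0 \leq r \leq s-t}$, and under Brownian driving this functional returns a standard one-dimensional Brownian motion by Lévy's characterization (\cref{rem:solution_of_SDE_are_BM}). In particular, under Brownian driving, $\conti Z_e^{(t)}(s)$ is a nondegenerate Gaussian of variance $s-t$, hence a.s.\ nonzero; absolute continuity carries this to the excursion setting.

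For (3), I will first note that (1) combined with Fubini gives, a.s., $\conti Z_e^{(t)}(s) \neq 0$ for Lebesgue-a.e.\ $(t,s)$ with $t<s$, so that a strict relation $t <_{\conti Z_e} s$ or $s <_{\conti Z_e} t$ holds. Assuming WLOG $\conti Z_e^{(t)}(s) < 0$, the transitivity of $\leq_{\conti Z_e}$ on the complement of the Lebesgue-null set $\bm A$ from \cref{prop:total_order} gives $\{x : x \leq_{\conti Z_e} t\} \subseteq \{x : x \leq_{\conti Z_e} s\}$ up to null, so $\varphi_{\conti Z_e}(t) \leq \varphi_{\conti Z_e}(s)$. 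To upgrade this to strict inequality, it suffices to show that the pushforward $(\varphi_{\conti Z_e})_* \Leb$ is atomless, for then $\varphi_{\conti Z_e}(t) \neq \varphi_{\conti Z_e}(s)$ for Lebesgue-a.e.\ $(t,s)$. I will establish atomlessness (in fact, that $(\varphi_{\conti Z_e})_* \Leb$ is Lebesgue) via discrete approximation: for uniform Baxter permutations $\bm \sigma_n$ and $\bm u$ uniform on $[0,1]$ independent of everything, the rescaled rank $\bm \sigma_n(\lceil n\bm u \rceil)/n$ is (essentially) uniform on $\{1/n, \ldots, 1\}$. Writing, via \cref{prop:patterns},
\[\frac{\bm \sigma_n(i)}{n} = \frac{1}{n} + \frac{1}{n}\#\{j < i : \bm Z_n^{(j)}(i) < 0\} + \frac{1}{n}\#\{j > i : \bm Z_n^{(i)}(j) \geq 0\},\]
I will apply \cref{thm:discret_coal_conv_to_continuous} at a countable dense set of sampling points combined with a Fubini/dominated convergence argument --- using (1) to ensure that the sign indicators are continuous at the limit --- to conclude that $\bm \sigma_n(\lceil n\bm u \rceil)/n \to \varphi_{\conti Z_e}(\bm u)$ in probability. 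Since the left-hand side is uniform on $\{1/n, \ldots, 1\}$, the limiting random variable $\varphi_{\conti Z_e}(\bm u)$ is uniform on $[0,1]$, which establishes the desired atomlessness.

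Claim (2) will then be an immediate consequence: the uniform distribution of $\varphi_{\conti Z_e}(\bm u)$ is exactly the statement that the second marginal of $\bm \mu_B = (\Id, \varphi_{\conti Z_e})_* \Leb$ is Lebesgue, and the first marginal is Lebesgue by inspection. The main obstacle will be controlling the discrete-to-continuous passage in (3): while \cref{thm:discret_coal_conv_to_continuous} gives joint convergence at a countable set of starting points, turning this into convergence of the Riemann sum expression for the rank requires a careful Fubini argument combined with (1) to discard the measure-zero event of degenerate signs. A purely SDE-based proof of atomlessness (bypassing the discrete approximation) appears considerably harder, because the non-crossing bound $\conti Z_e^{(x)} \geq \conti Z_e^{(t)}$ for $x > t$ with $\conti Z_e^{(t)}(x) < 0$ does not directly control the sign of $\conti Z_e^{(x)}(s)$ at a later time $s$.
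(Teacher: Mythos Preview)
Your argument for claim (1) is correct and is essentially the paper's argument.

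For claims (2) and (3), however, there is a genuine gap. You deduce from $\bm\sigma_n(\lceil n\bm u\rceil)/n \to \varphi_{\conti Z_e}(\bm u)$ that $\varphi_{\conti Z_e}(\bm u)$ is uniform on $[0,1]$. But this is only \emph{marginal} uniformity, integrating out both the noise $\conti W_e$ and the independent uniform $\bm u$. What (2) asks for is the \emph{quenched} statement $(\varphi_{\conti Z_e})_*\Leb = \Leb$ almost surely, i.e.\ uniformity of $\varphi_{\conti Z_e}(\bm u)$ \emph{conditionally on} $\conti W_e$; and this is also what you need to rule out $\varphi_{\conti Z_e}(t)=\varphi_{\conti Z_e}(s)$ on a set of positive Lebesgue measure in (3). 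Marginal uniformity does not give this: if $\varphi_{\conti Z_e}$ were constant equal to some $\conti W_e$-measurable uniform variable $\bm V$, then $\varphi_{\conti Z_e}(\bm u)=\bm V$ would still be marginally uniform while $(\varphi_{\conti Z_e})_*\Leb=\delta_{\bm V}$ would be purely atomic. Your argument is repairable --- the discrete variable $\bm\sigma_n(\lceil n\bm u\rceil)$ is uniform on $[n]$ \emph{independently of} $\bm W_n$ (since $\bm\sigma_n$ is a bijection), and if you argue joint convergence with $\conti W_n$ this independence passes to the limit --- but as written the conclusion does not follow.

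The paper avoids the discrete approximation entirely and works directly in the continuum. Given i.i.d.\ uniform points $(\bm U_i)_{i\geq 1}$ independent of $\conti W_e$, the empirical rank $\bm U_{1,k}=\frac{1}{k-1}\#\{2\leq i\leq k:\bm U_i\leq_{\conti Z_e}\bm U_1\}$ converges a.s.\ to $\varphi_{\conti Z_e}(\bm U_1)$ by the law of large numbers; on the other hand, by exchangeability of $(\bm U_1,\ldots,\bm U_k)$ \emph{conditionally on} $\conti W_e$ and claim (1) (no ties), $\bm U_{1,k}$ is uniform on $\{0,\tfrac{1}{k-1},\ldots,1\}$ conditionally on $\conti W_e$. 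This immediately yields conditional uniformity of $\varphi_{\conti Z_e}(\bm U_1)$ given $\conti W_e$, hence (2), and then (3) follows since $\varphi_{\conti Z_e}(\bm U)$ and $\varphi_{\conti Z_e}(\bm V)$ are two independent uniforms conditionally on $\conti W_e$. This route is shorter, does not invoke \cref{thm:discret_coal_conv_to_continuous}, and gets the quenched statement for free.
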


\begin{proof}
We start by proving the first claim. Let $\eps>0$ be such that $0<t<s<1-\eps<1$. As we have seen in the proof of \cref{thm:ext_and_uni_excursion}, $(\conti Z^{(t)}_e(t+r))_{0\leq r \leq 1-t-\eps}$ is absolutely continuous with regards to a Brownian motion of lifetime $1-t-\eps$. As a result, $\conti Z^{(t)}_e(s) \neq 0$ almost surely, proving claim 1.

	For the second claim, by definition, the measure $\bm \mu_B$ is a probability measure on the unit square and its first marginal is almost surely uniform. As such, to prove claim 2 we simply have to check that 
	\begin{equation}
	\label{eq:goal_of_proof}
	(\varphi_{\conti Z_e})_{*}\Leb=\Leb \text{ a.s.}
	\end{equation}
	Let $(\bm U_i)_{i\in \Z_{>0}}$ be i.i.d.\ uniform random variables on $[0,1]$. Set for $k\geq 2$,
	$$\bm U_{1,k}\coloneqq\tfrac 1 {k-1}{\#\Big\{i\in [2,k]\Big|\bm U_i\leq_{\conti Z_e} \bm U_1 \Big\}}.$$
	The random variables $\left(\mathds{1}_{\{\bm U_i\leq_{\conti Z_e} \bm U_1\}}\right)_{i\geq 2}$ are i.i.d., conditionally on $(\conti W_e,\bm U_1)$. Thus by the law of large numbers $\bm U_{1,k}$ converges almost surely as $k$ tends to infinity to 
	$$\Prob[\bm U_{2} \leq_{\conti Z_e} \bm U_{1} \mid \conti W_e, \bm U_1] = \Leb\left( \big\{x\in[0,1]\mid x \leq_{\conti Z_e} \bm U_1\big\}\right)=\varphi_{\conti Z_e}(\bm U_1).$$
	On the other hand, by the exchangeability of the $\bm U_i,$ and using claim 1, the random variable $\bm U_{1,k}$ is uniform in $\big\{\frac{0}{k-1},\dots, \frac{k-1}{k-1}\big\},$ conditionally on $\conti W_e$. Therefore $\varphi_{\conti Z_e}(\bm U_1)$ is uniform on $[0,1]$ conditionally on $\conti W_e$. This proves \cref{eq:goal_of_proof} and claim 2.
	
	For the third claim, consider a pair of independent uniform random variables $\bm U$ and $\bm V$ independent of $\conti W_e$. It is immediate from \cref{prop:total_order} that if $\bm U \leq \bm V$ and $\conti Z_e^{(\bm U)}(\bm V)> 0$ then $\varphi_{\conti Z_e}(\bm U)\geq\varphi_{\conti Z_e}(\bm V)$ a.s., and if $\bm U \leq \bm V$ and $\conti Z_e^{(\bm U)}(\bm V)<0$ then $\varphi_{\conti Z_e}(\bm U)\leq\varphi_{\conti Z_e}(\bm V)$ a.s. The equality case $\conti Z_e^{(\bm U)}(\bm V) = 0$ is almost surely excluded by claim 1, and the equality case  $\varphi_{\conti Z_e}(\bm U) = \varphi_{\conti Z_e}(\bm V)$ is almost surely excluded by the fact that $\varphi_{\conti Z_e}(\bm U)$ and $\varphi_{\conti Z_e}(\bm V)$ are two independent uniform random variables thanks to \cref{eq:goal_of_proof}. This proves claim 3.
\end{proof}

We can now prove that Baxter permutations converge in distribution to the Baxter permuton. Since it will be useful in the next section, we also show that this convergence is joint with the convergence of the corresponding tandem walk and the corresponding coalescent-walk process.

We reuse the notation of \cref{sec:cond_conv}. In particular, 
${\bm W}_n$ is a uniform element of the space of tandem walks $\mathcal W_n$, ${\bm Z}_n=\wcp({\bm W}_n)$ is the associated uniform coalescent-walk process, and ${\bm \sigma}_n = \cpbp({\bm Z}_n)$ is the associated uniform Baxter permutation.

\begin{theorem}\label{thm:permuton}
	Jointly with the convergences in \cref{thm:discret_coal_conv_to_continuous}, we have that $\mu_{\bm \sigma_n} \xrightarrow{d} \bm \mu_B$.
\end{theorem}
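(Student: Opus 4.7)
The plan is to reduce permuton convergence to the convergence of finite induced patterns via \cref{lem:subpermapproxpermuton}, and then to match the discrete and continuous pattern descriptions using \cref{prop:patterns} and \cref{lem:boundary}, with the bridge between the two provided by the scaling limit of coalescent-walk processes (\cref{thm:discret_coal_conv_to_continuous}).

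Applying \cref{lem:subpermapproxpermuton} to both $\mu_{\bm\sigma_n}$ and $\bm\mu_B$, it is enough to show that for every fixed $k\geq 1$, the induced pattern $\Perm_k(\mu_{\bm\sigma_n})$ converges in distribution to $\Perm_k(\bm\mu_B)$ jointly with the convergence stated in \cref{thm:discret_coal_conv_to_continuous}. To that end I would use the i.i.d.\ uniform sequence $(\bm u_i)_{i\geq 1}$ appearing in that theorem, and sample the $k$ points of both permutons through these same variables. On the continuous side, $k$ i.i.d.\ samples from $\bm\mu_B$ conditionally on $\conti W_e$ are precisely the points $(\bm u_i,\varphi_{\conti Z_e}(\bm u_i))$. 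On the discrete side, $\Perm_k(\mu_{\bm\sigma_n})$ has the law of the pattern of $\bm\sigma_n$ extracted at $k$ i.i.d.\ uniform indices in $[n]$, which up to a vanishing-probability event (coinciding indices) can be taken to be $(\lceil n\bm u_i\rceil)_{i=1}^k$.

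After reordering the $\bm u_i$'s into $\bm u_{(1)}<\cdots<\bm u_{(k)}$, \cref{prop:patterns} yields that the discrete pattern $\pi_n = \Perm_k(\mu_{\bm\sigma_n})$ is characterized, for $\ell<s$, by
\[ \pi_n(s)<\pi_n(\ell) \iff \bm Z_n^{(\lceil n\bm u_{(\ell)}\rceil)}\!\left(\lceil n\bm u_{(s)}\rceil\right)\geq 0,\]
while claim~3 of \cref{lem:boundary} characterizes the continuous pattern $\pi = \Perm_k(\bm\mu_B)$, almost surely, by
\[ \pi(s)<\pi(\ell) \iff \conti Z_e^{(\bm u_{(\ell)})}(\bm u_{(s)}) > 0,\]
the zero case being a.s.\ excluded at the random time pair $(\bm u_{(\ell)},\bm u_{(s)})$ thanks to claim~1 of \cref{lem:boundary} combined with Fubini--Tonelli.

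The conclusion then follows from \cref{thm:discret_coal_conv_to_continuous}: after an appeal to Skorokhod's representation theorem we may assume almost sure uniform convergence of $\conti Z_n^{(\bm u_i)}$ to $\conti Z_e^{(\bm u_i)}$ on every compact subset of $[0,1)$. Since $\bm u_{(s)}<1$ and $\conti Z_e^{(\bm u_{(\ell)})}(\bm u_{(s)})\neq 0$ hold almost surely, the sign of $\bm Z_n^{(\lceil n\bm u_{(\ell)}\rceil)}(\lceil n\bm u_{(s)}\rceil)$ converges almost surely to the sign of $\conti Z_e^{(\bm u_{(\ell)})}(\bm u_{(s)})$, and hence $\pi_n\to\pi$ almost surely. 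The main obstacle is precisely this pattern-matching step: one has to track the asymmetry between the strict and non-strict inequalities in the discrete and continuous descriptions, and confirm that the exceptional events (coinciding indices, zero values of the coalescent-walk processes, behaviour near time $1$) are negligible. Once that is in place, joint convergence with the coalescent-walk processes is automatic, since $\pi_n$ and $\pi$ are now realized as measurable functionals of the processes involved in \cref{thm:discret_coal_conv_to_continuous}.
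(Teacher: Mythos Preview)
Your proposal is correct and follows essentially the same argument as the paper: reduce to convergence of induced patterns at $k$ i.i.d.\ uniform points, read off the pattern via \cref{prop:patterns} on the discrete side and \cref{lem:boundary} on the continuous side, and pass to the limit using \cref{thm:discret_coal_conv_to_continuous} together with Skorokhod's representation and the almost-sure nonvanishing of $\conti Z_e^{(\bm u_{(\ell)})}(\bm u_{(s)})$. The only cosmetic difference is that the paper organizes the argument through a subsequential-limit identification (Prokhorov and compactness of $\mathcal M$), applying \cref{lem:subpermapproxpermuton} twice, whereas you invoke the equivalent characterization of permuton convergence by $\Perm_k$ directly; also, the paper actually has uniform convergence of $\conti Z_n^{(\bm u_i)}$ on all of $[0,1]$ (via \cref{prop:coal_con_cond}), so your restriction to compacts of $[0,1)$ is an unnecessary precaution.
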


\begin{proof}
	Using the notation of \cref{thm:discret_coal_conv_to_continuous}, we have to show that
	\begin{equation}\label{eq:coal_con_uncond2}
	\left(
	{\conti W}_n,\left({\conti Z}^{(\bm u_i)}_n, {\conti L}^{(\bm u_i)}_n\right)_{i\in\Z_{>0}},\mu_{\bm \sigma_n}
	\right)
	\xrightarrow[n\to\infty]{d}
	\left( \conti {W}_e,\left({\conti Z}^{(\bm u_i)}_e, {\conti L}^{(\bm u_i)}_e\right)_{i\in\Z_{>0}},\bm \mu_B
	\right).
	\end{equation}
	Since from \cref{thm:discret_coal_conv_to_continuous} we know that $\left({\conti W}_n,\left({\conti Z}^{(\bm u_i)}_n, {\conti L}^{(\bm u_i)}_n\right)_{i\in\Z_{>0}}\right)$ is a convergent sequence of random variables and the space of permutons $\mathcal M$ is compact, by Prokhorov's theorem, both $\left({\conti W}_n,\left({\conti Z}^{(\bm u_i)}_n, {\conti L}^{(\bm u_i)}_n\right)_{i\in\Z_{>0}}\right)$ and $\mu_{\bm \sigma_n}$ are tight sequences of random variables.
	
	Since the product of two compact sets is compact, then the left-hand side of \cref{eq:coal_con_uncond2} forms a tight sequence. Therefore, again by Prokhorov's theorem, it is enough to identify the distribution of all joint subsequential limits in order to show the convergence in \cref{eq:coal_con_uncond2}.
	
	Assume that along a subsequence, we have 
	\begin{equation}\label{eq:bwcibcwouec}
	\left(
	{\conti W}_n,\left({\conti Z}^{(\bm u_i)}_n, {\conti L}^{(\bm u_i)}_n\right)_{i\in\Z_{>0}},\mu_{\bm \sigma_n}
	\right)
	\xrightarrow[n\to\infty]{d}
	\left( \conti {W}_e,\left({\conti Z}^{(\bm u_i)}_e, {\conti L}^{(\bm u_i)}_e\right)_{i\in\Z_{>0}},\widetilde{\bm \mu}
	\right).
	\end{equation}
	The joint distribution of the pair 
	$$\Bigg(\left( \conti {W}_e,\left({\conti Z}^{(\bm u_i)}_e, {\conti L}^{(\bm u_i)}_e\right)_{i\in\Z_{>0}}\right), \widetilde{\bm \mu}\Bigg)$$ is unknown for now, but we will show that $\widetilde{\bm \mu}=\bm \mu_B$ almost surely, which will complete the proof.
	
	To simplify things, we assume that the subsequential convergence in \cref{eq:bwcibcwouec} is almost sure using Skorokhod's theorem. In particular, almost surely as $n\to\infty$, $\mu_{\bm \sigma_n} \to \widetilde{\bm \mu}$ in the space of permutons, and for every $i\geq 1$, $\conti Z_{n}^{(\bm u_i)} \to \conti Z_{e}^{(\bm u_i )}$ uniformly on $[0,1]$, where $(\bm u_i)_{i\geq 1}$ are i.i.d.\ uniform random variables on $[0,1]$.

	Fix $k\in\Z_{>0}$. We denote by $\bm \rho^k_n$ the pattern induced by $\bm \sigma_n$ on the indices $ \lceil n \bm u_1 \rceil,\ldots,\lceil n \bm u_k \rceil$ ($\bm \rho^k_n$ is undefined if two indices are equal). From the uniform convergence above, and recalling that ${\conti Z}^{(u)}_{n}\left(\frac kn\right) = \frac 1 {\sqrt {2n}} \bm Z^{(\lceil nu\rceil)}_{n}(k)$, we have for all $1\leq i<j\leq k$ that
	\begin{equation*}
	\sgn\left({\bm Z}_n^{( \lceil n \bm u_i \rceil \wedge \lceil n \bm u_j \rceil)}(\lceil n \bm u_i \rceil \vee \lceil n \bm u_j \rceil)\right)\xrightarrow[n\to\infty]{}\sgn(\conti Z_e^{(\bm u_i \wedge \bm u_j)}(\bm u_j \vee \bm u_i))\quad \text{ a.s.}
	\end{equation*}
	Note that the function $\sgn$ is not continuous, but by the third claim of \cref{lem:boundary}, the random variable $\conti Z_e^{(\bm u_i \wedge \bm u_j)}(\bm u_j \vee \bm u_i)$ is almost surely nonzero, hence a continuity point of $\sgn$.
	By \cref{prop:patterns}  and the third claim of \cref{lem:boundary}, this means that $\bm \rho_n^k \xrightarrow[n\to\infty]{} \bm \rho^k$, where $\bm \rho^k$ denotes the permutation $\Perm_k(\bm \mu_B)$ induced by $(\bm u_i, \varphi_{\conti Z_e}(\bm u_i))_{i \in [k]}$.
	Using \cref{lem:subpermapproxpermuton}, we have for $k$ large enough that
	\begin{equation}\label{eq:first_perm_bound}
	\Prob\left[
	d_{\square}(\mu_{\bm \rho_n^k},\mu_{\bm \sigma_n})
	> 16k^{-1/4}\right]
	\leq \frac12 e^{-\sqrt{k}} + O(n^{-1}),
	\end{equation}
	where the error term $O(n^{-1})$ comes from the fact that $\bm \rho^k_n$ might be undefined.
	Since $\bm \rho_n^k \xrightarrow[n\to\infty]{} \bm \rho^k$ and $\mu_{\bm \sigma_n} \to \widetilde{\bm \mu}$, then
	taking the limit as $n\to \infty$, we obtain that
	\[\Prob\left[
	d_{\square}(\mu_{\bm \rho^k},\widetilde {\bm \mu})
	> 16k^{-1/4}\right]
	\leq \frac12 e^{-\sqrt{k}},\]
	and so $\mu_{\bm \rho^k}\xrightarrow[k\to\infty]{P}\widetilde {\bm \mu}$. Another application of \cref{lem:subpermapproxpermuton} gives that $\mu_{\bm \rho^k} \xrightarrow[k\to\infty]{P}\bm \mu_B$. The last two limits yield $\widetilde {\bm \mu} = \bm \mu_B$ almost surely. This concludes the proof.
\end{proof}

\subsection{Joint convergence of the four trees of bipolar orientations}
Fix $n\geq 1$. Let $\bm m_n$ be a uniform bipolar orientation of size $n$, and consider its iterates $\bm m_n^*, \bm m_n^{**}, \bm m_n^{***}$ by the dual operation. Denote $\bigstar = \{\emptyset,*,**,{**}*\}$ the group of dual operations, that is isomorphic to $\Z/4\Z$. For $\theta \in \bigstar$, let $\bm W_n^\theta =(\bm X_n^\theta, \bm Y_n^\theta)$, $\bm Z_n^\theta$ and $\bm \sigma_n^\theta$ be the uniform objects corresponding to $\bm m_n^\theta$ via the commutative diagram in \cref{eq:comm_diagram} page~\pageref{eq:comm_diagram}. We also denote by $\bm L^\theta_n$  the discrete local time process of $\bm Z^\theta_n$ (see \cref{eq:local_time_process} page~\pageref{eq:local_time_process} for a definition).
We define rescaled versions as usual: for $u\in [0,1]$, let ${\conti W}_n^\theta:[0,1]\to \R^2$, ${\conti Z}^{\theta,(u)}_{n}:[0,1]\to\R$ and ${\conti L}^{\theta,(u)}_{n}:[0,1]\to\R$ be the continuous functions obtained by linearly interpolating the following families of points defined for all $k\in [n]$:
\begin{equation*}
{\conti W}_n^\theta\left(\frac kn\right) = \frac 1 {\sqrt {2n}} {\bm W}_{n}^\theta (k), 
\quad 
{\conti Z}^{\theta,(u)}_{n}\left(\frac kn\right) = \frac 1 {\sqrt {2n}} {\bm Z}^{\theta,(\lceil nu\rceil)}_{n}(k),
\quad 
{\conti L}^{\theta,(u)}_{n}\left(\frac kn\right) = \frac 1 {\sqrt {2n}} {\bm L}^{\theta,(\lceil nu\rceil)}_{n}(k).
\end{equation*}
 Finally, for each $n\in\Z_{>0}$, let $((\bm u_{n,i}, \bm u_{n,i}^{*}))_{i\geq 1}$ be an i.i.d.\ sequence of distribution $\mu_{\bm \sigma_n}$ conditionally on $\bm m_n$. Let also $\bm u_{n,i}^{**} = 1-\bm u_{n,i}^{}$ and $\bm u_{n,i}^{***} = 1-\bm u_{n,i}^{*}$ for $n,i\in\Z_{>0}$. The first and second marginals of a permuton are uniform irregardless of the permuton, which implies that for all $n\in\Z_{>0}$ and $\theta \in \bigstar$, $(\bm u_{n,i}^\theta)_{i\geq 1}$ is an i.i.d.\ sequence of uniform random variables on $[0,1]$ independent of $\bm m_n^\theta$ (but for every fixed $n\in\Z_{>0}$, the joint distribution of $\left((\bm u_{n,i}^\theta)_{i\geq 1}\right)_{\theta\in \bigstar}$ depends on $(\bm m^\theta_n)_{\theta\in \bigstar}$).
 
\medskip

We can now state one of the main theorems of this paper which is in some sense (made precise later) a joint scaling limit convergence result for all these objects. 
Recall the time-reversal and coordinate-swapping mapping $s:\mathcal C([0,1],\R^2) \to \mathcal C([0,1],\R^2)$ defined by $s(f,g) = (g(1-\cdot), f(1-\cdot))$.

\begin{theorem}\label{thm:joint_scaling_limits}
	Let $\conti W_e$ be a two-dimensional Brownian excursion of correlation $-1/2$ in the non-negative quadrant. Let $\conti Z_e$ be the associated continuous coalescent-walk process and $\conti L_e$ be its local-time process. Let $\bm u$ denote a uniform random variable in $[0,1]$ independent of $\conti W_e$. Then
\begin{enumerate}
\item almost surely, $\conti L_e^{(\bm u)} \in \mathcal C([0,1),\R)$ has a limit at $1$, and we still denote by $\conti L_e^{(\bm u)} \in \mathcal C([0,1],\R)$ its extension.
\item 
There exists a measurable mapping $r:\mathcal C([0,1],\R^2) \to \mathcal C([0,1],\R^2)$ such that almost surely, denoting $(\widetilde{\conti X}, \widetilde{\conti Y}) = r(\conti W_e)$,
\begin{equation}\label{eq:definition_map_r_1}
\widetilde{\conti X}(\varphi_{\conti Z_e}(\bm u)) =  \conti L_e^{(\bm u)}(1)\qquad\text{and}\qquad r(s(\conti W_e)) = s(r(\conti W_e)).
\end{equation}
These properties uniquely determine the mapping $r$ $\Prob_{\conti W_e}$-almost everywhere. Moreover,
\begin{equation}\label{eq:definition_map_r_2}
r(\conti W_e) \stackrel d= \conti W_e, \qquad r^2=s \qquad\text{and}\qquad r^4 = \Id, \qquad\Prob_{\conti W_e}-\text{a.e.}
\end{equation}
\item Let $(\bm u_{i})_{i\geq 1}$ be an auxiliary i.i.d.\ sequence of uniform random variables on $[0,1]$, independent of $\conti W_e$. For each $\theta \in \{\emptyset, *,**\}$, let $\conti W_e^{\theta*} = r(\conti W_e^{\theta})$ and  $\bm u_i^{\theta *} = \varphi_{\conti Z_e}(\bm u_i^\theta)$ for $i\geq 1$. Let also $\conti Z_e^\theta$ be the associated continuous coalescent-walk process, $\conti L_e^\theta$ be its local-time process and $\mu_{\conti Z^\theta_e}$ be the associated Baxter permuton. 
Then we have the joint convergence in distribution
	\begin{multline}\label{eq:joint_scaling_limits}
	\left(
	\conti W_n^\theta, 
	\left(\bm u_{n,i}^\theta,\conti Z^{\theta,(\bm u_{n,i}^\theta)}_n, \conti L^{\theta,(\bm u_{n,i}^\theta)}_n\right)_{i\in\Z_{>0}}, 
	\mu_{\bm \sigma_n^\theta}
	\right)_{\theta\in \bigstar}\\
	\xrightarrow[n\to\infty]{d}
	\left(
	\conti W_e^\theta, 
	\left(\bm u_i^\theta,\conti Z^{\theta,(\bm u_i^\theta)}_e, \conti L^{\theta,(\bm u_i^\theta)}_e\right)_{i\in\Z_{>0}}, 
	\mu_{\conti Z^\theta_e}
	\right)_{\theta\in \bigstar}
	\end{multline}
	in the space
	$$\left(\mathcal C([0,1], \R^2)\times ([0,1] \times \mathcal C([0,1], \R) \times \mathcal C([0,1], \R))^{\Z_{>0}} \times \mathcal M\right)^4.$$
	\item In this coupling, we almost surely have, for $\theta\in \bigstar$,
	\begin{equation}
	\varphi_{\conti Z_e^{\theta*}}\circ \varphi_{\conti Z_e^\theta} = 1 -\Id,\qquad\Prob_{\conti W_e}-\text{a.e.}
	\end{equation}
\end{enumerate}
\end{theorem}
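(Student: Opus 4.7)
My strategy is to establish the joint convergence of part 3 via a tightness-and-Skorokhod argument, then identify the limits using the combinatorial identity of \cref{prop:anti-inv}, which expresses the dual walk $\bm W_n^*$ through local times of the coalescent-walk process $\bm Z_n$. Carried to the continuous limit, this identity will simultaneously produce the local time extension at $1$ (part 1), the mapping $r$ of part 2, and the permuton composition identity of part 4.

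I will first argue joint tightness of the rescaled objects in \cref{eq:joint_scaling_limits}, using the convergence $\conti W_n^\theta \to \conti W_e^\theta$ of \cref{prop:DW}, the joint convergence of coalescent-walk trajectories and their local times on every compact subinterval of $[0,1)$ (\cref{thm:discret_coal_conv_to_continuous}), and compactness of the permuton space $\mathcal M$. Along any subsequence, Skorokhod's representation theorem will provide almost-sure joint convergence on a common probability space. The key step is then to exploit \cref{prop:anti-inv} applied at $i = \bm\sigma_n(\lceil n\bm u_{n,1}\rceil)$, which after rescaling by $\sqrt{2n}$ reads
\begin{equation*}
\conti X_n^*\bigl(\bm u_{n,1}^*\bigr) = \conti L_n^{(\bm u_{n,1})}(1) + O(1/\sqrt{n}),
\end{equation*}
since $\bm u_{n,1}^* = \bm\sigma_n(\lceil n\bm u_{n,1}\rceil)/n + O(1/n)$ by construction of the permuton-sample coupling. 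The almost-sure convergence $\bm u_{n,1}^* \to \varphi_{\conti Z_e}(\bm u_1)$ (promoted from the permuton convergence of \cref{thm:permuton}; see the obstacle below) will force the left-hand side to converge almost surely to $\conti X_e^*(\varphi_{\conti Z_e}(\bm u_1))$, and hence $\conti L_n^{(\bm u_{n,1})}(1)$ will have a finite almost-sure limit, which I take as the definition of $\conti L_e^{(\bm u_1)}(1)$. Since the piecewise-linear process $\conti L_n^{(\bm u_{n,1})}$ is non-decreasing, Dini's theorem for monotone functions with continuous limit will upgrade the pointwise convergence to uniform convergence on $[0,1]$, establishing part 1 and the key identity $\conti X_e^*(\varphi_{\conti Z_e}(\bm u_1)) = \conti L_e^{(\bm u_1)}(1)$.

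I will then define $r(\conti W_e) := \conti W_e^*$ and prove part 2. Since $(\varphi_{\conti Z_e}(\bm u_i))_{i\geq 1}$ is almost surely dense in $[0,1]$ by the uniform marginal of the Baxter permuton (\cref{lem:boundary}), the above identity combined with continuity of $\conti X_e^*$ determines $\conti X_e^*$ uniquely as a measurable functional of $\conti W_e$. The second coordinate $\conti Y_e^*$ is recovered by applying the same argument to $s(\conti W_e)$, which both defines $\conti Y_e^*$ and enforces $r \circ s = s \circ r$. The distributional identity $r(\conti W_e) \stackrel{d}{=} \conti W_e$ follows from the invariance $\bm m_n \stackrel{d}{=} \bm m_n^*$ under duality, and the involution relations $r^2 = s$, $r^4 = \Id$ follow from the combinatorial identities $\bow(m^{**}) = s(\bow(m))$ and $m^{****} = m$ (cf.\ \cref{thm:discrete_invo}). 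Since every subsequential limit is now uniquely identified, the convergence of part 3 follows. Part 4 is then obtained by passing to the limit the combinatorial identity $\bm\sigma_n^*(\bm\sigma_n(i)) = n+1-i$ (a consequence of \cref{thm:rotation}) at $i = \lceil n\bm u_{n,1}\rceil$.

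The hard part will be to prove rigorously that $\bm u_{n,1}^* \to \varphi_{\conti Z_e}(\bm u_1)$ almost surely along the Skorokhod subsequence, thereby promoting the weak convergence of permutons in \cref{thm:permuton} to convergence of coupled uniform samples. This requires a careful joint-continuity argument at the Lebesgue-a.e.\ defined function $\varphi_{\conti Z_e}$; the fact that $\bm \mu_B$ is concentrated on the graph of $\varphi_{\conti Z_e}$ (so the conditional law of the second coordinate given the first degenerates in the limit) should provide the required control. A secondary subtlety is that \cref{thm:ext_and_uni_excursion} only yields strong solutions on $[u, 1-\varepsilon]$, so the extension of $\conti L_e^{(\bm u)}$ to $t=1$ must be obtained via the discrete identity above rather than via pure SDE techniques.
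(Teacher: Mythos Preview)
Your overall architecture---tightness, Skorokhod, then identification of the limit via the discrete identity of \cref{prop:anti-inv} (equivalently \cref{cor:local_time})---is exactly the paper's approach, and your handling of the ``hard part'' $\bm u_{n,1}^*\to\varphi_{\conti Z_e}(\bm u_1)$ is correct in spirit: the paper does this via \cref{lem:cv_iid_seq}, which formalizes precisely your observation that samples from $\mu_{\bm\sigma_n}$ converge jointly with the permuton, and then uses that $\bm\mu_B$ is concentrated on the graph of $\varphi_{\conti Z_e}$.

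There is, however, a genuine gap in your treatment of the local time at $t=1$. Your Dini argument is circular: you obtain $\conti L_n^{(\bm u_{n,1})}(1)\to c:=\conti X_e^*(\varphi_{\conti Z_e}(\bm u_1))$ and $\conti L_n^{(\bm u_{n,1})}\to\conti L_e^{(\bm u_1)}$ uniformly on each $[0,1-\varepsilon]$, and then invoke ``Dini for monotone functions with continuous limit''. But continuity of the limit at $1$ is what you are trying to prove. Monotonicity and boundedness give that $a:=\lim_{t\to 1^-}\conti L_e^{(\bm u_1)}(t)$ exists and $a\le c$, but nothing you have written rules out $a<c$: take $L_n(t)=0$ for $t<1-1/n$ and $L_n$ linear from $0$ to $1$ on $[1-1/n,1]$ as a counterexample to the inference. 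If $a<c$, then $\conti L_n$ does \emph{not} converge in $\mathcal C([0,1],\R)$, the key identity $\conti X_e^*(\varphi_{\conti Z_e}(\bm u))=\conti L_e^{(\bm u)}(1)$ fails (with the continuous-extension convention of part~1), and the convergence of part~3 in the stated topology breaks down.

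The paper closes this gap with \cref{lem:local_time_does_not_disappear}, a non-trivial equicontinuity estimate: for every $\varepsilon,\delta>0$ there exist $x\in(0,1)$ and $n_0$ with $\Prob\bigl(\conti L_n^{(\bm u)}(1)-\conti L_n^{(\bm u)}(1-x)\ge\delta\bigr)\le\varepsilon$ for $n\ge n_0$. The proof uses the tree interpretation of the local time (\cref{obs:ancestry line}) and the duality between $\bm W_n$ and $\bm W_n^*$ to control how much local time can accumulate in $[1-x,1]$; it is not recoverable from your discrete identity alone. This lemma is what simultaneously yields tightness of $(\conti L_n^{(\bm u)})$ in $\mathcal C([0,1],\R)$, item~1, and the equality $a=c$ needed for item~2.
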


\begin{remark}
	As in the discrete case, we point out that even though the joint distribution of $\left((\bm u^{\theta}_{i})_{i\geq 1}\right)_{\theta\in\bigstar}$ depends on $(\conti W_e^{\theta})_{\theta\in \bigstar}$, we have that $(\bm u^{\theta}_{i})_{i\geq 1}$ is independent of $\conti W_e^{\theta}$ for every fixed $\theta\in\bigstar$.
\end{remark}

\begin{remark}
	We highlight that the results presented in the theorem above (in particular in \cref{eq:definition_map_r_1,eq:definition_map_r_2}) are continuous analogs of the results obtained in \cref{sect:anti-invo} for discrete objects (see in particular \cref{thm:discrete_invo}). The specific connections between the results for continuous and discrete objects are made clear in the proof of the theorem.
\end{remark}

\begin{proof}[Proof of \cref{thm:joint_scaling_limits}]
We start by showing that the left-hand side of \cref{eq:joint_scaling_limits} is tight. \cref{thm:discret_coal_conv_to_continuous} and \cref{thm:permuton} give us  tightness of all involved random variables, with the caveat that $(\conti L_n^{\theta,(\bm u_i^\theta)})_n$ is a priori only tight in the space $\mathcal C([0,1),\R)$. Tightness in $\mathcal C([0,1],\R)$ follows from \cref{lem:local_time_does_not_disappear}, whose statement and proof are postponed to the end of this section, which proves in passing item 1.

We now consider a subsequence of 
\[\left(
\conti W_n^\theta, 
\left(\bm u_{n,i}^\theta,\conti Z^{\theta,(\bm u_{n,i}^\theta)}_n, \conti L^{\theta,(\bm u_{n,1}^\theta)}_n\right)_{i\geq 1}, 
\mu_{\bm \sigma_n^\theta}
\right)_{\theta\in \bigstar}
\]
converging in distribution. For fixed $\theta \in \bigstar$, we know the distribution of the limit thanks to \cref{thm:discret_coal_conv_to_continuous} and \cref{thm:permuton} (the limit of $\conti L^{\theta,(\bm u_i^\theta)}_n$, being a random continuous function on $[0,1]$, is determined by its restriction to $[0,1)$). Henceforth, it is legitimate to denote by
\begin{equation}\label{eq:limiting_vector}
\left(
\conti W_e^\theta, 
\left(\bm u_i^\theta,\conti Z^{\theta,(\bm u_i^\theta)}_e, \conti L^{\theta,(\bm u_i^\theta)}_e\right)_{i\geq 1}, 
\mu_{\conti Z^\theta_e}
\right)_{\theta\in \bigstar}
\end{equation}
the limit, keeping in mind that the coupling for varying $\theta$ is undetermined at the moment. We shall determine it to complete the proof of items 2 and 3. We start by proving the following identities (we use the convention $\text{\small****}=\emptyset$): 
\begin{align}
\conti W_e^{**} = s(\conti W_e),\quad \conti W_e^{***} = s(\conti W_e^*), \label{eq:main_reversal}\\
\bm u_i^{\theta *} = \varphi_{\conti Z^\theta_e}(\bm u_i^\theta),&\quad  i\geq 1, \theta \in {\bigstar}, \label{eq:main_phi}\\
\conti X_e^{\theta*}(\bm u_i^{\theta *}) = \conti L_e^{\theta,(\bm u_i^\theta)}(1),&\quad   i\geq 1,\theta \in {\bigstar}. \label{eq:main_L}
\end{align}

\medskip

The claim in \cref{eq:main_reversal} is the easiest. Thanks to \cref{prop:rev_coal_prop}, we have that $\conti W_n^{**} = s(\conti W_n)$ and $\conti W_n^{***} = s(\conti W_n^*)$, for every $n\in\Z_{>0}$.
Since $s$ is continuous on $\mathcal C([0,1],\R^2)$, the same result holds in the limit, proving \cref{eq:main_reversal}.

\medskip

To prove \cref{eq:main_phi}, we use the following lemma, whose proof is skipped. It follows rather directly from the definition of weak convergence of measures.
\begin{lemma}\label{lem:cv_iid_seq}
	Suppose that for $n\in \Z_{>0} \cup \{\infty\}$, $\bm \mu_n$ is a random measure on a Polish space and $(\bm X^n_i)_{i\geq 1}$ is an i.i.d.\ sequence of elements with distribution $\bm \mu_n$ conditionally on $\bm \mu_n$.
	Assume that $\bm \mu_n \to \bm \mu_\infty$ in distribution for the weak topology. Then we have the joint convergence in distribution 
	\[
	(\bm \mu_n,(\bm X^n_i)_{i\geq 1}) \xrightarrow[n\to\infty]{d} (\bm \mu_\infty,(\bm X^\infty_i)_{i\geq 1})\; .
	\]
\end{lemma}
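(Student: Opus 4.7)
The plan is to reduce the claim to joint convergence of finite-dimensional marginals, which follows from a conditional independence calculation combined with the continuous mapping theorem.

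First, recall that the product topology on a countable product of Polish spaces is generated by the projections onto finitely many coordinates, so by a standard result (cf.\ Kallenberg, \emph{Foundations of Modern Probability}), it suffices to prove that for every fixed $k \geq 1$, we have the joint convergence in distribution
\[
(\bm \mu_n, \bm X^n_1, \dots, \bm X^n_k) \xrightarrow[n\to\infty]{d} (\bm \mu_\infty, \bm X^\infty_1, \dots, \bm X^\infty_k).
\]
To establish this, I would use the fact that convergence in distribution of random variables with values in a Polish space is determined by testing against functions of the form $g(\mu)\prod_{i=1}^k f_i(x_i)$, where $g$ is bounded continuous on the space of probability measures (with the weak topology) and each $f_i$ is bounded continuous on the underlying Polish space. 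Indeed, the algebra generated by such products is a convergence-determining class by a Stone--Weierstrass type argument.

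The key step is the following conditional independence computation: since conditionally on $\bm \mu_n$ the variables $(\bm X^n_i)_{i \geq 1}$ are i.i.d.\ with common law $\bm \mu_n$, we have
\[
\E\left[g(\bm \mu_n)\prod_{i=1}^k f_i(\bm X^n_i)\right] = \E\left[g(\bm \mu_n)\prod_{i=1}^k \langle f_i, \bm \mu_n\rangle\right],
\]
where $\langle f, \mu\rangle = \int f\, d\mu$. Now, for every bounded continuous $f$, the functional $\mu \mapsto \langle f, \mu\rangle$ is bounded and continuous on the space of probability measures equipped with the weak topology. Hence $\mu \mapsto g(\mu)\prod_{i=1}^k \langle f_i, \mu\rangle$ is a bounded continuous function on that space, and the assumed weak convergence $\bm \mu_n \xrightarrow{d} \bm \mu_\infty$ yields
\[
\E\left[g(\bm \mu_n)\prod_{i=1}^k \langle f_i, \bm \mu_n\rangle\right] \xrightarrow[n\to\infty]{} \E\left[g(\bm \mu_\infty)\prod_{i=1}^k \langle f_i, \bm \mu_\infty\rangle\right] = \E\left[g(\bm \mu_\infty)\prod_{i=1}^k f_i(\bm X^\infty_i)\right],
\]
applying the same conditional independence identity in reverse for the limit.

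There is no real obstacle in this argument; everything reduces to continuity of integration against a bounded continuous function and a routine density argument for test functions on the product space. The only point that requires minimal care is the reduction to finitely many coordinates, but this is automatic in the product topology since a net converges iff every finite projection converges.
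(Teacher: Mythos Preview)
Your proof is correct and is exactly the standard argument one would write here. The paper does not actually give a proof of this lemma: it states that the proof is skipped and ``follows rather directly from the definition of weak convergence of measures.'' Your write-up is precisely the natural way to make that remark rigorous---reduce to finitely many coordinates, test against products $g(\mu)\prod_i f_i(x_i)$, and use that $\mu\mapsto\langle f_i,\mu\rangle$ is bounded and continuous for the weak topology---so there is nothing to compare.
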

In view of the construction of $\left(\mu_{\bm \sigma_n^\theta},(\bm u_{n,i}^\theta, \bm u_{n,i}^{\theta*})_{i\geq 1}\right)$,
it implies that the joint distribution of $\left(\mu_{\conti Z^\theta_e},(\bm u_i^\theta, \bm u_i^{\theta*})_{i\geq 1}\right)$
is that of $\mu_{\conti Z^\theta_e}$ together with an i.i.d.\ sequence of elements with distribution $\mu_{\conti Z^\theta_e}$ conditionally on $\mu_{\conti Z^\theta_e}$.
In particular, we must have $\bm u_i^{\theta *} = \varphi_{\conti Z_e^{\theta}}(\bm u_i^{\theta})$ almost surely.
This proves \cref{eq:main_phi}.

\medskip

Finally, we have the discrete identity $\conti X_n^{\theta*}(n^{-1}\lceil n\bm u_i^{\theta *}\rceil) = \conti L_n^{\theta,(\bm u_{n,i}^\theta)}(1) - \frac 1 {\sqrt{2n}}$ for every $n\geq 1$ thanks to \cref{cor:local_time}. By convergence in distribution, we obtain  \cref{eq:main_L}.

\medskip

The continuous stochastic process $\conti X_e^{\theta *}$ is almost surely determined by its values on the dense sequence $(\bm u_i^{\theta *})_{i\geq i_0}$ for all $i_0\in\Z_{>0}$. By \cref{eq:main_L} and the Kolmogorov's zero–one law, we have that $\conti X_e^{\theta *} \in \sigma(\conti W_e^\theta)$. This together with \cref{eq:main_reversal} implies that $\sigma(\conti Y_e^{\theta *}) = \sigma(\conti X_e^{\theta ***}) \subset \sigma(\conti W_e^{\theta**}) = \sigma(\conti W_e^{\theta})$. As a result $\conti W_e^{\theta*} \in \sigma(\conti W_e^{\theta})$ and so there exists a measurable mapping $r:\mathcal C([0,1],\R^2) \to \mathcal C([0,1],\R^2)$ such that 
\begin{equation}\label{eq:main_r}
r(\conti W_e^{\theta}) = \conti W_e^{\theta*}.
\end{equation} Then the claims in \cref{eq:definition_map_r_1,eq:definition_map_r_2} are an immediate consequence of \cref{eq:main_L,eq:main_reversal}. The fact that \cref{eq:definition_map_r_1} uniquely determines $r$ $\Prob_{\conti W_e}$-almost everywhere also results from the fact that a continuous function is uniquely determined by its values on a set of full Lebesgue measure. This completes the proof of item 2.

Additionally, \cref{eq:main_phi,eq:main_r} show that the coupling in \cref{eq:limiting_vector} is the one announced in the statement of item 3, and in particular is independent of the subsequence. Together with tightness, this proves item 3.

For item 4, we observe that $\bm u_{n,1}^{\theta**} = 1- \bm u_{n,1}^{\theta}$, so that taking the limit, $\bm u_{1}^{\theta**} = 1- \bm u_{1}^{\theta}$. Then item 4 follows from \cref{eq:main_phi}.
\end{proof}

\bigskip

We now move to the tightness lemma that was left aside. The proof relies heavily on the relation between the coalescent-walk process and the dual map presented in \cref{cor:local_time}. 

\begin{lemma} \label{lem:local_time_does_not_disappear}
	Let $\bm u$ be a uniform random variable on $[0,1]$, independent of ${\bm W}_n$. The sequence $(\conti L_n^ {(\bm u)} (1))_n$ is tight, and for every $\eps, \delta >0$, there exist $x\in (0,1)$ and $n_0\geq 1$ such that 
	\begin{equation}\label{eq:continuity_local_time_at_1}
	\Prob\Big(\conti L_n^ {(\bm u)} (1) -\conti L_n^ {(\bm u)} (1-x) \geq \delta\Big) \leq \eps,\quad \text{for all} \quad n\geq n_0.
	\end{equation}
	Therefore $(\conti L_n^{(\bm u)})_n$ is tight in the space $\mathcal C([0,1],\R)$.
\end{lemma}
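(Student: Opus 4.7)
The starting point is \cref{cor:local_time}, which gives the exact formula
\[
\conti L_n^{(\bm u)}(1) = \frac{\bm X_n^*(\bm\sigma_n(\lceil n\bm u\rceil)) + 1}{\sqrt{2n}},
\]
where $\bm W_n^* = (\bm X_n^*, \bm Y_n^*) = \bow(\bm m_n^*)$ is the tandem walk of the dual map. Since duality is a bijection of $\mathcal O_n$, we have $\bm m_n^* \stackrel d= \bm m_n$ and hence $\conti W_n^* \stackrel d= \conti W_n$. By \cref{prop:DW}, $\|\conti X_n^*\|_\infty$ is tight, and the bound $\conti L_n^{(\bm u)}(1) \leq \|\conti X_n^*\|_\infty + 1/\sqrt{2n}$ yields tightness of $(\conti L_n^{(\bm u)}(1))_n$.

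To prove the continuity claim \cref{eq:continuity_local_time_at_1}, the plan is to bound the expected increment and apply Markov's inequality. Writing $j = \lfloor n(1-x) \rfloor$ and using that $\bm u$ is uniform so $\lceil n\bm u\rceil$ is (almost) uniform on $[n]$,
\[
\E\bigl[\conti L_n^{(\bm u)}(1) - \conti L_n^{(\bm u)}(1-x)\bigr] = \frac{1}{n\sqrt{2n}}\,\E\Bigl[\textstyle\sum_{i=1}^n \bigl(\bm L^{(i)}_n(n) - \bm L^{(i)}_n(j)\bigr)\Bigr].
\]
Each increment $\bm L^{(i)}_n(n) - \bm L^{(i)}_n(j)$ counts the zeros of the trajectory $\bm Z^{(i)}_n$ in the window $(j,n]$ of length $\leq nx$. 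In the infinite-volume setting, \cref{prop:trajectories_are_rw} identifies $\overline{\bm Z}^{(0)}$ with a centered random walk of variance $2$, for which classical local-time estimates give $\E[\overline{\bm L}^{(0)}(nx)] = O(\sqrt{nx})$. The plan is to transfer this bound to the conditioned setting using absolute-continuity arguments in the spirit of \cref{lem:AbsCont}: because the window $(j,n]$ abuts the right endpoint, one combines local absolute continuity on a middle sub-interval with the time-reversal symmetry $\cev{\bm W}_n \stackrel d= \bm W_n$ provided by \cref{prop:rev_coal_prop} to relocate the required estimate to the (symmetric) setting near the other endpoint. The outcome is a uniform bound $\E[\bm L^{(i)}_n(n) - \bm L^{(i)}_n(j)] \leq C\sqrt{nx}$, whence $\E[\conti L_n^{(\bm u)}(1) - \conti L_n^{(\bm u)}(1-x)] \leq C'\sqrt{x}$, and Markov's inequality delivers \cref{eq:continuity_local_time_at_1} by choosing $x$ small.

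For the final tightness in $\mathcal C([0,1],\R)$, one combines the tightness of $\conti L_n^{(\bm u)}(1)$, the continuity bound at $1$, and the convergence of the restrictions $\conti L_n^{(\bm u)}|_{[0,1-\eps]}$ to $\conti L_e^{(\bm u)}|_{[0,1-\eps]}$ provided by \cref{prop:coal_con_cond}. Since the processes $\conti L_n^{(\bm u)}$ are continuous and non-decreasing, uniform equicontinuity on $[0,1]$ follows from the uniform convergence on each $[0,1-\eps]$ together with the small-oscillation bound on $[1-x,1]$, and Arzel\`a--Ascoli gives the claim. The main obstacle I anticipate is the absolute-continuity/time-reversal step near the right endpoint: the standard results of \cref{lem:AbsCont} apply to sub-intervals well away from $\{0,n\}$, and converting the local-time bound to one valid adjacent to $n$ requires careful interplay between the coalescent-walk construction (which is not symmetric in time) and the distributional time-reversal of $\bm W_n$.
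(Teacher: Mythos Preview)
Your opening tightness argument for $\conti L_n^{(\bm u)}(1)$ via \cref{cor:local_time} is correct and identical to the paper's, and your closing Arzel\`a--Ascoli paragraph is fine. The gap is in the middle step, and it is more serious than a technicality.

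The first-moment strategy hinges on transferring the random-walk local-time bound $\E[\overline{\bm L}^{(0)}(nx)]=O(\sqrt{nx})$ from \cref{prop:trajectories_are_rw} to the conditioned process on the window $(\lfloor n(1-x)\rfloor,n]$. Two things break. First, \cref{lem:AbsCont} only controls the law of $(\bm W_n(k))_{m\leq k\leq n-m}$; the window you need abuts $n$, where no bounded density is available. Second, your proposed fix via $\cev{\bm W}_n\stackrel d=\bm W_n$ does not move the problem to the other endpoint: by \cref{prop:rev_coal_prop}, $\wcp(\cev{\bm W}_n)$ encodes the tree $T(\bm m_n^{***})$, not $T(\bm m_n^*)$. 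The zeros of $\bm Z_n^{(i)}$ on $(j,n]$---which are a feature of $T(\bm m_n^*)$---do not become zeros of any trajectory of the reversed coalescent process, so the quantity you care about does not transform into a left-endpoint quantity under time reversal. There is also a conceptual warning: under the excursion conditioning, $-\bm X_n\leq\bm Z_n^{(i)}\leq\bm Y_n$ forces $\bm Z_n^{(i)}$ to $0$ near time $n$, which is exactly the mechanism that could inflate the local time there; a random-walk heuristic is not evidence that this does not happen.

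The paper's argument is substantially different. It reads $\bm L_n^{(\bm U_n)}(n)-\bm L_n^{(\bm U_n)}(j)$ through \cref{obs:ancestry line} as the number of ancestors of $\bm V_n=\bm\sigma_n(\bm U_n)$ in $T(\bm m_n^*)$ whose $T(\bm m_n)$-label exceeds $j$, and then constructs a specific ancestor $\bm B_n$ at a small height $\bm\Delta_n$, reducing the claim to showing that the $T(\bm m_n)$-label $\bm A_n$ of $\bm B_n$ is typically $\leq j$. The key twist is that the condition $\bm A_n>j$ is converted, via $\bm Y_n(\bm A_n)\leq\max_{[j,n]}\bm Y_n$ and \cref{cor:local_time} applied in the dual direction, into a smallness condition on $\bm L_n^{*,(\bm B_n)}(n)$, the local time of the \emph{dual} coalescent process $\bm Z_n^*=\wcp(\bm W_n^*)$. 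This dual local time is then controlled by choosing $\bm\Delta_n=\bm X_n^*(\lfloor yn\rfloor)$ so that an associated stopping time $\bm\tau_n$ lands well inside $[0,n]$, where absolute continuity \emph{is} available and \cref{prop:trajectories_are_rw} can be invoked after the stopping time via the strong Markov property. In short, the paper does not bound the endpoint local time directly; it trades the primal endpoint problem for a dual interior problem through the combinatorics of \cref{sect:anti-invo}.
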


\begin{proof}[Proof of \cref{lem:local_time_does_not_disappear}]
	Let us denote by $\bm U_n = \lceil n \bm u \rceil$, and $\bm V_n = \bm \sigma_n(\bm U_n)$, where we recall that $\bm \sigma_n=\cpbp\circ\wcp({\bm W}_n)$. Both $\bm U_n$ and $\bm V_n$ are separately independent of ${\bm W}_n$. Using \cref{cor:local_time}, we have $\conti L_n^ {(\bm u)}(1) = \frac 1 {\sqrt{2n}} \bm ({\bm X}_n^*(\bm V_n) + 1)$, from which tightness for $(\conti L_n^ {(\bm u)} (1))_n$ follows.
	We turn to the analysis of 
	\begin{equation}
	\conti L_n^ {(\bm u)} (1) -\conti L_n^ {(\bm u)} (1-x)  
	\leq \frac{1}{\sqrt{2n}}\Big(\bm L_n ^{(\bm U_n)}(n) - \bm L_n ^{(\bm U_n)}(\lfloor (1-x)n \rfloor)\Big).
	\label{eq:tight_1}
	\end{equation}
	We now consider the tree $T(\bm m^*_n)$ with edges labeled by its exploration process. From \cref{obs:ancestry line}, the quantity $\left(\bm L_n ^{(\bm U_n)}(n) - \bm L_n ^{(\bm U_n)}(\lfloor (1-x)n \rfloor)\right)$ counts the number of edges on the ancestry line of the edge $\bm V_n$ in $T(\bm m^*_n)$ with a $T(\bm m_n)$-label strictly greater than $\lfloor (1-x)n \rfloor$.
	The idea of the proof is to show the existence of an edge on this ancestry line of height less than $\delta \sqrt{2n}$  and of $T(\bm m_n)$-label less than $\lfloor (1-x)n \rfloor$ with high probability. As $T(\bm m_n)$-labels decrease going up an ancestry line of $T(\bm m_n^*)$, this will be enough (see \cref{eq:tight_2} below).
	
	Let us make this more precise. Let $\bm \Delta_n$ a random quantity to be determined later (see \cref{eq:defn_rndm_quant} below), but that will turn out to be smaller than $\delta\sqrt{2n}$ with probability bounded below.	 
	Set
	\begin{equation}\label{eq:defn_techn_quant}
	\bm B_n = \sup\{k\leq  \bm V_n : \bm X^*_n(k) = \bm \Delta_n\}\quad\text{and}\quad \bm \tau_n = \inf\{k> \bm V_n : \bm X^*_n(k) \leq \bm \Delta_n\}.
	\end{equation}
	On the event $\mathcal E_n = \{\bm X_n^*(\bm V_n) \geq \bm \Delta_n\} = \{\bm \tau_n \neq  \infty \} = \{\bm B_n \neq  -\infty \}$, the edge $\bm B_n$ is an ancestor of the edge $\bm V_n$ in $T(\bm m^*_n)$, of height $\bm X_n^*(\bm B_n) = \bm \Delta_n$. We denote by $\bm A_n = \bm \sigma_n^{-1}(\bm B_n)$ the $T(\bm m_n)$-label of this edge (see \cref{fig:schema_for_tight_lemma} for a schema of the notation).
	\begin{figure}[tbh]
		\centering
		\includegraphics[scale=0.8]{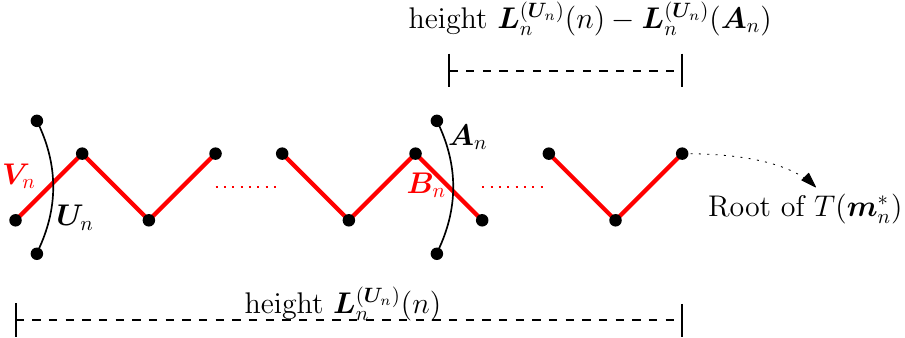}
		\caption{The ancestry line of $\bm V_n$ in $T(\bm m^*_n)$ is (partially) plotted in red together with the different quantities involved in the proof. Recall that the $T(\bm m_n)$-labels along this ancestry line increase from left to right.
			\label{fig:schema_for_tight_lemma}}
	\end{figure}

 By \cref{obs:ancestry line}, $\bm X_n^*(\bm B_n) = \bm \Delta_n = \left(\bm L_n ^{(\bm U_n)}(n) - \bm L_n ^{(\bm U_n)}(\bm A_n)\right)$. Since $\bm L_{n}^{(\bm U_n)}(\cdot)$ is increasing and non-negative, it is clear that $\bm L_n ^{(\bm U_n)}(n) - \bm L_n ^{(\bm U_n)}(\lfloor (1-x)n \rfloor)$ is bounded by $\bm \Delta_n$ unless the event $\mathcal E_n$ is realized and $\bm A_n \geq \lceil n(1-x) \rceil$.
 Translating into probabilities, 
	\begin{equation}\label{eq:tight_2}
	\Prob\Big(\conti L_n^ {(\bm u)} (1) -\conti L_n^ {(\bm u)} (1-x) \geq \delta\Big)
	\leq \Prob\Big(\mathcal E_n, \bm A_n \geq \lceil n(1-x) \rceil\Big) 
	+  \Prob\Big(\bm \Delta_n \geq \delta\sqrt{2n}\Big).
	\end{equation}
We focus on the first term in the right-hand side of the equation above. The quantity $\bm A_n$ is the index of the walk $\bm W_n$ corresponding to an edge of $\bm m_n$ whose definition is clearer from the walk $\bm W_n^*$. Hence it would be more tractable to rewrite the condition $ \bm A_n \geq \lceil n(1-x) \rceil$ in terms of the walk $\bm W_n^*$. To that end, we introduce $\eta>0$ and assume that $\max_{[\lceil n(1-x) \rceil,n]} \bm Y_n \leq \eta \sqrt{2n}$. If $\bm A_n \geq \lceil n(1-x) \rceil$ then necessarily $\bm Y_n(\bm A_n) \leq \eta \sqrt{2n}$. This trick is very useful to our purposes, as $\bm  Y_n(\bm A_n) = \bm L_{n}^{*,(\bm B_n)}(n) - 1$ (this follows once again by \cref{cor:local_time} together with \cref{thm:discrete_invo}). Finally,  we have the following inequality:
	\begin{equation}\label{eq:tight_3}
	 \Prob\Big(\mathcal E_n, \bm A_n \geq \lceil n(1-x) \rceil\Big) \leq \Prob\Big(\mathcal E_n, \bm L_{n}^{*,(\bm B_n)}(n) \leq \eta\sqrt{2n}+1\Big) + \Prob\Big(\max_{[\lceil n(1-x) \rceil,n]} \bm Y_n \geq \eta\sqrt{2n}\Big).
	\end{equation}
The first term in the right-hand side is, as desired, solely about the walk $\bm W_n^*$ and its corresponding coalescent-walk process, and we now focus on controlling it. By definition of $\bm B_n$ and $\bm \tau_n$ (see \cref{eq:defn_techn_quant}), the walk $\bm X_n^*-\bm \Delta_n$ takes a positive excursion between times ${\bm B}_n$ and ${\bm \tau}_n$
so that by construction of the coalescent-walk process, the walk ${\bm Z}^{({\bm B}_n)}_n$ takes a negative excursion between these times and weakly crosses zero upwards between times ${\bm \tau}_n - 1$ and ${\bm \tau}_n$. Hence, denoting $\bm G_n \coloneqq {\bm Z}^{({\bm B}_n)}_n(\bm \tau_n)$ and  $\bm R_n(k) \coloneqq {\bm Z}^{({\bm B}_n)}_n({\bm \tau}_n + k) - \bm G_n,\,\,k \geq 0$, we have
\begin{align*}
&\bm G_n =  {\bm Y_n}({\bm \tau}_n)-{\bm Y_n}({\bm \tau_n} - 1), \\	
&\bm L_{n}^{*(\bm B_n)}(\bm \tau_n+ k) = 1+\#\{i\in [0,k], \bm R_n(i) = -\bm G_n\}.
\end{align*}
As a result, $\bm L_{n}^{*,(\bm B_n)}(n) = \#\{i\in [0,n-\bm \tau_n], \bm R_n(i) = -\bm G_n\} +1$, and
\begin{multline}\label{eq:tight_4}
\Prob\Big(\mathcal E_n, \bm L_{n}^{*,(\bm B_n)}(n) \leq \eta\sqrt{2n}+1\Big) \leq\\ 
 \Prob\Big(\mathcal E_n, \inf_{j\in [0,n^{1/4}]}\#\{i\in [0,n-\bm \tau_n], \bm R_n(i) = -j\}\leq \eta\sqrt{2n}\Big) 
+ \Prob\Big(\max_{0\leq k \leq n}|{\bm Y}_n(k)-{\bm Y}_n(k - 1)|\geq n^{1/4}\Big)\\
\leq \Prob\Big(\mathcal E_n, \inf_{j\in [0,n^{1/4}]}\#\{i\in [0,n-\bm \tau_n], \bm R_n(i) = -j\}\leq \eta\sqrt{2n}\Big) 
+ o(1).
\end{multline}
The second term in the second line was easily treated by removing the excursion conditioning using \cref{eq:LLTexcursion} of \cref{lem:techn_estimates} and then using a union bound, yielding $$\Prob(\max_{0\leq k \leq n}|{\bm Y}_n(k)-{\bm Y}_n(k - 1)|\geq n^{1/4}) \leq Cn^4\, n \, 2^{-n^{1/4}}.$$

We turn to the first term in the right-hand side of \cref{eq:tight_4}. We use the idea that $\bm R_n$ is close in distribution to a random walk, which implies that its local time near zero in a time interval of order $n$ is indeed of order $\sqrt{n}$, and so the first term can be made small by taking $\eta$ small.
Actually, thanks to \cref{prop:trajectories_are_rw}, $\bm R_n$ would exactly be a random walk if there were no excursion conditioning on $\bm W^*_n$ and if $\bm \Delta_n$ were defined so that $\bm \tau_n$ is a stopping time of $\bm W^*_n$.
We shall use an absolute continuity argument to compare our current situation to this ideal one. 
Let $0<2u<y$ and set 
\begin{equation}\label{eq:defn_rndm_quant}
\bm \Delta_n \coloneqq \bm X^*_n(\lfloor yn \rfloor).
\end{equation}
Then
\begin{multline}\label{eq:tight_5}
\Prob(\mathcal E_n, \inf_{j\in [0,n^{1/4}]} \#\{i\in [0,n-\bm \tau_n], \bm R_n(i) = -j\}\leq \eta\sqrt{2n}) \\
\leq \Prob\Big(
\bm V_n\geq yn, 
\bm \tau_n \leq (1-2u)n,\,\,   \inf_{j\in [0,n^{1/4}]}\#\{i\in [0,un], \bm R_n(i) = -j\}\leq \eta \sqrt{2n}
\Big)\\
+ y + \Prob(\bm \tau_n \geq (1-2u)n).
\end{multline}
Our choice of definition for $\bm \Delta_n$ makes the event in the first term in the right-hand side of the equation above measurable with respect to $\sigma((\bm W^*_{n,\lfloor nu \rfloor+k} - \bm W^*_{n,\lfloor nu \rfloor})_{0\leq k\leq n - 2 \lfloor nu \rfloor},\bm V_n)$. 	By \cref{lem:AbsCont,prop:unif_law}, its probability is bounded independently of $n$ by a constant $C_u$ times the same probability under the unconditioned law (for which $\bm W_n^*$ is a random walk of step distribution $\nu$). 
Under the unconditioned law, $\bm \tau_n$ is a stopping time. Applying the strong Markov property and using \cref{prop:trajectories_are_rw}, we have that
$\bm R_n$ is a random walk of step distribution $\nu$.
So using an invariance principle for random walk local times \cite[Theorem 1.1]{Borodin}, the quantity
$\frac 1 {\sqrt n}\inf_{j\in [0,n^{1/4}]}\#\{i\in [0,un], \bm R_n(i) = -j\}$
converges in distribution to the local time at zero of a standard Brownian motion $\conti B$ during the interval $[0,u]$, which is distributed like $|\conti B_u|$. Hence the first term in the right-hand side of \cref{eq:tight_5} is bounded by $C_u(\P(|\conti B_u|\leq\eta) +o_{u,\eta}(1))$.

Combining this with the estimates in \cref{eq:tight_1,eq:tight_2,eq:tight_3,eq:tight_4,eq:tight_5}, we obtain:
\begin{multline*}
\Prob\Big(\conti L_n^ {(\bm u)} (1) -\conti L_n^ {(\bm u)} (1-x) \geq \delta\Big) \\
\leq \Prob\Big(\bm \Delta_n \geq \delta\sqrt{2n}\Big)  
+\Prob\Big(\max_{[\lceil n(1-x) \rceil,n]} \bm Y_n \geq \eta\sqrt{2n}\Big)
+ o(1)\\
+ y
+\Prob\Big(\bm \tau_n \geq (1-2u)n\Big) 
+C_u\P\Big(|\conti B_u|\leq\eta\Big) 
+o_{u,\eta}(1).
\end{multline*}
The probability of each term is readily bounded as follows 
\begin{multline*}
\Prob\Big(\conti L_n^ {(\bm u)} (1) -\conti L_n^ {(\bm u)} (1-x) \geq \delta\Big) \\
\leq \Prob\Big(\max_{[0,y]} \conti X_n^*\geq \delta\Big) + \Prob\Big(\max_{[1-x,1]} \conti Y_n \geq \eta\Big)
+ o(1)\\
+y 
+\Prob\Big( \min_{[\bm V_n/n,1-2u]}\conti X^*_n \geq \conti X^*_n(y)\Big)
+C_u\P\Big(|\conti B_u|\leq\eta\Big) +o_{u,\eta}(1).
\end{multline*}
As both $\conti Y_n$ and $\conti X^*_n$ converge to Brownian excursions, we can make this estimate arbitrarily small for large $n$ upon choosing $y$ small enough, then $u$ small enough, then $\eta$ small enough, and then $x$ small enough. This proves the lemma.
\end{proof}

\appendix

\section{Walks in the two-dimensional non-negative quadrant}
\label{sec:appendix}

 \subsection{Statements of the technical results}
Let $\bm W=(\bm W_k)_{k\in \Z_{\geq 0}}$ be a two-dimensional random walk with step distribution $\nu$ (defined in \cref{eq:step_distribution_walk} page~\pageref{eq:step_distribution_walk}), started at a point $x\in\Z^2$. We denote this measure by $\Prob_x$. Let $\conti W = (\conti X,\conti Y)$ be a standard two-dimensional Brownian motion of correlation $-1/2$.
After the simple computation $\Var(\nu) = \begin{psmallmatrix}2 &-1 \\ -1 &2 \end{psmallmatrix}$, the classical Donsker's theorem implies that the process $\left(\frac 1 {\sqrt {2n}} \bm W_{\lfloor nt \rfloor}\right)_{t\in[0,1]}$ converges in distribution to the process $(\conti W_t)_{t\in[0,1]}$.
In this section we are interested in the behavior of $\bm W$ under the conditioning of starting and ending close to the origin, and staying in the non-negative quadrant $Q = \Z_{\geq 0}^2$.
This has been treated in much wider generality in \cite{MR3342657} and \cite{duraj2015invariance}, and specialized in \cite{bousquet2019plane} to families of walks with steps in $\Steps$ (defined in \cref{eq:admis_steps} page~\pageref{eq:admis_steps}).
The following convergence in distribution can be found in \cite{MR3945746}, as an immediate consequence of \cite[Theorem 4]{duraj2015invariance}.
\begin{proposition}\label{prop:DW}
	Let $x,y\in Q$. 
	Then
	\[\Prob_x\left(\left(\tfrac 1 {\sqrt{2 n}} \bm W_{\lfloor nt \rfloor}\right)_{0\leq t \leq 1} \in \cdot \;
	\middle| \; \bm W_{[0,n]}\subset Q, \bm W_n = y\right)
	\xrightarrow[n\to\infty]{} \Prob(\conti W_e \in \cdot),\]
	where $\conti W_e$ is some process that we call the two-dimensional Brownian excursion of correlation $-1/2$ in the non-negative quadrant.
\end{proposition}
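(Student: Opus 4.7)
The plan is to reduce the statement to a direct application of the Duraj--Wachtel invariance principle for random walks conditioned to stay in a cone \cite[Thm.~4]{duraj2015invariance}, as already observed in \cite{MR3945746}. Since the setting of \cite{duraj2015invariance} is formulated for walks with zero mean in an arbitrary convex cone, the main task is to verify that our step distribution $\nu$ fits into that framework and to identify the limit.

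First, I would check the hypotheses on $\nu$. A direct computation using $\sum_{i\geq 0} i\, 2^{-i-1} = 1$ gives that $\nu$ has zero mean, and we already have $\Var(\nu) = \begin{psmallmatrix} 2 & -1 \\ -1 & 2 \end{psmallmatrix}$. Moreover, because the two mixture components of $\nu$ have bounded or geometric tails, all exponential moments of $\nu$ exist, which in particular yields the finite polynomial moment condition demanded by \cite[Thm.~4]{duraj2015invariance}. The cone $Q = \R_{\geq 0}^2$ is convex, and aperiodicity / irreducibility on $Q \cap \Z^2$ is easy to verify directly from the support of $\nu$ (for instance by exhibiting explicit paths between any two lattice points in $Q$ using the $(+1,-1)$ step together with steps $(0,j)$ and $(-i,0)$).

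Second, I would identify the limiting process. The Duraj--Wachtel theorem yields convergence (under the conditioning of staying in the cone and being pinned at the endpoints) to the Brownian excursion in $Q$ whose underlying two-dimensional Gaussian process has covariance matrix $\Var(\nu) = \begin{psmallmatrix} 2 & -1 \\ -1 & 2 \end{psmallmatrix}$, constructed via a Doob $h$-transform by the unique (up to normalization) harmonic function of the Brownian motion killed on exiting $Q$. The rescaling by $\sqrt{2n}$ (rather than the canonical $\sqrt{n}$) then absorbs the factor $2$ in the diagonal of $\Var(\nu)$, so that the limit is exactly the standard Brownian motion of correlation $-1/2$ conditioned to stay in $Q$, to start and end at $(0,0)$, and to have time-interval $[0,1]$. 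This process is by definition $\conti W_e$, as introduced in \cref{sect:intro_scaling_limit_results}.

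The main obstacle, and the reason why this is not entirely trivial, is the verification that the explicit step distribution $\nu$ satisfies all of the technical hypotheses of \cite[Thm.~4]{duraj2015invariance}, in particular the existence and non-degeneracy of the positive harmonic function for the walk killed on exiting $Q$. For this specific class of walks with steps in $\Steps$, however, this has been carried out in detail in \cite{bousquet2019plane}, so at this step I would simply quote their analysis. Once all hypotheses are checked, the convergence in the statement follows immediately by applying \cite[Thm.~4]{duraj2015invariance} with $x,y\in Q$ fixed and then dividing by $\sqrt{2n}$ rather than $\sqrt n$ to normalize the variance.
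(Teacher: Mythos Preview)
Your proposal is correct and matches the paper's approach: the paper does not give a self-contained proof of \cref{prop:DW} at all, but simply states that it is an immediate consequence of \cite[Thm.~4]{duraj2015invariance}, as already noted in \cite{MR3945746}. Your verification of the hypotheses (zero mean, covariance, exponential moments, irreducibility, and the harmonic-function input from \cite{bousquet2019plane}) is exactly the content that the paper leaves implicit in that citation.
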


We will now go through the initial steps of a slightly different proof of this result, one that highlights an absolute continuity phenomenon between a conditioned walk away of its starting and ending points and an unconditioned one. The two lemmas that we prove here (absolute continuity of the walk and local limit estimate of the density factor) are needed in this paper to show convergence of a coalescent-walk process driven by a conditioned random walk. 

In what follows, we recall that if $W = (X,Y)$ is a two-dimensional walk, then $\inf W = (\inf X, \inf Y)$.
We also use the \emph{hat} to denote reversal of coordinates, so that $\widehat{(i,j)} = (j,i)$.

 \begin{lemma}\label{lem:AbsCont}
 	Let $h:(\Z^2)^{n-2m+1}\to\R$ be a bounded measurable function. Let $x,y\in Q$ and $1\leq m<n/2$. Then
 	\begin{multline*}
 	\E_x[h((\bm W_{i+m}-\bm W_m)_{0\leq i \leq n-2m}) \mid \bm W_{[0,n]}\subset Q, \bm W_n = y]\\
 	= \E_0\left[
 	h(\bm W_i)_{0\leq i \leq n-2m}\cdot
 	\alpha_{n,m}^{x,y}\left(-\inf_{0\leq i \leq n-2m} \bm W_i\; ,\;\bm W_{n-2m}\right)
 	\right],
 	\end{multline*}
 	where
 	\begin{equation}\label{eq:alpha_tilting_function}
 	\alpha_{n,m}^{x,y}(a,b) = \sum_{z\in Q \colon z-a \in Q} \frac{
 		\Prob_x(\bm W_m = z, \bm{W}_{[0,m]}\subset Q)
 		\Prob_{\widehat y}(\bm W_m = \widehat z+ \widehat b, \bm{W}_{[0,m]}\subset Q)
 	}{\Prob_x(\bm W_n = y, \bm{W}_{[0,n]}\subset Q)}.
 	\end{equation}
 \end{lemma}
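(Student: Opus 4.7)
The plan is to prove the identity by first splitting the walk via the Markov property at times $m$ and $n-m$, and then using a time-reversal identity that exploits the symmetry of the step distribution $\nu$ under the combined operation of negation and coordinate-swap.

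More concretely, first I would write, for fixed $x,y\in Q$ and using the Markov property at times $m$ and $n-m$:
\begin{multline*}
\E_x\left[h((\bm W_{i+m}-\bm W_m)_{0\leq i \leq n-2m})\cdot\mathds 1_{\bm W_{[0,n]}\subset Q, \bm W_n=y}\right]\\
= \sum_{z_1,z_2\in Q} \P_x(\bm W_m=z_1, \bm W_{[0,m]}\subset Q)\cdot\E_0\!\left[h(\bm W)\mathds 1_{\bm W_{[0,n-2m]}\subset Q-z_1,\,\bm W_{n-2m}=z_2-z_1}\right]\cdot\P_{z_2}(\bm W_m=y, \bm W_{[0,m]}\subset Q),
\end{multline*}
where I used the translation invariance of the step distribution to recenter the middle portion at $0$.

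The second step is a time-reversal argument for the last factor. A direct computation shows that if $\xi\sim\nu$ then $-\widehat\xi\sim\nu$ (indeed the atom at $(1,-1)$ is sent to $(1,-1)$ with unchanged probability $1/2$, and each atom $(-i,j)$ of weight $2^{-i-j-3}$ is sent to $(-j,i)$ of the same weight). Consequently, for any $z_2,y\in Q$, reversing time and swapping coordinates yields
\begin{equation*}
\P_{z_2}(\bm W_m=y,\bm W_{[0,m]}\subset Q)=\P_{\widehat y}(\bm W_m=\widehat{z_2},\bm W_{[0,m]}\subset Q),
\end{equation*}
where one uses that $Q$ is stable under coordinate-swap.

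Finally, setting $a=\inf_{[0,n-2m]}\bm W^0$ and $b=\bm W^0_{n-2m}$ for a walk $\bm W^0$ started at $0$ under $\P_0$, the event $\bm W^0_{[0,n-2m]}\subset Q-z_1$ is exactly the event $\{z_1\in Q,\ z_1+a\in Q\}$. Writing $z=z_1$ and $z_2=z+b$, the previous two displays combine into
\begin{multline*}
\E_x\!\left[h((\bm W_{i+m}-\bm W_m)_{0\leq i \leq n-2m})\cdot\mathds 1_{\bm W_{[0,n]}\subset Q,\bm W_n=y}\right]\\
= \E_0\!\left[h(\bm W)\sum_{z\in Q,\,z+a\in Q}\P_x(\bm W_m=z,\bm W_{[0,m]}\subset Q)\,\P_{\widehat y}(\bm W_m=\widehat z+\widehat b,\bm W_{[0,m]}\subset Q)\right].
\end{multline*}
Dividing by $\P_x(\bm W_n=y,\bm W_{[0,n]}\subset Q)$ gives exactly $\alpha_{n,m}^{x,y}(a,b)$ in the bracket, proving the claim. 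The only delicate point is verifying the symmetry $-\widehat\xi\stackrel d=\xi$ of $\nu$, which is the small combinatorial miracle that makes the coordinate-swapped, time-reversed walk have the same law as the original; everything else is a bookkeeping exercise based on the Markov property.
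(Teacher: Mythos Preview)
Your proof is correct and follows essentially the same approach as the paper: split the walk at times $m$ and $n-m$ via the Markov property (the paper phrases this as independence of increments and a decomposition over the value of $\bm W_m$), then use the time-reversal identity coming from the symmetry $\nu(-j,-i)=\nu(i,j)$ to rewrite the last factor as a walk started at $\widehat y$. The bookkeeping with $a=\inf_{[0,n-2m]}\bm W$ and $b=\bm W_{n-2m}$ is also the same.
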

 
 \begin{lemma}\label{lem:LLT}
 	Fix $x,y\in Q$. For all $1/2>\eps>0$,
 	\begin{equation*}
 	\lim_{n\to\infty} \sup_{a\in\Z^2_{\geq 0} ,b\in \Z^2} \left\lvert\alpha^{x,y}_{n,\lfloor n\eps\rfloor}(a,b) - \alpha_{\eps}\left(\tfrac a{\sqrt {  n}},\tfrac b{\sqrt { n}}\right)\right\rvert = 0,
 	\end{equation*}
 	where $\alpha_\eps$ is a bounded continuous function on $(\R_+)^2\times \R^2$  defined by
 	\begin{equation}\label{eq:fuction_alpha}
 	\alpha_\eps(a,b) =\frac{1}{\eps^5}\frac {\sqrt 3}{8} \int_{x: x-a \in \R^2_+} g\left(\frac{x}{\sqrt{2\eps}}\right)g\left(\frac{x+b}{\sqrt{2\eps}}\right)dx
 	\end{equation}
 	and 
 	\begin{equation}\label{eq:fuction_g}
 	g(x_1, x_2)=\frac{1}{\sqrt{3 \pi}} x_1 x_2(x_1+x_2) \exp \left(-\frac{1}{3}\left(x_1^{2}+x_2^{2}+x_1 x_2\right)\right).
 	\end{equation}
 \end{lemma}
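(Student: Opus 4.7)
The proof is a standard application of the local limit theorems (LLT) for random walks conditioned to stay in a cone, due to Denisov and Wachtel \cite{MR3342657, duraj2015invariance}, in the form specialized to step distributions with increments in $\Steps$ in \cite{bousquet2019plane}. Since $\Var(\nu)=\begin{psmallmatrix}2 & -1\\ -1 & 2\end{psmallmatrix}$, the scaling limit of our walks is a Brownian motion of correlation $-1/2$, whose image by an affine change of coordinates is a standard Brownian motion in a wedge of opening angle $\pi/3$; the corresponding exit-time exponent is $\alpha=3/2$, and the minimal positive harmonic function on $Q$ for the walk killed outside $Q$ is (proportional to) $V(x_1,x_2)=x_1x_2(x_1+x_2)$, matching the polynomial prefactor of $g$ in \cref{eq:fuction_g}.

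From the cited references I would import two uniform asymptotic estimates. The first is a LLT for the meander, describing $\Prob_x(\bm W_k=z,\,\bm W_{[0,k]}\subset Q)$ for $z$ of order $\sqrt{2k}$ as $V(x)$ times $g(z/\sqrt{2k})$ times an explicit power of $k$. The second is a LLT for the bridge, describing $\Prob_x(\bm W_n=y,\,\bm W_{[0,n]}\subset Q)$ as $V(x)V(\widehat y)$ times an explicit power of $n$; the coordinate-swapped harmonic function $V(\widehat y)$ appears because $\widehat\cdot$ maps the time-reversed step distribution back to $\nu$, so that the relevant harmonic function at $y$ for the reversed walk equals $V(\widehat y)$. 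Plugging both asymptotics into \cref{eq:alpha_tilting_function} with $m=\lfloor n\eps\rfloor$, the factors $V(x)$ and $V(\widehat y)$ cancel between numerator and denominator, the remaining powers of $n$ and $m$ combine into $\eps^{-5}$, and after a suitable rescaling of the summation index the remaining sum becomes a Riemann sum approximating the integral defining $\alpha_\eps(a/\sqrt{2n},\,b/\sqrt{2n})$; the multiplicative constant $\sqrt 3/8$ is then recovered by matching the ratio of the LLT normalizations.

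The principal task is to upgrade this pointwise convergence to the uniform statement. I would handle it by a standard truncation: fix $R$ large and split the sum over $z$ into $\{|z|\leq R\sqrt n\}$ and its complement. On the compact box, uniformity in $(a,b)$ follows from the uniform LLT combined with the uniform continuity of $g$ and of the domain $\{u:u+a'\in Q\}$. For the complementary tail, one uses the uniform Gaussian upper bound for the killed transition probability in the cone (a standard by-product of the LLT), giving a contribution of size $\eta(R)\to 0$ uniformly in $n,a,b$; the corresponding tail in $\alpha_\eps$ vanishes by the super-exponential decay of $g$. Boundedness and continuity of $\alpha_\eps$ on its domain follow immediately from its explicit form. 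The main obstacle in executing this plan rigorously is bookkeeping rather than conceptual: correctly identifying the LLT normalization constants (and hence recovering the coefficient $\sqrt 3/8$) and justifying uniformity of the LLT near the boundary of $Q$, where $g$ vanishes. For both I would rely on the explicit asymptotic formulas established in \cite{bousquet2019plane} for the particular step distribution $\nu$.
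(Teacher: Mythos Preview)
Your proposal is correct and follows essentially the same route as the paper: import the meander LLT and the bridge LLT from \cite{bousquet2019plane}, plug them into \cref{eq:alpha_tilting_function} with $m=\lfloor n\eps\rfloor$, watch $V(x)V(\widehat y)$ cancel, and recognize the resulting sum as a Riemann sum for $\alpha_\eps$; uniformity is obtained by a spatial truncation at $t\sqrt n$.

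The only point where the paper does something slightly different is the tail control. Rather than invoking a uniform Gaussian upper bound on the killed transition kernel, the paper bounds one of the two factors in the numerator by the meander LLT (so by a constant times $m^{-5/2}$), and then recognizes the remaining sum over $|z|>t\sqrt n$ as $\Prob_x(|\bm W_m|>t\sqrt n,\,\bm W_{[0,m]}\subset Q)$, which after dividing by $\Prob_x(\bm W_{[0,m]}\subset Q)\sim c\,m^{-3/2}$ becomes $\Prob_x(|\bm W_m|>t\sqrt n\mid \bm W_{[0,m]}\subset Q)$; this tends to $0$ as $t\to\infty$ by the meander CLT. This avoids having to locate or prove a uniform Gaussian-type bound in the cone and uses only the three estimates already quoted.
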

 
 A byproduct of this approach is a different characterization of the law of $\conti W_e$, which is immediate from \cref{prop:DW,lem:AbsCont,lem:LLT}.
 \begin{proposition}\label{prop:brown_ex}
 	For every $\eps>0$, the distribution of $(\conti W_e (\eps+t) - \conti W_e(\eps))_{0\leq t\leq 1-2\eps}$ is absolutely continuous with regards to the distribution of $\conti W_{|[0,1-2\eps]}$. 
 	%The density function is the map
 	%\begin{equation*}
 	%\mathcal{C}([0,1-2\eps],\R^2)\to \R,\qquad
 	%f\mapsto
 	%\frac{1}{\eps^5}\alpha\big(-\inf_{[0,1-2\eps]}f\; ,\; f(1-2\eps)\big),
 	%\end{equation*}
 	%provided that $-\inf_{[0,1-2\eps]}f\in\R^2_{+}.$
 	In particular, for every $\eps>0$ and for every integrable function  $h:\mathcal{C}([0,1-2\eps],\R^2)\to \R$,
 	\begin{equation*}
 	\E\Big[h\big((\conti W_e (\eps+t) - \conti W_e(\eps))_{0\leq t\leq 1-2\eps}\big)\Big]
 	= \E\Big[
 	h\big(\conti W_{|[0,1-2\eps]}\big)
 	\alpha_{\eps}\big(-\inf_{[0,1-2\eps]} \conti W\; ,\; \conti W(1-2\eps)\big)\Big].
 	\end{equation*}
 \end{proposition}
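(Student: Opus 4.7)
The plan is to identify two different expressions for $\E[h\circ\Phi(\conti W_e)]$, where $h$ is a bounded continuous test function on $\mathcal{C}([0,1-2\eps],\R^2)$ and $\Phi(f)\coloneqq(f(\eps+t)-f(\eps))_{t\in[0,1-2\eps]}$ is the shift-and-restrict map. Fix $x,y\in Q$ and let $\conti W^{(n)}(t)\coloneqq\frac{1}{\sqrt{2n}}\bm W_{\lfloor nt\rfloor}$ denote the rescaled walk under $\Prob_x$. By \cref{prop:DW}, the conditional law of $\conti W^{(n)}$ given $\{\bm W_{[0,n]}\subset Q,\bm W_n=y\}$ converges weakly to the law of $\conti W_e$. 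Since $\Phi$ is continuous from $\mathcal{C}([0,1],\R^2)$ to $\mathcal{C}([0,1-2\eps],\R^2)$, the continuous mapping theorem gives
\begin{equation*}
a_n\coloneqq\E_x\big[h(\Phi(\conti W^{(n)}))\,\big|\,\bm W_{[0,n]}\subset Q,\,\bm W_n=y\big]\;\longrightarrow\;\E[h(\Phi(\conti W_e))],
\end{equation*}
which is the left-hand side of the target identity.

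Next, I would apply \cref{lem:AbsCont} with $m=\lfloor n\eps\rfloor$ to rewrite $a_n$ as an unconditional expectation under $\Prob_0$. Setting $N=n-2\lfloor n\eps\rfloor$, this yields
\begin{equation*}
a_n=\E_0\!\left[h\big(\tilde{\conti W}^{(n)}\big)\cdot \alpha_{n,\lfloor n\eps\rfloor}^{x,y}\!\Big(\inf_{0\le k\le N}\bm W_k,\,\bm W_N\Big)\right],
\end{equation*}
where $\tilde{\conti W}^{(n)}$ denotes the rescaled and time-renormalized walk $(\frac{1}{\sqrt{2n}}\bm W_{\lfloor tN/(1-2\eps)\rfloor})_{t\in[0,1-2\eps]}$ (the $O(1/n)$ time-scale correction is absorbed by uniform continuity of $h$). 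Under $\Prob_0$, Donsker's theorem gives $\tilde{\conti W}^{(n)}\xrightarrow{d}\conti W|_{[0,1-2\eps]}$, and the rescaled infimum and endpoint $(\tfrac{1}{\sqrt{2n}}\inf_{0\le k\le N}\bm W_k,\,\tfrac{1}{\sqrt{2n}}\bm W_N)$ are continuous functionals of the trajectory converging jointly to $(\inf_{[0,1-2\eps]}\conti W,\,\conti W(1-2\eps))$.

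The final step is a routine interchange of limits. Since \cref{lem:LLT} provides the uniform convergence $\alpha^{x,y}_{n,\lfloor n\eps\rfloor}(\sqrt{2n}\,a,\sqrt{2n}\,b)\to\alpha_\eps(a,b)$ with $\alpha_\eps$ bounded and continuous on $\R_+^2\times\R^2$, and since $h$ is bounded continuous, a standard application of the Portmanteau theorem (together with Skorokhod coupling if one prefers almost-sure convergence) gives
\begin{equation*}
a_n\;\longrightarrow\;\E\!\left[h(\conti W|_{[0,1-2\eps]})\cdot\alpha_\eps\Big(\inf_{[0,1-2\eps]}\conti W,\,\conti W(1-2\eps)\Big)\right].
\end{equation*}
Equating the two limiting expressions for $a_n$ proves the absolute continuity and identifies the density function; the integral formulation at the end of the proposition then extends from bounded continuous $h$ to bounded measurable $h$ by the usual monotone class argument. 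The only genuine obstacle is the compatibility of the weak convergence of $\tilde{\conti W}^{(n)}$ with the density prefactor, and this is exactly the content of the uniform convergence in \cref{lem:LLT}; without that uniformity one could not pass $\alpha^{x,y}_{n,m}$ under the expectation against a weakly convergent sequence. Everything else is bookkeeping.
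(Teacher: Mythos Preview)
Your proposal is correct and follows exactly the approach the paper indicates: the paper does not give a detailed proof but simply states that the result ``is immediate from \cref{prop:DW,lem:AbsCont,lem:LLT}'', and you have filled in precisely those details---using \cref{prop:DW} for convergence of the conditioned walk to $\conti W_e$, \cref{lem:AbsCont} to rewrite the conditional expectation as an unconditional one with the density factor, and the uniform estimate of \cref{lem:LLT} together with Donsker's theorem to pass to the limit on the right-hand side. Your handling of the $O(1/n)$ time-scale discrepancy and the extension from bounded continuous to integrable $h$ are both standard and correctly identified.
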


 \subsection{Proof of the technical results}
  \begin{proof}[Proof of \cref{lem:AbsCont}]
 	We write
 	\begin{multline*}	 	
	 	\idf\{\bm W_{[0,n]}\subset Q, \bm W_n = y\} = \idf\{\bm W_m + \inf_{m\leq i \leq n-m} (\bm W_i-\bm W_m) \geq (0,0) \} 
	 	  \cdot\idf\{\inf_{0\leq i \leq m} \bm W_i \geq (0,0)\} \\
	 	  \cdot\idf\{\inf_{0\leq i \leq m} (\bm W_{n-i}- \bm W_n + y) \geq (0,0)\}\cdot \idf\{(\bm W_{n-m}- \bm W_n + y) = \bm W_m + (\bm W_{n-m}-\bm W_m)\}.	
	\end{multline*}
	We introduce a decomposition over the values of $\bm W_m$, yielding
	\begin{multline*}	 	
	\idf\{\bm W_{[0,n]}\subset Q, \bm W_n = y\} = \sum_{z:z +  \inf_{m\leq i \leq n-m} (\bm W_i-\bm W_m) \geq (0,0)}
	\idf\{\bm W_m = z\} \cdot \idf\{\inf_{0\leq i \leq m} \bm W_i \geq (0,0)\} \\
	 \cdot  \idf\{\inf_{0\leq i \leq m} (\bm W_{n-i}- \bm W_n + y) \geq (0,0)\}\cdot  \idf\{(\bm W_{n-m}- \bm W_n + y) = z + (\bm W_{n-m}-\bm W_m)\} .	
	\end{multline*}
	Using the independence of increments of the random walk, along with the fact that $\bm W_{n-i} - \bm W_n$ is a random walk of step distribution $(x,y) \mapsto \nu(-x,-y) = \nu(y,x)$, we obtain 
	\begin{multline*}
	\Prob_x(\bm W_{[0,n]}\subset Q, \bm W_n = y\mid (\bm W_{i+m}-\bm W_m)_{0\leq i \leq n-2m})\\
	=\sum_{z\in Q \colon z+\inf_{0\leq i \leq n-2m} (\bm W_i) \in Q}
		\Prob_x(\bm W_m = z, \bm{W}_{[0,m]}\subset Q)
		\Prob_{\widehat y}(\bm W_m = \widehat z+ \widehat{\bm W_{n-2m}}, \bm{W}_{[0,m]}\subset Q|\bm W_{n-2m}).
	\end{multline*}
	From that we can conclude using \cref{eq:alpha_tilting_function} that
	\begin{multline*}
	\E_x[h((\bm W_{i+m}-\bm W_m)_{0\leq i \leq n-2m}) \mid \bm W_{[0,n]}\subset Q, \bm W_n = y]=\\
	\E_x\left[h((\bm W_{i+m}-\bm W_m)_{0\leq i \leq n-2m}) \frac{\Prob_x(\bm W_{[0,n]}\subset Q, \bm W_n = y\mid (\bm W_i - \bm W_m)_{m\leq i \leq n-m})}{\P_x\left(\bm W_{[0,n]}\subset Q, \bm W_n = y\right)}\right]\\
	= \E_0\left[
	h(\bm W_i)_{0\leq i \leq n-2m}\cdot
	\alpha_{n,m}^{x,y}\left(-\inf_{0\leq i \leq n-2m} \bm W_i\; ,\;\bm W_{n-2m}\right)
	\right].
	\end{multline*}
	This concludes the proof.
 \end{proof}

Finally we prove the estimate given in \cref{lem:LLT} for the density factor $\alpha_{n,m}^{x,y}(a,b)$. It relies on local limit results one can find in \cite[Propositions 8.2-3-6]{bousquet2019plane}. These are specializations of the results of \cite{MR3342657}.  We collect those estimates in the following lemma.
 \begin{lemma}\label{lem:techn_estimates}
 	Fix $x\in Q$. There exists a positive function $V$ on $Q$ such that as $n\to\infty$ the following asymptotics hold
 	\begin{gather}
 	\mathbb{P}_x\left(\bm W_{[0,n]} \subset Q\right) \sim \frac{1}{4 \sqrt{\pi}} V(x) n^{-3 / 2}\text{ as }n \rightarrow \infty, \label{eq:LLTconditioning}\\
 	\delta_1(x,n) \coloneqq \sup _{y \in Q}\left|n^{5 / 2} \cdot \mathbb{P}_x\left(\bm W_n=y, \bm W_{[0,n]} \subset Q\right)-\frac{ V(x)}{8 \sqrt{\pi}} g\left(\frac{y}{\sqrt{2n}}\right)\right| \rightarrow 0, 	\label{eq:LLTmeander}\\
 	\mathbb{P}_x\left(\bm W_n=y, \bm W_{[0,n]} \subset Q\right) \sim \frac{1}{8 \sqrt{3} \pi}\cdot \frac{V(x) V(\widehat y)}{n^{4}}, \label{eq:LLTexcursion}
 	\end{gather}
 	where $g$ was defined in \cref{eq:fuction_g} above.	
 \end{lemma}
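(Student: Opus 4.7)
The plan is to invoke the general theory of random walks conditioned to stay in a cone, due to Denisov and Wachtel~\cite{MR3342657}, as specialized to the present setting in \cite{bousquet2019plane}. First I verify the standing hypotheses of that theory for our step distribution $\nu$. Direct computation gives $\sum_{(i,j)\in\Steps}(i,j)\,\nu(i,j) = (0,0)$, so $\nu$ is centered; the covariance matrix $\mathrm{Var}(\nu)=\begin{psmallmatrix}2&-1\\-1&2\end{psmallmatrix}$ has already been recorded; and the geometric decay built into the $2^{-i-j-3}$ weights in \eqref{eq:walk_distribution_walk} provides finite moments of all orders. After the linear transformation that orthonormalizes the covariance, the cone $Q$ becomes a planar wedge of opening $2\pi/3$, so the principal Dirichlet eigenvalue of the spherical Laplacian on the arc is $p=3/2$, which accounts for the exponent $n^{-3/2}$ in \eqref{eq:LLTconditioning} and the exponent $n^{-4}=n^{-(1+p+p)}$ in \eqref{eq:LLTexcursion}.

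With the hypotheses checked, the positive function $V$ is produced by the general construction of \cite[Lemma~13]{MR3342657}: it is (up to scaling) the unique positive discrete harmonic function on $Q$ for the walk killed on leaving $Q$, obtained as a martingale limit of the continuous harmonic function associated with the transformed wedge. The three claims are then, in order, the survival asymptotic (proved in this exact form as \cite[Proposition~28]{bousquet2019plane}), the uniform local limit theorem for walks conditioned to stay in $Q$ (the non-uniform version is \cite[Proposition~29]{bousquet2019plane}; see below for the uniformity), and the local limit theorem for excursions in $Q$ (this is \cite[Proposition~32]{bousquet2019plane}, which can alternatively be recovered from the uniform version of \eqref{eq:LLTmeander} applied to the two halves of the excursion together with the absolute-continuity decomposition already used in the proof of \cref{lem:AbsCont}).

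The one point that requires a small extra argument is the uniformity over $y\in Q$ asserted in \eqref{eq:LLTmeander}: the statement of \cite[Proposition~29]{bousquet2019plane} gives the asymptotic for each fixed $y/\sqrt{n}$, whereas here we need the sup over $y\in Q$ to vanish. The standard upgrade proceeds by a compactness-plus-tails split: fix $R>0$ and distinguish $|y|\leq R\sqrt n$ from $|y|>R\sqrt n$. On the compact region the pointwise local limit theorem combined with uniform continuity and boundedness of the density $g$ yields $\sup_{|y|\leq R\sqrt n}|\cdots|\to 0$; on the complement both $n^{5/2}\Prob_x(\bm W_n=y,\bm W_{[0,n]}\subset Q)$ and $g(y/\sqrt{2n})$ are bounded by $C\exp(-cR^2)$, the first by the Gaussian-type upper bounds of \cite[Theorem~5]{MR3342657} (or by combining \eqref{eq:LLTconditioning} with the exponential Markov inequality on $\bm W_n$), the second by the explicit formula \eqref{eq:fuction_g}. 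Sending $n\to\infty$ and then $R\to\infty$ gives the sup-norm convergence.

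The main obstacle, and the only step not entirely routine from citation, is thus the uniform tail estimate justifying the $|y|>R\sqrt n$ bound; once available, the rest is bookkeeping against the Denisov--Wachtel machinery. All other items are direct quotations of the specialized results already collected in \cite{bousquet2019plane}.
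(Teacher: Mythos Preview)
Your proposal is correct and matches the paper's treatment: the paper does not prove this lemma at all but simply records it as a direct quotation of \cite[Prop.~28, Prop.~29, Prop.~32]{bousquet2019plane}, themselves specializations of \cite{MR3342657}. Your write-up is in fact more detailed than the paper's, since you explicitly verify the moment and cone hypotheses and spell out the compactness-plus-tails argument for the uniformity in \eqref{eq:LLTmeander}; the paper takes all of this as already contained in the cited references.
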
 
 
 From \cref{lem:techn_estimates}, the proof of \cref{lem:LLT} is similar to the proof of \cref{eq:LLTexcursion} from \cref{eq:LLTconditioning,eq:LLTmeander} in \cite[Proposition 8.3]{bousquet2019plane} or \cite[Theorem 5]{MR3342657}.

\begin{proof}[Proof of \cref{lem:LLT}]
	In what follows, $m = \lfloor n \eps \rfloor$ for some $\eps>0$. Let us consider $\alpha_{n,m}^{x,y}(a,b)$ defined in \cref{eq:alpha_tilting_function}.
	By \cref{eq:LLTexcursion}, the denominator (which is independent of $a,b$) 
	is of order $n^{-4}$. This is the scale to which we need to estimate the numerator.
	
We first deal with the infiniteness of the sum by cutting it off at $t\sqrt{n}$, for some $t>0$, and bound the remainder. Using \cref{eq:LLTmeander} for one factor (note that $g$ is bounded) and \cref{eq:LLTconditioning} for the other, there is a constant $C$ depending only on $x,y$ such that
	\begin{align*}
	R_n:= &\sum_{|z|>t\sqrt n} 
\Prob_x(\bm W_m = z, \bm W_{[0,m]} \subset Q)
\Prob_{\widehat y}(\bm W_m = \widehat z+ \widehat b, \bm W_{[0,m]}\subset Q)\\
	&\leq C n^{-3/2}n^{-5/2}\sum_{|z|>t\sqrt n} \Prob_x(\bm W_m = z \mid \bm W_{[0,m]} \subset Q)\\
	&= C n^{-4}\Prob_x(|\bm W_{ \lfloor n \eps \rfloor}| > t\sqrt n \mid \bm W_{[0, \lfloor n \eps \rfloor]} \subset Q).
	\end{align*}
	Thanks to the central limit theorem for $\bm W$ under the meander conditioning \cite[Proposition 8.5]{bousquet2019plane}, we can find a function $\delta_2(x,y,\eps,n,t)$ independent of $a,b$ such that
	\begin{equation} n^{4}R_n \leq \delta_2(x,y,\eps,n,t)\qquad \text{and}\qquad 
	\lim_{t\to\infty} \limsup_{n\to\infty} \delta_2(x,y,\eps,n,t) = 0 .
	\label{eq:remainder_proof_lemma}
	\end{equation}
	Now set
	\begin{align*}
	B_n \coloneqq \sum_{z:z-a\in Q ,|z|\leq t\sqrt n} 
	\Prob_x(\bm W_m = z, \bm W_{[0,m]} \subset Q)
	\cdot \Prob_{\widehat y}(\bm W_m = \widehat z+ \widehat b, \bm W_{[0,m]} \subset Q).
	\end{align*}
	Using \cref{eq:LLTmeander} and symmetry of $g$, we have for fixed $x$ and $y$ that
	\begin{multline*}
	B_n = m^{-5}\cdot \frac{ V(x)V(\widehat y)}{(8 \sqrt{\pi})^2}\sum_{z:z-a\in Q ,|z|\leq t\sqrt n}  g\left(\frac{z}{\sqrt{2\eps n}}\right)g\left(\frac{z+b}{\sqrt{2\eps n}}\right) \\
	+ O(1) (t\sqrt n)^2 m^{-5}(\delta_1(x,m) + \delta_1(\widehat y,m)).
	\end{multline*}	
	Collecting this estimate of the numerator with the estimate in \cref{eq:LLTexcursion} of the denominator, both uniform in $(a,b)$, we have
	\begin{align*}
	\alpha^{x,y}_{n,\lfloor \eps n \rfloor}(a,b) &= O(1)n^{4}R_n + o(1) + \frac {\sqrt 3+o(1)}{8\eps^5}\times \frac 1 {(\sqrt n)^2} \sum_{z \geq a,|z|\leq t\sqrt n}  g\left(\frac{z}{\sqrt{2\eps n}}\right)g\left(\frac{z+b}{\sqrt{2 \eps n}}\right)\\
	&= O(1)n^{4}R_n + o(1) + \frac {\sqrt 3+o(1)}{8\eps^5}\int_{w\geq \frac a {\sqrt n}, |w|\leq t} g\left(\frac{w}{\sqrt{2\eps }}\right)g\left(\frac{w+b/\sqrt n}{\sqrt{2\eps }}\right)dw + o(1), 
	\end{align*}
	where the last $o(1)$ corresponds to the uniform modulus of continuity of $g$ at the scale $n^{-1/2}$ resulting from Riemann summation. All error terms are uniform in $(a,b)$.	
	Finally, 
	\begin{align*}
	\left\lvert\alpha_{n,\lfloor n\eps\rfloor}(a,b) - \alpha_\eps\left(\tfrac 1{\sqrt n}a,\tfrac 1{\sqrt n}b\right)\right\rvert 
	&= O(1)n^{4}R_n + o(1) + O(1)\int_{|w|> t} g(w/\sqrt{2 \eps}).
	\end{align*}
	By integrability of $g$ and \cref{eq:remainder_proof_lemma}, this last term can be made negligible by taking $n\to\infty$ and then $t\to\infty$.
\end{proof}

\section{Generalizations}
\label{sec:general}

\subsection{A coalescent-walk process for separable permutations}\label{sect:sep_perm}

We return here to the class of separable permutations, a well-known subclass of the Baxter permutations, defined by avoidance of the two classical patterns $2413$ and $3142$. As we already mentioned in the introduction, the scaling limit of this class of permutations, called the \textit{Brownian separable permuton}, was introduced in \cite{bassino2018brownian}. We also point out that the mapping $\bobp$ puts separable permutations in bijection with \textit{rooted series-parallel non-separable maps} \cite[Prop. 6]{MR2734180}. 

In this section, we explain an encoding of separable permutations by a discrete coalescent-walk process, different from the one given by $\wcp \circ \bow \circ \bobp^{-1}$, but more suitable for our purposes. We will also present what we believe to be the scaling limit of this coalescent-walk process, and relate it to the construction of the Brownian separable permuton given in \cite{maazoun}.

We first recall another definition of separable permutations more suited to our goals. A \emph{signed tree} $t$ is a rooted plane tree whose internal vertices are decorated with signs in $\{\oplus, \ominus\}$ (see the first picture of \cref{fig:separable} for an example).
We label its leaves with the integers from $1$ to $k$ according to the exploration process of $t$.
The signs can be interpreted as coding a different ordering of the rooted tree $t$: we call $\tilde t$ the tree obtained from $t$ by reversing the order of the children of each vertex with a minus sign (see the second picture of \cref{fig:separable}).
The order of the leaves is changed by this procedure, and we set $\sigma(i)$ to be the position in $\tilde t$ (w.r.t.\ its exploration process) of the leaf $i$.
We call $\mathrm{perm}(t)$ this permutation $\sigma \in \Perms_k$ (see the third picture of \cref{fig:separable}). Separable permutations are exactly the ones  obtained from a signed tree through this procedure.

We now introduce the discrete coalescent-walk process associated with a separable permutation. Let $t$ be a signed tree with $k$ leaves and $e$ edges, and let $C = (C_0,\ldots,C_{2e})$ be its contour function. For every $j\in[1,2e-1]$ which is a local minimum of $C$, we denote $s_j$ the sign of the internal vertex of $t$ which is visited by $C$ at time $j$. For every $i$ which is a local maximum of $C$ (a visit-time of a leaf of $t$), we construct a walk $Z^{(i)}$ starting at time $i$ at $0$, i.e.\ $Z^{(i)}_i = 0$, and that stays equal to zero until time $\ell_i$, where $\ell_i$ is the first local minimum of $C$ after time $i$. The walk is then defined inductively by the following: for all $\ell_i \leq j\leq 2e-1$,
	\begin{equation*}
	Z^{(i)}_{j+1} -Z^{(i)}_{j} \coloneqq 
	\begin{cases}
	(C_{j+1}- C_{j}), &\text{ if }\quad Z^{(i)}_j>0,\\
	- (C_{j+1}- C_{j}), &\text{ if }\quad Z^{(i)}_j<0,\\
	-1,&\text{ if }\quad Z^{(i)}_j=0 \text{ and }s_j = \oplus,\\
	1,&\text{ if }\quad Z^{(i)}_j=0 \text{ and }s_j = \ominus,\\
	0,&\text{ if }\quad Z^{(i)}_j=0 \text{ and }j\text{ is not a local minimum of }C.
	\end{cases}
	\end{equation*}
	
	\begin{figure}
	\centering
		\includegraphics[scale=1.2]{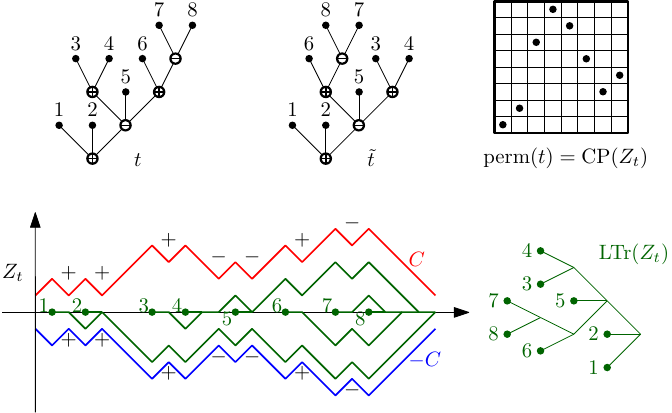}
		\caption{An example of a coalescent-walk process driven by the signed excursion of a signed tree. \label{fig:separable}}
	\end{figure}

We set $Z_t = \{Z^{(i)}, i\text{ local maximum of }C\}$, that is the coalescent-walk process associated with the separable permutation $\mathrm{perm}(t)$ (see the fourth picture of \cref{fig:separable} for an example). We observe that $Z_t$ is a coalescent-walk process on $[0,2e]$ in the sense of \cref{def:discrete_coal_process}, except that the trajectories do not start at every point of the interval, which is irrelevant to the rest of the discussion.

We leave to the reader the following observation (similar to \cref{prop:eq_trees}) that justifies the construction of $Z_t$. We denote by $\labtree(Z_t)$ the labeled tree induced by the trajectories of the coalescent-walk process $Z_t$ (see the fifth picture of \cref{fig:separable} for an example).
\begin{observation}
	The tree $\labtree(Z_t)$ is the same as the tree $\widetilde t$ (forgetting the signs). Consequently, using \cref{prop:fortree_cpbp}, we have that $\cpbp(Z_t) = \mathrm{perm}(t)$.
\end{observation}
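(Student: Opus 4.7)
The plan is to prove the observation by induction on the number $N$ of internal vertices of $t$. The base case $N=0$ is trivial: both $\widetilde t$ and $\labtree(Z_t)$ consist of a single edge with label $1$. For the inductive step, decompose $t$ at its root of sign $s \in \{\oplus,\ominus\}$ into children subtrees $t_1,\ldots,t_r$. The contour $C$ of $t$ is obtained by concatenating the shifted sub-contours $C^{(1)},\ldots,C^{(r)}$ of $t_1,\ldots,t_r$ (each raised by $1$), separated by the $r+1$ visits to the root at height $0$, which are the only global minima of $C$.

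The first key step is a locality claim: for any leaf $i$ of $t_\ell$, during the time window in which $C$ explores $t_\ell$, the walk $Z^{(i)}$ stays strictly positive or strictly negative. Indeed, its first local minimum $\ell_i$ lies inside this window, the corresponding step at $\ell_i$ comes from a local minimum of $C^{(\ell)}$ with the \emph{same} sign as in $t_\ell$, and once $Z^{(i)}$ is away from zero its increments equal $\pm(C_{j+1}-C_j)$; in particular $Z^{(i)}$ can only return to zero once $C$ descends back to the root. Reading the dynamics on this window, one checks that the family $(Z^{(i)})_{i \in t_\ell}$ coincides, up to a global sign, with the coalescent-walk process $Z_{t_\ell}$ driven by the signed sub-contour, so by induction it builds an isomorphic copy of $\widetilde{t_\ell}$ (with leaf labels shifted consistently with the contour). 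The second key step is the behaviour at root visits: at the $(\ell{+}1)$-th visit, all walks $Z^{(i)}$ for $i$ a leaf of $t_1\cup\cdots\cup t_\ell$ return simultaneously to $0$, and the simultaneous step imposed by the root sign (down if $s=\oplus$, up if $s=\ominus$) forces them to coalesce from that point onwards. Thus the subtrees $\widetilde{t_1},\ldots,\widetilde{t_r}$ produced in step one are glued together, from bottom to top, in the order $\widetilde{t_1},\ldots,\widetilde{t_r}$ when $s=\oplus$ and in the reverse order when $s=\ominus$, matching exactly the recursive definition of $\widetilde t$. The second assertion $\cpbp(Z_t)=\mathrm{perm}(t)$ is then immediate from \cref{prop:fortree_cpbp}: both permutations record the position of each leaf in the contour exploration of $\widetilde t$.

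The main obstacle will be the careful sign bookkeeping in the locality step: one must confirm that the restriction of $Z^{(i)}$ to the exploration of $t_\ell$ is a \emph{signed} copy of the natural coalescent-walk on $t_\ell$, where the global sign is determined by the composition of root signs encountered on the way from the root of $t$ to the root of $t_\ell$. Once this sign rule is stated precisely (e.g.\ by induction on the depth of the subtree), the combinatorial conclusion matching $\labtree(Z_t)$ with $\widetilde t$ is a routine verification of the rule ``at each internal vertex, preserve or reverse the order of children according to the sign'', which is exactly how $\widetilde t$ is built from $t$.
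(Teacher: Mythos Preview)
The paper leaves this observation to the reader and gives no proof, so there is nothing to compare your approach against; your inductive decomposition at the root is the natural route. However, there is a genuine error in your locality step.

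You claim that for a leaf $i$ of $t_\ell$, the walk $Z^{(i)}$ stays strictly positive or strictly negative throughout the exploration window of $t_\ell$. This is false: $Z^{(i)}$ returns to $0$ at every time $j$ where $C_j$ is a new running minimum of $C$ on $[i,j]$, and there are typically many such times inside the window (one for each proper ancestor of leaf $i$ within $t_\ell$). You seem to sense this when you then say the restriction coincides ``up to a global sign'' with $Z_{t_\ell}$, but that already contradicts strict positivity/negativity.

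The ``global sign'' is also a red herring. The walk $Z^{(i)}$ starts at $0$ at a time inside the window, and its increments depend only on $C_{j+1}-C_j$, on the sign of $Z^{(i)}_j$, and on the signs decorating the local minima --- all of which are identical for $C$ restricted to the window and for the sub-contour $C^{(\ell)}$. Hence the restriction of $Z^{(i)}$ to the window equals \emph{exactly} the corresponding walk in $Z_{t_\ell}$, with no sign flip; in particular no ``composition of root signs along the ancestry line'' ever enters. Your anticipated main obstacle is based on this misconception.

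The clean way to carry out your induction is to first prove the one-line identity
\[
|Z^{(i)}_j| \;=\; C_j - \min_{[i,j]} C \qquad (j\geq i),
\]
by a straightforward case check on the defining recursion. This immediately yields: (a) exact locality of $Z^{(i)}$ to the sub-process $Z_{t_\ell}$ on the window; (b) that every walk started before $\tau_\ell$ (the $\ell$-th return of $C$ to the root) satisfies $Z^{(i)}_{\tau_\ell}=0$; and (c) that walks from $t_1\cup\cdots\cup t_\ell$ all step together to $-1$ (if $s=\oplus$) or $+1$ (if $s=\ominus$) at time $\tau_\ell$ and then stay strictly on that side during the window of $t_{\ell+1}$. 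Point (c) gives the correct relative order in $\leq_Z$ between leaves of different subtrees, point (a) plus the inductive hypothesis gives the order inside each $t_\ell$, and together they reproduce the exploration order of $\widetilde t$. The second assertion then follows from \cref{prop:fortree_cpbp} exactly as you say.
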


Hence we see that separable permutations may be constructed using a coalescent-walk process driven by the discrete contour function (and its reflection) of a signed tree. 

\bigskip
An \emph{alternating Schröder tree} is a signed tree with no vertices of outdegree one, and the additional property that signs alternate along ancestry lines. A uniform separable permutation of size $n$ corresponds to a uniform alternating signed tree with $n$ leaves  \cite[Proposition 2.13]{bassino2018brownian}. Upon rescaling, the contour function of the uniform alternating Schröder tree converges to a Brownian excursion \cite[Proposition 2.23]{bassino2018brownian}. A small leap of faith leads us to believe that the scaling limit of the discrete coalescent-walk process should be the continuous coalescent-walk process given by the following family of SDEs defined for all $u\in[0,1]$,

\begin{equation} \label{eq:tanaka}
\begin{cases}
d\conti Z^{(u)}(t) = \sgn(\conti Z^{(u)}(t)) \:d \bm e(t), &0<u\leq t
\leq 1,\\
\conti Z^{(u)}(u) = 0,
\end{cases}
\end{equation}
where $\bm e$ is a Brownian excursion on $[0,1]$. This is a variation of
the well-known \textit{Tanaka SDE}
\begin{equation} \label{eq:standard_tanaka}
d\conti Z(t) = \sgn(\conti Z(t)) \:d \conti B(t).
\end{equation}
where $\conti B$ is a standard Brownian motion.
The characteristic feature of this equation is the absence of pathwise
uniqueness: solutions cannot be measurable functions of the driving
process $\conti B$ and must also incorporate additional randomness. For
instance, taking for simplicity $\conti Z(0) = 0$ (see exercise IX.1.19
in \cite{revuz2013continuous}) one solves by taking $\conti Z$ to be a
standard Brownian motion and $\conti B (t) = \int_0^t \sgn(\conti Z(t))
d \conti Z(t)$. Then $\sigma(\conti B) \subseteq \sigma(|\conti Z|)$
hence $\sigma(\conti Z) \nsubseteq \sigma(\conti B)$.
This absence of pathwise uniqueness raises many questions when one
wishes to couple the solutions of \cref{eq:standard_tanaka} for
different starting times and points. An elegant way of doing so is with
the notion of \textit{stochastic flow of maps} developed by Le Jan and
Raimond in \cite{MR2060298}. The same authors show in \cite{MR2235172}
that there exists a unique flow of maps solving Tanaka's SDE
(\cref{eq:standard_tanaka}), that this flow is \textit{coalescent}, and
explicitly construct it (see in particular the proof of Theorem 2.1).

Transposed in our setting,\footnote{Here the driving motion is a
	Brownian excursion $\bm e$ and not a Brownian motion $\conti B$. Results
	can be transposed from one setting to the other by absolute continuity,
	as in \cref{sec:cond_conv}.}
their construction gives the following solution of \cref{eq:tanaka}.
Conditional on $\bm e$,  choose a uniform element $\bm s(\ell)
\in\{+1,-1\}$ independently for every $\ell\in (0,1)$ that is a local
minimum\footnote{
	For the technicalities involved in indexing an i.i.d.\ sequence by
	this random countable set, see \cite{maazoun}.}
of $\bm e$.
For $0\leq u \leq t \leq 1$, set $\bm m(u,t) \coloneqq \inf_{[u,t]} \bm
e$, and $\bm \mu(u,t) =\sup\{s\in [u,t]: \bm e(s) = \bm m(u,t)\}$. Then define
\[
\conti Z^{(u)}(t) \coloneqq (\bm e(t) - \bm m(u,t))\bm s( \bm \mu(u,t)).
\]
This construction is reminiscent of the discrete setting. Moreover we
leave to the reader the following:
\begin{observation}\label{obs:separable_permuton}
	Let $\mu_{\conti Z}$ be the permuton built (as in \cref{sec:final})
	from the continuous coalescent-walk process $\conti Z$ defined by
	\cref{eq:tanaka} . Then $\mu_{\conti Z}$ coincides with the permuton
	constructed from $(\bm e,\bm s)$ in \cite{maazoun}. In particular, it
	has the distribution of the Brownian separable permuton.
\end{observation}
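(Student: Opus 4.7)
The plan is to show that the total order $\leq_{\conti Z}$ on $[0,1]$ induced by the continuous coalescent-walk process driven by Tanaka's SDE coincides, up to an immaterial sign convention, with the order on $[0,1]$ defining the Brownian separable permuton of \cite{maazoun}. Once this order-matching is in place, the equality of the two permutons follows automatically: both are constructed as the push-forward $(\Id,\varphi)_{*}\Leb$ for the ``level function'' $\varphi(t)=\Leb\{x\in[0,1]:x\text{ precedes }t\}$, exactly as in \cref{defn:Baxter_perm}.

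The first step is to extract the sign of $\conti Z^{(u)}(t)$ from the explicit formula $\conti Z^{(u)}(t)=(\bm e(t)-\bm m(u,t))\,\bm s(\bm \mu(u,t))$ recalled in the text. For Lebesgue-a.e.\ pair $(u,t)$ with $u<t$, the factor $\bm e(t)-\bm m(u,t)$ is strictly positive, since by continuity of $\bm e$ the exceptional locus $\{t=\argmin_{[u,t]}\bm e\}$ has product Lebesgue measure zero. Hence $\sgn(\conti Z^{(u)}(t))=\bm s(\bm \mu(u,t))$ a.e., and the rule $u\leq_{\conti Z}t\iff\conti Z^{(u)}(t)<0$ reduces to a statement about a single random sign attached to the pair $(u,t)$.

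The second, crucial step is to identify $\bm s(\bm \mu(u,t))$ with $\bm s(\ell(u,t))$, where $\ell(u,t):=\argmin_{[u,t]}\bm e$. The key geometric observation is that both points realise the value $\bm m(u,t)$ while $\bm e\geq\bm m(u,t)$ on the whole interval $[\ell(u,t),\bm \mu(u,t)]$; hence under the equivalence relation $s\sim r\iff\bm e(s)=\bm e(r)=\inf_{[s,r]}\bm e$ defining Aldous's continuum random tree coded by $\bm e$, they project to the same branch point. By construction of the coalescing flow of Tanaka's SDE (see \cite[\S 4.4.3]{MR2060298}), the sign field $\bm s$ is constant along each such equivalence class, so the two evaluations genuinely coincide. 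Now $\ell(u,t)$ is precisely the CRT-branch point representing the least common ancestor of $u$ and $t$, and the order defining the Brownian separable permuton in \cite{maazoun} is exactly: ``$u$ precedes $t$'' iff $\bm s$ at this LCA takes the distinguished value. Any global swap of $\oplus$ and $\ominus$ between the two conventions is harmless because the signs are i.i.d.\ uniform, so the two orders on $[0,1]$ agree almost surely, and so do the associated level functions $\varphi$ and hence the two random permutons.

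The main obstacle lies in this second step: one must verify carefully that the random signs produced by the coalescing flow of Tanaka's SDE descend to a well-defined random function on CRT branch points, noting that $\bm \mu(u,t)$ is typically not itself a local minimum of $\bm e$ but rather a downcrossing point of the level $\bm m(u,t)$; the consistency comes from the structure of the coalescing solution flow. As an alternative route that avoids this technicality, one could prove a direct scaling-limit statement, lifting the convergence of uniform signed alternating Schröder trees to Brownian excursions with i.i.d.\ signs at branch points all the way to the level of coalescent-walk processes and of permutations; this is the ``small leap of faith'' alluded to in the preceding text of \cref{sect:sep_perm}.
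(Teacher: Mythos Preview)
Your approach is correct and is essentially what the paper intends: the statement is explicitly left to the reader as an observation, so there is no proof in the paper to compare against, and the natural argument is precisely the one you give---read off $\sgn(\conti Z^{(u)}(t))$ from the explicit solution formula, recognise it as the sign at the CRT branch point separating $u$ and $t$, and conclude that $\leq_{\conti Z}$ is the order of \cite{maazoun}.

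One comment on your second step. Your worry that $\bm\mu(u,t)$ is a downcrossing rather than a local minimum is well-founded, and your CRT-identification argument (showing $\ell(u,t)\sim\bm\mu(u,t)$ under the tree equivalence, hence carrying the same sign) is a clean way to repair it. In fact, the simplest reading is that the paper's formula intends $\bm\mu(u,t)$ to be the argmin itself---i.e.\ $\inf\{s\geq u:\bm e(s)=\bm m(u,t)\}$ rather than $\inf\{s\geq t:\cdots\}$---which is almost surely a local minimum of $\bm e$ and is exactly the last zero of $\conti Z^{(u)}$ before time $t$, as one expects from the Tanaka flow. With that reading your step~2 becomes trivial and the whole argument collapses to: $\sgn(\conti Z^{(u)}(t))=\bm s(\ell(u,t))$ a.e., which is verbatim the order of \cite{maazoun}. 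Either way your conclusion stands.
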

This shows that the Brownian separable permuton falls in the framework
of continuous coalescent-walk processes. We believe an approach similar
to the one of this paper would be doable to rigorously prove the
convergence from discrete to continuous coalescent-walk processes.

\medskip

As separable permutations form a subset of the Baxter permutations, another route would be to specialize the bijections given for Baxter permutations in \cref{thm:diagram_commutes}. Let $\bm \sigma$ be a uniform separable permutation of size $n$, $(\bm X,\bm Y) = \bow \circ \bobp^{-1}(\bm \sigma)$ and $\bm Z = \wcp(\bm X,\bm Y)$. Simulations lead us to believe that when the size of $\bm \sigma$ is large, both $\bm X$ and $\bm Y$ concentrate around the contour function of the alternating signed tree coding $\bm \sigma$. We also believe that the discrete coalescent-walk process $\bm Z$ should converge, in the limit, to the continuous coalescent-walk process $\conti Z$ defined by the Tanaka's SDEs in \cref{eq:tanaka}.

\subsection{Liouville quantum gravity and mating-of-trees}
\label{sec:lqg_sde}

As mentioned in the introduction, uniform infinite bipolar triangulations were studied in \cite{GHS}.
Their key lemma is \cite[Proposition 4.1]{GHS}, which is similar to our \cref{thm:coal_con_uncond}. To state it using our notation, let $(\tilde{\conti X}_n, \tilde{\conti Y}_n)$ be the continuous rescaling (at scale $n$) of the bi-infinite two-dimensional random walk defining (through $\Inverse$) a uniform infinite-volume bipolar triangulation, and $\tilde{\conti Z}_n$ the continuous rescaling of the corresponding coalescent-walk process\footnote{The uniform infinite-volume bipolar triangulation is properly defined in \cite[Section 1.3.3]{GHS}. The corresponding bi-infinite two-dimensional random walk is denoted by $\mathcal{Z}=(\mathcal{L}_n,\mathcal{R}_n)_{n\in\Z}$ in their article. The trajectory of the coalescent-walk process starting at time $0$ is denoted by $\mathcal X$.}. Then \cite[Proposition 4.1]{GHS} states that\footnote{The process $(\tilde{\conti X},\tilde{\conti Y},\tilde{\conti Z}^{(0)})$ is denoted $(L,R,X)$ in \cite{GHS}.}
\begin{equation}\label{eq:GHS}
(\tilde{\conti X}_n,\tilde{\conti Y}_n, \tilde{\conti Z}^{(0)}_n) \xrightarrow[n\to\infty]{d} \left(\tilde{\conti X},\tilde{\conti Y},\tilde{\conti Z}^{(0)}\right).
\end{equation}  
The process $\left(\tilde{\conti X},\tilde{\conti Y},\tilde{\conti Z}^{(0)}\right)$ on the right-hand side of the equation above is a special case (for  $\kappa' = 12$ and $\theta = \pi/2$) of the general process $\left(\tilde{\conti X}_{\kappa'},\tilde{\conti Y}_{\kappa'},\tilde{\conti Z}_{\kappa',\theta}^{(0)}\right)$ defined in terms of two parameters $\kappa' \in (4,\infty)$ and $\theta\in [0,2\pi)$ that they construct using Liouville quantum gravity, imaginary geometry and mating of trees. We can describe it roughly as follows: 
let $\bm \mu$ be a $\sqrt{16/\kappa'}$-LQG quantum plane, 
 $\bm h$ be a Gaussian free field independent of $\bm \mu$, and $\bm \eta$ be the space-filling SLE${}_{\kappa'}$ curve of angle zero generated (in the sense of imaginary geometry) by $\bm h$.
\begin{itemize}
	\item The process $(\tilde{\conti X}_{\kappa'},\tilde{\conti Y}_{\kappa'})$ is a standard two-dimensional Brownian motion of correlation $\rho=-\cos(4\pi/\kappa')$ given by the \emph{mating-of-tree} encoding of $(\bm \mu,\bm \eta)$.
	\item The process $\tilde{\conti Z}_{\kappa',\theta}^{(0)}$ tracks, in some sense, the interaction between $\bm \eta$ and another SLE${}_{16/\kappa'}$ curve of angle $\theta$ also generated by $\bm h$.
\end{itemize}
Gwynne, Holden and Sun prove that there exists a constant\footnote{The explicit expression of $p(\kappa',\theta)$ is not known.} $p = p(\kappa',\theta)$, with $p(\kappa',\pi/2) \equiv 1/2$, so that $\tilde{\conti Z}_{\kappa',\theta}^{(0)}$ is a skew Brownian motion of parameter $p$ (see \cite{MR2280299}) and they describe the conditional distribution of $(\tilde{\conti X}_{\kappa'},\tilde{\conti Y}_{\kappa'})$ given $\tilde{\conti Z}^{(0)}_{\kappa',\theta}$ (see \cite[Proposition 3.2]{GHS}). 
They also note that $\tilde{\conti Z}_{\kappa',\theta}^{(0)}$ is a measurable functional of $(\bm \mu,\bm h)$, which turns to be completely determined by $(\tilde{\conti X}_{\kappa'},\tilde{\conti Y}_{\kappa'})$. Nevertheless, they do not make explicit the measurable mapping $(\tilde{\conti X}_{\kappa'},\tilde{\conti Y}_{\kappa'})\mapsto \tilde{\conti Z}_{\kappa',\theta}^{(0)}$.

\medskip

In the case $\kappa' = 12$ and $\theta = \pi/2$, after comparing\footnote{More precisely one would need to show that \cref{thm:coal_con_uncond} works also in the case of bipolar triangulations. This is a special case of the generality conjecture of \cref{sec:perspectives}, about which we are very confident.} \cref{eq:GHS} and \cref{eq:coal_con_uncond} in \cref{thm:coal_con_uncond}, one sees that our approach provides the explicit mapping $(\tilde{\conti X},\tilde{\conti Y})\mapsto \tilde{\conti Z}^{(0)}$ through solving the SDE \eqref{eq:flow_SDE_intro} driven by $(\tilde{\conti X},\tilde{\conti Y})$ for $u=0$.

\medskip

For general $\kappa' \in (4,\infty)$ and $\theta\in [0,2\pi)$, we believe that a related SDE  provides the explicit mapping $(\tilde{\conti X}_{\kappa'},\tilde{\conti Y}_{\kappa'})\mapsto \tilde{\conti Z}_{\kappa',\theta}^{(0)}$.
More precisely, let $\conti Z^*_{\kappa',\theta}=\{\conti Z_{\kappa',\theta}^{*(u)}\}_{u\in \R}$ be defined by the solutions of the following SDEs for $u\in\R$,
\begin{equation}\label{eq:generalized}
\begin{cases}
d\conti Z_{\kappa',\theta}^{*(u)}(t) = \idf_{\{\conti Z_{\kappa',\theta}^{*(u)}(t)> 0\}} d\conti Y^*_{\kappa'}(t) - \idf_{\{\conti Z_{\kappa'}^{*(u)}(t)\leq 0\}} d\conti X_{\kappa'}^*(t) +(2p-1)d\conti L_{\kappa',\theta}^{*(u)}(t), \quad &t>u,\\
\conti Z_{\kappa',\theta}^{*(u)}(t) = 0, \quad &t\leq u,
\end{cases}
\end{equation}
where  $p = p(\kappa',\theta)$ is the constant mentioned above, $(\conti X_{\kappa'}^*,\conti Y_{\kappa'}^*)$ is a standard two-dimensional Brownian motion of correlation $\rho$ with $\rho = - \cos(4\pi/\kappa')$, and $\conti L_{\kappa',\theta}^{*(u)}(t)$ is the local time at zero of $\conti Z_{\kappa',\theta}^{*(u)}$ accumulated during the time interval $[u,t]$.

\begin{conjecture}
	The SDE in \cref{eq:generalized} admits existence and pathwise uniqueness of the strong solution.
\end{conjecture}

\begin{conjecture}
	 For all $\kappa' \in (4,\infty)$ and $\theta\in [0,2\pi)$, the following equality in distribution holds 
	 $$\left(\conti X_{\kappa'}^*,\conti Y_{\kappa'}^*,\conti Z_{\kappa',\theta}^{*(0)}\right)\stackrel{d}{=}\left(\tilde{\conti X}_{\kappa'},\tilde{\conti Y}_{\kappa'}, \tilde{\conti Z}_{\kappa',\theta}^{(0)}\right).$$
	 In particular the SDE \eqref{eq:generalized} for $u=0$ explicitly defines the mapping $(\tilde{\conti X}_{\kappa'},\tilde{\conti Y}_{\kappa'})\mapsto \tilde{\conti Z}_{\kappa',\theta}^{(0)}$.
\end{conjecture}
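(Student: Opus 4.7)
The plan is to attack the conjecture by combining two independent ingredients: first, a careful SDE-theoretic analysis of \eqref{eq:generalized} establishing existence, pathwise uniqueness and the correct marginal law of $\conti Z^{*(0)}_{\kappa',\theta}$; and second, a matching of the conditional law of $(\conti X^*_{\kappa'},\conti Y^*_{\kappa'})$ given $\conti Z^{*(0)}_{\kappa',\theta}$ with the explicit description of the conditional law of $(\tilde{\conti X}_{\kappa'},\tilde{\conti Y}_{\kappa'})$ given $\tilde{\conti Z}^{(0)}_{\kappa',\theta}$ provided by \cite[Proposition 3.2]{GHS}. Since $(\tilde{\conti X}_{\kappa'},\tilde{\conti Y}_{\kappa'})$ is already known to be a standard two-dimensional Brownian motion of correlation $\rho=-\cos(4\pi/\kappa')$, if the marginal of $\conti Z^{*(0)}_{\kappa',\theta}$ is the same skew Brownian motion of parameter $p=p(\kappa',\theta)$ and the conditional laws agree, then the full triples have the same law.

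First I would treat \eqref{eq:generalized} as a perturbed skew Tanaka SDE. The analogous equation with Brownian noise and $p=1/2$ is \eqref{eq:SDE}, for which \cref{thm:ext_and_uni} gives pathwise uniqueness and strong existence. Adding the drift $(2p-1)\,d\conti L^{*(u)}$ amounts to turning the reflecting/Tanaka-type dynamics into a skew one, a situation studied in the literature on skew Brownian motion and its SDE characterizations (\cite{MR2280299} and subsequent works). The strategy is to carry out a Girsanov-type absolute-continuity comparison to the Brownian case, or alternatively to adapt the fixed-point proof of \cite{MR3098074, MR3882190}, exploiting the fact that the two indicators $\idf_{\{\conti Z>0\}}$ and $\idf_{\{\conti Z\le 0\}}$ partition time and the local time $\conti L^{*(u)}$ is supported on $\{\conti Z^{*(u)}=0\}$. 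This yields a strong solution and uniqueness, and in particular the well-definedness of the process $\conti Z^{*(0)}_{\kappa',\theta}$.

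Next I would identify the law of $\conti Z^{*(0)}_{\kappa',\theta}$. Writing $\conti M(t) := \int_0^t \idf_{\{\conti Z^{*(0)}>0\}} d\conti Y^*_{\kappa'} - \idf_{\{\conti Z^{*(0)}\le 0\}} d\conti X^*_{\kappa'}$, Lévy's characterization (as in \cref{rem:solution_of_SDE_are_BM}) shows that $\conti M$ is a standard Brownian motion, using the correlation $\rho=-\cos(4\pi/\kappa')$ and the explicit covariance of $(\conti X^*_{\kappa'},\conti Y^*_{\kappa'})$. Thus $\conti Z^{*(0)}_{\kappa',\theta}$ solves $d\conti Z = d\conti M + (2p-1)d\conti L(\conti Z)$, which is precisely the defining equation of skew Brownian motion of parameter $p$. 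Compute the conditional covariation of $(\conti X^*_{\kappa'},\conti Y^*_{\kappa'})$ with $\conti Z^{*(0)}_{\kappa',\theta}$ directly from \eqref{eq:generalized}: on $\{\conti Z^{*(0)}>0\}$ only $d\conti Y^*$ contributes and on $\{\conti Z^{*(0)}\le 0\}$ only $d\conti X^*$, so the orthogonal complements are again correlated Brownian motions whose law can be written down in closed form and compared with \cite[Prop. 3.2]{GHS}.

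The main obstacle, by far, is the conditional-law matching: the imaginary-geometry construction of $\tilde{\conti Z}^{(0)}_{\kappa',\theta}$ in terms of space-filling SLE${}_{\kappa'}$ and a flow line of angle $\theta$ is not naturally phrased as an SDE, so matching the two descriptions requires either (a) a discrete route, producing a combinatorial model (say the weighted bipolar orientations of \cite[Thm 2.6]{MR3945746} or Schnyder-wood–type models as in \cite{LSW}) whose coalescent-walk-process scaling limit we can show using our techniques to be the solution of \eqref{eq:generalized} and whose Peanosphere limit is already known by LQG methods to be $(\tilde{\conti X}_{\kappa'},\tilde{\conti Y}_{\kappa'},\tilde{\conti Z}^{(0)}_{\kappa',\theta})$; or (b) a direct probabilistic identification of the pair $(\tilde{\conti X}_{\kappa'},\tilde{\conti Y}_{\kappa'}-\tilde{\conti X}_{\kappa'})$ on the excursions of $\tilde{\conti Z}^{(0)}_{\kappa',\theta}$ away from $0$ using the Brownian-cone-excursion decomposition. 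Route (a) seems by far the most tractable, and in the special case $\kappa'=12$, $\theta=\pi/2$ the argument reduces, via \cref{thm:coal_con_uncond}, to the discrete-to-continuous convergence already established in this paper, giving an unconditional proof of the conjecture for the Baxter/UIBT parameters and leaving the general $(\kappa',\theta)$ case contingent on generalizing \cref{thm:coal_con_uncond} to the corresponding weighted models.
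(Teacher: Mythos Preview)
The statement you are attempting to prove is labeled a \emph{Conjecture} in the paper and is left open; the paper provides no proof. What the paper does offer is exactly the two pieces of supporting evidence you reproduce: the L\'evy-characterization argument showing that any solution of \eqref{eq:generalized} is a skew Brownian motion of parameter $p$, and the pointer to \cite{LSW} where a local-time term appears for $\kappa'=16$, $\theta=\pi/3$. Your outline is therefore not a competing proof but a proposed research program, and you yourself identify its gap correctly.

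The substantive missing ingredient is the conditional-law matching. Your route (a) --- find a discrete model whose coalescent-walk process converges to the solution of \eqref{eq:generalized} and whose Peanosphere limit is known to be $(\tilde{\conti X}_{\kappa'},\tilde{\conti Y}_{\kappa'},\tilde{\conti Z}^{(0)}_{\kappa',\theta})$ --- is exactly what the paper suggests in \cref{sec:perspectives}, but no such model is known for general $(\kappa',\theta)$, and even in the Baxter case $\kappa'=12$, $\theta=\pi/2$ the identification relies on comparing two separate limit theorems (\cref{thm:coal_con_uncond} here and \cite[Prop.~4.1]{GHS}) applied to slightly different discrete models (general bipolar orientations versus bipolar triangulations); the paper flags this as requiring the generality conjecture of \cref{sec:perspectives}. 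Your route (b) would require extracting from the imaginary-geometry construction an explicit SDE/excursion description, which is not available in the literature. Two further technical points you gloss over: pathwise uniqueness for \eqref{eq:generalized} with $p\neq 1/2$ and $\rho\in(-1,1)$ is, as the paper notes, not in the literature and cannot be obtained by Girsanov (the local-time drift is singular); and even the constant $p(\kappa',\theta)$ is not known explicitly, so the SDE cannot currently be written down in closed form for general $\theta$. In short, your proposal is a faithful summary of what one would need to do, but it does not close the conjecture, nor does the paper.
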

In support of this conjecture, we point out that a local time term appears in the analysis of the case $\kappa' = 16$ and $\theta = \pi/3$ in \cite{LSW}. Moreover, the Lévy's characterization theorem guarantees that $\conti Z_{\kappa',\theta}^{*(u)} - (2p-1)\conti L_{\kappa',\theta}^{*(u)}$ is a Brownian motion, so that $\conti Z_{\kappa',\theta}^{*(u)}$ is indeed a skew Brownian motion of parameter $p$. 

In the case $p=1/2$, we recover the perturbed Tanaka SDE \eqref{eq:SDE} of \cite{MR3098074,MR3882190}, which has pathwise unique solutions for $\rho \in (-1,1)$. The edge case $\rho=1$ (i.e.\ $\kappa' = 4$, the underlying geometry being the critical $2$-LQG) corresponds to the Tanaka SDE \eqref{eq:tanaka} which does not have pathwise uniqueness.

When $\rho=1$ (i.e.\ $\kappa' = 4$) but $p\neq 1/2$, going back to the finite-volume case and denoting by $\bm e_*$ a one-dimensional Brownian excursion on $[0,1]$, we obtain the following SDEs defined for all $u\in\R$,
\begin{equation}\label{eq:skew_tanaka}
d\conti Z_{4,\theta}^{*(u)}(t) = \sgn(\conti Z_{4,\theta}^{*(u)}(t)) d \bm e_*(t) + (2p-1)d\conti L_{4,\theta}^{*(u)}(t),\quad  t\geq u,
\end{equation}
which we believe to give rise (using the same procedure described above \cref{defn:Baxter_perm}) to the \textit{biased Brownian separable permuton of parameter $1-p$}, in the terminology of \cite{bassino2017universal,maazoun}.
The opposite edge case $\rho = -1$ (i.e.\ $\kappa' = \infty$, the underlying geometry being $0$-LQG, that is Euclidean geometry) is Harrison and Shepp's equation defining skew Brownian motion, whose solutions are pathwise unique \cite{MR2280299}, and whose coalescing flow was studied by Burdzy and his coauthors (see \cite{MR2094439} and the references therein). 

Although the cases $p\neq 1/2$ and $\rho \neq -1$ are not present in the literature, we expect pathwise uniqueness of \cref{eq:generalized} to hold for every $p\in [0,1]$ and $\rho \in [-1,1)$. 

\section{Simulations of large Baxter permutations}\label{sect:simulations}

The simulations for Baxter permutations presented in the first page of this paper have been obtained in the following way:
\begin{enumerate}
	\item first, we have sampled a uniform random walk of size $n+2$ in the non-negative quadrant starting at $(0,0)$ and ending at $(0,0)$ with increments in the set $A$ (defined in \cref{eq:admis_steps} page~\pageref{eq:admis_steps}). This has been done using a ``rejection algorithm": it is enough to sample a walk $W$ starting at $(0,0)$ with increments distribution given by \cref{eq:walk_distrib}, up to the first time it leaves the non-negative quadrant. Then one has to check if the last step inside the non-negative quadrant is at the origin $(0,0)$. When this is the case (otherwise we resample a new walk), the part of the walk $W$ inside the non-negative quadrant, denoted $\widetilde W$, is a uniform walk conditionally to its size in the non-negative quadrant starting at $(0,0)$ and ending at $(0,0)$ with increments in the set $A$.
	\item Removing the first and the last step of $\widetilde W$, thanks to \cref{prop:unif_law}, we obtained a uniform random walk in $\mathcal W_n$.
	\item Finally, applying the mapping $\cpbp\circ\wcp$ to the walk given by the previous step, we obtained a uniform Baxter permutation of size $n$ (thanks to \cref{thm:diagram_commutes}).
\end{enumerate}
Note that our algorithm gives a random Baxter permutation which, conditioned on its size to be equal to $n$, is uniformly distributed among all Baxter permutations of size $n$.
\bibliographystyle{alpha}
\bibliography{bibli}

\end{document}